\documentclass[11pt,a4paper]{article}

\usepackage{geometry}                
\usepackage{amsmath}
\usepackage{amsthm}
\usepackage{amssymb}
\usepackage{hyperref}
\usepackage[usenames,dvipsnames]{xcolor}
\hypersetup{
	colorlinks=true,
	linkcolor=blue,
	citecolor=Green,
	filecolor=magenta,      
	urlcolor=olive,
	linktoc=page
}

\geometry{letterpaper}                   
\usepackage{placeins}
\usepackage{enumitem}
\usepackage{graphicx}
\usepackage{ amssymb }
\usepackage{epstopdf}
\usepackage[utf8]{inputenc}
\usepackage[english]{babel}
\usepackage{amsthm}
\usepackage{ dsfont }
\usepackage[square,sort,comma,numbers]{natbib}
\usepackage{amsmath}
\usepackage{amsfonts}
\usepackage{amssymb}
\usepackage{scalerel}[2016/12/29]
\usepackage{bbm}
\usepackage{amsmath,amsfonts,amsthm,amssymb}
\usepackage{fancyhdr}
\usepackage{float}
\usepackage{ amssymb }

\theoremstyle{plain}
\newtheorem{theorem}{Theorem}[section]
\newtheorem{corollary}[theorem]{Corollary}
\newtheorem{prop}[theorem]{Proposition}
\newtheorem{lemma}[theorem]{Lemma}

\theoremstyle{definition}

\newtheorem{remark}[theorem]{Remark}

\theoremstyle{plain}

\newenvironment{customthm}[1]
  {\innercustomthm}
  {\endinnercustomthm}

\newcommand{\inte}{\mathbb{Z}}
\newcommand{\rat}{\mathbb{Q}}
\newcommand{\nat}{\mathbb{N}}
\newcommand{\real}{\mathbb{R}}

\newcommand{\prob}{\mathbb{P}}
\newcommand{\expt}{\mathbb{E}}
\newcommand{\indic}{\mathbbm{1}}
\newcommand{\Mod}[1]{\ (\text{mod}\ #1)}
\newcommand{\floor}[1]{{\left\lfloor #1 \right\rfloor}}
\newcommand{\ceil}[1]{{\left\lceil #1 \right\rceil}}

\newcommand{\sset}{\subset}

\newcommand{\al}{\alpha}
\newcommand{\Om}{\Omega}
\newcommand{\mathforall}{\text{ for all }}

\newcommand{\mathand}{\;\text{and}\;}

\newcommand{\mathor}{\;\text{or}\;}

\newcommand{\mathas}{\;\text{as}\;}

\newcommand{\ga}{\gamma}
\newcommand{\Ga}{\Gamma}
\newcommand{\ep}{\epsilon}

\newcommand{\de}{\delta}

\newcommand{\sig}{\sigma}

\newcommand{\del}{\partial}

\newcommand{\scrA}{\mathcal{A}}

\newcommand{\scrP}{\mathcal{P}}
\newcommand{\scrD}{\mathcal{D}}

\newcommand{\scrM}{\mathcal{M}}

\newcommand{\scrL}{\mathcal{L}}

\newcommand{\scrX}{\mathcal{X}}

\newcommand{\card}[1]{\left\vert #1 \right\vert}

\newcommand{\close}[1]{\mkern 1.5mu\overline{\mkern-1.5mu#1\mkern-1.5mu}\mkern 1.5mu}
\newcommand{\supp}{\text{supp}}
\newcommand{\Z}{\mathds{Z}}
\newcommand{\ddd}{\mathellipsis}
\usepackage{MnSymbol}

\newcommand{\Lip}{\text{\fontfamily{ppl}\selectfont Lip}}

\usepackage{yfonts}
\usepackage{ wasysym }

\newcommand{\eqd}{\stackrel{d}{=}}

\newcommand{\cvgd}{\stackrel{d}{\to}}

\newcommand{\X}{\times}

\newcommand{\rev}{\text{rev}}
\newcommand{\id}{\text{id}}
\newcommand{\as}{\text{almost surely}}

\newcommand{\lf}{\left}
\newcommand{\rg}{\right}
\usepackage{titling}

\setlength{\droptitle}{-5em}

\newcommand\blfootnote[1]{%
	\begingroup
	\renewcommand\thefootnote{}\footnote{#1}%
	\addtocounter{footnote}{-1}%
	\endgroup
}

\author{Duncan Dauvergne}                
\title{The Archimedean limit of random sorting networks }

\begin{document}
\maketitle

\begin{abstract}
	
	A sorting network\blfootnote{MSC classes -- Primary: 60C05, Secondary: 05E15, 05A05, 68P10} (also known as a reduced decomposition of the reverse permutation), is a shortest path from $12 \cdots n$ to $n \cdots 21$ in the Cayley graph of the symmetric group $S_n$ generated by adjacent transpositions. We prove that in a uniform random $n$-element sorting network $\sig^n$, all particle trajectories are close to sine curves with high probability. We also find the weak limit of the time-$t$ permutation matrix measures of $\sig^n$. As a corollary of these results, we show that if $S_n$ is embedded into $\real^n$ via the map $\tau \mapsto (\tau(1), \tau(2), \dots \tau(n))$, then with high probability, the path $\sig^n$ is close to a great circle on a particular $(n-2)$-dimensional sphere in $\real^n$. These results prove conjectures of Angel, Holroyd, Romik, and Vir\'ag. \end{abstract}

\begin{figure}[H]
	\centering
	\includegraphics[scale= 1]{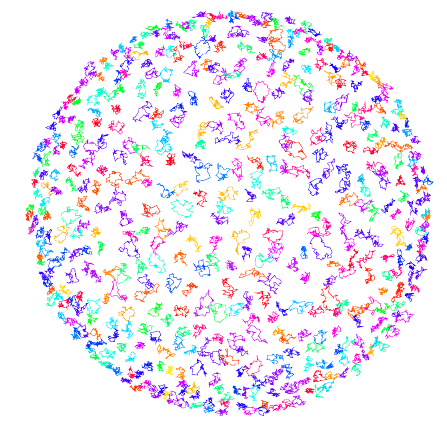}
	\caption{The half-way permutation matrix evolution for a uniform $500$-element sorting network. As a corollary of our main results, we prove that all of the paths in this evolution are localized, and that the distribution of these localized paths is the projected surface area measure of the $2$-sphere onto the unit disk. This figure is from \cite{angel2007random}.}
	\label{fig:window}
\end{figure}

\pagebreak

\tableofcontents

\section{Introduction}
\label{S:intro}

Consider $n$ numbers $1, \dots, n$ listed in increasing order. An $n$-element \textbf{sorting network} is a way of reversing this list from increasing to decreasing order by using a minimal number of adjacent swaps (see Figure \ref{fig:wiring} for an example with $n = 4$). This minimal number is $N = {n \choose 2}$. For $i \in \{1, \dots, N\}$, let $k_i \in \{1, \dots, n-1\}$ be the location of the $i$th swap: that is, the particles at locations $k_i$ and $k_{i+1}$ get swapped at stage $i$.

\medskip

We can equivalently define sorting networks in the following way. Let $\Ga(S_n)$ be the Cayley graph of the symmetric group  $S_n$ with generators given by the adjacent transpositions $(i, i + 1)$ for $i = 1, \ddd, n -1$. Then a sorting network is a shortest path from the identity permutation $\id_n = 12 \cdots n$ to the reverse permutation $\rev_n = n \cdots 21$ in  $\Ga(S_n)$.

\medskip

The name sorting network comes from computer science, where sorting networks are viewed as $N$-step algorithms for sorting a list of $n$ numbers. At step $i$,  the sorting network algorithm sorts the numbers at positions $k_i$ and $k_i + 1$ into increasing order. This process sorts any list in $N$ steps.

\medskip

In combinatorics, sorting networks are known as {\bf reduced decompositions} or \textbf{reduced words} for the reverse permutation, as any sorting network can be represented as a minimal length decomposition of the reverse permutation as a product of adjacent transpositions: $\rev_n = (k_N, k_N + 1) \cdots (k_1, k_1 + 1)$. 

\medskip

Stanley \cite{stanley1984number} showed that the number of $n$-element sorting networks is equal to
\begin{equation}
\label{E:stanley}
\frac{{n \choose 2}!}{(2n-3)^1(2n-5)^2 \cdots 3^{n-2}1^{n-1}},
\end{equation}
and he observed that this is the same as the number of standard Young tableaux of staircase shape $(n-1, n-2, \dots, 1)$. Stanley's argument was based on properties of symmetric functions and did not yield a bijective proof of the connection with Young tableaux. A few years later, Edelman and Greene found an explicit bijection \cite{edelman1987balanced}. 

\medskip

Since these seminal works, the combinatorics of sorting networks and reduced decompositions of other permutations and of elements of other Coxeter groups has been studied in great detail, revealing interesting connections with many other areas. We highlight a few of these here. For more connections and background, see Bj\"orner and Brenti \cite{bjorner2006combinatorics} and Garsia \cite{garsia2002saga}.

\medskip

Reduced decompositions can be used to give a combinatorial interpretation of Schubert polynomials, see Billey, Jockusch, and Stanley \cite{billey1993some}, Billey and Haiman \cite{billey1995schubert} and Manivel \cite{manivel2001symmetric}. They arise in the study of Bott-Samelson resolutions of Schubert varieties, see Magyar \cite{magyar1998schubert}, and are a useful tool in studying matrix factorizations, total positivity, and canonical bases, e.g. see Berenstein, Fomin, and Zelevinsky \cite{berenstein1996parametrizations} and Leclerc and Zelevinsky \cite{leclerc1998quasicommuting}.

\medskip

Equivalence classes of reduced decompositions of permutations are in bijection with rhombic tilings of certain polygons, see Elnitsky \cite{elnitsky1997rhombic} and Tenner \cite{tenner2006reduced}.  Permutations of Coxeter group elements that avoid certain patterns can also be characterized by properties of their reduced decompositions, see Lascoux and Schutzenberger \cite{lascoux1985schubert}, Stembridge \cite{stembridge1997some}, and Tenner \cite{tenner2006reduced}. Reduced decompositions are also connected to the study of point configurations in the plane and pseudoline arrangements, e.g. see the work of Goodman and Pollack \cite{goodman1980combinatorial} and subsequent papers.  

\medskip

A natural direction of inquiry regarding sorting networks is to try to understand their asymptotic properties as the number of elements $n$ approaches infinity. By applying Stirling's formula to Equation \eqref{E:stanley}, it is fairly straightforward to find asymptotics for the number of sorting networks. Beyond this, the next logical question to ask is what a typical (i.e. uniformly chosen) sorting network looks like as $n$ approaches infinity. This direction of inquiry has been incredibly fruitful for understanding other objects in algebraic combinatorics, e.g.\ Young tableaux and domino/lozenge tilings.

\medskip

In \cite{angel2007random} Angel, Holroyd, Romik, and Vir\'ag initiated the study of uniform random $n$-element sorting networks. They studied the global limiting behaviour of the space-time swap distribution, rescaled particle trajectories, time-$t$ permutation matrices, and the Cayley graph path itself.

\medskip

They proved a law of large numbers for the space-time swap distribution, and based on strong numerical evidence, made conjectures about the limiting behaviour of the other three objects. One of the main difficulties they faced in proving these conjectures is that unlike with many probabilistic models where global limiting results have been proven (e.g.\ random walks, classical interacting particle systems, random tilings, random graphs), the combinatorics of sorting networks cannot be easily reduced to a series of local rules. In this paper, we overcome this difficulty and prove their conjectures.

\begin{figure}
	\centering
	\includegraphics[scale=0.7]{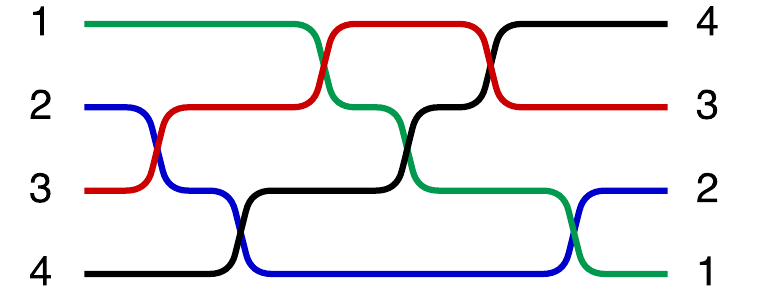}
	\caption{A ``wiring diagram" for a sorting network with $n = 4$. In
		this diagram, trajectories are drawn as continuous curves for
		clarity.}
	\label{fig:wiring}
\end{figure}

\subsection{Main limit theorems}
\label{SS:main-thms}

We will think of the elements $\{1, \ddd, n \}$ as particles being sorted in time (see Figure \ref{fig:wiring}). We use the notation $\sig(x, t) = (k_\floor{t}, k_\floor{t} + 1) \ddd (k_1, k_1 + 1)(x)$ for the position of particle $x$ at time $t\in [0, N]$ in a sorting network $\sig$. We call $(k_1, \dots, k_N)$ the \textbf{swap sequence} for $\sig$.

\bigskip

\noindent {\bf I. Rescaled particle trajectories.} \qquad  For a sorting network $\sig$, define the global trajectory
$$
\sig_G(x, t) = \frac{2\sig(x, Nt)}n - 1.
$$
The function $\sig_G(x, \cdot):[0, 1] \to [-1, 1]$ is the trajectory of particle $x$, with time rescaled so that the sorting process finishes at time $1$, and space rescaled so that the trajectory stays in the interval $[-1, 1]$.
In \cite{angel2007random}, Angel et al.\ conjectured that with high probability, all particle trajectories in a uniform random sorting network are close to sine curves  (see Figure \ref{fig:sinecurves}). They proved that with high probability, all global trajectories in a random sorting network are close in uniform norm to some H\"older-$1/2$ curve with H\"older constant $\sqrt{8}$.

\medskip

Our first theorem proves the sine curve conjecture from \cite{angel2007random}. Here and throughout the paper we use the notation $\sig^n$ for a uniform random $n$-element sorting network.

\begin{customthm}{1}[Sine curve limit]
	\label{T:sine-curves}
	For each $n$ there exist random variables $\{(A^n_i, \Theta^n_i) \in [0, 1] \X [0, 2\pi]\}_{i \in \{1, \ddd, n\}}$ such that for any $\ep > 0$, we have that
	$$
	\prob \lf( \max_{i \in [1, n] } \max_{t \in [0, 1]} \card{ \sig^n_G(i, t) - A_i^n \sin(\pi t + \Theta_i^n) } > \ep \rg) \to 0 \qquad \mathas \;\; n \to \infty.
	$$
\end{customthm}

\begin{figure}
	\centering
	\includegraphics[scale=0.8]{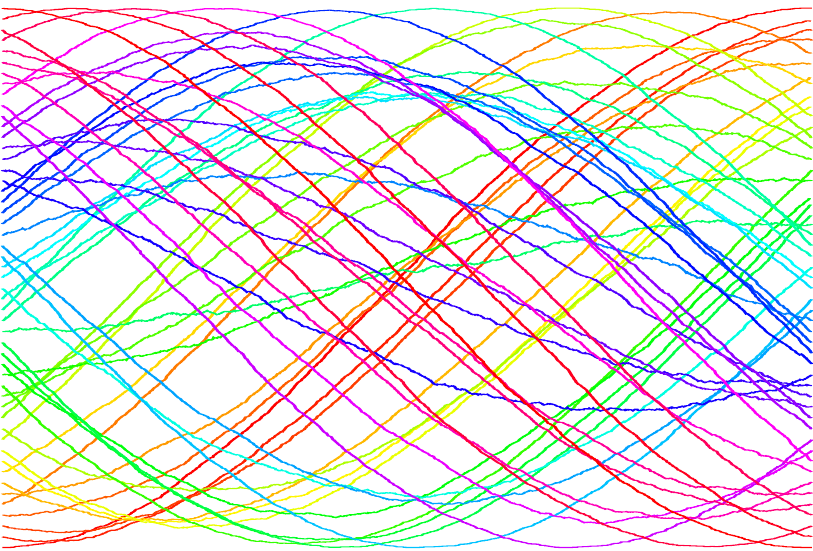}
	\caption{This is a diagram of selected particle trajectories in a 2000 element sorting network. This image is taken from \cite{angel2007random}.}
	\label{fig:sinecurves}
\end{figure}

\bigskip

\noindent {\bf II. Permutation Matrices.} \qquad For a uniform $n$-element sorting network $\sig^n$, define the random measure
$$
\rho^n_t= \frac{1}n \sum_{i=1}^n \delta \lf( \sig^n_G(i, 0), \sig^n_G(i, t) \rg).
$$
Here $\de(x, y)$ is a $\de$-mass at the point $(x, y)$.
The measure $\rho^n_t$ rescales the time-$t$ permutation matrix of $\sig^n$, placing atoms of weight $1/n$ at the positions of the ones. Define the {\bf Archimedean measure} $\mathfrak{Arch}$ on the square $[-1, 1]^2$ to be the measure with Lebesgue density 
$$
f(x, y) = \frac{1}{2\pi\sqrt{1 - x^2 - y^2}}
$$
on the unit ball $B(0, 1)$, and $0$ elsewhere. The Archimedean measure is the unique rotationally symmetric measure on $[-1, 1]^2$ whose linear projections are all uniform. It is obtained by projecting the surface area measure of the $2$-sphere onto the unit disk. The fact that this measure has uniform linear projections follows from Archimedes' theorem that if two parallel planes slice through a $2$-sphere, then the surface area cut out is simply proportional to the distance between the planes.
 Define the {\bf time-$t$ Archimedean measure} by
$$
(X, X \cos (\pi t) + Y \sin (\pi t)) \eqd \mathfrak{Arch}_{t}, \qquad \text{ where } \;\; (X, Y) \eqd \mathfrak{Arch}.
$$
Note that $\mathfrak{Arch} = \mathfrak{Arch}_{1/2}$.
In \cite{angel2007random}, Angel et al.\ conjectured that $\rho^n_t$ converges weakly to $\mathfrak{Arch}_{t}$ for every $t$ (see Figure \ref{fig:circles}). They proved that for any $t$, the support of $\rho^n_t$ lies in a particular octagon with high probability. In \cite{dauvergne1}, Dauvergne and Vir\'ag showed that the support of any limit of $\rho^n_t$ lies within the elliptical support of $\mathfrak{Arch}_t$. 
\begin{figure}
	\centering
	\includegraphics[scale=0.9]{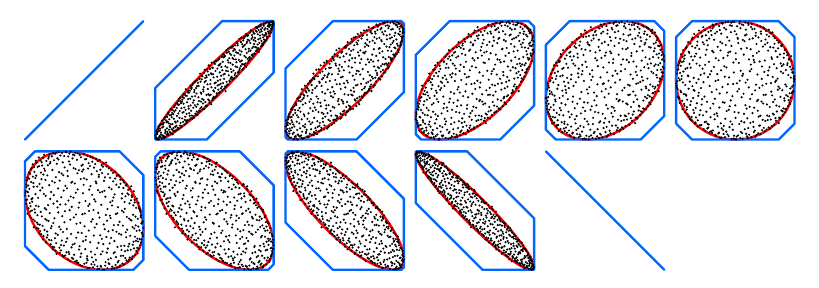}
	\caption{This is a diagram of the measures $\{\rho^n_t : t \in \{0, 1/10, 2/10, \ddd 1\} \}$ in a $500$-element sorting network. The blue octagons are the octagonal support bounds proved in \cite{angel2007random}, and the red ellipses are the supports of the measures $\mathfrak{Arch}_t$. This figure is from \cite{angel2007random}.}
	\label{fig:circles}
\end{figure}

\medskip

Our second theorem proves the weak convergence of the random measures $\rho^n_t$. We also show that the support of $\rho^n_t$ and $\mathfrak{Arch}_t$ are close. To state this theorem, recall that the Hausdorff distance between two sets $A, B \sset \real^2$ is
$$
d_H(A, B) = \max \lf\{\sup_{a \in A} \inf_{b \in B} d(a, b), \;\sup_{b \in B} \inf_{a \in A} d(a, b) \rg\}.
$$
\begin{customthm}{2}[Permutation matrix limit]
	\label{T:matrices}
	For any $t \in [0, 1]$,
	$
	\rho^n_t \to \mathfrak{Arch}_t
	$ 
	weakly 
	in probability
	as $n \to \infty$. That is, for any weakly open set $U$ in the space of probability measures on $[-1, 1]^2$ containing $\mathfrak{Arch}_t$, we have that
	$$
	\prob(\rho^n_t \in U) \to 1 \qquad \mathas \quad n \to \infty.
	$$
	Moreover, 
	$$
	d_H(\supp(\rho^n_t), \supp(\mathfrak{Arch}_t)) \to 0 \qquad \text{ in probability } \mathas n \to \infty.
	$$
\end{customthm}

Angel, Holroyd, Romik, and Vir\'ag \cite{angel2007random} also considered a permutation matrix evolution for $\sig^n$. Let $j \in \{1, \ddd, n\}$, and consider the random complex-valued function
$$
Z^n_j(t) = e^{\pi i t} \lf[\sig^n_G(j, t) + i\sig^n_G(j, t + 1/2) \rg], \qquad t \in [0, 1/2].
$$
For a fixed $t$, $(Z^n_1(t), \ddd, Z^n_n(t))$ is the set of points in the scaled permutation matrix for 
\[
\sig^n(\cdot, N(t + 1/2))(\sig^n)^{-1}(\cdot, Nt)
\]
after a counterclockwise rotation by $\pi t$ (see Figure \ref{fig:window}). Theorem \ref{T:sine-curves} guarantees that each of the paths $Z^n_j$ localize.
\begin{customthm}{3}[Path Localization]
	\label{T:unif-rotation}
	Let $\sig^n$ be a uniform random $n$-element sorting network.
	Then 
	$$
	\max_{j \in [1, n]} \max_{s, t \in [0, 1]} |Z^n_j(t) - Z^n_j(s)| \to 0 \qquad \text{in probability as } \;\; n \to \infty.
	$$
\end{customthm}

\bigskip

\noindent {\bf III. Great Circles.} \qquad We can embed the vertices of $\Ga(S_n)$ into $\real^n$ by sending the permutation $\tau \in S_n$ to the point $\close{\tau} = (\tau(1), \ddd, \tau(n)) \in \real^n$. For any $\tau \in S_n$, the point $\close{\tau}$ lies on the $(n-2)$-sphere $\mathbb{S}^{n-2} = \mathbb{L}_n \cap \mathbb{K}_n$, where
\begin{align*}
\mathbb{L}_n =& \lf\{ (x_1, \ddd, x_n) \in \real^n : \sum_{i=1}^n x_i = \frac{n(n+1)}2 \rg\} \qquad \mathand \\
\mathbb{K}_n =& \lf\{ (x_1, \ddd, x_n) \in \real^n : \sum_{i=1}^n x_i^2 = \frac{n(n+1)(2n+1)}6\rg\}.
\end{align*}
If we also embed the edges of $\Ga(S_n)$ as straight lines between the embedded vertices, we get an object called the {\bf permutahedron} (see Figure \ref{fig:permutahedron}). 

\medskip

\begin{figure}
	\centering
	\includegraphics[scale=0.073]{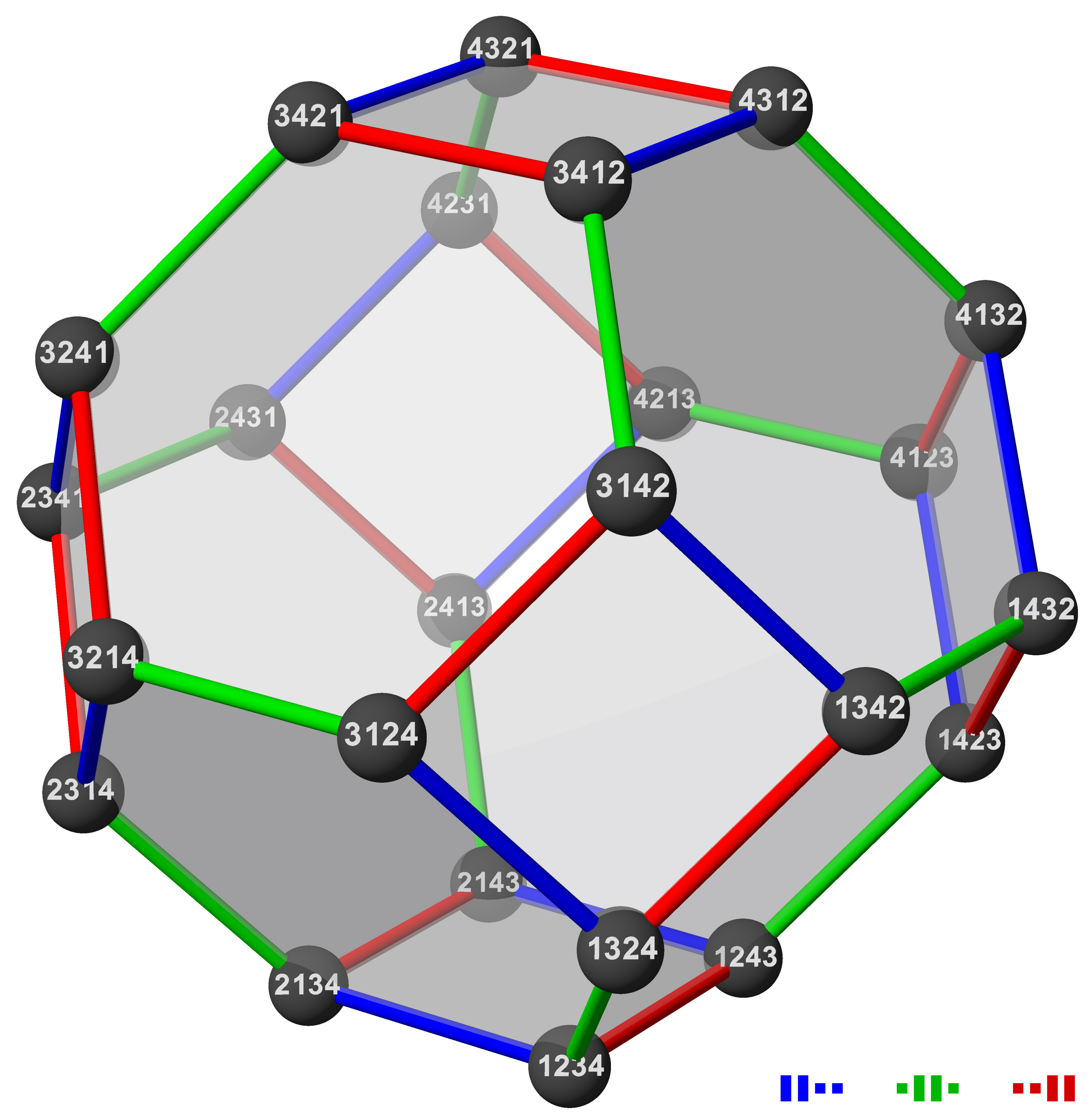}
	\caption{The permutahedron for $S_4$.}
	\label{fig:permutahedron}
\end{figure}
In \cite{angel2007random}, Angel et al.\ conjectured that with high probability, a uniform random sorting network is close to a great circle in $\mathbb{S}^{n-2}$ under this embedding. They showed that this conjecture implies the other global limiting results for uniform random sorting networks. Our strongest theorem proves this conjecture.

\medskip

For two functions $f, g: [0, 1] \to \real^n$,  define the distance function
$$
d_\infty(f, g) = \sup_{t \in [0, 1]} ||f(t) - g(t)||_\infty.
$$
This is the uniform norm on functions, where the pointwise distance is the $L^\infty$ distance. Also, for an $n$-element sorting network $\sig$, we define the embedded and time-rescaled path
$
\bar{\sig}:[0, 1] \to \mathbb{S}^{n-2}
$
by letting 
$$
\bar{\sig}(t) = (\sig(1, Nt), \sig(2, Nt), \dots, \sig(n, Nt)).
$$

\begin{customthm}{4}[Great circles]
	\label{T:geom-limit}
	Let $\bar{\sig}^n$ be the embedding of $\sig^n$ into $\mathbb{S}^{n-2}$. For every $n$, there exists a random path $C_n:[0, 1] \to \mathbb{S}^{n-2}$ such that $C_n$ is a constant-velocity parametrization of an arc of a great circle in $\mathbb{S}^{n-2}$ starting at $(1, \ddd, n)$ and finishing at $(n, \ddd, 1)$, and such that 
	$$
	\frac{d_\infty(\bar{\sig}^n, C_n)}{n} \to 0
	\qquad \text{in probability as} \;\;  n \to \infty.
	$$
\end{customthm}

Note that it is easy to find sorting networks that aren't close to a great circle in $\mathbb{S}^{n-2}$. For example, the ``bubble sort" sorting network given by the swap sequence 
$$
(1, 2, \ddd, n-1, 1, 2, \ddd, n-2, \ddd, 1, 2, 1)
$$
is $d_\infty$-distance $n - 1 - o(1)$ from any great circle.

\subsection{Random trajectory limits}

To prove the main theorems of Section \ref{SS:main-thms}, we first analyze the limit of a random particle trajectory. This approach was first considered by Rahman, Vir\'ag, and Vizer \cite{rahman2016geometry}.

\medskip

Let $\scrD$ be the closure in the uniform norm $||\cdot||_u$ of the space of all possible sorting network trajectories $\sig_G(x, \cdot):[0, 1] \to [-1, 1]$. The space $(\scrD, ||\cdot||_u)$ is a complete separable metric space. The only functions in $\scrD$ are continuous functions and the sorting network trajectories themselves.

\medskip
Let $Y_n \in \scrD$ be a uniform $n$-element sorting network trajectory. That is, if $\sig^n$ is a uniform $n$-element sorting network, and $I_n$ is an independent uniform random variable on $\{1, \ddd, n\}$, then
$$
Y_n = \sig^n_G(I_n, \cdot).
$$
We refer to $Y_n$ as the {\bf trajectory random variable} of $\sig^n$.
In \cite{dauvergne1}, Dauvergne and Vir\'ag proved that the sequence $\{Y_n\}_{n \in \nat}$ is precompact in distribution, and that any subsequential limit is almost surely Lipschitz. In this paper, we show that $\{Y_n\}_{n \in \nat}$ converges in distribution, and identify its limit as the Archimedean path, a random element of $\scrD$ first described in \cite{rahman2016geometry}, see Conjecture 1 from that paper and surrounding discussion.

\begin{customthm}{5}[The weak trajectory limit]
	\label{T:weak-limit}
	Let $(X, Z) \sim \mathfrak{Arch}$, and define the Archimedean path $\scrA \in \scrD$ by $\scrA(t) = X\cos(\pi t) + Z\sin(\pi t)$. Then
	$$
	Y_n \cvgd \scrA \qquad \mathas \; n \to \infty.
	$$
\end{customthm}

Theorem \ref{T:weak-limit} will be used in the proof of all our main theorems from Section \ref{SS:main-thms}. Most of the paper is devoted to its proof. We note here that we can equivalently write 
$$
\scrA(t) = \sqrt{1 - V^2}\sin(\pi t + 2\pi U),
$$
where $V$ and $U$ are independent uniform random variables on $[0, 1]$.

\bigskip

\noindent \textbf{Random $m$-out-of-$n$ sorting networks.} \qquad 
We will also use Theorem \ref{T:weak-limit} to identify the limit of random $m$-out-of-$n$ subnetworks. This answers a question of Angel and Holroyd \cite{angel2010random}. This limit can also be by found by using the stronger great circle theorem (Theorem \ref{T:geom-limit}), as was noted in \cite{angel2010random}.

\medskip

Let $\sig$ be an $n$-element sorting network. For $A \sset \{1, \dots, n\}$, let $\sig|_A$ be the $|A|$-element sorting network given by restricting $\sig$ to the set $A$. Specifically, for $i \in \{1, \dots, N\}$, let $\sig^*_A(\cdot, i)$ be the relative ordering of the particles in $A$ in the permutation $\sig(\cdot, i)$. This gives a sequence of $N$ permutations $\{\sig^*_A(\cdot, i) \in S_{|A|} : i \in \{1, \dots N\} \}$. Removing duplicates gives the permutation sequence for the sorting network $\sig|_A$. For $m < n$, let the \textbf{random $m$-out-of-$n$ subnetwork $\tau^n_m$} be the restriction of $\sig^n$ to a uniform $m$-element subset of $\{1, \ddd, n\}$, chosen independently from $\sig^n$. 

\medskip

Let $\{x_1, \ddd, x_n\}$ be a set of points in $\real^2$ in general position, and such that no two pairs of points determine parallel lines. Label the points in order of increasing $x$-coordinate. For all but finitely many angles $\theta$, listing the labels of the points $\{x_1, x_2, \ddd, x_n\}$ in increasing order of their horizontal projections after rotation by $\pi \theta$  gives a permutation $\tau_\theta$. The \textbf{geometric sorting network} associated to $\{x_1, \ddd, x_n\}$ is simply the sequence of permutations $\{\tau_\theta, \theta \in [0, 1] \}$ listed in order of increasing $\theta$. 

\begin{customthm}{6}[The subnetwork limit]
	\label{T:subnetwork}
	Let $\{X_1, X_2, \ddd, X_m \}$ be random points in the unit ball $B(0, 1)$ sampled from the Archimedean distribution $\mathfrak{Arch}$, and let $\tau_m$ be the associated geometric sorting network. Then
	$$
	\tau^n_m \cvgd \tau_m \qquad \mathas \; n \to \infty.
	$$ 
\end{customthm}

Note that while random $m$-out-of-$n$ subnetworks are geometric with high probability for fixed $m$, the same is not true of entire sorting networks. Indeed, Angel, Gorin, and Holroyd \cite{angel2012pattern} showed that a uniform sorting network is, with high probability, not geometric.

\subsection{The local speed distribution} 

As a by-product of the proof of Theorem \ref{T:weak-limit}, we find the distribution of speeds in the local limit of random sorting networks. To state this result, we first give an informal description of this limit (a precise description is given in Section \ref{S:local}). The existence of this limit was established independently by Angel, Dauvergne, Holroyd, and Vir\'ag \cite{angel2017local}, and by Gorin and Rahman \cite{gorin2017}. Define 
$$
U_n (x, t) = \sig^n(\floor{n/2} + x, nt) - \floor{n/2}.
$$
Each path $U_n(x, \cdot)$ is a locally scaled particle trajectory.
With an appropriate notion of convergence, we have that 
$$
U_n \cvgd U,
$$
where $U$ is a random function from $\Z \X [0, \infty) \to \Z$. The process $U$ is the local limit at the centre of the sorting network. We can also take a local limit centred at particle $\floor{\al n}$ for any $\al \in (0, 1)$. The result is the process $U$ with time rescaled by a semicircle factor $2\sqrt{\al(1 - \al)}$. 

\medskip

In \cite{dauvergne1}, Dauvergne and Vir\'ag used a stationarity argument to conclude that particles in $U$ have asymptotic speeds. Specifically, they showed that for every $x \in \Z$, the limit
$$
S(x) = \lim_{t \to \infty} \frac{U(x, t) - U(x, 0)}{t} \qquad \text{exists }\as.
$$ 
By spatial stationarity of $U$, the speed $S(x)$ has distribution $\mu$ independent of $x$. In this paper, we identify $\mu$.

\begin{customthm}{7}
	\label{T:main-2}
	The measure $\mu$ is the arcsine distribution on $[-\pi, \pi]$ given by the Lebesgue density
	$$
	f(x) = \frac{1}{\pi\sqrt{\pi^2 - x^2}}.
	$$
\end{customthm}

\subsection{Related work and some random sorting variants}

All of the prior work discussed so far on random sorting networks relies crucially on Edelman-Greene bijection. This bijection has also been used by Reiner \cite{reiner2005note} and Tenner \cite{tenner2014expected} to study the frequency of particular substrings in the swap sequence of a random sorting network. Little \cite{little2003combinatorial} found another bijection between the set of sorting networks and Young tableaux of staircase shape, and Hamaker and Young \cite{HY} proved that these bijections coincide. 

\medskip

Problems involving limits of sorting networks under different measures have been considered by Angel, Holroyd, and Romik \cite{angel2009oriented}, and also by Young \cite{young2014markov}. Uniform ``relaxed" random sorting networks have been analyzed by Kotowski and Vir\'ag \cite{kotowski2016limits} (see also \cite{rahman2016geometry}). Uniform random sorting networks that avoid intermediate permutations with a $132$-pattern have been analyzed by Linusson, Potka, and Sulzgruber \cite{linusson2018random}.

\medskip

A natural follow-up to studying random sorting networks would be to try to prove analogous theorems for random reduced decompositions of other permutations.
When the target permutation is vexillary (i.e. it avoids the pattern $2143$) then the Edelman-Greene bijection allows for efficient sampling of random reduced decompositions so the limit object can at least be guessed at, see Gross and Marsaglia \cite{gross2012vexillary} for simulations. Much of the combinatorics that is necessary for analyzing random reduced decompositions of the reverse permutation breaks down for general vexillary permutations, so the proofs from this paper and previous random sorting network papers cannot be adapted directly. Reduced decompositions of random $321$-avoiding permutations are in correspondence with skew standard Young tableaux of a shape determined by the permutation, see Section 2 in \cite{billey1993some}, so a general theory of random reduced decompositions would also subsume the theory of random Young tableaux.

\medskip

There are also closely related sorting contexts which exhibit similar Archimedean limiting behaviour. For the Coxeter groups $\{B_n : n \in \mathbb{N}\}$, reduced decompositions of the long element can again be efficiently sampled by a variant of the Edelman-Greene bijection, see Haiman \cite{haiman1992dual}. It seems possible that the same broad strategy used to show the Archimedean limit in this paper could work for type $B$ reduced decompositions.

\medskip

Another variant of the Edelman-Greene bijection allows for sampling of random involution words for the reverse permutation in $S_n$, see Marberg  \cite{marberg2020symplectic} for relevant definitions and the description of the algorithm. In this case, while simulations still suggest an Archimedean limit, a proof along the lines of the current paper seems further away. Indeed, random involution words lack a basic time stationarity property used heavily in this paper and in previous work on random sorting networks.

\medskip

\subsection{A connection with fluid mechanics}

It turns out that the Archimedean path appeared in the literature on fluid mechanics long before the first papers on random sorting networks. \footnote{And as far as I know, no one studying either sorting networks or fluid mechanics was aware of this connection until Laurent Miclo pointed this out at one of my talks after the first version of this paper appeared online.} We give a brief description of the context here. For more details and motivation, see Brenier \cite{brenier2008generalized} and references therein.

\medskip

Incompressible fluid flow in a compact connected subset $D \sset \mathbb{R}^d$ can be modelled by a system of PDEs: the Euler equations. A solution to these equations from time $0$ to $1$ is a function $g:[0, 1] \X D \to D$ with the property that $g(t, \cdot)$ is a volume and orientation preserving diffeomorphism on $D$ for all times $t$. When $g(0, a) = a$, we can think of the value $g(t, a)$ as representing the time-$t$ location of the parcel of fluid that started at location $a$. One way of finding solutions to the Euler equations is by looking for minimizers of the Dirichlet energy
\begin{equation}
\label{E:energy}
E(g) = \int_0^1 \int_D \frac{1}2 |\del_t g(t, a)|^2 da dt
\end{equation}
among functions with the same values at time $0$ and $1$. For a given admissible end state (i.e. a volume and orientation preserving diffeomeorphism) $g_1:D \to  D$, there may not exist a minimizer of \eqref{E:energy} with $g(0, a) = a$ and $g(1, \cdot) = g_1$, see \cite{shnirelman56geometry}. Because of this difficulty, Brenier \cite{brenier1989least} introduced a notion of generalized solutions of the Euler equations. These solutions allow particles to `split' and `cross each other'. Formally, solutions are now random functions $G:[0, 1] \X D \to D$ that minimize expected Dirichlet energy $\expt E(G)$ subject to the constraint that $G(t, \cdot)$ is uniformly distributed on $D$ for all $t \in [0, 1]$ (this is incompressibility), and subject to the initial and final conditions $G(0, a) = a$ and $G(1, a) = g_1(a)$ for all $a \in D$. The end state $g_1$ no longer needs to be either a diffeomorphism or orientation-preserving; it just needs to preserve volume and it can even be taken to be random. For a given $g_1$, a generalized solution of the Euler equations always exists and solves a set of `generalized' Euler equations.

\medskip

The case when $D = [-1, 1]$ and $g_1(a) = (-a)$ is one of the only scenarios where a non-deterministic generalized solution can be computed. In this case, the unique generalized solution to the Euler equations is $G(\cdot, a) = \scrA_a(\cdot)$, where $\scrA_a$ is the Archimedean path conditioned to start at $a$ (see \cite{brenier1989least}, Proposition 6.3 or \cite{brenier2008generalized}, Figure 3,6). In other words, if $U$ is an independent uniform random variable, then for every $a, b$, the random function $G(\cdot, U) \eqd \scrA$. 

\medskip

The fact that the Archimedean path minimizes Dirichlet energy was later also observed by Rahman, Vir\'ag and Vizer \cite{rahman2016geometry} and used to study random `relaxed' sorting networks by Kotowski and Vir\'ag \cite{kotowski2016limits}. 

\medskip

In one sense, it is not all that surprising that the limit of random sorting networks is an energy minimizer; hydrodynamic limits of interacting particle systems often minimize energy. On the other hand, it is the limiting version of the surprising and beautiful fact that random sorting networks typically follow great circles on a sphere. Moreover, unlike in many interacting particle systems, there does not seem to be an obvious way to prove that random sorting networks tend to minimize energy. In fact, in this paper we take an orthogonal approach to proving the main limit theorems, though still one based around minimizing a particular functional.

\medskip

It is interesting to note that for a general target permutation, the trajectory limit will not necessarily minimize Dirichlet energy. For example, the limit of random reduced decompositions of the permutation $(\floor{n/2}, \dots n, 1, \dots, \floor{n/2} -1)$ is not an energy minimizer (this can be deduced from Romik \cite{romik2015surprising}, Theorem 3.24). However, it is possible that a time-changed version of the limiting process will minimize energy and therefore align with a generalized solution of the Euler equations.

\subsection{Outline of the proofs and the paper}
Most of the paper is devoted to proving Theorem \ref{T:weak-limit}. The key to proving this theorem is the notion of particle flux across a curve $h$. Heuristically, particle flux measures the number of times that particles cross $h$ in a typical large-$n$ sorting network. It is defined in terms of the a priori unknown local speed distribution $\mu$.

\medskip

Particle flux is a useful quantity because any limit $Y$ of the trajectory random variables $Y_n$ must minimize this quantity among curves $h$ with $h(0) = -h(1)$. This is due to the fact that in any sorting network, every pair of particles swaps \textit{exactly} once, whereas any particle must cross a line with $h(0) = -h(1)$ \textit{at least} once. We define the particle flux functional and establish that it does indeed measure what it intends to in Section \ref{S:flux-intro}.

\medskip

The next three sections of the paper (Section \ref{S:unique}, Section \ref{S:path-speed}, and Section \ref{S:integral-formula}) are devoted to understanding properties of minimal flux paths and how they interact. The goal of these sections is to work towards a classification of minimal flux paths. First, by stationarity and symmetry arguments it will be enough to classify minimal flux paths with $h(0) = h(1) = 0$ satisfying $h \ge 0$. Our end goal will be to show that the only minimal flux paths in this class are the sine curves $a \sin (\pi t)$ for $a \in [0, 1]$. When going through these sections, we encourage the reader to keep in mind this goal.

\medskip

To understand minimal flux paths, we use a combination of analytic reasoning about how minimizers must behave with intuition based on both the combinatorics of sorting and previously established results about sorting networks. Two of the most useful observations guiding the proofs are: 
\begin{enumerate}[nosep, label=(\roman*)]
	\item  the number of particles that get within distance $r$ of the edge can be understood by using the semicircular law of large numbers for the swap distribution,
	\item since sorting network trajectories can only cross each other once, if they start at the same location then their initial speeds and their maximum heights must be in the same relative order.
\end{enumerate}
In Section \ref{S:unique}, we show that for every height $a \in [0, 1]$, there is exactly one minimal flux path $h_a$ with $h_a(0) = -h_a(1) = 0$, $h \ge 0$, and maximum height $a$. Moreover, paths in this family are ordered by their maximum height, and all these paths are unimodal and concave. 

\medskip

While these ordering, uniqueness, and regularity properties do not yet allow us to conclude that $h_a(t) = a \sin (\pi t)$, they do give enough structure to allow us to identify certain features of any limit $Y$ of $Y_n$. First, they allow us show that the local speed distribution $\mu$ matches the initial derivative distribution of any limit $Y$. Intuitively, this means that trajectories in a uniform sorting network typically follow straight lines on all time scales $t$ with $n \ll t \ll n^2$. Second, these properties allow us to explicitly identify the maximum height distribution of $Y$. This also uses the observation (i) above. All this is done in Section \ref{S:path-speed}.

\medskip

Now, at this point we have an explicit formula for the probability that $Y$ reaches a certain maximum height. Observation (ii) above relates this to the initial derivative distribution of $Y$, and hence to the local speed distribution $\mu$. Exploring this connection yields an integral transform formula involving $\mu$, which we obtain in Section \ref{S:integral-formula}. 

\medskip

By proving injectivity of this integral transform, we are able to identify $\mu$ as the arcsine distribution and then use this to prove that the only minimal flux paths are sine curves. This is done in Section \ref{S:transform} and completes the proof of Theorem \ref{T:weak-limit}.

\medskip

In the last two sections of the paper, we use Theorem \ref{T:weak-limit} to establish our stronger limit theorems. The basic idea behind the stronger limit theorems is that if most particles in a sorting network move along sine curves, then the fact that every pair of particles swaps exactly once forces all other particles to follow sine curves as well: the movements of the many control the movements of the few. In Section \ref{S:strong-limit}, we combine Theorem \ref{T:weak-limit} with bounds from \cite{angel2007random} to prove Theorems \ref{T:sine-curves}, \ref{T:matrices}, and \ref{T:unif-rotation}. Finally, in Section \ref{S:geom-limit}, we combine Theorem \ref{T:weak-limit} with Theorem \ref{T:sine-curves} to prove Theorem \ref{T:geom-limit}.

\section{Preliminaries}
\label{S:local} 
In this section, we introduce the necessary background about sorting networks. The most basic observation about uniform $n$-element sorting networks is that they exhibit a type of time stationarity. This was first observed in \cite{angel2007random}.

\begin{theorem}
\label{T:time-stat}
Let $(K^n_1, K^n_2, \ddd, K^n_N)$ be the swap sequence of a random $n$-element sorting network $\sig^n$. Then 
$$
\lf(K^n_1, K^n_2, \ddd, K^n_N\rg) \eqd \lf(n - K^n_N, K^n_1, K^n_2, \ddd, K^n_{N - 1}\rg).
$$
\end{theorem}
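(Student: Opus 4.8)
The plan is to realize the claimed distributional identity by an explicit measure-preserving bijection on the finite set $\scrS_n$ of $n$-element sorting networks, identified with their swap sequences. Since $\sig^n$ is uniform on $\scrS_n$, any bijection $\Phi : \scrS_n \to \scrS_n$ automatically satisfies $\Phi(\sig^n) \eqd \sig^n$, so it suffices to produce a bijection whose action on swap sequences is $(k_1, \ddd, k_N) \mapsto (n - k_N, k_1, \ddd, k_{N-1})$; the theorem follows by comparing laws of the two sides.

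The construction is algebraic. Recall that $(k_1, \ddd, k_N)$ is the swap sequence of a sorting network precisely when $\pi_{k_N} \cdots \pi_{k_1}$ is a reduced word for $\rev_n$, i.e. $\rev_n = \pi_{k_N}\cdots\pi_{k_1}$ with $N = \binom{n}{2} = \ell(\rev_n)$; the partial products $\pi_{k_m}\cdots\pi_{k_1}$ are then reduced of length $m$, so they trace a geodesic from $\id_n$ to $\rev_n$ in $\Ga(S_n)$. The key point is that conjugation by $\rev_n$ reverses the generators: since $\rev_n(j) = n+1-j$ and $\rev_n$ is an involution, $\rev_n \pi_i \rev_n = \pi_{n-i}$, equivalently $\rev_n \pi_i = \pi_{n-i} \rev_n$. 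Given a sorting network with $\rev_n = \pi_{k_N}\cdots\pi_{k_1}$, consider the word $\pi_{k_{N-1}}\cdots\pi_{k_1}\pi_{n-k_N}$ corresponding to the shifted sequence $(n-k_N, k_1, \ddd, k_{N-1})$. Left-multiplying the original relation by $\pi_{k_N}$ gives $\pi_{k_{N-1}}\cdots\pi_{k_1} = \pi_{k_N}\rev_n$, hence
$$
\pi_{k_{N-1}}\cdots\pi_{k_1}\pi_{n-k_N} = \pi_{k_N}\rev_n\pi_{n-k_N} = \pi_{k_N}\pi_{k_N}\rev_n = \rev_n,
$$
where the middle equality uses $\rev_n \pi_{n-k_N} = \pi_{k_N}\rev_n$. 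Thus the shifted sequence represents $\rev_n$ as a product of $N = \ell(\rev_n)$ adjacent transpositions, so it is reduced, hence it is the swap sequence of a sorting network.

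This defines a map $\Phi : \scrS_n \to \scrS_n$, and it is injective: from the image $(n-k_N, k_1, \ddd, k_{N-1})$ one recovers $k_N$ from the first coordinate and $(k_1, \ddd, k_{N-1})$ from the rest. An injective self-map of a finite set is a bijection, so applying $\Phi$ to the uniform random sorting network $\sig^n$ yields a uniform random sorting network whose swap sequence is $(n - K^n_N, K^n_1, \ddd, K^n_{N-1})$. This therefore has the same law as $(K^n_1, \ddd, K^n_N)$, which is the assertion of the theorem.

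The only real content is the conjugation identity $\rev_n\pi_i\rev_n = \pi_{n-i}$ together with the bookkeeping in the displayed computation; there is no genuine obstacle. One could alternatively argue via the wiring diagram: $\Phi$ deletes the final crossing, which occurs in positions $\{k_N, k_N+1\}$, and re-inserts it as the first crossing in the mirror-image positions $\{n-k_N, n-k_N+1\}$, the reflection being forced by the fact that the terminal permutation is $\rev_n$ rather than $\id_n$.
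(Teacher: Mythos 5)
Your proof is correct, and it is essentially the standard argument: the paper itself states this result without proof, citing Angel--Holroyd--Romik--Vir\'ag, whose proof is exactly this bijection (cycling the last swap to the front with the reflection $k \mapsto n-k$ forced by conjugation by $\rev_n$). The conjugation identity $\rev_n \pi_i \rev_n = \pi_{n-i}$, the length count showing the new word is reduced, and the injectivity argument are all as they should be.
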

We will repeatedly use time stationarity of sorting networks to reduce proofs to statements about the beginning of a sorting network. Using the Edelman-Greene bijection between sorting networks and Young tableaux, Angel, Holroyd, Romik, and Vir\'ag \cite{angel2007random} also found an explicit formula for the distribution of $K^n_1$.

\begin{theorem}
\label{T:dist-k1}
Let $K^n_1$ be the location of the first swap of $\sig^n$. For any $i \in \{1, \dots, n-1\}$, we have that
$$
\prob(K^n_1 = i) = \frac{1}N \frac{[3\cdot5\cdot7\cdots(2i - 1)][3\cdot 5 \cdots (2(n-i) - 1)]}{[2\cdot4\cdot6\cdots(2i - 2)][2\cdot 4 \cdots (2(n-i) - 2)]} \le \frac{3}n.
$$
Moreover, if $\{i_n\}_{n \in \nat}$ is any sequence of integers such that $i_n/n\to (\al + 1)/2$ for some $\al \in (-1, 1)$, then
$$
n\prob(K^n_1 = i_n) \to \frac{4\sqrt{1 - \al^2}}{\pi} \qquad \mathas n \to \infty.
$$
\end{theorem}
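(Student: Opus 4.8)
The plan is to first obtain the exact product formula for $\prob(K^n_1 = i)$ by combining the Edelman--Greene bijection with the hook length formula, and then to read off the bound $3/n$ and the stated limit by elementary estimates.

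\emph{Reduction to counting standard Young tableaux.} Recall that Edelman--Greene insertion biject\-ively identifies $n$-element sorting networks with standard Young tableaux of the staircase shape $\delta_{n-1} := (n-1, n-2, \dots, 1)$; in particular their number is $f^{\delta_{n-1}}$. Moreover, for \emph{every} reduced word of $\rev_n$ the Edelman--Greene insertion tableau is the one fixed tableau $P$ with $P_{r,c} = r + c - 1$. I would establish the following bumping lemma: a reduced word of $\rev_n$ whose final swap is $\pi_j$ corresponds under this bijection to a tableau in which the largest entry $N = {n \choose 2}$ occupies the corner cell $(n-j,\, j)$ of $\delta_{n-1}$. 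Indeed, the length-$(N-1)$ prefix inserts to a sub-Young-diagram of $\delta_{n-1}$ of size $N-1$, i.e.\ to $P$ with one removable corner deleted; and inserting the last letter $j$ into $P$ minus the corner $(n-j,j)$ produces a cascade of non-bumps $j \mapsto j+1 \mapsto \cdots \mapsto n-1$ through rows $1, 2, \dots, n-j$ (each step is a non-bump because in the relevant full staircase row both $x$ and $x+1$ occur), terminating by appending $n-1$ at the end of row $n-j$ and thereby creating the cell $(n-j, j)$; so this is the unique corner that works. Since $K^n_1$ records the position of the first swap of $\sig^n$ --- equivalently, by the swap-sequence convention (or by the time-stationarity $K^n_1 \eqd n - K^n_N$ of Theorem~\ref{T:time-stat}), the position of the last letter of a reduced word --- we conclude
$$
\prob(K^n_1 = i) = \frac{f^{\,\delta_{n-1} \setminus \{(n-i,\, i)\}}}{f^{\delta_{n-1}}}.
$$

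\emph{Hook length computation.} Deleting the corner $c_0 = (n-i, i)$ from the staircase changes the hook lengths of exactly the $i-1$ cells strictly left of $c_0$ in its row and the $n-i-1$ cells strictly above $c_0$ in its column; in $\delta_{n-1}$ each of these hook lengths is odd and drops by $1$ (from $2k+1$ to $2k$), while $c_0$'s own hook length $1$ disappears. Plugging both shapes into $f^\lambda = |\lambda|!/\prod_{\text{cells}}(\text{hook})$, all other factors cancel and the ratio telescopes to
$$
\prob(K^n_1 = i) = \frac{1}{N} \cdot \frac{\prod_{k=1}^{i-1}(2k+1)}{\prod_{k=1}^{i-1}(2k)} \cdot \frac{\prod_{k=1}^{n-i-1}(2k+1)}{\prod_{k=1}^{n-i-1}(2k)},
$$
which is the claimed formula (the empty products cover $i \in \{1, n-1\}$). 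Writing $g(m) = (2m-1)!!/(2m-2)!! = m\, 2^{1-2m}\binom{2m}{m}$ this reads $\prob(K^n_1 = i) = g(i)\,g(n-i)/N$.

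\emph{Bound and asymptotics.} The Wallis-type inequality $\binom{2m}{m} \le 4^m/\sqrt{3m+1}$ gives $g(m) \le 2m/\sqrt{3m} = (2/\sqrt 3)\sqrt m$, so $g(i)g(n-i) \le \tfrac43\sqrt{i(n-i)} \le \tfrac23 n$ and hence $\prob(K^n_1 = i) \le \tfrac{2}{n(n-1)} \cdot \tfrac23 n = \tfrac{4}{3(n-1)} \le 3/n$ for $n \ge 2$. For the limit, Stirling gives $g(m) \sim 2\sqrt{m/\pi}$; since $\al \in (-1,1)$ the hypothesis $i_n/n \to (\al+1)/2$ forces $i_n \to \infty$ and $n - i_n \to \infty$, so
$$
n\,\prob(K^n_1 = i_n) = \frac{2}{n-1}\, g(i_n)\, g(n-i_n) \sim \frac{2}{n} \cdot \frac{4}{\pi}\sqrt{i_n(n-i_n)} \longrightarrow \frac{4\sqrt{1-\al^2}}{\pi},
$$
using $i_n(n-i_n)/n^2 \to \tfrac{(1+\al)(1-\al)}{4} = \tfrac{1-\al^2}{4}$. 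The one genuinely delicate step is the reduction: pinning down \emph{which} corner of the staircase receives the label $N$ when the word ends in $\pi_j$. The bumping-cascade argument above does this directly; alternatively one can observe that $\rev_n\pi_j$ is vexillary (it is $2143$-avoiding, having a single ascent), so its Stanley symmetric function is a single Schur function and its reduced-word count is $f^\mu$ for the appropriate shape $\mu = \delta_{n-1}$ minus a corner, yielding the same ratio. Once the product formula is secured, the remaining estimates are routine; one should still sanity-check the degenerate cases $i \in \{1, n-1\}$, where one side of the staircase corner is empty.
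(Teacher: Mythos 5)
This theorem is quoted by the paper from Angel, Holroyd, Romik and Vir\'ag \cite{angel2007random} without proof, so there is no in-paper argument to compare against; your proposal essentially reconstructs the original AHRV derivation (Edelman--Greene bijection plus the hook length formula for the staircase), and it is correct. The hook-length telescoping, the bound via $g(m)\le (2/\sqrt3)\sqrt m$, and the Stirling asymptotics all check out, and the one delicate step you rightly flag --- identifying which corner of $\delta_{n-1}$ receives the entry $N$ --- is both verifiable by your cascade argument and, in any case, harmless to the final formula, since the two candidate corners $(n-j,j)$ and $(j,n-j)$ index conjugate shapes with equal tableau counts and the resulting expression is symmetric under $i\mapsto n-i$ (which also absorbs the passage between first and last letters via Theorem \ref{T:time-stat}).
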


\bigskip

\noindent \textbf{The local limit.} \qquad  Define a {\bf swap function} as a map $V:\inte \times [0, \infty) \to \Z$ with the following properties:

\smallskip

 \begin{enumerate}[nosep,label=(\roman*)]
  \item For each $x$, $V(x,\cdot)$ is cadlag with nearest neighbour jumps.
  \item For each $t$,  $V(\cdot,t)$ is a bijection from $\Z$  to $\Z$ (i.e. a permutation) and $V(\cdot, 0)$ is the identity.
  \item Define $V^{-1}(x, t)$ by $V(V^{-1}(x, t),t) = x$.  Then for each $x$, $V^{-1}(x, \cdot)$
    is a cadlag path.
  \item For any time $t \in (0, \infty)$ and any $x \in \Z$, 
  $$
  \lim_{s \to t^-} V^{-1}(x, s) = V^{-1}(x + 1, t) \qquad \text{if and only if} \qquad  \lim_{s \to t^-} V^{-1}(x +1, s) = V^{-1}(x, t).
  $$
  \end{enumerate}
  We think of a swap function as a collection of particle trajectories $\{V(x, \cdot) : x \in \Z\}$.
Condition (iv) guarantees that the only way that a particle at position $x$ can move up at time $t$ is if the particle at position $x+1$ moves down. That is, particles move by swapping with their neighbours. 

\medskip
  
  Let $\scrA$ be the space of swap functions endowed with the following topology. A sequence of swap functions $V_n \to V$ if each of the cadlag paths $V_n(x, \cdot) \to V(x, \cdot)$ and $V^{-1}_n(x, \cdot) \to V^{-1}(x, \cdot)$. Convergence of cadlag paths is convergence in the Skorokhod topology. We refer to a random swap function as a \textbf{swap process}.
  
  \medskip
  
For a swap function $V$ and a time $t \in (0, \infty)$, define
$$
V(\cdot, t, s) = V(V^{-1}(t, \cdot), t + s).
$$
The function $V(\cdot, t, s)$ is the increment of $V$ from time $t$ to time $t + s$. 

\medskip

Now for $i \in \{1, \dots n-1\}$, define the semicircle scaling factor 
$$
r_n(i) = \sqrt{1 - (2i/n - 1)^2},
$$
and consider the shifted, time-scaled swap process 
$$
U_{n}^{i}(x, s) = \sig^n \lf(i + x, \frac{ns}{r_n(i)} \rg) - i.
$$
Here recall that $\sig^n(i, t)$ is the location of particle $i$ at time $t$ in a uniform $n$-element sorting network $\sig^n$.
To ensure that $U_{n}^{i}$ fits the definition of a swap process, we can extend it to a random function from $\Z \X [0, \infty) \to \Z$ by letting $U_{n}^{i}$  be constant after time $(n-1)r_n(i)/2$, and with the convention that  $U_{n}^{i}(x, s)= x$ whenever $x \notin \{1 -i, \ddd, n - i\}$. In the swap processes $U_{n}^{i}$, all particles are labelled by their initial positions.
The following is shown in \cite{angel2017local}, and also essentially in \cite{gorin2017}. 

\begin{theorem}
\label{T:local}
There exists a swap process $U$ such that the following holds. For any $\al \in (-1, 1)$, and any sequence of integers $\{i_n\}_{n \in \nat}$ such that $i_n/n \to (\al + 1)/2$, we have that
  $$
U_{n}^{i_n}  \cvgd U \qquad \mathas \; n \to \infty.
  $$
The swap process $U$ has the following properties:
\begin{enumerate}[nosep,label=(\roman*)]
 \item $U$ is stationary and mixing of all orders with respect to the spatial shift $\tau U(x, t) = U(x + 1, t) - 1$.
  \item $U$ has stationary increments in time: for any $t \ge 0$, the
  process $U(\cdot, t, s)_{s\ge 0}$ has the same law
  as $U(\cdot,s)_{s\geq 0}$.
  \item $U$ is symmetric: $U(\cdot, \cdot) \eqd - \; U(- \; \cdot, \cdot)$.
  \item For any $t \in [0, \infty)$, $\prob($There exists $x \in \Z$ such that $U(x, t) \neq \lim_{s \to t^-} U(x, t)) = 0$.
  \end{enumerate}
  \smallskip
  
  \noindent Moreover, for any sequence of times $\{t_n : n \in \nat\}$ such that $(n-1)r_n(i)/2 - t_n \to \infty$ as $n \to \infty$,
  $$
U^{i_n}_n (\cdot, t_n, \cdot) \cvgd U \qquad \mathas n \to \infty.
$$ 
\end{theorem}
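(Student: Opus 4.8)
The plan is to prove $U^{i_n}_n \cvgd U$ by the standard two-step route: establish tightness of $\{U^{i_n}_n\}_n$ in the topology on the space $\scrA$ of swap functions, and then show that every subsequential limit has one and the same law, which we take as the definition of $U$. Convergence in $\scrA$ will then follow from tightness together with convergence of finite-dimensional distributions, and the listed properties of $U$ together with the ``moreover'' statement will be read off from symmetries and time-stationarity of uniform sorting networks.

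\emph{Tightness.} I would base this on a first-moment bound on local activity. Fix $L \in \nat$ and $T > 0$. A jump of a trajectory $U^{i_n}_n(x, \cdot)$ or an inverse trajectory $(U^{i_n}_n)^{-1}(x, \cdot)$ with $|x| \le L$ during rescaled time $[0,T]$ is caused by a swap of $\sig^n$ at one of the at most $2L + 2$ positions in $\{i_n - L - 1, \ddd, i_n + L\}$, at one of the at most $1 + Tn/r_n(i_n)$ steps $m$ with $m \le Tn/r_n(i_n)$. By Theorem \ref{T:time-stat} all of $K^n_1, \ddd, K^n_N$ have the same marginal law, so Theorem \ref{T:dist-k1} gives $\prob(K^n_m = j) \le 3/n$ for every step $m$ and position $j$; hence the expected number of such jumps is at most $(2L+2)(1 + Tn/r_n(i_n)) \cdot 3/n = O_{L,T}(1)$, since $r_n(i_n) \to \sqrt{1-\al^2} > 0$. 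This gives compact containment and, at each fixed rescaled time $t$ --- where there is at most one step $m$ with $nt/r_n(i_n) = m$, hence a swap in $\{i_n - L - 1, \ddd, i_n + L\}$ with probability at most $3(2L+2)/n \to 0$ --- property (iv) of the limit. Promoting this to full tightness in $\scrA$ also requires excluding clusters of jumps in short time windows, which the marginal bound alone does not provide; for that I would invoke a decorrelation estimate for swaps at nearby positions close together in time --- equivalently, for the entries of a uniform staircase Young tableau in a small region --- coming from the analysis described next.

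\emph{Identification of the limit.} This is the substantive step, and I expect it to be the main obstacle. Under the Edelman--Greene bijection a uniform $n$-element sorting network corresponds to a uniform standard Young tableau of staircase shape $\delta_n = (n-1, n-2, \ddd, 1)$, with the steps at which $\sig^n$ swaps at a given position encoded by the entries of the tableau on a fixed diagonal determined by that position. Thus the finite-dimensional distributions of $U^{i_n}_n$ on a window $\{-L, \ddd, L\} \times [0,T]$ are governed by the joint law, in a uniform staircase tableau, of the entries on $O(L)$ consecutive diagonals near the $i_n$-th, restricted to the entries in a window of size $O(Tn/r_n(i_n))$ about a prescribed level. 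I would then (a) use the known limit shape of uniform staircase Young tableaux to read off the macroscopic gradient of the tableau at the relevant bulk point, which depends only on $i_n/n \to (\al+1)/2$; and (b) prove a bulk local limit theorem: conditioned on its values far away, a uniform staircase tableau looks locally like a single translation-invariant limiting field with that gradient. I would attack (b) through the (determinantal, or variational/Gibbs) structure of uniform staircase tableaux in the bulk --- for instance by realizing the level lines as non-intersecting lattice paths and running a steepest-descent analysis of the associated correlation kernel --- producing a translation-invariant limit that both supplies the decorrelation estimate needed above and pins down the limiting field, hence $U$, uniquely: independently of the subsequence and, once the semicircle factor $r_n(i_n) \to \sqrt{1-\al^2}$ is absorbed into the time rescaling in the definition of $U^{i_n}_n$, independently of $\al$. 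This is exactly the analysis carried out in \cite{angel2017local}, and by a related route in \cite{gorin2017}; reproducing it is the heart of the matter. Combined with the tightness above, it yields $U^{i_n}_n \cvgd U$.

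\emph{Properties and the final display.} For (i), $U^{i_n+c}_n$ and the $c$-fold spatial shift of $U^{i_n}_n$ differ only through the ratio $r_n(i_n+c)/r_n(i_n) \to 1$, a negligible time change, so they have the same limit; hence $U$ is spatially stationary, and mixing of all orders is inherited from the translation-invariant limiting field. For (ii), passing Theorem \ref{T:time-stat} to the limit, an $m$-fold time shift with $m = \floor{tn/r_n(i_n)}$ leaves the law of the swaps near position $i_n$ over any bounded subsequent rescaled interval asymptotically unchanged, which is precisely the assertion $U(\cdot, t, \cdot) \eqd U$. For (iii), the involution $K^n_m \mapsto n - K^n_m$ on swap sequences (the Coxeter-diagram automorphism $\pi_i \mapsto \pi_{n-i}$) preserves the uniform measure on $n$-element sorting networks and takes $U^{i_n}_n(x,s)$ to $-U^{n+1-i_n}_n(-x,s)$ exactly, up to the negligible time change $r_n(n+1-i_n)/r_n(i_n) \to 1$; since $(n+1-i_n)/n \to (1-\al)/2 \in (0,1)$, both sides converge, giving $U \eqd -U(-\;\cdot, \cdot)$. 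Finally, I would prove the ``moreover'' statement exactly as in (ii): the displayed hypothesis guarantees that, after moving the time origin to the step corresponding to rescaled time $t_n$, there remain $\to\infty$ rescaled time units --- equivalently $\to\infty$ swaps per site near $i_n$ --- before the sorting ends, so Theorem \ref{T:time-stat} (iterated) shows the shifted local process still converges to $U$.
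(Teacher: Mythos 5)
The first thing to note is that the paper does not prove this statement at all: Theorem \ref{T:local} is quoted as background, with the proof attributed to \cite{angel2017local} (and, in essence, \cite{gorin2017}). So there is no in-paper argument to compare yours against. Judged as a standalone proof, your proposal has a genuine gap exactly where you yourself locate it: the entire content of the theorem is the existence and uniqueness of the limit, i.e.\ your step (b), and for that step you write that "this is exactly the analysis carried out in \cite{angel2017local}" rather than carrying it out. A proof whose central step is a pointer to the reference being proved is a citation, not a proof. For what it is worth, the route you sketch for (b) is closer to Gorin--Rahman, who do exploit a determinantal structure (via Poissonization of staircase tableaux) and a steepest-descent analysis of a correlation kernel; the argument of \cite{angel2017local} is different and more combinatorial, resting on the Edelman--Greene bijection together with exchangeability/shift arguments and hydrodynamic-type estimates rather than on an explicit kernel. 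So even the roadmap, if one were to follow it, describes only one of the two known proofs and omits all of its substance.

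The peripheral parts of your outline are mostly sound and do match how the listed properties are extracted once convergence is known. The expected-activity bound obtained from Theorems \ref{T:time-stat} and \ref{T:dist-k1} is the right first-moment input, and you correctly flag that it does not by itself give tightness in the Skorokhod topology. Two smaller points. First, property (iv) for the limit $U$ does not follow from the prelimit bound $3(2L+2)/n \to 0$: a fixed rescaled time $t$ for $U$ does not correspond to a fixed step of $\sig^n$, and weak convergence does not transfer "no jump exactly at $t$" pointwise; the clean argument is that by (ii) the expected number of swaps of $U$ in a window $[0,t]$ is linear in $t$, so the swap intensity measure is non-atomic. Second, "mixing of all orders is inherited from the translation-invariant limiting field" is an assertion, not an argument---mixing is an additional property that \cite{angel2017local} must prove separately. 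Your treatments of (i)--(iii) via the negligible time change $r_n(i_n+c)/r_n(i_n) \to 1$ and the reflection $K \mapsto n - K$, and of the final display via iterated time-stationarity with the hypothesis $\frac{n-1}{2r_n(i_n)} - t_n \to \infty$ guaranteeing enough remaining steps for any fixed window, are correct.
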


The main takeaway from Theorem \ref{T:local} is that at every location in a large random sorting network, the local swap process converges to a universal limiting object. The only difference between the limits at different locations comes from a rescaling of time by the semicircle factor $\sqrt{1- \al^2}$ anticipated by Theorem \ref{T:dist-k1}.

\medskip

Now, for a swap function $V$, let $W(V, t)$ be the number of times that the particles at locations $0$ and $1$ swap in the interval $[0, t]$. That is,
$$
W(V, t) = \card {\lf\{s \in (0, t] : 
  \lim_{r \to s^-} V^{-1}(0, r) = V^{-1}(1, s)  \rg\}}.
 $$

As a by-product of the proof of convergence of $U_n^{i_n}$ to $U$,  Angel et al.\ \cite{angel2017local} also found the expectation of $W(U, t)$.

\begin{theorem}[Proposition 7.10, \cite{angel2017local}]
\label{T:swaps}
Let $\al \in (-1, 1)$, and let $\{i_n\}_{n \in \nat}$ be any sequence of integers converging to $(\al + 1)/2.$ Then for any $t \in [0, \infty)$, we have that
$$
\expt W(U_n^{i_n}, t) \to \expt W(U, t) = \frac{4t}{\pi}.
$$
\end{theorem}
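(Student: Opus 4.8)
The plan is to compute $\expt W(U_n^{i_n},t)$ exactly for each finite $n$ from the time-stationarity of $\sig^n$, identify the limit of this expectation, and then transfer the value to the limiting process $U$ via Theorem~\ref{T:local}. The first step is to unwind the definition of $U_n^{i_n}$: by construction, a swap of the particles at locations $0$ and $1$ of the swap function $U_n^{i_n}$ at $U$-time $s$ is exactly a swap at position $i_n$ in $\sig^n$, i.e.\ an index $j$ with $K^n_j = i_n$, and the $j$-th swap of $\sig^n$ occurs at $U$-time $\,j\,r_n(i_n)/n\,$. Hence, writing $L_n = \floor{nt/r_n(i_n)}$ --- which is $O(n)$, far below both $N$ and the freezing time $\tfrac{n-1}{2r_n(i_n)}$ once $n$ is large, so no truncation effects intervene --- one gets
$$
W(U_n^{i_n},t) = \card{\{\, j \in \{1,\dots,L_n\} : K^n_j = i_n \,\}}
$$
up to an additive error of at most $1$ from rounding at the endpoint $s = t$, which is irrelevant in the limit.

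Taking expectations, iterating Theorem~\ref{T:time-stat} shows $K^n_j \eqd K^n_1$ for every $j$, so
$$
\expt W(U_n^{i_n},t) = L_n\,\prob(K^n_1 = i_n) + O(1/n) = \frac{nt}{r_n(i_n)}\,\prob(K^n_1 = i_n) + o(1).
$$
Since $r_n(i_n) \to \sqrt{1-\al^2}$ and $n\,\prob(K^n_1 = i_n) \to \tfrac{4\sqrt{1-\al^2}}{\pi}$ by Theorem~\ref{T:dist-k1}, the right-hand side tends to $\tfrac{t}{\sqrt{1-\al^2}}\cdot\tfrac{4\sqrt{1-\al^2}}{\pi} = \tfrac{4t}{\pi}$. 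The cancellation of the semicircle factor $\sqrt{1-\al^2}$ is exactly why the answer does not depend on $\al$: the time rescaling by $r_n(i)$ in the definition of $U^i_n$ was put there to undo the slowdown of the dynamics away from the centre.

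It remains to pass to the limit, i.e.\ to show $\expt W(U,t) = \lim_n \expt W(U_n^{i_n},t)$. By Theorem~\ref{T:local}, $U_n^{i_n} \cvgd U$; the functional $V \mapsto W(V,t)$ depends only on the pair of cadlag paths $(V^{-1}(0,\cdot),V^{-1}(1,\cdot))$ on $[0,t]$, counting ``matched'' nearest-neighbour jumps, and it is continuous at $U$ almost surely --- the only dangerous configurations, a swap at exactly time $t$ or two relevant swaps at the same instant, have probability $0$ by property (iv) of Theorem~\ref{T:local} and the non-degeneracy of $U$ --- so $W(U_n^{i_n},t) \cvgd W(U,t)$. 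Fatou then yields $\expt W(U,t) \le 4t/\pi$; moreover $g(t) := \expt W(U,t)$ is finite, nondecreasing and additive, since $W(U,t+s) = W(U,t) + W(U(\cdot,t,\cdot),s)$ and $U(\cdot,t,\cdot)\eqd U$ by property (ii), so $g(t) = ct$ with $c \le 4/\pi$. The genuinely technical point --- and the step I expect to be the main obstacle --- is the matching lower bound, for which I would prove uniform integrability of $\{W(U_n^{i_n},t)\}_n$, i.e.\ an a priori estimate $\sup_n \expt[W(U_n^{i_n},t)^2] < \infty$. Expanding the square and using time-stationarity once more (now as $(K^n_j,K^n_k)\eqd(K^n_1,K^n_{k-j+1})$ for $j<k$), this reduces to the two-point bound $\prob(K^n_1 = i_n,\ K^n_m = i_n) = O(n^{-2})$ uniformly in $m$: conditioning on one early swap at position $i_n$ must not create an abnormal pile-up of later swaps there. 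Unlike the rest of the argument, establishing this bound seems to require the Edelman--Greene / hook-walk description of $\sig^n$ rather than just time-stationarity and the first-swap formula.
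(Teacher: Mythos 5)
A point of context first: the paper does not prove this statement at all --- it is imported verbatim from \cite{angel2017local} (Proposition 7.10) and used as a black-box input --- so there is no internal proof to compare yours against, and I evaluate the proposal on its own merits. The prelimit half of your argument is correct and complete: $W(U_n^{i_n},t)$ is indeed the number of indices $j \le \floor{nt/r_n(i_n)}$ with $K^n_j = i_n$, iterating Theorem \ref{T:time-stat} gives $K^n_j \eqd K^n_1$ for every $j$, and Theorem \ref{T:dist-k1} then yields $\expt W(U_n^{i_n},t) \to 4t/\pi$, with the semicircle factors cancelling exactly as you explain. The distributional convergence $W(U_n^{i_n},t)\cvgd W(U,t)$ (using property (iv) of Theorem \ref{T:local} to rule out a swap at time exactly $t$) and the resulting Fatou bound $\expt W(U,t)\le 4t/\pi$ are also fine; this is the same continuity argument the paper itself uses inside Lemma \ref{L:L1-swap-rate}.

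The gap is the one you flag yourself: the matching lower bound, i.e.\ uniform integrability of $\{W(U_n^{i_n},t)\}_n$, is not established, and it genuinely cannot be obtained from the ingredients quoted in this paper. In particular, beware of two circular shortcuts: Lemma \ref{L:L1-swap-rate} \emph{deduces} uniform integrability of these variables \emph{from} the convergence of expectations you are trying to prove, and Theorem \ref{T:swaps-2} is itself derived in \cite{dauvergne1} from Theorem \ref{T:swaps}. Your reduction of the second-moment bound, via time-stationarity, to the two-point estimate $\sup_m \prob(K^n_1=i_n,\ K^n_m=i_n)=O(n^{-2})$ is the correct shape of the missing input, and your diagnosis that it needs structural information about $\sig^n$ (Edelman--Greene) beyond stationarity and the one-point formula is accurate --- that correlation control is precisely the technical content supplied by \cite{angel2017local}. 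So: correct skeleton and a correct limit computation, but one real unproven step, honestly identified.
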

Note that the expected value $4t/\pi$ in Theorem \ref{T:swaps} is immediate from Theorem \ref{T:dist-k1}. Only the convergence in Theorem \ref{T:swaps} is non-trivial.
Now let $Q(V, t)$ be the number of swaps that particle $0$ makes by time $t$ in a swap function $V$. That is,
$$
Q(V, t) = \card {\Big\{ s \in (0, t] : \lim_{r \to s^-} V(0, r) \neq V(0, s) \Big\}}.
$$
Dauvergne and Vir\'ag \cite{dauvergne1} used a straightforward stationarity argument to prove an analogous result to Theorem \ref{T:swaps} for $Q(U, t)$. 
\begin{lemma}[Lemma 3.2, \cite{dauvergne1}]
\label{L:swaps-2}
For any $t \in [0, \infty)$, we have that
$$
\expt Q(U, t) = \frac{8t}{\pi}.
$$
\end{lemma}

The fact that $U$ is stationary in both time and space implies that the point process of swaps of a given particle $x$ in $U$ is also stationary. This realization combined with the ergodic theorem allows us to conclude that all particles in $U$ have asymptotic speeds. This observation was used in \cite{dauvergne1} to prove results about the relationship between the local and global limit. We use their results as a starting point in our proofs.

\begin{theorem}[Theorem 1.7, \cite{dauvergne1}]
 \label{T:local-speeds}
 For every $x \in \Z$, the limit
$$
S(x) = \lim_{t \to \infty} \frac{U(x, t) - U(x, 0)}{t}  \qquad \text{exists } \as.
$$
The random variables $S(x), x \in \Z$ satisfy $S(x) \eqd -S(x) \eqd S(0)$, and $S(0) \in [-\pi, \pi]$ almost surely.
\end{theorem}
Throughout the paper we let $\mu$ be the law of $S(0)$ and refer to $\mu$ as the {\bf local speed distribution}. One of the main difficulties overcome in this paper is in finding the local speed distribution $\mu$. We will slowly learn more information about $\mu$ throughout the paper, culminating in the proof that $\mu$ is the arcsine distribution on $[-\pi, \pi]$. 

\medskip

Dauvergne and Vir\'ag \cite{dauvergne1} were able to express limiting swap rates in $U$ in terms of $\mu$. Define 
\begin{equation*}
D_\mu^+(c) = \int(y - c)^+d \mu(y), \;\;\;\; D_\mu^-(c) = \int(y - c)^-d \mu(y) \;\;\;\;\mathand \;\;\;\; D_\mu(c) = \int|y - c|d \mu(y).
\end{equation*}

\begin{theorem}[Theorem 1.8, \cite{dauvergne1}]
\label{T:particle-swaps} Let $S(0)$ be the asymptotic speed of particle $0$ in $U$. For any $x \in \Z$, 
$$
\frac{Q(U, t)}t \to D_\mu(S(0)) \qquad \text{ almost surely and in } L^1.
$$
In particular, the random variables $Q(U, t)/t$ are uniformly integrable and $\expt D_\mu(S(0)) = 8/\pi$.
\end{theorem}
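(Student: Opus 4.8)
The plan is to rewrite $Q(U,t)$ exactly as a count of order changes, evaluate its almost sure limit using the asymptotic speeds $S(x)$ together with the spatial ergodic theorem, and then upgrade to $L^1$ convergence and read off the expectation. The key structural input is that in $U$ any two particles swap \emph{at most once} over all of $[0,\infty)$: $U$ is a subsequential limit of the windowed processes $U_n^{i_n}$, and in the finite network $\sig^n$ each pair of wires crosses exactly once, so two particles of $U$ swapping twice would force the corresponding particles of $\sig^n$ to swap twice for large $n$. Hence the swaps particle $0$ makes by time $t$ are in bijection with the particles whose order relative to it has changed, and we obtain the \emph{exact} identity
\[
Q(U,t) = \#\{x > 0 : U(x,t) < U(0,t)\} + \#\{x < 0 : U(x,t) > U(0,t)\}
\]
(the particles started to its right that have passed to its left, plus those started to its left that have passed to its right), with no parity or over-counting corrections.

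To evaluate the almost sure limit, fix $\de > 0$ and set $\tilde T_x^\de := \inf\{s_0 \ge 1 : |(U(x,s) - x)/s - S(x)| < \de \text{ for all } s \ge s_0\}$; since $\tilde T_x^\de$ is a functional of the shifted process $\tau^x U$, the sequence $(\tilde T_x^\de)_{x \in \Z}$ is stationary and ergodic (Theorem \ref{T:local}(i)) and is a.s.\ finite by Theorem \ref{T:local-speeds}. The spatial ergodic theorem then gives $\#\{0 < x \le Ct : \tilde T_x^\de > t\} = o(t)$ a.s.\ for any fixed $C$ (compare with $\#\{0 < x \le Ct : \tilde T_x^\de > K\}$ and send $K \to \infty$), so at most $o(t)$ of the particles relevant to the first count above have ``unstabilized'' empirical speed at time $t$; for the rest, $U(x,t) = x + S(x)t + O(\de t)$, while $U(0,t) = S(0)t + O(\de t)$. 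This squeezes $\#\{x > 0 : U(x,t) < U(0,t)\}$ between $\#\{x > 0 : x < (S(0) - S(x) - 2\de)t\} - o(t)$ and $\#\{x > 0 : x < (S(0) - S(x) + 2\de)t\} + o(t)$. A further application of the spatial ergodic theorem to the stationary sequence $x \mapsto S(x)$ — treating $S(0)$ as a parameter $c$ and then inserting the random value via a monotone sandwich over a countable dense set of $c$'s, using continuity of $c \mapsto \int (c - y - 2\de)^+ \, d\mu(y)$ — shows each of these, divided by $t$, converges to $\int (S(0) - y \mp 2\de)^+ \, d\mu(y)$, which tends to $D_\mu^-(S(0))$ as $\de \downarrow 0$. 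The mirror-image argument gives $\#\{x < 0 : U(x,t) > U(0,t)\}/t \to D_\mu^+(S(0))$, so $Q(U,t)/t \to D_\mu^-(S(0)) + D_\mu^+(S(0)) = D_\mu(S(0))$ almost surely.

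For the $L^1$ statement and the expectation, note that $A_k := Q(U,k+1) - Q(U,k)$, $k \ge 0$, is a stationary sequence (Theorem \ref{T:local}(ii)) with $\expt A_0 = \expt Q(U,1) = 8/\pi < \infty$ (Theorem \ref{T:swaps-2}); Birkhoff's theorem gives $Q(U,n)/n \to \expt[A_0 \mid \mathcal{I}]$ in $L^1$, hence $\{Q(U,n)/n : n \in \nat\}$ is uniformly integrable, and since $Q(U,\cdot)$ is nondecreasing with $Q(U,t)/t \le 2 Q(U,\ceil{t})/\ceil{t}$ for $t \ge 1$, so is $\{Q(U,t)/t : t \ge 1\}$. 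Together with the almost sure convergence above — whose limit satisfies $D_\mu(S(0)) \le 2\pi$ — this gives $Q(U,t)/t \to D_\mu(S(0))$ in $L^1$, and taking expectations with Theorem \ref{T:swaps-2} yields $\expt D_\mu(S(0)) = \lim_{t \to \infty} \expt Q(U,t)/t = 8/\pi$.

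The main obstacle is the uniformity underlying the second step. Theorem \ref{T:local-speeds} controls one particle at a time, while the crossing count involves order-$t$ many particles sitting at spatial scale order $t$; the argument circumvents a uniform strong law for the speeds by discarding the $o(t)$ particles whose empirical speeds have not stabilized by time $t$, and the care needed to show that this discarded set really is $o(t)$, to control the large-$x$ tail (where one uses that the count is a.s.\ finite), and to feed the parameter $S(0)$ into the ergodic-theorem limit, is where the genuine work lies. Everything else is a packaging of the ergodic theorem, the exact crossing identity, and the inputs quoted above.
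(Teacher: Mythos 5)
This theorem is imported by the paper from \cite{dauvergne1} (their Theorem 1.8) and is not reproved here, so there is no in-text proof to compare against; I am judging your argument on its own terms. The overall shape is right and most of it is sound: the at-most-one-swap-per-pair property of $U$ does follow from the finite-$n$ picture by a lower-semicontinuity argument in the Skorokhod-type topology (using Theorem \ref{T:local}(iv) to avoid swaps exactly at the endpoint), the resulting exact crossing identity for $Q(U,t)$ is correct, and the stabilization-time device $\tilde T_x^\de$ combined with the spatial ergodic theorem for the stationary sequence $(S(x))_{x\in\Z}$ is exactly the right way to handle the bulk particles $0<x\le Ct$. But there are two genuine gaps.

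First, the large-$x$ tail in the upper bound. Your sandwich controls only $x\le Ct$, and the parenthetical appeal to ``the count is a.s.\ finite'' does not close it: finiteness of $\#\{x>0: U(x,t)<U(0,t)\}$ for each fixed $t$ is perfectly compatible with $\tfrac1t\#\{x>Ct: U(x,t)<U(0,t)\}$ having a positive limsup. There is no deterministic speed bound in $U$ (the support bound $[-\pi,\pi]$ is only for \emph{asymptotic} speeds), so a particle started at $x\approx Ct$ is not a priori prevented from reaching position $U(0,t)$ by time $t$. One must actually bound the tail, e.g.\ by noting that every such particle crosses a fixed spatial location to the right of $\max_{s\le t}U(0,s)$ and comparing with a $W$-type swap count; but that route gives $O(t)$ in expectation rather than the a.s.\ $o(t)$ you need, so additional work (essentially a uniform-integrability or second-moment input) is required. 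This is the hard point of the theorem and it is missing.

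Second, the stationarity of $A_k=Q(U,k+1)-Q(U,k)$ does not follow from Theorem \ref{T:local}(ii) as you cite it. The increment process $U(\cdot,t,\cdot)$ relabels particles by their positions at time $t$: its particle $0$ is whichever particle occupies position $0$ at time $t$, not the original tagged particle. So (ii) identifies the law of the swaps of a \emph{freshly chosen} particle on $[t,t+s]$ with $Q(U,s)$, not the swaps of the original particle $0$ on $[t,t+s]$. What you need is time-stationarity of the environment seen from the tagged particle, a Palm/mass-transport statement combining spatial and temporal stationarity (this is the ``stationarity argument'' behind Theorem \ref{T:swaps-2}). Without it your Birkhoff step, and hence the uniform integrability and the $L^1$ upgrade, is unsupported; and you cannot instead finish by Scheff\'e from the a.s.\ limit together with $\expt Q(U,t)/t=8/\pi$, since that would presuppose $\expt D_\mu(S(0))=8/\pi$, which is precisely the identity you are trying to derive.
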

We also need an analogous result regarding crossings of lines in the local limit. Let $L(t) = ct + d$ be a line of constant slope $c$. For a swap function $V$, define
$$
C^+(V, L, t) = \card{\Big\{x \in \Z: V(x, 0) \le L(0), V(x, t) > L(t)\Big\}}.
$$
The quantity $C^+(V, L, t)$ is the total number of particles that are below $L$ at time $0$ and above $L$ at time $t$. We symmetrically define $C^-(V, L, t)$ as the total number of particles that are above $L$ at time $0$ and below $L$ at time $t$, and let $C(V, L, t) = C^-(V, L, t) + C^+(V, L, t)$.

\begin{theorem}[Theorem 5.7, \cite{dauvergne1}]
 \label{T:line-rate}
Let $L(t) = ct + d$. Then almost surely and in $L^1$, we have that
\begin{align*}
\lim_{t \to \infty} \frac{C^+(U, L, t)}t = D_\mu^+(c), \qquad \lim_{t \to \infty} \frac{C^-(U, L, t)}t = D_\mu^-(c), \qquad \lim_{t \to \infty}  \frac{C(U, L, t)}t =  D_\mu(c).
\end{align*}
\end{theorem}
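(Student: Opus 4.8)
\emph{Reduction and heuristic.} Since $U(\cdot,0)=\id$ almost surely, $C=C^++C^-$ and $D_\mu=D_\mu^++D_\mu^-$, and since the reflection symmetry $U\eqd-\,U(-\,\cdot,\cdot)$ of Theorem \ref{T:local}(iii) (together with symmetry of $\mu$, which gives $D_\mu^-(c)=D_\mu^+(-c)$) turns the assertion about $C^-$ and a line of slope $c$ into the assertion about $C^+$ and a line of slope $-c$, up to an additive $O(1)$, it suffices to prove
\begin{equation*}
\frac{C^+(U,L,t)}{t}\longto D_\mu^+(c)\quad\text{a.s. and in }L^1,\qquad L(t)=ct+d.
\end{equation*}
Using $U(\cdot,0)=\id$ I would first rewrite $C^+(U,L,t)=\sum_{x\in\Z,\ x\le d}\indic[U(x,t)>ct+d]$. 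Heuristically, particle $x$ moves at asymptotic speed $S(x)$ (Theorem \ref{T:local-speeds}), so for large $t$ it lies above $L$ (which moves at speed $c$) exactly when $S(x)>c$ and it started within roughly $(S(x)-c)t$ of the line, i.e.\ $d-(S(x)-c)^+t\lesssim x\le d$; counting these lattice points and averaging over speeds produces $\int(y-c)^+\,d\mu(y)=D_\mu^+(c)$.

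\emph{The mean.} The plan is to first establish $\expt C^+(U,L,t)/t\to D_\mu^+(c)$. By spatial stationarity (Theorem \ref{T:local}(i)) one has $U(x,t)-x\eqd U(0,t)$, and $\expt|U(0,t)|\le\expt Q(U,t)<\infty$ by Theorem \ref{T:swaps-2}; hence by Tonelli
\begin{equation*}
\expt C^+(U,L,t)=\sum_{x\le d}\prob\big(U(0,t)>ct+d-x\big)=\expt\big[(U(0,t)-ct)^+\big]+O(1),
\end{equation*}
with the $O(1)$ uniform in $t$. Dividing by $t$, $(U(0,t)/t-c)^+\to(S(0)-c)^+$ almost surely by Theorem \ref{T:local-speeds}, and the family is uniformly integrable because $(U(0,t)/t-c)^+\le Q(U,t)/t+|c|$ with $\{Q(U,t)/t\}$ uniformly integrable by Theorem \ref{T:particle-swaps}. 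Thus $\expt C^+(U,L,t)/t\to\expt[(S(0)-c)^+]=D_\mu^+(c)$.

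\emph{Almost sure and $L^1$ convergence.} For $0\le s\le t$ set $X_{s,t}=\#\{x:U(x,s)\le L(s),\ U(x,t)>L(t)\}$, so $X_{0,t}=C^+(U,L,t)$. Any $x$ counted by $X_{0,t}$ is counted by $X_{s,t}$ or by $X_{0,s}$ according to whether $U(x,s)\le L(s)$; hence $X_{0,t}\le X_{0,s}+X_{s,t}$. The increments $(X_{s,t})$ are stationary by Theorem \ref{T:local}(ii): translating the time origin by $a$ replaces $U$ by an equidistributed copy and the line by one of the same slope with intercept shifted by $ca$, a shift which I would absorb using spatial stationarity and the fact that the eventual limit does not depend on the intercept. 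Since also $\expt X_{0,1}=\expt C^+(U,L,1)<\infty$, the (continuous-parameter) subadditive ergodic theorem yields $C^+(U,L,t)/t\to\lim_t\expt C^+(U,L,t)/t=D_\mu^+(c)$ almost surely. The $L^1$ convergence then follows either from Kingman's theorem directly or from Scheffé's lemma applied to these nonnegative variables together with the convergence of the means.

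\emph{Main obstacle.} The crossing-subadditivity $X_{0,t}\le X_{0,s}+X_{s,t}$ and the mean computation are short; I expect the real work to be in verifying the hypotheses of the subadditive ergodic theorem in this spatially infinite, continuous-time setting — realizing the time shift as a measure-preserving map by combining the spatial and temporal stationarity of $U$, handling the bookkeeping of the line's intercept, and passing from integer to real times. The only quantitative input, namely the uniform integrability needed to pass to the limit in the mean, is supplied directly by Theorem \ref{T:particle-swaps}.
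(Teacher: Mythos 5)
First, a point of context: this theorem is not proved in the paper at all --- it is imported verbatim from \cite{dauvergne1} (their Theorem 5.7) --- so there is no in-paper proof to compare against, and I am assessing your argument on its own terms. Your reduction to $C^+$, and your computation of the mean, are correct: $U(\cdot,0)=\id$, spatial stationarity gives $U(x,t)-x\eqd U(0,t)$, the Tonelli step and the uniform $O(1)$ error are fine, and the uniform integrability of $(U(0,t)/t-c)^+$ via $|U(0,t)|\le Q(U,t)$ and Theorem \ref{T:particle-swaps} legitimately yields $\expt C^+(U,L,t)/t\to D_\mu^+(c)$.

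The genuine gap is exactly at the step you flag as the ``main obstacle,'' and your proposed resolution does not close it. The array $X_{m,n}$ is \emph{not} stationary under $(m,n)\mapsto(m+1,n+1)$: after passing to the increment process $U(\cdot,1,\cdot)$, the relevant line becomes $cu+(c+d)$, and spatial stationarity of $U$ only holds for \emph{integer} shifts, so it can absorb $\lfloor c\rfloor$ but not the fractional part of $c$. Thus the hypothesis of Kingman's theorem fails for every irrational (indeed every non-integer) $c$. Your stated fix --- ``the eventual limit does not depend on the intercept'' --- is circular: that independence is a consequence of the theorem you are proving, and in any case a statement about the limit cannot be used to verify a distributional identity of the prelimit array. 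A workable repair does exist along your lines, but it requires an extra layer: for $c\neq 0$ run the subadditive argument along the times $t_k=k/|c|$, where the intercept advances by exactly $\pm 1$ per step and can be absorbed by the integer spatial shift (using that $|C^+(U,cu+d_1,t)-C^+(U,cu+d_2,t)|\le 2$ deterministically when $|d_1-d_2|\le 1$, since $U(\cdot,t)$ is a bijection of $\Z$); then interpolate between consecutive $t_k$ by bounding the oscillation of $C^+$ by the number of swaps across the line in a window of length $1/|c|$, which is $o(t_k)$ a.s.\ by stationarity. You would also need to argue that the a.s.\ limit produced by Kingman is constant: time-shift ergodicity is not among the listed properties of $U$, but the deterministic $O(1)$ dependence of $C^+$ on the intercept makes the limit invariant under the spatial shift, and Theorem \ref{T:local}(i) then forces it to be constant. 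Without these two supplements the proposal does not constitute a proof.
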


We also record here a few basic facts about the functions $D_\mu$ and $D_\mu^+$ that will be used throughout the paper. These properties can be proven using basic facts about integrals, and the fact that $\mu$ is symmetric and supported in $[-\pi, \pi]$.

\begin{lemma}
\label{L:convex}
\begin{enumerate}[nosep, label=(\roman*)]
\item Both $D_\mu$ and $D_\mu^+$ are convex, $1$-Lipschitz functions.
\item For all $x$, we have that $D_\mu^+(x) \le D_\mu(x) \le |x| \vee \pi$.
\item Suppose that $L(t) = at + b$ is tangent to either $D_\mu$ or $D_\mu^+$. Then $b \in [0, \pi]$.
\item $D_\mu$ is a symmetric function, and hence minimized at $0$.
\item $D_\mu^+(\pi) = 0$, and $D_\mu(\pm \pi) = \pi$.
\end{enumerate}

\end{lemma}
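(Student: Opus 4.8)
The plan is to reduce every item to elementary properties of the two integrand families $c\mapsto (y-c)^+$ and $c\mapsto |y-c|$, together with the only two features of $\mu$ at our disposal: that it is a probability measure, and that it is symmetric (hence mean zero) and supported on $[-\pi,\pi]$. For (i), for each fixed $y$ the map $c\mapsto (y-c)^+$ is convex and $1$-Lipschitz — it is the affine map $c\mapsto y-c$ post-composed with $t\mapsto t^+$ — and likewise for $c\mapsto|y-c|$; integrating against the probability measure $\mu$ preserves both properties, which gives convexity and the $1$-Lipschitz bound for $D_\mu^+$ and $D_\mu$.

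Items (iv) and (v) are direct computations. The change of variables $y\mapsto -y$, which leaves $\mu$ invariant by symmetry, identifies $\int|y+c|\,d\mu(y)$ with $\int|y-c|\,d\mu(y)$, so $D_\mu(-c)=D_\mu(c)$; a symmetric convex function on $\real$ is minimized at $0$, giving (iv). Since $y\le\pi$ on the support, $(y-\pi)^+=0$ for $\mu$-a.e.\ $y$ and $|y-\pi|=\pi-y$, so $D_\mu^+(\pi)=0$ and $D_\mu(\pi)=\pi-\int y\,d\mu=\pi$ by mean zero; $D_\mu(-\pi)=\pi$ then follows from (iv). For (ii), $D_\mu^+\le D_\mu$ is the pointwise inequality $(y-c)^+\le|y-c|$, and the upper bound comes from the identity $|x-y|+|x+y|=2\max(|x|,|y|)$: pairing $y$ with $-y$ and using symmetry gives $D_\mu(x)=\int\max(|x|,|y|)\,d\mu(y)\le\max(|x|,\pi)$, since $|y|\le\pi$ on the support. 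Along the way I will also record the reverse estimates $D_\mu(x)\ge|x|$ (from $\big|\int(y-x)\,d\mu\big|\le\int|y-x|\,d\mu$) and $D_\mu^+(c)\ge(-c)\vee 0$ (from $(y-c)^+\ge y-c$ and $(y-c)^+\ge 0$), which are what feed into (iii).

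For (iii), I read ``tangent'' as ``supporting line'': write $L(t)=at+b$ with $L\le D_\mu$ everywhere (resp.\ $L\le D_\mu^+$) and with equality at the contact point $t_0$. Since $D_\mu$ and $D_\mu^+$ are $1$-Lipschitz, $|a|\le 1$ (and for $D_\mu^+$ monotonicity forces $a\le 0$). The upper bound is immediate: $b=L(0)\le D_\mu(0)\le\pi$ in the first case, and $b=L(0)\le D_\mu^+(0)\le\pi$ in the second. For the lower bound in the $D_\mu$ case, $b=D_\mu(t_0)-at_0\ge|t_0|-|a|\,|t_0|\ge 0$ by $D_\mu(t_0)\ge|t_0|$ and $|a|\le 1$; in the $D_\mu^+$ case, combine $D_\mu^+(t_0)\ge(-t_0)\vee 0$ with $a\le 0$ and split on the sign of $t_0$ to conclude $b\ge 0$ again.

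There is no genuine obstacle here. The only points that use more than the pointwise behaviour of the integrands are the upper bound in (ii) and the lower bound in (iii), both of which go through the symmetry of $\mu$ (via the pairing identity, resp.\ via $D_\mu\ge|x|$ and $D_\mu^+\ge(-c)\vee 0$), and the one mild subtlety is to interpret ``tangent to a convex function'' as ``supporting line'', so that the contact-point identity $L(t_0)=D_\mu(t_0)$ is available.
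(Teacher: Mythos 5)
Your proof is correct, and it is exactly the argument the paper intends: the paper omits a proof of this lemma, remarking only that the properties ``can be proven using basic facts about integrals, and the fact that $\mu$ is symmetric and supported in $[-\pi,\pi]$,'' which is precisely what you carry out. All the individual steps (convexity and the Lipschitz bound passing through the integral, the pairing identity for the upper bound in (ii), the lower bounds $D_\mu\ge|x|$ and $D_\mu^+\ge(-c)\vee 0$ feeding into (iii), and the supporting-line reading of ``tangent'') check out.
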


\bigskip

\noindent \textbf{Subsequential limits of $Y_n$.} \qquad Recall that $Y_n$ is the trajectory random variable of $\sig^n$. We record here the main result of \cite{dauvergne1} regarding subsequential limits of $Y_n$. Here and throughout the paper, the phrase ``subsequential limit of $Y_n$'' always refers to a subsequential limit of $Y_n$ in distribution. 

\medskip

We say that a path $y \in \scrD$ is $g(y)$-Lipschitz if $y$ is absolutely continuous and if for almost every $t$, $|y'(t)| \le g(y(t))$. 
\begin{theorem}[Theorem 1.4, \cite{dauvergne1}]
\label{T:bounded-speed}
\begin{enumerate}[nosep, label=(\roman*)]
\item The sequence $\{Y_n\}$ is precompact in distribution.
\item Suppose that $Y$ is a subsequential limit of $Y_n$ (in distribution). Then
$$
\prob\bigg(Y \text{ is } \pi\sqrt{1-y^2}\text{-Lipschitz},\;Y(0) = -Y(1) \bigg) = 1.
$$
Moreover, $Y(t)$ is uniformly distributed on $[-1, 1]$ for every $t$.
\end{enumerate}
\end{theorem}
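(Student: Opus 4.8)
The plan is to establish the four assertions separately: precompactness and the two distributional identities are soft, while the Lipschitz bound carries essentially all of the weight and is where the local limit enters.

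\textbf{Precompactness and the distributional identities.} Every element of $\scrD$ is either continuous or within $O(1/n)$ of a continuous path, so precompactness in distribution reduces, via the Arzel\`a--Ascoli tightness criterion on $C[0,1]$, to (a) tightness of $\{Y_n(0)\}$, which is immediate since $Y_n(0) = 2I_n/n - 1 \cvgd \mathrm{Unif}[-1,1]$, and (b) a modulus-of-continuity estimate uniform in $n$. For (b) I would invoke the a priori H\"older-$1/2$ trajectory bound established in \cite{angel2007random}: with probability tending to $1$, every trajectory of $\sig^n$ is H\"older-$1/2$ with a constant independent of $n$. For the marginal law, fix $t$ and set $k = \floor{Nt}$; for any fixed sorting network $i\mapsto\sig^n(i,k)$ is a bijection of $\{1,\dots,n\}$, so $\sig^n(I_n,k)$ is uniform on $\{1,\dots,n\}$ and hence $Y_n(t) = 2\sig^n(I_n,k)/n - 1 \cvgd \mathrm{Unif}[-1,1]$; since evaluation at $t$ is continuous on $\scrD$, every subsequential limit $Y$ has $Y(t)\sim\mathrm{Unif}[-1,1]$. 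For the endpoint identity, every sorting network ends at $\rev_n$, so $\sig^n(i,N) = n+1-i$ and therefore $Y_n(0) + Y_n(1) = 2/n$ deterministically; passing to the limit gives $Y(0) + Y(1) = 0$ almost surely.

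\textbf{The Lipschitz bound via the local limit.} The substantive claim is that $Y$ is absolutely continuous with $|Y'(t)| \le \pi\sqrt{1-Y(t)^2}$ for a.e.\ $t$. The idea is to zoom in near a typical particle and transfer the local speed bound $|S(x)|\le\pi$ (Theorem \ref{T:local-speeds}) to the macroscopic scale. Fix $\beta\in(-1,1)$, $t_0\in[0,1)$, and integers $i_n$ with $i_n/n\to(\beta+1)/2$; write $r=\sqrt{1-\beta^2}$. By Theorem \ref{T:local} (including its ``moreover'' part, which handles $t_0>0$), the local process re-centred at the particle occupying position $i_n$ at global time $\floor{Nt_0}$ converges to $U$, with the defining time change $N(t-t_0)=ns/r_n$. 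Following that particle over a local-time window $[0,T]$ and unwinding the rescalings, its rescaled global displacement from time $t_0$ is $\tfrac2n$ times its displacement in $U_n$, while the corresponding rescaled-time increment is $\tfrac{2s}{r_n(n-1)}$; hence the rescaled global speed over the window equals $\tfrac{(n-1)r_n}{n}$ times the local speed $U_n(0,s)/s$, and this tends to $r\cdot U(0,s)/s$. Since $U(0,s)/s\to S(0)$ with $|S(0)|\le\pi$, taking $T$ large gives, for $\Delta_n=2T/(r_n(n-1))$ and with probability tending to $1$, an increment bound of the form $|Y_n(t_0+\Delta_n)-Y_n(t_0)| \le (\pi+\ep)\,r\,\Delta_n$ on the event that particle $i_n$ sits near position $(\beta+1)n/2$ at time $\floor{Nt_0}$ --- a discrete derivative bound at scale $1/n$ matching $\pi\sqrt{1-\beta^2}$.

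\textbf{Upgrading to $Y$, and the main obstacle.} To turn this into a statement about $Y$: for each deterministic $t_0$ and each $\ep>0$, the windowed estimate, combined with $Y_n(t_0)\sim\mathrm{Unif}[-1,1]$ and the weak convergence $Y_n\to Y$ (Portmanteau plus a one-sided difference-quotient argument), should give $\limsup_{\Delta\downarrow0}|Y(t_0+\Delta)-Y(t_0)|/\Delta\le\pi\sqrt{1-Y(t_0)^2}$ almost surely; Fubini in $t_0$ then yields that almost surely $|Y'(t_0)|\le\pi\sqrt{1-Y(t_0)^2}$ for a.e.\ $t_0$. Absolute continuity of $Y$ comes from the uniform bound $\expt\,\mathrm{TV}(Y_n)\to 2$ --- immediate from counting the swaps of a fixed particle --- together with the local limit, which rules out macroscopic clustering of those swap times. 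The hard part is precisely this upgrade: one must extract from $U_n\to U$ and Theorem \ref{T:local-speeds} a version of $|S|\le\pi$ that is quantitative and uniform in $n$ over windows whose local length diverges; handle the conditioning on the random starting particle $I_n$ (the bound is naturally ``at position $\beta$ at time $t_0$'', whereas $I_n$ only fixes the position at time $0$ --- this is what the ``moreover'' part of Theorem \ref{T:local} and the marginal identity are there to absorb); and assemble the high-probability windowed bounds into an almost sure statement while separately securing absolute continuity. The other three assertions are comparatively routine.
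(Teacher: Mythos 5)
This statement is not proved in the paper at all: it is imported verbatim from \cite{dauvergne1} (Theorem 1.4 there) and used as an input, so there is no in-paper argument to compare yours against; your proposal has to stand on its own.

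The soft parts of your sketch are fine: uniformity of the marginals from the bijectivity of the time-$t$ configuration and independence of $I_n$, the deterministic identity $Y_n(0)+Y_n(1)=2/n$ for the endpoints, and tightness via the H\"older/octagon estimates of \cite{angel2007random} together with time-stationarity. The genuine gap is the core claim, namely the $\pi\sqrt{1-y^2}$-Lipschitz bound and absolute continuity, which your outline does not actually establish. The increment estimate you extract from Theorem \ref{T:local} and Theorem \ref{T:local-speeds} lives on windows of global length $\Delta_n\asymp T/n$ for \emph{fixed} local horizon $T$; since these windows shrink as $n\to\infty$, they give no control whatsoever on difference quotients of the limit $Y$ at any fixed scale $\Delta>0$, so the proposed ``Portmanteau plus one-sided difference quotient plus Fubini'' step cannot deliver $|Y'(t)|\le\pi\sqrt{1-Y(t)^2}$ a.e. To make this route work one would need bounds that are uniform in $n$ over windows whose local length diverges with $n$ --- that is, quantitative (rate or maximal-inequality type) versions of the almost sure convergence $U(0,t)/t\to S(0)$ and of $U_n\to U$ --- and none of this follows from the purely distributional statements you invoke; likewise $\expt$ of the total variation of $Y_n$ tending to $2$ does not rule out a singular part in the limit, so ``the local limit rules out macroscopic clustering of swap times'' is an assertion, not an argument, and absolute continuity remains unproven. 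You flag this yourself as ``the hard part,'' but that upgrade \emph{is} the content of the theorem, together with the tagged-particle issue (the local limit describes the configuration near a position, not the particle $I_n$ chosen at time $0$), which you also note without resolving. A secondary caution: Theorem \ref{T:local-speeds} is Theorem 1.7 of the very paper \cite{dauvergne1} whose Theorem 1.4 you are reproving, so you would need to check that the support bound $|S|\le\pi$ is not itself obtained there from the global Lipschitz property, or your argument becomes circular.
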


In addition, we observe here that any subsequential limit $Y$ of $Y_n$ inherits certain symmetries from $\sig^n$.

\begin{prop}
\label{P:Y-symmetries} Let $Y$ be any subsequential limit of $Y_n$.
\smallskip
\begin{enumerate}[nosep, label=(\roman*)]
\item Define $Y_t \in \scrD$ by 
$$
Y_t(s) = \begin{cases}
\;\;Y(s + t), \quad &s \le t.\\
\;\; -Y(s + t - 1), \quad &s > t.
\end{cases}
$$
For any $t \in [0, 1]$, we have that $Y_t \eqd Y$.
\item $Y \eqd - Y$.
\item Define $Z \in \scrD$ by $Z(t) = Y(1 - t)$. Then $Z \eqd Y$.
\end{enumerate}
\end{prop}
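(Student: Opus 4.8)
The plan is to show that each of the three symmetries holds already at the level of the prelimit trajectory random variable $Y_n$ — exactly, or up to a deterministic uniform error of order $1/n$ — and then to push it through the subsequential limit. The soft input for the last step is the same in all three cases: each of the maps involved ($y \mapsto -y$, $y \mapsto y(1-\cdot)$, and the cut-shift-reflect map) is continuous at every continuous function in the uniform norm, and $Y$ is almost surely continuous (indeed Lipschitz) by Theorem \ref{T:bounded-speed}, so the continuous mapping theorem applies. I fix a subsequence along which $Y_n \cvgd Y$, argue along it, and work in the space of bounded functions on $[0,1]$ under the uniform norm, which contains $\scrD$ as a closed subspace.

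For (ii) I would use the measure-preserving involution on $n$-element sorting networks that reflects every swap, $(k_1,\ddd,k_N)\mapsto(n-k_1,\ddd,n-k_N)$; this is again a reduced word for $\rev_n$ since $\rev_n\pi_i\rev_n=\pi_{n-i}$, and it is clearly an involution, hence preserves the uniform measure. A one-line computation shows it sends the trajectory of particle $x$ to the negative of the trajectory of particle $n+1-x$, up to the $O(1/n)$ coming from the non-centred rescaling; since $n+1-I_n\eqd I_n$ this gives $Y_n\eqd -Y_n$ up to $O(1/n)$, hence $Y\eqd -Y$ in the limit. For (iii) I would use the reversal involution $(k_1,\ddd,k_N)\mapsto(k_N,\ddd,k_1)$, which maps reduced words of $\rev_n$ to reduced words of $\rev_n$ because $\rev_n$ is an involution, hence is again measure-preserving. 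Reversing the swap sequence corresponds, up to the usual $O(1/n)$ discrepancy at the finitely many jump times (where the cadlag and caglad conventions differ by one jump of size $2/n$), to the trajectory operation $\sig_G(x,\cdot)\mapsto\sig_G(n+1-x,1-\cdot)$. Thus $Y_n$ equals in distribution something within $O(1/n)$ of $\sig^n_G(n+1-I_n,1-\cdot)\eqd Y_n(1-\cdot)$; letting $n\to\infty$ gives $Y_n(1-\cdot)\cvgd Y$, while continuity of $y\mapsto y(1-\cdot)$ at the a.s.\ continuous $Y$ gives $Y_n(1-\cdot)\cvgd Y(1-\cdot)=Z$, so $Z\eqd Y$.

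Part (i) is the substantive case and rests on time-stationarity (Theorem \ref{T:time-stat}). I would extend $Y_n$ to $\bar Y_n:\real\to[-1,1]$ by $2$-periodicity together with the sign-flip rule $\bar Y_n(s+1)=-\bar Y_n(s)$; this is, up to $O(1/n)$, the trajectory of the $2N$-step periodic object obtained by following $\sig^n$ with its spatial reflection (which runs $\rev_n\to\id$) and repeating. Iterating Theorem \ref{T:time-stat} shows that $\sig^n$ is equal in distribution to its $k$-step time-shift $\sig^{n,[k]}$ for every $k$, and the content of the proof is to check, by chasing the definitions, that the trajectory random variable of $\sig^{n,[k]}$ equals — up to $O(1/n)$ — the restriction to $[0,1]$ of $\bar Y_n(\cdot+k/N)$: here one uses that the particle configuration at any fixed time is a uniform random bijection, so that tracking a uniform particle in $\sig^{n,[k]}$ is the same as tracking a uniform particle in $\sig^n$ and the induced relabelling is irrelevant. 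Hence $\bar Y_n(\cdot+k/N)|_{[0,1]}\eqd Y_n$ up to $O(1/n)$, for all $k$. Given $t\in[0,1]$, I would take $k=k_n$ with $k_n/N\to t$; since the extension map $Y_n\mapsto\bar Y_n$ and the shift map $(f,a)\mapsto f(\cdot+a)|_{[0,1]}$ are continuous at continuous functions, passing to the limit yields $\bar Y(\cdot+t)|_{[0,1]}\eqd Y$, and $\bar Y(\cdot+t)|_{[0,1]}$ is exactly $Y_t$.

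The main obstacle is the combinatorial identification in part (i): confirming that iterating time-stationarity and following a uniformly chosen particle really does realize the cut-shift-reflect map $Y\mapsto Y_t$, and controlling the several $O(1/n)$ errors (the non-centred rescaling, the cadlag convention, and the fact that $k_n/N$ only approaches $t$). Parts (ii) and (iii) are routine once the two measure-preserving involutions on sorting networks are written down, and the passage to the limit in all three parts is the same short continuous-mapping argument using the almost sure continuity of $Y$.
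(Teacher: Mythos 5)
Your proposal is correct and follows essentially the same route as the paper, which simply invokes Theorem \ref{T:time-stat} for (i) and the swap-sequence symmetries $(K^n_1,\ddd,K^n_N)\eqd(n-K^n_1,\ddd,n-K^n_N)\eqd(K^n_N,\ddd,K^n_1)$ for (ii) and (iii); you have merely filled in the prelimit computations, the relabelling/uniform-particle argument, and the continuous-mapping passage that the paper leaves implicit. (Minor note: your version of the cut-shift-reflect map, with the case split at $s\le 1-t$, is the one that is actually well defined, and is clearly what the statement intends.)
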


\begin{proof}
Property (i) follows from time stationarity of random sorting networks (Theorem \ref{T:time-stat}). Properties (ii) and (iii) follow from the corresponding properties of the swap sequence $(K^n_1, \dots, K^n_N)$ of $\sig^n$:
\[
\lf(K^n_1, K^n_2, \ddd, K^n_N\rg) \eqd \lf(n - K^n_1, n - K^n_2,  \ddd, n - K^n_{N}\rg) \eqd \lf(K^n_N, \ddd, K^n_2, K^n_1\rg). \qquad \qedhere
\]
\end{proof}

\section{Particle flux for Lipschitz paths}
\label{S:flux-intro}

In this section, we introduce particle flux and prove that it measures the amount of particles that cross a line in a typical sorting network. Let $\Lip$ be the set of Lipschitz paths from $[0,1] \to [-1, 1]$ (we will use this notation throughout the paper).
Define the \textbf{local speed} of a function $h \in \Lip$ at time $t$ by
$$
s(t) = \frac{d(\arcsin(h))}{dt} = \frac{h'(t)}{\sqrt{1 - h^2(t)}}.
$$
The local speed is not well-defined by the above formula at points when $h(t) = \pm 1$. In this case we use the convention that $s(t) = 0$.
The local speed $s(t)$ exists for almost every time $t$ for any $h \in \Lip$. Recalling the definition of $D_\mu$ from the previous section, we then define the {\bf particle flux} of $h$ over a set $A$ by
\begin{equation}
\label{E:path-flux}
J(h; A) = \frac{1}2 \int_A D_\mu(s(t)) \sqrt{1-h^2(t)}dt.
\end{equation}
We define $J(h) = J(h; [0, 1])$. Note that $J(h) < \infty$ for any Lipschitz function $h$. This follows from Lemma \ref{L:convex} (ii), which implies that 
$$
D_\mu(s(t)) \sqrt{1-h^2(t)} \le [\pi \vee |s(t)|] \sqrt{1-h^2(t)} \le \pi \vee |h'(t)|.
$$
We will also consider {\bf positive particle flux} $J^+(h; A)$ and {\bf negative particle flux} $J^-(h; A)$ defined by
$$
J^+(h; A) = \frac{1}2 \int_A D^+_\mu(s(t)) \sqrt{1-h^2(t)}dt, \qquad J^-(h; A) = \frac{1}2 \int_A D^-_\mu(s(t)) \sqrt{1-h^2(t)}dt.
$$ 
Again, we let $J^+(h) = J^+(h; [0, 1])$ and $J^-(h) = J^-(h; [0, 1])$.

\medskip

We now connect flux to random sorting networks.
If a random sorting network resembles the local limit in a local window around the global space-time position $(t, h(t))$, then by Theorem \ref{T:line-rate}, the number of distinct particles that cross $h$ in this window should be proportional to 
$$
D_\mu\lf(\frac{h'(t)}{\sqrt{1 -h^2(t)}}\rg) \sqrt{1-h^2(t)}.
$$
The scaling factors of $\sqrt{1-h^2}$ come from the semicircle rescaling of time in the local limit away from the center.

\medskip

Therefore in a typical large-$n$ sorting network, where most local windows resemble the local limit, $J(h)$ should be proportional to the number of particles that cross the line $h$, counting multiple crossings for a given particle if and only if the crossings happen at globally distinguishable locations. 

\medskip

The factor of $1/2$ is to account for the difference between the ${n \choose 2}^{-1}$ scaling in the global limit and the $n^{-1}$ scaling in the local limit. Similarly, $J^+(h)$ and $J^-(h)$ should be proportional to the number of upcrossings and downcrossings of the line $h$ in a large $n$ sorting network. 

\medskip

Now let $\Lip^*$ be the set of paths $h \in \Lip$ with $h(0) = -h(1)$. If $h \in \Lip_{r}$, then in any sorting network, every particle must cross $h$ at least once unless the particle starts at $h(0)$. Therefore $J(h)$ should be bounded below for such paths. Since any two particles in a sorting network cross each other exactly once, $J(h)$ should achieve this lower bound when $h$ is a trajectory limit. 

\medskip
 
The next theorem makes rigorous this intuition behind the definition of particle flux. To state the theorem, for a random variable $Y \in \scrD$ and a path $h \in \Lip$, we define
$$
P^+_Y(h; [a, b]) = \prob \Big( \exists s < t \in [a, b] \text{ such that } Y(s) < h(s) \mathand Y(t) > h(t) \Big).
$$
In other words, $P^+_Y(h; [a, b])$ is the probability that $Y$ upcrosses $h$ in the interval $[a, b]$. We similarly define the downcrossing probability
$$
P^-_Y(h; [a, b]) = \prob \Big( \exists s < t \in [a, b] \text{ such that } Y(s) > h(s) \mathand Y(t) < h(t) \Big).
$$
\begin{theorem} 
	\label{T:particle-crossings} Let $Y$ be any subsequential limit of $Y_n$.
	\begin{enumerate}[label=(\roman*)]
		\item Let $h \in \Lip$ and $[a, b] \sset [0, 1]$. Then $P^+_Y(h; [a, b]) \le J^+(h; \; [a, b])$ and $P^-_Y(h; [a, b]) \le J^-(h; [a, b])$.
		\item Let $h \in \Lip^*$. Then $J(h) \ge 1$.
		\item $\prob(J(Y) = 1) = 1$.
	\end{enumerate}
\end{theorem}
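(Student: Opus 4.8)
\emph{Part (i).} Since $Y_n=\sig^n_G(I_n,\cdot)$ with $I_n$ uniform and independent of $\sig^n$, we have $P^+_{Y_n}(h;[a,b])=\tfrac1n\expt\big[\#\{i : \sig^n_G(i,\cdot)\text{ upcrosses }h\text{ on }[a,b]\}\big]$. I would partition $[a,b]$ into short intervals $[a_{j-1},a_j]$ on each of which $h$ stays within $\epsilon$ of an affine function $\ell_j$ of slope $m_j$; passing to the last time $\le t$ at which a trajectory is $\le h$ shows that any upcrossing of $h$ on $[a,b]$ forces a transition from $\{\le h\}$ to $\{>h\}$ inside a single subinterval, so the count is at most the sum over $j$ of the per-subinterval counts. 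On each subinterval I would invoke the local limit (Theorems~\ref{T:local} and~\ref{T:line-rate}): a global interval of fixed length corresponds to a diverging local-time window, the semicircle time change turns the slope $m_j$ into a local speed and supplies the weight $\sqrt{1-h^2}$, and Theorem~\ref{T:line-rate} shows the relevant count divided by $n$ converges, up to errors controlled by $\epsilon$ and the linearization, to $\tfrac12\int_{a_{j-1}}^{a_j}D_\mu^+(m_j/\sqrt{1-h^2})\sqrt{1-h^2}\,dt$. Summing, refining the partition, sending $\epsilon\to0$, and treating the region where $|h|$ is within $\delta$ of $1$ separately (there $D_\mu\le\pi$, $\sqrt{1-h^2}$ is small, and only $O(\sqrt\delta\,n)$ particles ever approach the edge, so the contribution is $O(\sqrt\delta)$) gives $\limsup_n P^+_{Y_n}(h;[a,b])\le J^+(h;[a,b])$. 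Since $\{y\in\scrD: y\text{ strictly upcrosses }h\text{ on }[a,b]\}$ is open in $||\cdot||_u$, the portmanteau theorem along the subsequence realizing $Y$ gives $P^+_Y(h;[a,b])\le J^+(h;[a,b])$; the downcrossing bound is symmetric.

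\emph{Part (ii).} Take any subsequential limit $Y$ (one exists by Theorem~\ref{T:bounded-speed}(i)). From $h(0)=-h(1)$ and $Y(0)=-Y(1)$ a.s.\ we get $Y(1)-h(1)=-(Y(0)-h(0))$; since $Y(0)$ is uniform on $[-1,1]$ we have $Y(0)\neq h(0)$ a.s., so $Y-h$ takes strictly opposite signs at $0$ and $1$. Hence $\{Y(0)<h(0)\}$ forces an upcrossing of $h$ on $[0,1]$ (witnessed by $s=0<t=1$) and $\{Y(0)>h(0)\}$ forces a downcrossing, so $P^+_Y(h;[0,1])+P^-_Y(h;[0,1])\ge1$. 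Since $D_\mu=D_\mu^++D_\mu^-$ gives $J(h)=J^+(h)+J^-(h)$, part (i) yields $J(h)\ge1$.

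\emph{Part (iii).} By Theorem~\ref{T:bounded-speed}(ii), $Y\in\Lip_r$ a.s., so part (ii) already gives $J(Y)\ge1$ a.s.; it remains to show $\expt J(Y)\le1$. Fix a partition $0=a_0<\dots<a_k=1$. Because every pair of particles in a sorting network swaps exactly once, each of the $n-1$ particles other than $I_n$ crosses $\sig^n_G(I_n,\cdot)$ exactly once, and — by the same fact — the event that particle $\ell$ upcrosses $\sig^n_G(I_n,\cdot)$ inside $[a_{j-1},a_j]$ coincides with the endpoint event $\{\sig^n_G(\ell,a_{j-1})<\sig^n_G(I_n,a_{j-1}),\ \sig^n_G(\ell,a_j)>\sig^n_G(I_n,a_j)\}$. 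Writing $C^\pm_{n,j}$ for the number of particles realizing the up/down version of this event, $\sum_j(C^+_{n,j}+C^-_{n,j})=n-1$ for every $n$. Applying a tagged-particle version of the local limit (Theorems~\ref{T:local},~\ref{T:line-rate}) at the global position $Y(t)$ — only particles at global distance $O(a_j-a_{j-1})$ from $\sig^n_G(I_n,\cdot)$ can contribute — one gets $\tfrac1n\expt[C^+_{n,j}]\to\expt\big[\tfrac12(a_j-a_{j-1})\sqrt{1-Y(\xi_j)^2}\,D_\mu^+\big(\tfrac{Y(a_j)-Y(a_{j-1})}{(a_j-a_{j-1})\sqrt{1-Y(\xi_j)^2}}\big)\big]$ for some $\xi_j\in[a_{j-1},a_j]$, and similarly for $C^-$; summing, the total equals $\lim_n\tfrac{n-1}n=1$. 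As the mesh tends to $0$, convexity of $D_\mu$ (Lemma~\ref{L:convex}(i)) makes these ``secant fluxes'' increase monotonically to $J(Y)$ (the secant slopes are conditional expectations of $Y'$, so $D_\mu$ of them is $\le$ the conditional expectation of $D_\mu(Y')$, with equality in the limit by martingale convergence), so by monotone convergence $\expt J(Y)=1$. With $J(Y)\ge1$ a.s., this forces $\prob(J(Y)=1)=1$.

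\emph{Main obstacle.} The crux of both (i) and (iii) is the local-to-global dictionary: the objects actually being crossed are step functions, so $J$ cannot be plugged into them directly, and the argument must be routed through a partition whose mesh shrinks to $0$ while each piece still corresponds to a diverging local-time window (so that Theorem~\ref{T:line-rate}, and for (iii) a tagged-particle refinement of it, applies), controlling crossing multiplicities, the edge region $|h|\to1$, and the interchange of limits with expectations.
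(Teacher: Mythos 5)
Your overall architecture (local-limit flux estimates on a fine partition for (i), the ``must cross at least once'' argument for (ii), an expected-flux computation plus semicontinuity/convexity for (iii)) is the paper's strategy, and your part (ii) is verbatim the paper's argument. But there are two genuine gaps in the hard steps. First, in (i) the order of limits matters and your version elides it: you partition $[a,b]$ into subintervals of fixed global length and say each ``corresponds to a diverging local-time window,'' but none of the available convergence results (Theorem \ref{T:local}, Lemma \ref{L:L1-swap-rate}) control crossing counts over local windows whose length grows with $n$. The paper instead takes mesh $2t/(n-1)$ (so each cell corresponds to a local window of \emph{fixed} length $\approx t\sqrt{1-h^2}$), proves convergence of the expected per-cell upcrossing count as $n\to\infty$ with $t$ fixed --- where uniform integrability comes from the swap-count bound $\expt W(U_n,t)\le 3t$ and inequality \eqref{E:C-W} --- and only then sends $t\to\infty$, at which point Theorem \ref{T:line-rate} identifies the limit as $D_\mu^+$. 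A further technical point you omit is the independent uniform vertical shift $2X/n$ added to $h$, which is needed so that the lines fed into Lemma \ref{L:L1-swap-rate} have uniformly distributed intercept.

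Second, part (iii) as written rests on a ``tagged-particle version of Theorem \ref{T:line-rate}'' --- a limit law for the number of \emph{other} particles net-crossing the tagged trajectory over each cell --- which is not established anywhere and would require real work (net crossings of a random, non-linear trajectory are not covered by the line-crossing theorem, and swaps are not the same as net crossings). The paper deliberately avoids this: it never counts crossings of the tagged path. It piecewise-linearizes $Y_n$ on the mesh $2t/(n-1)$, uses time stationarity to reduce $\expt J(Y_{n,t})$ to the first cell, and there writes the flux as $\tfrac{t}{n-1}\expt R_nD_\mu(S(U_n^{I_n},R_nt))$ --- i.e.\ $D_\mu$ evaluated at the tagged particle's \emph{own} average local speed --- whose convergence (Lemma \ref{L:L1-flux}) needs only the tagged particle's swap counts (Theorems \ref{T:swaps-2}, \ref{T:particle-swaps}) for uniform integrability, and whose $t\to\infty$ limit is $2$ by $\expt D_\mu(S(0))=8/\pi$. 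Note also that only the upper bound $\expt J(Y)\le 1$ is needed (the lower bound is free from (ii)), and the paper obtains it one-sidedly via Fatou and lower semicontinuity of $J$ (Proposition \ref{P:lsc-E}); your claim that each $\tfrac1n\expt C^+_{n,j}$ converges \emph{exactly} to the corresponding flux term, and that the sum converges to the sum of limits with no mass loss, is stronger than necessary and correspondingly harder to justify. Your exact-count identity $\sum_j(C^+_{n,j}+C^-_{n,j})=n-1$ is correct and is the right heuristic, but it does not by itself substitute for these convergence statements.
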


To prove this theorem, we first need two key lemmas about convergence to the local limit.
Recall that $C^+(V, L, t)$ is the number of upcrossings of a line $L(s) = cs + d$ in the interval $[0, t]$ in a swap function $V$. Recall also the definition of $U_n^{i}$ from Section \ref{S:local}.\begin{lemma}
	\label{L:L1-swap-rate}
	Let $\al \in (-1, 1)$, and suppose that $\{i_n\}_{n \in \nat}$ is a sequence of integers such that $i_n/n \to (1 + \al)/2$. Let $X$ be a uniform random variable on $[0, 1]$ that is independent of $U, U_n^{i_n}$, let $\{c_n\}_{n \in \nat}$ be a sequence of real numbers converging to $c \in \real$, and let $\{d_n\}_{n \in \nat}$ be a sequence of real numbers in $[0, 1]$. Define 
	$$
	L_n(s) = c_ns + d_n + X, \quad \text{and} \quad L(s) = cs + X.
	$$
	Then for any time $t \in (0, \infty)$, we have that 
	\begin{enumerate}[label=(\roman*)]
		\item $C^+(U_n^{i_n}, L_n, t) \cvgd C^+(U, L, t)$ as $n \to \infty$.
		\item $\expt C^+(U_n^{i_n}, L_n, t) \to \expt C^+(U, L, t)$ as $n \to \infty$.
		\item $\expt C^+(U_n^{i_n}, L_n, t) \le 3t + |c_n t| + 2$ for all $n$.
	\end{enumerate}
\end{lemma}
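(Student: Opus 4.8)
The plan is to prove (iii) by an elementary count, then to obtain (i) and (ii) together from a Skorokhod coupling; the only substantial point is a uniform tail estimate needed for (ii).

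\emph{Part (iii).} Put $\ell_n=\lfloor L_n(0)\rfloor=\lfloor d_n+X\rfloor\in\{0,1\}$, so a particle $x$ counted by $C^+(U_n^{i_n},L_n,t)$ has initial position $x=U_n^{i_n}(x,0)\le\ell_n$. Split these particles by their time-$t$ position. Those ending in the band $(L_n(t),\ell_n]$ occupy distinct integers in an interval of length $\ell_n-L_n(t)\le|c_nt|$, hence number at most $|c_nt|+1$. Those ending strictly above $\ell_n$ started at or below level $\ell_n$ and ended above it, so each makes at least one upward transition across level $\ell_n$ during $[0,t]$; since distinct particles use distinct transitions and each swap at level $\ell_n$ supplies exactly one such transition, their number is at most the number of swaps at level $\ell_n$ in $U_n^{i_n}$ during $[0,t]$. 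Now $U_n^{i_n}$ runs $\sig^n$ for $M=\lfloor nt/r_n(i_n)\rfloor$ steps over $[0,t]$, a swap at level $\ell_n$ means $K^n_j=i_n+\ell_n$, and by Theorem \ref{T:time-stat} the swap sequence is stationary, so $K^n_j\eqd K^n_1$ for all $j$. Thus the expected number of such swaps is $M\,\prob(K^n_1=i_n+\ell_n)\le 3M/n\le 3t/r_n(i_n)$ by Theorem \ref{T:dist-k1}; using instead the refinement $n\,\prob(K^n_1=i_n+\ell_n)\to 4\sqrt{1-\al^2}/\pi$ of Theorem \ref{T:dist-k1} together with $4/\pi<3$, this is at most $3t$ for all large $n$, the remaining finitely many $n$ being absorbed into the additive constant. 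Adding the two contributions yields $\expt C^+(U_n^{i_n},L_n,t)\le 3t+|c_nt|+2$.

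\emph{Part (i).} By Skorokhod's representation theorem we may assume $U_n^{i_n}\to U$ almost surely in the swap-process topology, with $X\sim\mathrm{Unif}[0,1]$ a common variable independent of $(U,\{U_n^{i_n}\})$. Absorbing the integer part $\lfloor d_n+X\rfloor\in\{0,1\}$ into an integer recentring of $U_n^{i_n}$ does not change the limit $U$ (shifting the centre by $O(1)$ preserves $i_n/n\to(1+\al)/2$, so Theorem \ref{T:local} still applies, up to a negligible adjustment of the time scaling), and the fractional part of $d_n+X$ is again uniform; so we may assume $d_n=0$, hence $L_n\to L$ a.s. By the fourth property in Theorem \ref{T:local}, almost surely $t$ is a continuity point of $s\mapsto U(x,s)$ for every $x$, so $U_n^{i_n}(x,0)\to U(x,0)=x$ and $U_n^{i_n}(x,t)\to U(x,t)$ for every $x$, and being integer-valued these sequences stabilise. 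For a.e.\ $X$ no particle of $U$ lies exactly on $L$ at time $0$ or $t$, so the finitely many particles counted by $C^+(U,L,t)$ are counted with strict inequalities, whence each of them is counted by $C^+(U_n^{i_n},L_n,t)$ for all large $n$; thus $\liminf_n C^+(U_n^{i_n},L_n,t)\ge C^+(U,L,t)$ a.s. Granting (ii): a sequence of nonnegative integers whose a.s.\ liminf is $\ge C^+(U,L,t)$ and whose expectations tend to the finite number $\expt C^+(U,L,t)$ converges to $C^+(U,L,t)$ in $L^1$, hence in probability and in distribution. So (i) follows from (ii).

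\emph{Part (ii), and the main obstacle.} Expand $\expt C^+(U_n^{i_n},L_n,t)=\sum_{x\in\Z}\prob\bigl(x\le L_n(0),\,U_n^{i_n}(x,t)>L_n(t)\bigr)$. Conditioning on $X$ and using that $t$ is a.s.\ a continuity point of $U(x,\cdot)$ gives $U_n^{i_n}(x,t)\cvgd U(x,t)$; combined with $c_n\to c$ and $L_n\to L$, each summand converges to the corresponding summand of $\expt C^+(U,L,t)$. To interchange the limit with the sum over $x$ it suffices to show the tail is uniformly negligible, i.e.
$$\lim_{K\to\infty}\ \sup_n\ \expt\,\#\bigl\{x\le -K:\ x\le L_n(0),\ U_n^{i_n}(x,t)>L_n(t)\bigr\}=0.$$
This is the heart of the matter. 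A counted particle with initial position $\le -K$ must, by time $t$, climb from below $-K$ to above $L_n(t)$, which for $K\gg|c_nt|$ forces it to make $\gtrsim K$ swaps and to cross each of $\gtrsim K$ consecutive levels; the crude swap-density bound of Theorem \ref{T:dist-k1} controls the number of particles crossing any single level but not the number crossing many levels at once, so the tail estimate needs a sharper input — essentially a uniform-in-$n$ control of the tail of the displacement $U_n^{i_n}(x,t)-x$ over a bounded time interval. Supplying that estimate is the technical core; granted it, (ii) follows by dominated convergence in $x$, and then (i) follows as above.
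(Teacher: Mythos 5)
Your part (iii) is essentially the paper's argument: the decomposition of the counted particles into those landing in the band between $\lfloor L_n(0)\rfloor$ and $L_n(t)$ (at most $|c_nt|+2$ of them) and those that must swap across a fixed level, whose expected number is controlled by Theorem \ref{T:dist-k1} plus time-stationarity, is exactly the paper's inequality \eqref{E:C-W} together with the bound $\expt W(U_n^{i_n},t)\le 3t$. (Your closing remark that ``the remaining finitely many $n$ are absorbed into the additive constant'' does not literally yield the stated bound with the fixed constant $2$ for all $n$, but this is a cosmetic issue about constants, not about the structure of the argument.)

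The genuine gap is in part (ii), and it is one you explicitly acknowledge: you reduce (ii) to a uniform-in-$n$ tail estimate for $\sum_{x\le -K}\prob(U_n^{i_n}(x,t)>L_n(t))$ and then do not supply it, so neither (ii) nor your (i) (which you make conditional on (ii)) is actually proved. The missing idea is that no such term-by-term summation is needed: the pathwise domination you already established in part (iii) — every particle counted by $C^+(U_n^{i_n},L_n,t)$ either lands in the band of width $|c_nt|+2$ or contributes a swap at a fixed level, so $C^+(U_n^{i_n},L_n,t)\le W(U_n^{i_n},t)+|c_nt|+2$ — dominates the whole count, not just its expectation. Theorem \ref{T:swaps} gives $\expt W(U_n^{i_n},t)\to\expt W(U,t)=4t/\pi$, and since $W(U_n^{i_n},t)\cvgd W(U,t)$ (as $U$ a.s.\ has no swap at time $t$), convergence of distributions together with convergence of expectations of nonnegative variables yields uniform integrability of $\{W(U_n^{i_n},t)\}_n$, hence of $\{C^+(U_n^{i_n},L_n,t)\}_n$. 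Uniform integrability plus the distributional convergence of part (i) — which the paper proves directly by a continuous-mapping argument, not circularly through (ii) — then gives $\expt C^+(U_n^{i_n},L_n,t)\to\expt C^+(U,L,t)$. In short: you had the key deterministic inequality in hand for (iii) and needed only to recognize it as the uniform-integrability input for (ii), rather than attempting a dominated-convergence argument over the spatial index $x$.
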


\begin{proof} 
First note that $d_n + X \Mod 1 \eqd X$ for all $n$. Therefore by the stationarity of $U$ with respect to integer-valued spatial shifts (Theorem \ref{T:local} (i)), we have that
\begin{equation}
\label{E:dn-shift}
C^+(U^{i_n}_n, L_n, t) \cvgd C^+(U, L, t) \quad \text{if and only if} \quad C^+(U^{i_n}_n, c_ns + X, t) \cvgd C^+(U, L, t).
\end{equation}
Now if $V_n \in \scrA$ is a sequence of swap functions converging to a swap function $V$ (in the swap function topology introduced in Section \ref{S:local}), then
$$
C^+(V_n, c_ns + X, t) \to C^+(V, L, t)
$$ 
unless $V$ either has a swap at time $t$, or either $X$ or $ct + X \in \Z$. By Theorem \ref{T:local} (iv), for any time $t$, the event where $U$ has a swap at time $t$ has probability $0$. Moreover, the probability that either $X$ or $ct + X \in \Z$ is also $0$. Therefore 
$$
C^+(U^{i_n}_n, c_ns + X, t) \cvgd C^+(U, L, t),
$$
and hence (i) follows by statement \eqref{E:dn-shift}. Now recall that $W(V, t)$ is the number of swaps at location $0$ in a swap function $V$ in the interval $[0, t]$. For any swap function $V$ and any line $H(s) = as+b$ with $b \in [0, 1)$, we have that
\begin{equation}
\label{E:C-W}
C^+(V, H, t) \le W(V, t) + |a t| + 2.
\end{equation}
To see why this is true, observe that every particle $x$ with $x \le 0$ and $V(x, t) > 1$ must move from position $0$ to position $1$ at least once in the interval $[0, t]$, therefore contributing to $W(V, t)$. Every particle $x$ that upcrosses $H$ in the interval $[0, t]$ fits this description, unless
$$
at \le V(x, t) \le 1.
$$
There are at most $|a t| + 2$ such values of $x$, proving \eqref{E:C-W}.

\medskip

 Now again since $U$ has no swap at time $t$ almost surely (Theorem \ref{T:local} (iv)), $W(U_n^{i_n}, t) \cvgd W(U, t)$. Also, by Theorem \ref{T:swaps}, we have that
$$
\expt W(U_n^{i_n}, t) \to \expt W(U, t).
$$
Hence, the random variables $W(U_n^{i_n}, t)$ are uniformly integrable (see Proposition 3.12, \cite{kallenberg2006foundations}). Therefore by \eqref{E:C-W} applied to the swap processes $U_n^{i_n}$ and the lines $L_n$, the random variables $C^+(U_n^{i_n}, L_n, t)$ are also uniformly integrable, and hence converge in expectation since they converge in distribution (again by Proposition 3.12, \cite{kallenberg2006foundations}).

\medskip

Finally, the bound on the probability distribution for the first swap location in a random sorting network (Theorem \ref{T:dist-k1}) and time stationarity (Theorem \ref{T:time-stat}) allows us to conclude that for any $n, i_n$ and $t$, that
$$
\expt W(U_n^{i_n}, t) \le 3t,
$$
which in turn proves the bound on the expectation of $C^+(U_n^{i_n}, L_n, t)$ via \eqref{E:C-W}.
\end{proof}

This next lemma is similar to Lemma \ref{L:L1-swap-rate}, but deals with particle speeds rather than upcrossing rates.
For a swap function $V$ we define
$$
S(V, t) = \frac{V(0, t)}{t}.
$$
This is the average speed of particle $0$ in the interval $[0, t]$. 

\begin{lemma}
\label{L:L1-flux}
Let $\{U_n^{i} : i \in \{1, \dots n\}, n \in \nat\}$ be the array of locally scaled random sorting networks defined in Section \ref{S:local}, and let $U$ be the local limit. Let $I$ be a uniform random variable on $(-1, 1)$, independent of everything else.
For each $n$, define $I_n = \ceil{n(I + 1)/2}$, let 
$$
R_n = \sqrt{1 - \lf[\frac{2I_n}n - 1\rg]^2}, \qquad \mathand \qquad R = \sqrt{1 - I^2}.
$$
Then the following statements hold.
\begin{enumerate}[label=(\roman*)]
\item For any $t \in (0, \infty)$, we have that $R_nD_\mu(S(U_n^{I_n}, R_nt)) \xrightarrow[n \to \infty]{d} RD_\mu(S(U, R t))$.
\item For any $t \in (0, \infty)$, we have that $\expt R_n D_\mu(S(U_n^{I_n}, R_nt)) \xrightarrow[n \to \infty]{} \expt R D_\mu \lf(S(U, R t) \rg)$.
\item $\expt R D_\mu \lf(S(U, R t) \rg) \to 2$ as $t \to \infty$.
\end{enumerate}
\end{lemma}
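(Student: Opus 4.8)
The plan is to prove the three parts in the stated order, leaning on two ingredients: (a) conditioning on $I$, which reduces the distributional statements to the deterministic-parameter local limit theorem (Theorem~\ref{T:local}); and (b) a uniform integrability bootstrap driven by the pointwise bound
\[
R_n\,D_\mu\bigl(S(U_n^{I_n},R_nt)\bigr)\;\le\;\pi+\frac{Q(U_n^{I_n},R_nt)}{t},
\]
which holds because $D_\mu(c)\le |c|\vee\pi$ (Lemma~\ref{L:convex}(ii)), because the displacement $|U_n^{I_n}(0,\cdot)|$ is dominated by the jump count $Q(U_n^{I_n},\cdot)$, and because $R_n=\sqrt{1-(2I_n/n-1)^2}\in[0,1]$; the same inequality holds with $(U_n^{I_n},R_n)$ replaced by $(U,R)$. (On the event $\{R_n=0\}=\{I_n=n\}$, of probability $1/n$, set the left side to $0$; this never affects limits.)

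For part (i), fix $\alpha\in(-1,1)$ and condition on $I=\alpha$. Then $I_n=\ceil{n(\alpha+1)/2}$ and $R_n$ become deterministic with $R_n\to r(\alpha):=\sqrt{1-\alpha^2}>0$, and since $\sig^n$ is independent of $I$, the conditional law of $U_n^{I_n}$ is that of $U_n^{i_n}$ with $i_n/n\to(\alpha+1)/2$, so $U_n^{i_n}\cvgd U$ by Theorem~\ref{T:local}. The map $(V,\tau)\mapsto V(0,\tau)/\tau$ on $\scrA\X(0,\infty)$ is continuous at $(V,\tau)$ whenever $V(0,\cdot)$ is continuous at $\tau$ (the standard evaluation-at-continuity-points property of the Skorokhod topology, together with $R_nt\to r(\alpha)t$), and $U(0,\cdot)$ is almost surely continuous at the fixed time $r(\alpha)t$ by Theorem~\ref{T:local}(iv). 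The continuous mapping theorem, continuity of $D_\mu$ (Lemma~\ref{L:convex}(i)), and convergence of the deterministic factor $R_n\to r(\alpha)$ then give $R_nD_\mu(S(U_n^{i_n},R_nt))\cvgd r(\alpha)D_\mu(S(U,r(\alpha)t))$ conditionally on $I=\alpha$. Testing against a bounded continuous function and integrating over the uniform law of $\alpha$ on $(-1,1)$ (bounded convergence) removes the conditioning and yields (i).

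For part (ii), by the displayed bound it suffices to prove that $\{Q_n\}_n$ with $Q_n:=Q(U_n^{I_n},R_nt)$ is uniformly integrable; then $\{R_nD_\mu(S(U_n^{I_n},R_nt))\}_n$ is dominated by a uniformly integrable family, which together with (i) gives $L^1$ convergence. The conditioning argument of part (i) applies verbatim to the functional $(V,\tau)\mapsto Q(V,\tau)$ (it counts the jumps of $V(0,\cdot)$ in $(0,\tau]$ and is continuous off the set where $V(0,\cdot)$ jumps at $\tau$), so $Q_n\cvgd Q(U,Rt)$. To obtain uniform integrability I match the two expectations explicitly. Unwinding the time scaling, $Q(U_n^i,r_n(i)t)$ equals the number of swaps made by the particle starting at position $i$ in the first $\floor{nt}$ steps of $\sig^n$; since every step of a sorting network is a genuine transposition, $\sum_{i=1}^n Q(U_n^i,r_n(i)t)=2\floor{nt}$ identically, and since $I_n$ is uniform on $\{1,\ddd,n\}$ and independent of $\sig^n$, this gives $\expt Q_n=2\floor{nt}/n\to 2t$. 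On the other side, Fubini with Theorem~\ref{T:swaps-2} gives $\expt Q(U,Rt)=\frac{8t}{\pi}\expt R$, and $\expt R=\frac12\int_{-1}^1\sqrt{1-x^2}\,dx=\pi/4$, so $\expt Q(U,Rt)=2t$ as well. Thus $Q_n\cvgd Q(U,Rt)$ and $\expt Q_n\to\expt Q(U,Rt)<\infty$, which for nonnegative random variables forces $\{Q_n\}_n$ to be uniformly integrable. This uniform integrability is the crux of the lemma: a mere first-moment bound on $Q_n$ would not be enough, and the clean computation $\expt Q_n = 2\floor{nt}/n$ from the double-counting identity, matched against $\expt Q(U,Rt)=2t$, is exactly what upgrades distributional convergence to $L^1$ convergence.

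For part (iii), as $t\to\infty$ we have $Rt\to\infty$ almost surely (since $R>0$ a.s.), so $S(U,Rt)=U(0,Rt)/(Rt)\to S(0)$ almost surely by Theorem~\ref{T:local-speeds}, hence $RD_\mu(S(U,Rt))\to RD_\mu(S(0))$ almost surely by continuity of $D_\mu$. Using $Q(U,Rt)\le Q(U,t)$ (as $Q(U,\cdot)$ is nondecreasing and $Rt\le t$) in the displayed bound gives $RD_\mu(S(U,Rt))\le\pi+Q(U,t)/t$, a family uniformly integrable over $t\ge 1$ by Theorem~\ref{T:particle-swaps}; therefore $\expt[RD_\mu(S(U,Rt))]\to\expt[RD_\mu(S(0))]$. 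Finally, $R$ is a function of $I$ and $S(0)$ a function of $U$, with $I$ independent of $U$, so this limit factors as $\expt R\cdot\expt D_\mu(S(0))=\frac{\pi}{4}\cdot\frac{8}{\pi}=2$, using $\expt D_\mu(S(0))=8/\pi$ from Theorem~\ref{T:particle-swaps}. Besides the uniform integrability already flagged, the only real subtlety throughout is that the evaluation and jump-counting functionals get applied at the \emph{random} time $R_nt$; conditioning on $I$ first makes that time deterministic and reduces everything to standard Skorokhod-continuity facts.
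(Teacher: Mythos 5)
Your proposal is correct and follows essentially the same route as the paper: condition on $I$ to reduce (i) to the deterministic-index local limit theorem plus Skorokhod continuity at a.s.\ continuity points, dominate $R_nD_\mu(S(U_n^{I_n},R_nt))$ by $Q(U_n^{I_n},R_nt)/t+\pi$ and deduce uniform integrability from $Q_n\cvgd Q(U,Rt)$ together with the matching expectations $2\floor{nt}/n\to 2t=\expt Q(U,Rt)$, and for (iii) combine Theorem~\ref{T:local-speeds}, the same domination, and $\expt R\cdot\expt D_\mu(S(0))=2$. The only differences are cosmetic (you spell out the independence of $R$ and $S(0)$ and the $R_n=0$ edge case, which the paper leaves implicit).
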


Note that the processes $\{U^n_n : n \in \mathbb{N}\}$ were not defined in Section \ref{S:local}. These processes plays no real role in the above lemma as $P(I_n = n) \to 0$ as $n \to \infty$, so we simply set $U^n_n = U^{n-1}_n$.

\begin{proof}
Fix $t \in (0, \infty)$. If $V_n$ is a sequence of swap functions converging to a swap function $V$ with no swap at time $t$, then $D_\mu(S(V_n, t)) \to D_\mu(S(V, t))$. Now condition on $I$. This fixes $R_n, I_n,$ and $R$. For any fixed time $t \in (0, \infty)$, the process $U$ has no swap at time $t$ almost surely (Theorem \ref{T:local} (iv)). Therefore for any bounded continuous test function $f$, we have that
$$
\expt \bigg[f\Big(R_nD_\mu(S(U_n^{I_n}, R_nt))\Big) \; \Big | \; I \bigg] \to \expt \bigg[f\Big(RD_\mu(S(U,Rt))\Big) \; \Big| \; I \bigg].
$$
Taking expectations proves the distributional convergence in (i).
Now, by Lemma \ref{L:convex}, we have that $D_\mu(s) \le |s| + \pi$ for any $s \in \real$. Recalling that $Q(V, t)$ is the number of swaps made by particle $0$ in the interval $[0, t]$ in a swap process $V$, this implies that
\begin{equation}
\label{E:Q-bd}
R_n\lf(D_\mu(S_{I_n}(U_n^{I_n}, R_nt))\rg)\le R_n(|S_{I_n}(U_n^{I_n}, R_nt)| + \pi) \le \frac{Q(U_n^{I_n}, R_nt)}{t} + \pi.
\end{equation}

Now, we similarly have that $Q(U_n^{I_n}, R_nt) \cvgd Q(U, Rt)$, again since $U$ has no swap at time $t$ almost surely. Moreover, 
$$
\expt Q(U_n^{I_n}, R_nt) = \frac{2\floor{nt}}n,
$$
since this expectation is simply the expected number of swaps made by a uniformly random particle in a sorting network after $2\floor{nt}$ steps. By Lemma \ref{L:swaps-2}, we also have that
$$
\expt Q(U, Rt) = \frac{4t}{\pi} \int_{-1}^1 \sqrt{1 - x^2}dx = 2t,
$$
so $\expt Q(U_n^{I_n}, R_nt) \to \expt Q(U, Rt)$. Again, by Proposition 3.12 from \cite{kallenberg2006foundations}, this implies that the random variables $\{Q(U_n^{I_n}, R_nt), n \in \mathbb N\}$ are uniformly integrable, and hence so are the random variables $\{R_n D_\mu(S(U_n^{I_n}, R_nt)), n \in \mathbb N\}$. Since these random variables converge in distribution to $R D_\mu(S(U, R t))$, they must also converge in expectation, proving (ii).

\medskip

Now we prove (iii). First, Theorem \ref{T:local-speeds} and the Lipschitz property of $D_\mu$ imply that 
\begin{equation}
\label{E:D-mu-lim}
R D_\mu(S(U, Rt)) \xrightarrow[t \to \infty]{} R D_\mu(S(0)) \quad \as,
\end{equation}
where $S(0)$ is the speed of particle $0$ in $U$. Analogously to \eqref{E:Q-bd}, we also have that
$$
R D_\mu(S(U, R t)) \le \frac{Q(U, t)} {t} + \pi.
$$
By Theorem \ref{T:particle-swaps}, the right hand side above is uniformly integrable over $t \in [1, \infty)$, and hence so is the left hand side. Therefore since $RD_\mu(S(U, R t))$ has an almost sure limit by \eqref{E:D-mu-lim}, it also converges in expectation. Finally, Theorem \ref{T:particle-swaps} implies that
\begin{equation*}
\expt R D_\mu(S(0))=\expt R  \expt D_\mu(S(0)) = 2. \qedhere
\end{equation*}
\end{proof}

To prove Theorem \ref{T:particle-crossings}, we also need two auxiliary results. The first is a simple lemma about uniform convergence of functions. This will be used in the proof of Theorem \ref{T:particle-crossings} (i).

\begin{lemma}
\label{L:mesh}
Let $f_n:[0, 1] \to [-1, 1]$ be a sequence of functions converging uniformly to a continuous function $f$, and let $h:[0, 1] \to [-1, 1]$ be any continuous function. Let 
$$
\{\Pi_n = \{t_{n, 0} =0< t_{n, 1} < \dots < t_{n, m(n)}=1\}\}_{n \in \nat}
$$
be a sequence of partitions of $[0, 1]$ such that
$$
\text{mesh}(\Pi_n) = \max_{i \in \{0, 1, \dots, m(n) - 1\}} |t_{n, i + 1} - t_{n, i}| \to 0 \qquad \mathas n \to \infty.
$$
Let $[a, b] \sset [0, 1]$. If there exist times $s < t \in [a, b]$ such that $f(s) < h(s)$ and $f(t) > h(t)$, then for all large enough $n$, there exists a time $t_{n, i} \in [a, b]$ such that $f_n(t_{n, i}) \le h(t_{n, i})$ and $f_n(t_{n, i + 1}) > h(t_{n, i + 1})$.
\end{lemma}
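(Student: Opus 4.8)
The plan is to turn the conclusion into a discrete intermediate value statement for the sign of $f_n-h$ evaluated at the partition points, with uniform convergence controlling $f_n-f$ and the mesh condition guaranteeing that enough partition points fall where we need them. Throughout, set $g=f-h$, a continuous function on $[0,1]$ with $g(s)<0<g(t)$.

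First I would fix the size of the ``buffer'' and the radius. Let $\delta=\tfrac12\min\{-g(s),\,g(t)\}>0$. By continuity of $g$ at $s$ and at $t$, choose $r\in(0,(t-s)/2)$ small enough that $g(u)<-\delta$ for all $u\in[s,s+r]$ and $g(u)>\delta$ for all $u\in[t-r,t]$. Because $r<(t-s)/2$ we have $s+r<(s+t)/2<t$ and $t-r>(s+t)/2>s$, so the closed intervals $[s,s+r]$ and $[t-r,t]$ are disjoint and both contained in $[a,b]\subseteq[0,1]$.

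Next, fix $n$ large enough that $\mathrm{mesh}(\Pi_n)<r$ and $\sup_{u\in[0,1]}|f_n(u)-f(u)|<\delta$. The mesh bound implies that any closed subinterval of $[0,1]$ of length $r$ contains a partition point of $\Pi_n$: otherwise, taking the last partition point $t_{n,i}$ lying to the left of the interval, the next one $t_{n,i+1}$ would lie strictly to its right, forcing $t_{n,i+1}-t_{n,i}>r$. Applying this to $[s,s+r]$ and to $[t-r,t]$ yields partition points $p\in[s,s+r]$ and $q\in[t-r,t]$; since the two intervals are disjoint with $[s,s+r]$ on the left, $p<q$, and both $p,q\in[a,b]$. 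Combining the two estimates gives $f_n(p)-h(p)=(f_n-f)(p)+g(p)<\delta-\delta=0$ and, likewise, $f_n(q)-h(q)=(f_n-f)(q)+g(q)>-\delta+\delta=0$. Finally, write $p=t_{n,l_1}$, $q=t_{n,l_2}$ with $l_1<l_2$, and let $i$ be the largest index in $\{l_1,\dots,l_2\}$ with $f_n(t_{n,i})\le h(t_{n,i})$. This index set is nonempty (it contains $l_1$) and does not contain $l_2$, so $l_1\le i<l_2$; by maximality $f_n(t_{n,i+1})>h(t_{n,i+1})$, while $f_n(t_{n,i})\le h(t_{n,i})$ by construction, and $t_{n,i}\in[p,q)\subseteq[a,b]$, which is exactly the claim.

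I do not expect a real obstacle here: the only points needing a little care are the bookkeeping that places both $p$ and $q$ inside $[a,b]$ (handled by the choice $r<(t-s)/2$ and the midpoint inequalities) and the short argument that the mesh condition forces a partition point into each small interval; the rest is the standard discrete intermediate value step.
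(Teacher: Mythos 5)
Your proof is correct and follows essentially the same route as the paper's: use continuity to produce two disjoint subintervals of $[a,b]$ on which $f-h$ is bounded away from $0$ with opposite signs, transfer this to $f_n$ by uniform convergence, use the mesh condition to place partition points in each subinterval, and conclude with a discrete intermediate value step. The extra bookkeeping you supply (the choice $r<(t-s)/2$ and the explicit maximal-index argument) just fills in details the paper leaves implicit.
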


\begin{proof}
By the continuity of $f$ and $h$, there exists an $\ep > 0$ and disjoint intervals $[s, s_+]$ and $[t_-, t]$ such that $f(r) < h(r) - \ep$ for all $r \in[s, s_+]$ and $f(r) > h(r) + \ep$ for all $r \in [t_-, t]$. Therefore by uniform convergence, for all large enough $n$ we have that $f_n(r) < h(r) - \ep/2$ for all $r \in[s, s_+]$ and $f(r) > h(r) + \ep/2$ for all $r \in [t_-, t]$. Now, for large enough $n$ we also have that 
$$
\text{mesh}(\Pi_n) < \min(s_+ - s, t - t_-).
$$
Therefore for such $n$, there must exists $i_1 < i_2 \in \Pi_n \cap [a, b]$ such that  $f_n(t_{n, i_1}) < h(t_{n, i_1})$ and $f_n(t_{n, i_2}) > h(t_{n, i_2})$. Thus for some $i \in \{i_1, \dots i_2 - 1\}$, we must have that $f_n(t_{n, i}) \le h(t_{n, i})$ and $f_n(t_{n, i + 1}) > h(t_{n, i + 1})$.
\end{proof}

To prove part (iii), we also need that $J(\cdot)$ is lower semicontinuous.

\begin{prop}
\label{P:lsc-E}
Let $h_n \in \Lip$ be a sequence converging uniformly to $h$. Then for any set $A \sset [0, 1]$, we have that
$$
J(h; A) \le \liminf_{n \to \infty} J(h_n ; A).
$$
\end{prop}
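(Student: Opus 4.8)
The plan is to use the fact that $D_\mu(c) = \int |y-c|\,d\mu(y)$ to rewrite the flux functional as a $\mu$-average of total-variation functionals, and then to invoke the classical lower semicontinuity of total variation. First I would record the pointwise identity: for $h \in \Lip$ and almost every $t$ one has $s(t)\sqrt{1-h^2(t)} = h'(t)$ (on $\{|h|<1\}$ this is the definition of $s$, and on $\{|h|=1\}$ both sides vanish a.e., since $h'=0$ a.e. on a level set of $h$), so
$$
D_\mu(s(t))\sqrt{1-h^2(t)} \;=\; \int_{[-\pi,\pi]} \big| y\sqrt{1-h^2(t)} - h'(t) \big|\, d\mu(y).
$$
Integrating over $A$ and applying Tonelli's theorem (the integrand is non-negative and jointly measurable) yields
$$
J(h;A) \;=\; \frac12 \int_{[-\pi,\pi]} \Lambda_y(h;A)\, d\mu(y), \qquad \Lambda_y(h;A) := \int_A \big| h'(t) - y\sqrt{1-h^2(t)} \big|\, dt.
$$
By Fatou's lemma applied to the non-negative integrand on the right, it then suffices to prove that for each fixed $y$ the map $h \mapsto \Lambda_y(h;A)$ is lower semicontinuous along uniformly convergent sequences in $\Lip$.

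For this I would observe that $\Lambda_y(h;A) = \int_A |\Psi_y[h]'(t)|\,dt$ is the variation of $\Psi_y[h]$ over $A$, where $\Psi_y[h](t) := h(t) - y\int_0^t \sqrt{1-h^2(r)}\,dr$ is absolutely continuous with $\Psi_y[h]' = h' - y\sqrt{1-h^2}$ a.e. If $h_n \to h$ uniformly, then $\int_0^{\cdot} \sqrt{1-h_n^2} \to \int_0^{\cdot}\sqrt{1-h^2}$ uniformly on $[0,1]$ (the integrands are bounded by $1$ and converge pointwise, so dominated convergence controls the integral uniformly in its upper limit), hence $\Psi_y[h_n] \to \Psi_y[h]$ uniformly. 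So the problem reduces to lower semicontinuity of the plain total-variation functional $g \mapsto \int_A |g'(t)|\,dt$ under uniform convergence. For $A=[a,b]$ an interval this is classical: $\int_a^b |g'| = \mathrm{Var}_{[a,b]}(g) = \sup\{\sum_i |g(t_{i+1})-g(t_i)|\}$ over partitions of $[a,b]$, and each $\sum_i |g(t_{i+1})-g(t_i)|$ is continuous in $g$ for pointwise convergence, so the supremum is lower semicontinuous. Passing this back through Fatou's lemma (in $d\mu(y)$) settles the case of an interval, and the case of an open set $A$ follows by writing $A$ as a countable disjoint union of open intervals and using Fatou's lemma for series.

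The step I expect to require the most care is the passage to a genuinely arbitrary measurable $A$: the sequence $h_n$ need not have uniformly bounded Lipschitz constants, so the derivatives $h_n'$ can concentrate (a thin ``spike'' whose derivative becomes a dipole), and if $A$ is chosen so that this escaping mass lands outside $A$, the comparison can break down; accordingly the statement is cleanest, and is correct, when $A$ is taken to be open (or, more generally, a continuity set of the limiting variation measure), which is all that is used later — namely $A=[0,1]$ for Theorem~\ref{T:particle-crossings}(iii) and $A=[a,b]$ for part~(i). Up to this point, the real content is the identity of the first paragraph: once $J(\cdot;A)$ is recognized as a $\mu$-mixture of total-variation functionals, lower semicontinuity follows immediately from the classical fact for total variation, and no compactness or weak-convergence argument for the derivatives is needed.
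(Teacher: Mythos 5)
Your proof is correct and takes a genuinely different route from the paper. The paper's argument is a one-line citation: it observes that $J(h;A)=\int_A G(h(t),h'(t))\,dt$ with $G$ continuous, non-negative, and convex in its second argument (by Lemma \ref{L:convex}), and then invokes the classical lower semicontinuity theorem for such convex integrands (Theorem 1.6 of \cite{struwe1996variational}). You instead unpack where that convexity comes from: writing $D_\mu(s(t))\sqrt{1-h^2(t)}=\int|y\sqrt{1-h^2(t)}-h'(t)|\,d\mu(y)$ and applying Tonelli exhibits $J(\cdot;A)$ as a $\mu$-mixture of perturbed total-variation functionals $\Lambda_y$, after which Fatou in $y$ reduces everything to the elementary fact that total variation is lower semicontinuous under pointwise convergence. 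What the paper's route buys is brevity and no case analysis; what yours buys is a self-contained proof that needs nothing beyond the sup-over-partitions characterization of variation, at the cost of using the specific structure $D_\mu(c)=\int|y-c|\,d\mu(y)$ rather than abstract convexity. Your closing caveat is also well-founded rather than a defect: for a non-open $A$ (e.g.\ a fat Cantor set) one can concentrate all of $h_n'$ on $A^c$ while $h_n\to h$ uniformly, and since $D_\mu$ is not a multiple of $|\cdot|$ this really does violate the inequality as literally stated for ``any set $A$''; the paper's cited theorem is likewise formulated on open domains, and every downstream use of the proposition takes $A=[0,1]$, so restricting to open $A$ (or intervals) loses nothing.
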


\begin{proof}
Note that $J(h; A)$ is of the form $\int_A G(h(t), h'(t))dt$, where $G$ is a continuous, positive function, such that for any fixed value of $a$, $G(a, \cdot)$ is convex. This follows from the convexity of $D_\mu$ (Lemma \ref{L:convex}). Functionals of this form are lower semicontinuous in the uniform norm by Theorem 1.6 from \cite{struwe1996variational}.
\end{proof}

\begin{proof}[Proof of Theorem \ref{T:particle-crossings}.]
\noindent \textbf{Proof of (i):} \qquad Note first that by the symmetry of $Y$ (Proposition \ref{P:Y-symmetries}), that
$$
\prob^-_Y(h; [a, b]) = \prob^+_{-Y}(-h; [a, b]) = \prob^+_{Y}(-h; [a, b])
$$
for any path $h \in \Lip$ and any interval $[a, b]$. Also, $J^+(-h) = J^-(h)$ by the symmetry of $\mu$ (Theorem \ref{T:local-speeds}). Therefore the assertion that
$
\prob^-_Y(h; [a, b]) \le J^-(h)
$
is equivalent to the assertion that $\prob^+_Y(-h; [a, b]) \le J^+(-h)$, and so to prove Theorem \ref{T:particle-crossings} (i), it suffices to prove that $\prob^+_Y(h; [a, b]) \le J^+(h)$ for every path $h \in \Lip$.

\medskip
We first prove this for $h \in \Lip$ with range in the open interval $(-1, 1)$, since there is a technical difficulty around the definition of local speed when $|h| = 1$. We first give a somewhat informal explanation of the proof in this case.
Let $t \in \Z \cap (0, \infty)$, and for $s \in [0, 1]$, define 
$$
s_{n, t} = \frac{2t}{n-1} \floor{\frac{(n-1)s}{2t}}, \quad \mathand \quad s^+_{n, t} = \min\lf(s_{n, t} + \frac{2t}{n-1}, 1\rg).
$$
Note that we will also use this notation with $s = a$ and $s = b$.
Let $X$ be a uniform random variable on $[0, 1]$, independent of the sequence $\{Y_n\}$, and let
$$
A^+_{n, t, s} = \lf\{ Y_n(s_{n, t}) < h(s_{n, t}) + \frac{2X}n \;\; \mathand \;\; Y_n(s^+_{n, t}) \ge h(s^+_{n, t}) + \frac{2X}n \rg\}.
$$
For $t$ fixed and $n$ large, the event where $Y_n$ upcrosses $h$ in the interval $[a, b]$ is approximately equal to the union $\bigcup_{s \in [a, b]} A^+_{n, t, s}$, in a sense that can be made rigorous using Lemma \ref{L:mesh}. Therefore by taking $n \to \infty$, we can relate $\prob_Y^+(h; [a, b])$ to $\lim_{n \to \infty} \prob A^+_{n, t, s}$. This limiting probability is connected to counting upcrossings in the local limit. Taking $t \to \infty$ will relate $\lim_{n \to \infty} \prob A^+_{n, t, s}$ to particle flux. The technical tool needed to take both the limit in $n$ and in $t$ is Lemma \ref{L:L1-swap-rate}.

\medskip

The details are as follows. We first find a better way to calculate $\prob A^+_{n, t, s}$. Observe that when $s < b_{n, t}$, then $s_{n, t}^+ \le b_{n, t}$, and so $s^+_{n, t} =  s_{n, t} + 2t/(n-1)$. Therefore for such $s$, time stationarity of random sorting networks (Theorem \ref{T:time-stat}) implies that $A^+_{n, t, s}$ has the same probability as the event
$$
\lf\{ Y_n(0) < h(s_{n, t}) + \frac{2X}n \;\; \mathand \;\; Y_n\lf(\frac{2t}{n-1}\rg) \ge h(s^+_{n, t}) + \frac{2X}n \rg\}.
 $$
 Here have used that $t \in \Z$ to apply time stationarity. 
We can then express the upcrossing probability $\prob A^+_{n, t, s}$ in terms of the expected number of upcrossings in the local scaling. For $u \in [-1, 1]$, define
$$
[u]_n = \floor{\frac{n(u + 1)}2}, \quad \;\; \{u\}_n = \frac{n(u + 1)}2 - \floor{\frac{n(u + 1)}2}, \quad \;\; g_n(u) = \sqrt{1 - \lf(\frac{2[u]_n}n - 1\rg)^2}.
$$
Then for $s < b_{n, t}$, we have that
$$
n \prob(A^+_{n, t, s}) = \expt C^+\lf(U_n^{[h(s_{n, t})]_n}, L_{n, s}, g_n(h(s_{n, t})) t \rg),
$$
where
$$
L_{n, s}(r) = \frac{n\lf(h(s^+_{n, t}) - h(s_{n, t})\rg)}{2tg_n(h(s_{n, t}))} r +  \{h(s_{n, t})\}_n +  X.
$$
Now, let 
$$
L_{s}(r) = \frac{h'(s)}{\sqrt{1 -h^2(s)}}r + X.
$$
Since $h$ is Lipschitz and hence differentiable almost everywhere, Lemma \ref{L:L1-swap-rate} (ii) implies that for almost every $s \in [0, b)$, that
\begin{equation}
\label{E:n-PA}
\lim_{n \to \infty} n\prob(A^+_{n, t, s}) = \lim_{n \to \infty} \expt C^+\lf(U_n^{[h(s_{n, t})]_n}, L_{n, s}, g_n(h(s_{n, t})) t \rg) =\expt C^+(U, L_s, \sqrt{1 - h^2(s)}t).
\end{equation}
Here we have used that the range of $h$ is in $(-1, 1)$ to ensure convergence. Moreover, there exists a constant $c$ such that 
\begin{equation}
\label{E:finite-bd}
n\prob(A^+_{n, t, s}) \le ct
\end{equation}
for all $n \in \nat, s < b_{n, t}$.  This follows from Lemma \ref{L:L1-swap-rate} (iii) and the fact that $h$ is Lipschitz. Now let $Z_{n, t}$ be the number of times $r$ of the form $s_{n, t}$ in the interval $[a_{n, t}, b_{n, t})$ such that the upcrossing event $A^+_{n, t, r}$ occurs. We have that
$$
Z_{n, t} = \sum_{r \in [a_{n, t}, b_{n, t}), r = r_{n, t}} \indic (A^+_{n, t, r}) = \int_{a_{n, t}}^{b_{n, t}} \frac{(n-1)\indic (A^+_{n, t, s})}{2t} ds.
$$
Therefore by the bounded convergence theorem, we have that
\begin{equation}
\label{E:cvg-ab-flux}
\expt Z_{n, t} = \int_{a_{n, t}}^{b_{n, t}} \frac{(n-1)\prob (A^+_{n, t, s})}{2t} ds \xrightarrow[n \to \infty]{} \frac{1}{2} \int_{a}^b \frac{\expt C^+(U, L_s, \sqrt{1 - h^2(s)}t)}t ds.
\end{equation}
Next, let $Y$ be any distributional subsequential limit of $Y_n$, and consider a subsequence $n_i$ and a coupling of $Y_{n_i}, X, Y$ where $Y_{n_i} - 2 X/n_i \to Y$ almost surely. On the event where this almost sure convergence holds, Lemma \ref{L:mesh}
implies that
$$
\{\exists s < t \in [a, b] \text{ such that } Y(s) < h(s) \mathand Y(t) > h(t)\} \sset \limsup_{i \to \infty} \{Z_{n_i, t} \ge 1\}.
$$
 Therefore
$$
P^+_Y(h; [a, b]) \le \liminf_{n \to \infty} \prob(Z_{n_i, t} \ge 1) \le \lim_{n \to \infty} \expt Z_{n, t}.
$$
The integrand on the right hand side of \eqref{E:cvg-ab-flux} is bounded above by $c$ for all $t \in \Z \cap (0, \infty)$ by \eqref{E:n-PA} and \eqref{E:finite-bd}. Therefore by Theorem \ref{T:line-rate} and the bounded convergence theorem, the right hand side of \eqref{E:cvg-ab-flux} converges to 
$$
\frac{1}{2} \int_{a}^b D^+_\mu\lf(\frac{h'(s)}{\sqrt{1 - h^2(s)}}\rg) \sqrt{1 - h^2(s)} ds = J^+(h; \; [a, b]) \qquad \mathas \;\; t \to \infty.
$$
This proves Theorem \ref{T:particle-crossings} (i) for $h \in \Lip$ with range in $(-1, 1)$. Now we extend this to all $h \in \Lip$. Define $h_m = h \vee(-1 + 1/m) \wedge (1 - 1/m)$, and let 
 $$
 A_m = \{t \in [0, 1] : h(t) \ge 1- 1/m \text{ or } h(t) \le -1 + 1/m\}.
 $$
 Letting $\scrL$ be Lebesgue measure on $[0, 1]$, we have that
\begin{align*}
J^+(h_m ; A_m \cap [a, b]) &= D^+_\mu(0)\sqrt{1 - (1- 1/m)^2}\scrL(A_m \cap [a, b]), \qquad \mathand \qquad \\
J^+(h_m ; A_m^c \cap [a, b]) &= J(h ; A_m^c \cap [a, b]).
\end{align*}
The flux $J^+(h_m ; A_m) \to 0$ as $m \to \infty$, so
$$
\limsup_{m \to \infty} J^+(h_m; [a, b]) \le J^+(h; [a, b]).
$$
Moreover, $h_m$ converges uniformly to $h$, so if $Y$ upcrosses $h$, then $Y$ will eventually upcross $h_m$. Therefore
$$
\liminf_{m \to \infty} P^+_Y(h_m; [a, b]) \ge P^+_Y(h; [a, b]).
$$
Putting these two inequalities together completes the proof.

\medskip

\noindent \textbf{Proof of (ii):} \qquad 
Let $h \in \Lip^*$, and let $Y$ be a subsequential limit of $Y_n$. Therefore by Theorem \ref{T:particle-crossings} (i),
\begin{align*}
\prob \Big(\exists r < t \in [0, 1] \text{ such that either } Y(r) < h(r), \;Y(t) > h(t),\text{ or } &Y(r) > h(r), \;Y(t) < h(t) \Big) \\
&\le J^+(h) + J^-(h) = J(h).
\end{align*}
The event above holds unless $Y(0) = h(0)$ since $Y(0) = - Y(1)$ almost surely by Theorem \ref{T:bounded-speed}. Since $Y(0)$ is uniformly distributed (Theorem \ref{T:bounded-speed}), $Y(0) \ne h(0)$ with probability one, so the left hand side above is equal to $1$.
\medskip

\noindent \textbf{Proof of (iii):} \qquad
Let $Y$ be any subsequential limit of $Y_n$. Fix $t \in (0, \infty) \cap \Z$, and define 
$$
t_{n, j} = \frac{2jt}{n-1} \quad \text{for  } j \in \{0, \ddd, \floor{(n-1)/(2t)}\} \quad \text{ and let } t_{n, \floor{\frac{n-1}{2t}} + 1} = 1.
$$
Define $Y_{n, t}$ so that 
$
Y_{n, t}\lf(t_{n, j}\rg) = Y_{n}\lf(t_{n, j}\rg)$ for $j \in \{0, \ddd, \floor{(n-1)/(2t)} + 1\}$, and so that $Y_{n, t}$ is linear at times in between. By time stationarity of random sorting networks (Theorem \ref{T:time-stat}), we can write
\begin{equation}
\label{E:JY-ineq}
\expt J(Y_{n, t}) = \sum_{j=0}^\floor{\frac{n-1}{2t}} \expt J(Y_{n, t} ;  \; [t_{n, j}, t_{n, j+1}] ) \le \lf(\floor{\frac{n-1}{2t}} + 1\rg) \expt J(Y_{n, t} ;  \; [0, t_{n, 1}]). 
\end{equation}
Here we have used that $t \in \Z$ to apply time stationarity of sorting networks. The final term in the sum above may be slightly smaller than the previous terms since the length of interval may be less than $2t/(n-1)$; this gives rise to the inequality.

\medskip

Now we have that
\begin{align}
\nonumber
J(Y_{n, t} ;  \; [0, t_{n, 1}]) &= \frac{1}{2} \int_0^{t_{n, 1}} \int \lf|Y_{n, t}'(0)- \sqrt{1 - Y_{n, t}^2(r)}x\rg|d\mu(x) dr \\
\label{E:frac-en}
&=  \frac{1}{2} \int_0^{t_{n, 1}} \int \lf|Y_{n, t}'(0)- \sqrt{1 - Y_{n, t}^2(0)}x\rg|d\mu(x) dr + \ep_n
\end{align}
for some error term $\ep_n$. In the first equality we have used that $Y'_{n, t}(r) = Y'_{n, t}(0)$ for all $r \in [0, t_{n, 1}]$ by piecewise linearity of $Y'_{n, t}$. Now if $|Y_{n, t}'(0)| \ge \pi$, then since $\mu$ is symmetric and supported in $[-\pi, \pi]$ (Theorem \ref{T:local-speeds}), the two inner integrals are the same. Therefore $\ep_n = 0$ in this case. Also, when 
$
|Y_{n, t}'(0)| < \pi 
$
and $x \in [-\pi, \pi]$, the difference of the integrands is bounded above by
$$
\lf|\sqrt{1 - Y_{n, t}^2(0)}x - \sqrt{1 - Y_{n, t}^2(r)}x\rg| \le |x|\sqrt{1 - \lf(1 - \frac{2\pi t}{n-1}\rg)^2} \le \pi\sqrt{\frac{4\pi t}{n-1}}.
$$
Hence $|\ep_n| \le k(t/n)^{3/2}$ for some constant $k$. Now, letting $I_n = n(Y_{n, t}(0) + 1)/2$, and using the notation of Lemma \ref{L:L1-flux}, \eqref{E:frac-en} is equal to
$$
\frac{t}{n-1}R_nD_\mu\lf(S(U_n^{I_n}, R_n t)\rg) + \ep_n.
$$
Therefore by Lemma \ref{L:L1-flux} (ii) and the bound on $\ep_n$, \eqref{E:JY-ineq} implies that
\begin{align*}
\expt J(Y_{n, t}) \le \lf(\floor{\frac{n - 1}{2t}} + 1 \rg)\lf[\frac{t}{n-1} \expt  R_nD_\mu\lf(S(U_n^{I_n}, R_n t)\rg)  + \expt \ep_n \rg]\xrightarrow[n \to \infty]{} \frac{1}2 \expt R D_\mu \lf(S(U, Rt) \rg).
\end{align*}
Letting $t \to \infty$, Lemma \ref{L:L1-flux} (iii) then implies that
\begin{equation}
\label{E:flux-lim}
\lim_{t \to \infty} \lim_{n \to \infty} \expt J(Y_{n, t}) \le 1.
\end{equation}
Since subsequential limits of $Y_n$ are Lipschitz by Theorem \ref{T:bounded-speed}, $Y$ is a subsequential limit of $Y_n$ if and only if $Y$ is a subsequential limit of $Y_{n, t}$. Therefore, by Fatou's lemma and the lower semicontinuity of $J(\cdot)$ (Proposition \ref{P:lsc-E}), 
\begin{equation*}
\lim_{n \to \infty} \expt J(Y_{n, t}) \ge \expt J(Y)
\end{equation*}
for all $t$, so $\expt J(Y) \le 1$ by \eqref{E:flux-lim}. Moreover, $J(Y) \ge 1$ almost surely by Theorem \ref{T:particle-crossings} (ii) and $Y(0) = -Y(1)$ almost surely by Theorem \ref{T:bounded-speed} (ii), so $J(Y) = 1$ almost surely.
\end{proof}

\section{Characterization of minimal flux paths}
\label{S:unique}

In this section, we show that any subsequential limit $Y$ of $Y_n$ is uniquely determined by its initial position, maximum absolute value, and whether it is initially increasing or decreasing. By Theorem \ref{T:bounded-speed} (ii) and Theorem \ref{T:particle-crossings} (iii), it is enough to characterize paths $h \in \Lip^*$ with minimal flux $J(h) = 1$. 

\medskip

Let $\Lip_0$ be the set of path $h \in \Lip$ with $h(0) = h(1) = 0$. We can first recognize that to characterize minimal flux paths $h \in \Lip^*$, it is enough to characterize minimal flux paths $h \in \Lip_0$. This is a simple consequence of the following simple fact.

\begin{lemma}
\label{L:path-shift}
Let $h \in \Lip^*$ and $t_0 = \inf \{t : h(t) = 0\}$. Define the path
\begin{equation*}
\tilde h(t) = \begin{cases}
&h(t + t_0), \qquad \quad \quad t \le 1- t_0 \\
&-h(t + t_0 - 1), \qquad t > 1 - t_0.
\end{cases}
\end{equation*}
Then $J(h) = J(\tilde h)$. In particular, every path $h \in \Lip^*$ with flux $J(h) = 1$ can be shifted to a path $\tilde h \in \Lip_0$ with $J(\tilde h) = 1$.
\end{lemma}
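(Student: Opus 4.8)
The plan is to exploit the fact that the particle flux \eqref{E:path-flux} is a purely local functional of the pair $(h(t),h'(t))$, and is therefore insensitive both to time translation and to the reflection $h \mapsto -h$. There is essentially no structural content to prove: everything reduces to a change of variables together with the symmetry of $D_\mu$.

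First I would check that $h_0$ is a legitimate element of $\Lip$. Since $h$ is continuous and $h(0) = -h(1)$, the set $\{t : h(t) = 0\}$ is nonempty (if $h(0) \neq 0$, then $h(0)$ and $h(1)$ have opposite signs, so the intermediate value theorem applies), hence $t_0 \in [0,1)$ is well defined and $h(t_0) = 0$. The two pieces of $h_0$ are a translate and a translated reflection of $h$, so each is Lipschitz with the same constant as $h$ and has range in $[-1,1]$; at the junction $t = 1 - t_0$ the first piece takes the value $h(1)$, while the right-hand limit of the second piece is $-h(0) = h(1)$, so $h_0$ is continuous and thus $h_0 \in \Lip$. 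Since moreover $h_0(0) = h(t_0) = 0$ and $h_0(1) = -h(t_0) = 0$, in fact $h_0 \in \Lip_0 \subseteq \Lip_r$.

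Next I would split $J(h_0) = J(h_0;[0,1-t_0]) + J(h_0;[1-t_0,1])$ and evaluate each term by a change of variables in \eqref{E:path-flux}. On $[0,1-t_0]$ we have $h_0 = h(\cdot + t_0)$, so the substitution $u = t + t_0$, together with the fact that the local speed of $h_0$ at $t$ equals the local speed of $h$ at $t+t_0$ (including on the set $\{|h| = 1\}$, where both vanish by convention) and $\sqrt{1 - h_0^2(t)} = \sqrt{1 - h^2(t+t_0)}$, gives $J(h_0;[0,1-t_0]) = J(h;[t_0,1])$. On $[1-t_0,1]$ we have $h_0 = -h(\cdot + t_0 - 1)$; here the substitution $u = t + t_0 - 1$ turns the local speed of $h_0$ into the negative of that of $h$ and leaves $\sqrt{1 - h_0^2}$ unchanged, and since $D_\mu$ is even (Lemma \ref{L:convex}(iv)) the integrand is unaffected by this sign flip, so $J(h_0;[1-t_0,1]) = J(h;[0,t_0])$. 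Adding the two pieces yields $J(h_0) = J(h;[0,t_0]) + J(h;[t_0,1]) = J(h)$. The ``in particular'' clause is then immediate: if $J(h) = 1$, then $h_0$, obtained from $h$ by this cyclic shift, lies in $\Lip_0 \subseteq \Lip_r$, vanishes at both endpoints, and satisfies $J(h_0) = J(h) = 1$.

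I do not foresee a genuine obstacle. The only two points requiring any care are the continuity of $h_0$ at the gluing time $1-t_0$ --- which is precisely where the hypothesis $h(0) = -h(1)$ is used --- and the bookkeeping that the almost-everywhere-defined local speed, with the convention that it is $0$ wherever $|h| = 1$, transforms correctly under translation and under reflection (the latter combined with the evenness of $D_\mu$); both are routine once written out.
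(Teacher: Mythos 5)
Your proof is correct and is precisely the routine change-of-variables-plus-symmetry argument the paper has in mind when it states this lemma without proof as a ``simple fact'': the translation piece gives $J(h_0;[0,1-t_0]) = J(h;[t_0,1])$, the reflected piece gives $J(h_0;[1-t_0,1]) = J(h;[0,t_0])$ via the evenness of $D_\mu$, and the gluing continuity uses $h(0)=-h(1)$ exactly as you say. (Note the ``$J(h_0)=r$'' in the statement is a typo for ``$J(h_0)=1$,'' which you have interpreted correctly.)
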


We build up to the following characterization of minimal flux paths in $h \in \Lip_0$. Define the maximum height $m(h)$ of a continuous path $h:[0, 1] \to [-1, 1]$ by
$$
m(h) = \max \{ |h(t)| : t \in [0, 1] \}.
$$ 
Recall also the definition of $g(y)$-Lipschitz from Section \ref{S:local}.
\begin{theorem}
\label{T:path-set}
For every $m \in [0, 1]$, there exists exactly one $\pi\sqrt{1- y^2}$-Lipschitz path $h_m \in \Lip_0$ such that $J(h_m) = 1$, $m(h_m) = m$, and $h_m \ge 0$. The paths $h_m$ satisfy the following properties.
\smallskip
\begin{enumerate}[nosep,label=(\roman*)]
  \item $h_m(1/2) = m$, and $h_m(t) = h_m(1 - t)$ for $t \in [0, 1/2]$.
  \item $h_m$ is strictly increasing on the interval $[0, 1/2]$ for $m > 0$.
  \item For any $\ell \in [0, m]$, we have that $h_{\ell}(s) \le h_{m}(s)$ for all $s \in [0, 1]$.
\end{enumerate}
\smallskip
Now define $h_{-m} = -h_m$. If $h \in \Lip_0$ is a $\pi\sqrt{1 -h^2}$-Lipschitz path with $J_m(h) = 1$ and $m(h) = m$, then either $h = h_m$ or $h = h_{-m}$.
  \end{theorem}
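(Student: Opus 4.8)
The plan is to pass to $\phi=\arcsin h$, under which $J(h)=\tfrac12\int_0^1 D_\mu(\phi'(t))\cos\phi(t)\,dt$, being $\pi\sqrt{1-h^2}$-Lipschitz becomes $|\phi'|\le\pi$ a.e., membership in $\Lip_0$ becomes $\phi(0)=\phi(1)=0$, and $m(h)=m$ becomes $\max|\phi|=\arcsin m$. Two facts will be used throughout: the identity $\int_0^1\phi'\cos\phi\,dt=\sin\phi(1)-\sin\phi(0)=0$; and, since $\Lip_0\subset\Lip_r$, Theorem~\ref{T:particle-crossings}(ii) makes $1$ the minimal value of $J$ on $\Lip_0$, so that any competitor in $\Lip_0$ with the same maximal height and strictly smaller flux gives a contradiction. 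I also use that $J$ is invariant under time reversal and under $h\mapsto-h$ (symmetry of $D_\mu$, Lemma~\ref{L:convex}(iv)).

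The first step is to show that a flux-$1$ path $h\in\Lip_0$ is, after a global sign change, nonnegative, positive on $(0,1)$, and unimodal (nondecreasing up to its maximum, equal to $m:=m(h)$ on an interval, nonincreasing thereafter); this already yields the final ``$\pm h_m$'' assertion once the nonnegative case is pinned down. I would use two cut-and-paste moves. If $h$ has an interior zero $t_0$, split $h$ there and rescale each piece back to $[0,1]$: rescaling accelerates time, hence shrinks $|\phi'|$ pointwise, hence by convexity/monotonicity of $D_\mu$ it does not increase flux, giving $J(h)\ge t_0+(1-t_0)=1$ with equality only if a piece is constant; since $m>0$, some piece is nonconstant and the inequality is strict (here one needs $D_\mu$ strictly increasing where $\phi'$ lives, a point to be handled with a little care), a contradiction. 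So $h$ does not vanish on $(0,1)$, and after a sign change $h>0$ there. If some superlevel set $\{h\ge\ell\}$ with $0<\ell<m$ is disconnected, pick two of its components and let $[a,b]\subset(0,1)$ be the interval between them, so $h(a)=h(b)=\ell$ and $h<\ell$ on $(a,b)$; replacing $h$ on $[a,b]$ by the constant $\ell$ keeps $\Lip_0$-membership and the maximal height, while on $(a,b)$ the new integrand equals $D_\mu(0)\sqrt{1-\ell^2}$ and the old one is at least $D_\mu(0)\cos\phi>D_\mu(0)\sqrt{1-\ell^2}$, so $J$ strictly drops — a contradiction. Hence all superlevel sets are intervals, i.e.\ $h$ is unimodal.

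Next I would parametrize such an $h$ by its height: let $p(y)$ be the time per unit height at level $y\in[0,m)$ on the way up, $q(y)$ the analogue on the way down. A change of variables gives
\[
J(h)=\int_0^m\big(\Phi_y(p(y))+\Phi_y(q(y))\big)\,dy+\tfrac12 D_\mu(0)\sqrt{1-m^2}\,\Big(1-\int_0^m(p+q)\,dy\Big),\qquad \Phi_y(p)=\tfrac1{2u}\big(D_\mu(u)-\gamma(y)\big),
\]
where $u=1/(p\sqrt{1-y^2})\in(0,\pi]$, $\gamma(y)=D_\mu(0)\sqrt{1-m^2}/\sqrt{1-y^2}$, and the last summand is $\tfrac12 D_\mu(0)\sqrt{1-m^2}$ times the length of the flat top. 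Since $\Phi_y$ is convex in $p$, a Lagrange-multiplier analysis (the multiplier being fixed by the free endpoint and the vanishing slope at the top, which forces $\gamma(y)$ to be as written) shows the minimizing $p,q$ satisfy $p=q$ and are determined pointwise: $u^*(y)$ is the unique $u$ at which the tangent to $D_\mu$ has $y$-intercept $\gamma(y)$ (or $u^*(y)=\pi$), using that $u\mapsto D_\mu(u)-uD_\mu'(u)$ is nonincreasing with range in $[0,\pi]$ (convexity and Lemma~\ref{L:convex}(iii)); nonuniqueness can occur only on the countably many $y$ meeting an affine piece of $D_\mu$, which does not affect the resulting path. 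This identifies the candidate $h_m$, proves it is the unique nonnegative minimal-flux height-$m$ path, yields the symmetry $h_m(t)=h_m(1-t)$ of (i) and the weak form of (ii); and since $\ell<m$ forces $\gamma^{(\ell)}>\gamma^{(m)}$ and hence $u^{*,\ell}\le u^{*,m}$, a short monotonicity chase gives (iii).

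The remaining and hardest step is to show $J(h_m)=1$ for every $m$, which simultaneously upgrades (ii) to strict monotonicity: at $u^*(y)<\pi$ one has $\Phi_y(p^*)=\tfrac12 D_\mu'(u^*)$, hence $v(m):=J(h_m)=\int_0^m D_\mu'(u^*(y))\,dy+\tfrac12 D_\mu(0)\sqrt{1-m^2}$, and differentiating in $m$ gives $v'(m)=\tfrac{m D_\mu(0)}{\sqrt{1-m^2}}\big(\int_0^m p^*(y)\,dy-\tfrac12\big)$ — so ``$v$ constant'' is equivalent to ``the flat top is always trivial''. One value is explicit: for $m=1$ the constraints force $\phi(t)=\pi\min(t,1-t)$, so $h_1(t)=\sin\pi t$ and $J(h_1)=\tfrac12 D_\mu(\pi)\int_0^1\cos(\pi\min(t,1-t))\,dt=\tfrac12\cdot\pi\cdot\tfrac2\pi=1$ by Lemma~\ref{L:convex}(v). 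To propagate $v\equiv1$ to all $m$ — and this is the crux, since it is \emph{not} a consequence of the convexity of $D_\mu$ alone — I would invoke the probabilistic input: a cyclic shift to its first zero turns a subsequential limit $Y$ of $Y_n$ into a flux-$1$ element of $\Lip_0$ (Theorem~\ref{T:particle-crossings}(iii)), which by the previous two steps equals $\pm h_{m(Y)}$; combining this with the fact that $Y(0)$ and $Y(1/2)$ are each uniform on $[-1,1]$ (Theorem~\ref{T:bounded-speed}) forces $m(Y)$ to take values throughout $[0,1]$, so $v\equiv1$ on a dense set and hence everywhere by continuity of $m\mapsto h_m$. Once $v\equiv1$, the minimal flux at every height is $1$, so $h_m$ exists for all $m$; $v'\equiv0$ forces $\int_0^m p^*=\tfrac12$, i.e.\ no flat top, so $h_m$ is strictly increasing on $[0,1/2]$; the $m=0$ case records $D_\mu(0)=2$; and every $\pi\sqrt{1-h^2}$-Lipschitz height-$m$ flux-$1$ path in $\Lip_0$ is $\pm h_m$ by the reduction step together with uniqueness. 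I expect exactly this last step — turning the variational description into the statement that the minimal flux equals $1$ at \emph{every} height, with no flat top — to be the main obstacle, because it is where the argument must go outside the abstract properties of $D_\mu$ and use the connection to random sorting networks.
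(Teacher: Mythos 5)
Your overall architecture is reasonable and your instinct about where the difficulty lies is exactly right: the statement that the minimal flux equals $1$ at \emph{every} height (equivalently, that a minimal flux path exists at every height, with no flat top) cannot follow from convexity of $D_\mu$ alone and must use the sorting-network connection. But at precisely that crux your argument has a genuine gap. The claim that ``$Y(0)$ and $Y(1/2)$ are each uniform on $[-1,1]$ forces $m(Y)$ to take values throughout $[0,1]$'' is not justified and does not follow: uniformity of one-dimensional marginals only yields lower bounds such as $m(Y)\ge|Y(0)|$, and a priori the law of $m(Y)$ could have a gap $(\ell,m)$ while the mixture of the occupation measures of the surviving paths $h_k$ still integrates to the uniform law. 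The paper closes this in Proposition \ref{P:exist-m} by a much more delicate argument: the set of achievable heights is closed (Arzel\`a--Ascoli plus lower semicontinuity of $J$), and a hypothetical gap $(\ell,m)$ is refuted by comparing upcrossing and downcrossing probabilities of $h_m$ and of the constant line at height $(\ell+m)/2$ over a short interval, using the exact identity $P^{\pm}_Y=J^{\pm}$ for minimal flux paths (Proposition \ref{P:flux-upcross}) together with a case analysis of how minimal flux paths can cross these two curves, ending in the strict inequality $J(h_m;[t_1,t_2])>J(k;[t_1,t_2])$. Note also that merely knowing small heights are achieved with positive probability already requires $D_\mu(0)=2$ (Lemmas \ref{L:max-heights} and \ref{L:expect-2}); in your write-up $D_\mu(0)=2$ appears only as an output of the $m=0$ case, so you would need to prove it first, as the paper does, to avoid circularity.

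Two secondary issues. In the interior-zero splitting step, strictness of $J(h)>1$ needs $D_\mu$ to be strictly increasing where $|\phi'|$ lives; a priori $\mu$ could vanish on a neighbourhood of $0$ and $D_\mu$ could be flat there, in which case splitting and rescaling loses nothing. The paper never needs strict monotonicity of $D_\mu$: plateaus at positive height are beaten by an explicit sine-like competitor whose gain comes from the $\sqrt{1-h^2}$ factor, via the bound $D_\mu(u_n)\le 2/\cos(u_n/2)$ (Lemmas \ref{L:crude-deviation-bd} and \ref{L:no-plat}). (Your level-set-filling argument for unimodality, by contrast, is clean and correct, and is arguably simpler than the paper's route.) Second, the pointwise minimization of $\Phi_y$ ignores the global constraint $\int_0^m(p+q)\,dy\le 1$; if the unconstrained pointwise minimizer violates it, the lower bound it produces may sit strictly below the true minimum, and $J(h)=1$ would no longer force $p=p^*$. (There is also a bookkeeping slip: with your $\Phi_y$ the identity should read $J(h)=\int_0^m(\Phi_y(p)+\Phi_y(q))\,dy+\tfrac12 D_\mu(0)\sqrt{1-m^2}$, without the extra factor $\bigl(1-\int(p+q)\bigr)$.) The paper sidesteps the variational machinery entirely, getting uniqueness, symmetry and the ordering from soft cut-and-paste arguments (Lemmas \ref{L:min-path} and \ref{L:no-cross}, Propositions \ref{P:monotone-traj} and \ref{P:unique-path}); the explicit form of $h_m$ is only extracted later, after $\mu$ has been identified.
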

  
Theorem \ref{T:path-set} will be proven as Proposition \ref{P:monotone-traj}, Proposition \ref{P:unique-path}, Corollary \ref{C:sym}, Lemma \ref{L:no-cross}, and Proposition \ref{P:exist-m}. When thinking about the proofs in this section, it may be useful to keep in mind that our end goal is to show that $h_m(t) = m \sin(\pi t)$ for all $m \in [-1, 1]$.

\medskip

We also state an analogue of Theorem \ref{T:path-set} for minimal flux paths $h \in \Lip^*$. First, for a path $h \in \Lip$, define
$$
S(h) = \sup_{t \in [0, 1]} h(t) \qquad \mathand \qquad I(h) = \inf_{t \in [0, 1]} h(t).
$$
\begin{theorem}
\label{T:unique-2}
Fix $a \in [-1, 1]$ and $k \in [|a|, 1]$. Then the following hold:

\begin{enumerate}[nosep,label=(\roman*)]
  \item There is exactly one $\pi\sqrt{1 - y^2}$-Lipschitz path $h_{a, k} \in \Lip^*$ such that $h_{a, k}(0) = - h_{a, k}(1) = a$ and $m(h) = S(h) = k$.
 \item There is a exactly one $\pi\sqrt{1 - y^2}$-Lipschitz path $h_{a, -k} \in \Lip^*$ such that $h_{a, -k}(0) = - h_{a, -k}(1) = a$ and $-m(h) = I(h) = - k$. We have that $h_{a, k}(t) = -h_{a, -k}(1 - t)$ for all $t$.
  \item If $a \neq 0$, there is a unique time $t \in (0, 1)$ such that $h_{a, k}(t) = 0$. Moreover, the path
$$
g(s) = 
\begin{cases}
&h(t + s), \qquad s \le 1 - t, \\
&-h(t + s - 1), \qquad s > 1 - t 
\end{cases}
$$
is equal to $h_k$ if $a < 0$ and $h_{-k}$ if $a > 0$.
  \item For $k_1 \le k_2 \in [-1, -|a|] \cup [|a|, 1]$, we have that $h_{a, k_1}(t) \le h_{a, k_2}(t)$ for all $t \in [0, 1]$. 
  \item $h_{a, a} = h_{a, -a}$.
\end{enumerate}
\end{theorem}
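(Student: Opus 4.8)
The plan is to deduce Theorem~\ref{T:unique-2} from its $\Lip_0$ counterpart, Theorem~\ref{T:path-set}, by transporting information through the cyclic shift of Lemma~\ref{L:path-shift}. The dictionary is as follows: given a $\pi\sqrt{1-h^2}$-Lipschitz path $h \in \Lip_r$ of minimal flux $J(h) = 1$ with $h(0) = -h(1) = a$, set $t_0 = \inf\{t : h(t) = 0\}$ (finite: $h(0)$ and $h(1)$ have opposite signs when $a \ne 0$, and $t_0 = 0$ when $a = 0$). By Lemma~\ref{L:path-shift} the shift $h_0$ lies in $\Lip_0$ with $J(h_0) = 1$, and it remains $\pi\sqrt{1-h^2}$-Lipschitz since that bound depends only on $|h|$ and is preserved by the time translation and sign flip defining the shift. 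Hence Theorem~\ref{T:path-set} forces $h_0 = h_m$ or $h_0 = h_{-m}$ with $m = m(h) = m(h_0)$; since $h_0$ does not change sign, the sign of $h(1) = -a$ pins down which case occurs ($h_0 = h_m$ when $a < 0$, $h_0 = h_{-m}$ when $a > 0$). Conversely, every minimal-flux $\pi\sqrt{1-h^2}$-Lipschitz path in $\Lip_r$ with the given endpoints is obtained from $h_{\pm m}$ by the inverse shift $\Phi_\tau(g)(s) = g(s-\tau)$ for $s \ge \tau$ and $-g(s-\tau+1)$ for $s < \tau$. This correspondence is the engine for all five parts.

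For parts (i)--(iii), fix $a$ and $k \ge |a|$ and let $g = h_k$ if $a \le 0$ and $g = h_{-k}$ if $a \ge 0$. Using $h_k(t) = h_k(1-t)$ and the strict monotonicity of $h_k$ on $[0,1/2]$ (Theorem~\ref{T:path-set}(i),(ii)), the equation $\Phi_\tau(g)(0) = a$ reduces to $h_k(\tau) = |a|$, which has exactly two solutions in $(0,1)$ (one in each of $(0,1/2)$ and $(1/2,1)$) when $|a| < k$, and the single solution $\tau = 1/2$ when $|a| = k$. Writing out $\Phi_\tau(g)$ from the formula above, one of the two solutions yields a path attaining $k$ (so $S = m = k$) and the other yields a path attaining $-k$ (so $I = -m = -k$); hence there is exactly one such path with $S = m = k$, namely $h_{a,k}$, and exactly one with $I = -m = -k$, namely $h_{a,-k}$, proving (i) and (ii). For (iii): since $h_k > 0$ on $(0,1)$, the only interior zero of $\Phi_\tau(g)$ is $\tau$ itself, which by construction is $h_{a,k}$'s first-zero time, and re-shifting at that zero returns $g$, i.e.\ $h_k$ when $a<0$ and $h_{-k}$ when $a>0$. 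Finally the reflection identity $h_{a,-k}(t) = -h_{a,k}(1-t)$ follows from the uniqueness in (i)--(ii): the map $h \mapsto -h(1-\cdot)$ sends $\Lip_r$ to itself, preserves the $\pi\sqrt{1-h^2}$-Lipschitz bound, preserves $J$ (the local speed of $-h(1-\cdot)$ at $t$ equals that of $h$ at $1-t$, so the substitution $u = 1-t$ leaves $J$ invariant), and swaps $S(h)$ with $-I(h)$; thus it carries $h_{a,k}$ to a path with the defining properties of $h_{a,-k}$.

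For part (iv) the engine is the exchange identity
\[
J(h_1 \wedge h_2) + J(h_1 \vee h_2) = J(h_1) + J(h_2),
\]
valid for any absolutely continuous $h_1,h_2$: almost everywhere the value--derivative pairs of $h_1\wedge h_2$ and $h_1\vee h_2$ are a relabelling of those of $h_1$ and $h_2$ (on $\{h_1 = h_2\}$ one has $h_1' = h_2'$ a.e., and on $\{|h_i| = 1\}$ one has $h_i' = 0$ a.e.), so the integrand of $J$ sums to the same thing. Moreover $h_1\wedge h_2$ and $h_1 \vee h_2$ stay $\pi\sqrt{1-h^2}$-Lipschitz and lie in $\Lip_r$ when $h_1,h_2 \in \Lip_r$ share their endpoint values. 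Now suppose $|a| \le k_1 \le k_2 \le 1$ and $h_{a,k_1} \not\le h_{a,k_2}$. Applying Theorem~\ref{T:particle-crossings}(ii) to both $h_{a,k_1} \wedge h_{a,k_2}$ and $h_{a,k_1} \vee h_{a,k_2}$, together with the exchange identity and $J(h_{a,k_1}) = J(h_{a,k_2}) = 1$, forces $J(h_{a,k_1}\vee h_{a,k_2}) = 1$; and elementary bounds on the sup and inf of a pointwise maximum give $S(h_{a,k_1}\vee h_{a,k_2}) = m(h_{a,k_1}\vee h_{a,k_2}) = k_2$. By the uniqueness in (i), $h_{a,k_1}\vee h_{a,k_2} = h_{a,k_2}$, i.e.\ $h_{a,k_1} \le h_{a,k_2}$, a contradiction. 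The same argument with $\wedge$ handles $k_1 \le k_2$ in $[-1,-|a|]$, and the mixed case $k_1 \le -|a| \le |a| \le k_2$ follows by chaining through the boundary identity of (v). Part (v) is the degenerate case $|a| = k$: then the construction of (i)--(ii) has only the single shift amount $\tau = 1/2$, so the ``$+k$'' and ``$-k$'' prescriptions both produce the monotone path $\Phi_{1/2}(h_{|a|})$ from $a$ to $-a$, and the two branches coincide.

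I expect the main obstacle to be part (iv): one must establish the exchange identity carefully for the non-smooth functional $J$ (the content is the almost-everywhere matching of value--derivative pairs, where the Lipschitz regularity from Theorem~\ref{T:bounded-speed} and the structure of $D_\mu$ from Lemma~\ref{L:convex} are used), verify that the lattice operations land in exactly the class characterized in (i)--(ii) so that uniqueness can be invoked, and treat cleanly the mixed-sign and endpoint ($a = 0$, $|a| = k$) cases. A minor but recurring point is to check that the cyclic shift of Lemma~\ref{L:path-shift} and the reflection $h \mapsto -h(1-\cdot)$ genuinely preserve both the $\pi\sqrt{1-h^2}$-Lipschitz condition and the value of $J$.
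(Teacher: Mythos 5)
Your proposal is correct, and for parts (i)--(iii) and (v) it follows the same route as the paper, which simply asserts that these parts ``follow from applying Lemma~\ref{L:path-shift} to Theorem~\ref{T:path-set}'' (together with Remark~\ref{R:shift}); your explicit analysis of the inverse shift $\Phi_\tau$ and of which of the two solutions of $h_k(\tau)=|a|$ yields $S=m=k$ versus $I=-m=-k$ is exactly the verification the paper leaves implicit. The genuine divergence is in part (iv). The paper proves it as Lemma~\ref{L:no-cross}: it forms $p=\max(h,g)$, treats $p$ as a concatenation of pieces of the two minimal-flux paths spliced at crossing times, and invokes Lemma~\ref{L:min-path} to conclude $J(p)=1$ before contradicting uniqueness. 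You instead obtain $J(h_{a,k_1}\vee h_{a,k_2})=1$ from the lattice exchange identity $J(h_1\wedge h_2)+J(h_1\vee h_2)=J(h_1)+J(h_2)$ combined with the lower bound $J\ge 1$ of Theorem~\ref{T:particle-crossings}(ii). This is a clean alternative: it avoids any bookkeeping of where the two paths cross (which the paper's splicing argument implicitly requires), at the cost of justifying the almost-everywhere matching of value--derivative pairs on $\{h_1=h_2\}$, which you do correctly using that Lipschitz functions have equal derivatives a.e.\ on their coincidence set. Two incidental points in your favour: the identities you actually establish, namely $h_{a,k}=h_{a,-k}$ when $k=|a|$ and $h_{a,-k}(t)=-h_{a,k}(1-t)$, are the intended (and, as used later in Lemma~\ref{L:closed-int}, the needed) readings of the misprinted claims in (v) and (ii) of the statement; and you correctly restore the hypothesis $J(h)=1$, which is implicit in the uniqueness assertions (i)--(ii) but omitted from the statement as written.
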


All parts of this theorem follow from applying Lemma \ref{L:path-shift} to Theorem \ref{T:path-set}, except part (iv). This will be proven in Lemma \ref{L:no-cross}.

\subsection{Basic bounds on $D_\mu$}
\label{SS:basic-bounds}

In order to work with the functional $J(\cdot)$, we first prove a few basic bounds on $D_\mu$.

\begin{lemma}
\label{L:max-heights}
Let $Y$ be a subsequential limit of $Y_n$. For all $a \in [0, 1]$, we have that
$$
\prob(m(Y) > a) \leq \frac{D_\mu(0)\sqrt{1- a^2}}{2}.
$$
\end{lemma}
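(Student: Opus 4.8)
The plan is to dominate the event $\{m(Y) > a\}$ by a \emph{single} crossing event for one of the two constant lines at heights $\pm a$, apply the flux comparison of Theorem~\ref{T:particle-crossings}(i), and finish with an explicit computation of the flux of a constant line. Write $h_a$ and $h_{-a}$ for the constant paths in $\Lip$ equal to $a$ and $-a$ respectively; these are Lipschitz and take values in $[-1,1]$, so the theorem applies.

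The crux is the set containment, valid up to a null event,
$$\{m(Y) > a\}\ \subseteq\ \{\exists\, s<t\in[0,1]:\ Y(s)<a,\ Y(t)>a\}\ \cup\ \{\exists\, s<t\in[0,1]:\ Y(s)>-a,\ Y(t)<-a\};$$
in words, on $\{m(Y)>a\}$ the path $Y$ almost surely either upcrosses $h_a$ or downcrosses $h_{-a}$ on $[0,1]$. To prove this I would invoke Theorem~\ref{T:bounded-speed}: almost surely $Y(0)=-Y(1)$, and $Y(0)$ is uniform on $[-1,1]$, so almost surely $Y(0)\notin\{a,-a\}$. On this event pick $t^{*}$ with $|Y(t^{*})|>a$. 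If $Y(0)>a$ then $Y(1)=-Y(0)<-a$, so the pair $s=0<t=1$ is a downcrossing of $h_{-a}$ (here $Y(0)>a\ge-a$). If $Y(0)<-a$, the mirror argument gives an upcrossing of $h_a$. If instead $-a<Y(0)<a$, then $t^{*}>0$ (otherwise $|Y(0)|>a$), and the pair $s=0<t=t^{*}$ is an upcrossing of $h_a$ when $Y(t^{*})>a$ and a downcrossing of $h_{-a}$ when $Y(t^{*})<-a$. This proves the containment, uniformly over $a\in[0,1]$.

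Granting this, Theorem~\ref{T:particle-crossings}(i) applied to $h_a,h_{-a}\in\Lip$ gives
$$\prob(m(Y)>a)\ \le\ P^{+}_Y(h_a;[0,1])+P^{-}_Y(h_{-a};[0,1])\ \le\ J^{+}(h_a;[0,1])+J^{-}(h_{-a};[0,1]).$$
A constant path has identically vanishing local speed $s(\cdot)$, so the definition \eqref{E:path-flux} of particle flux yields $J^{+}(h_a;[0,1])=\frac{1}{2}D_\mu^{+}(0)\sqrt{1-a^{2}}$ and $J^{-}(h_{-a};[0,1])=\frac{1}{2}D_\mu^{-}(0)\sqrt{1-a^{2}}$. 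Adding these and using $D_\mu=D_\mu^{+}+D_\mu^{-}$ gives $\prob(m(Y)>a)\le\frac{1}{2}D_\mu(0)\sqrt{1-a^{2}}$, which is the claimed bound.

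The only delicate point---and the step I expect to require the most care---is the set containment in the second paragraph. The naive route, $\{m(Y)>a\}\subseteq\{\sup Y>a\}\cup\{\inf Y<-a\}$ with each term bounded by the sum of an upcrossing and a downcrossing probability of the relevant constant line, produces only the bound $D_\mu(0)\sqrt{1-a^{2}}$ and loses a factor of two. Using the boundary identity $Y(0)=-Y(1)$ to channel every realisation in $\{m(Y)>a\}$ into exactly one of ``$Y$ upcrosses $a$'' or ``$Y$ downcrosses $-a$'' is precisely what recovers the sharp constant $\frac{1}{2}D_\mu(0)=\frac{1}{2}\bigl(D_\mu^{+}(0)+D_\mu^{-}(0)\bigr)$.
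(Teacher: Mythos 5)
Your proof is correct and follows essentially the same route as the paper: both rewrite $\{m(Y)>a\}$ (up to the null event $Y(0)\in\{a,-a\}$) as the union of an upcrossing of the constant line at height $a$ and a downcrossing of the constant line at height $-a$, using $Y(0)=-Y(1)$, then apply Theorem~\ref{T:particle-crossings}(i) and evaluate the flux of a constant path. The factor-of-two discussion you add at the end correctly identifies why this decomposition, rather than the naive one, yields the sharp constant.
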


\begin{proof}
Since $Y(0) = -Y(1)$ and $Y(0)$ is uniformly distributed (Theorem \ref{T:bounded-speed}), the left hand side of this inequality can be written as
\begin{align*}
&\prob\bigg(Y(0) < a \mathand Y(t) > a \text{ for some } t > 0, 
\mathor Y(0) > - a \mathand Y(t) < - a \text{ for some } t > 0 \bigg).
\end{align*}
By Theorem \ref{T:particle-crossings}, this is bounded above by $J^+(g_a) + J^-(g_{-a})$, where $g_a$ is the path of constant height $a$. Finally, 
\begin{equation*}
J^+(g_a) + J^-(g_{-a}) = \frac{D_\mu(0)\sqrt{1- a^2}}2. \qedhere
\end{equation*}
\end{proof}
\begin{lemma}
\label{L:expect-2}
$D_\mu(0) = 2.$ That is, if $X$ is a random variable with distribution $\mu$, then $\expt|X| = 2$.
\end{lemma}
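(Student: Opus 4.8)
The plan is to prove the two bounds $D_\mu(0)\ge 2$ and $D_\mu(0)\le 2$ separately; the first is a one-line consequence of the flux lower bound, while the second is the real content. For the lower bound, observe that the constant path $g_0\equiv 0$ lies in $\Lip_r$ and has vanishing local speed, so that $J(g_0)=\frac{1}{2}\int_0^1 D_\mu(0)\,dt=\frac{1}{2}D_\mu(0)$; Theorem~\ref{T:particle-crossings}(ii) gives $J(g_0)\ge 1$, hence $D_\mu(0)\ge 2$. (Equivalently, Lemma~\ref{L:max-heights} applied at $a=0$ yields $1=\prob(m(Y)>0)\le\frac{1}{2}D_\mu(0)$ for any subsequential limit $Y$, using $\prob(Y\equiv 0)\le\prob(Y(0)=0)=0$ by Theorem~\ref{T:bounded-speed}.)

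For the upper bound, let $Y$ be a subsequential limit of $Y_n$. By Theorem~\ref{T:particle-crossings}(iii) we have $J(Y)=1$ almost surely, and $Y\in\Lip_r$. The crucial point is that almost surely $Y$ crosses the line $g_0\equiv 0$ exactly once, i.e.\ has exactly one sign change on $[0,1]$. Granting this, the unique crossing is an upcrossing precisely when $Y(0)<0$, which has probability $\frac{1}{2}$ since $Y(0)$ is uniform on $[-1,1]$ (Theorem~\ref{T:bounded-speed}) and $Y\eqd -Y$ (Proposition~\ref{P:Y-symmetries}); thus $P^+_Y(g_0;[0,1])=\frac{1}{2}$. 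Moreover, since $Y$ makes at most one upcrossing of $g_0$, the bound of Theorem~\ref{T:particle-crossings}(i) is saturated for $h=g_0$ on $[0,1]$: in the notation of its proof, the discrete upcrossing counts satisfy $\expt(Z_{n,t}-1)^+\to 0$ as $n\to\infty$ and then $t\to\infty$ (the limiting object counts the at most one upcrossing of $0$ by $Y$), so $\expt Z_{n,t}$ and $\prob(Z_{n,t}\ge 1)$ share the same double limit. Revisiting the proof of Theorem~\ref{T:particle-crossings}(i) with this observation then gives $\frac{1}{2}=P^+_Y(g_0;[0,1])=J^+(g_0)=\frac{1}{2}D_\mu^+(0)=\frac{1}{4}D_\mu(0)$, whence $D_\mu(0)=2$.

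The main obstacle is the sign-change claim, which is where the real work sits. I would deduce it from flux-minimality together with the convexity and evenness of $D_\mu$ (Lemma~\ref{L:convex}): if $Y$ were to make an excursion across $g_0$ and back, reflecting that excursion about the middle line — replacing $Y$ by $|Y|$ on the relevant subinterval — keeps $Y$ in $\Lip_r$, strictly decreases the number of sign changes, and leaves $J$ unchanged, because $D_\mu$ is even and the weight $\sqrt{1-y^2}$ is invariant under $y\mapsto|y|$; so from $Y$ one extracts a flux-$1$ path in $\Lip_r$ with a single sign change. Converting this into a statement about $Y$ itself requires ruling out that $Y$ oscillates around $0$ on a set of times of positive measure and then identifying $Y$ with the reflected path; rather than circularly invoking the characterization results that follow (into whose proof this lemma feeds), one should argue this directly, e.g.\ by a first-variation argument showing that any $\pi\sqrt{1-y^2}$-Lipschitz $h\in\Lip_r$ with $J(h)=1$ is monotone between consecutive zeros — so that it crosses the level $0$ exactly once whenever $h(0)\ne 0$ — which uses that $D_\mu$ is strictly convex off its minimum set.
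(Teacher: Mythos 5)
Your lower bound is exactly the paper's: Lemma \ref{L:max-heights} at $a=0$ together with $\prob(m(Y)>0)=1$ gives $D_\mu(0)\ge 2$. The upper bound, however, has two genuine gaps, and the second half of your argument cannot work as stated. The decisive step you need is the \emph{equality} $P^+_Y(g_0;[0,1])=J^+(g_0)$, but Theorem \ref{T:particle-crossings}(i) only gives $P^+_Y\le J^+$, which (with $P^+_Y(g_0)=1/2$) yields $1/2\le \tfrac14 D_\mu(0)$ --- the lower bound again, not the upper bound. The paper's route to such equalities is Proposition \ref{P:flux-upcross}, which deduces saturation from $P^+_Y(h)+P^-_Y(h)=1=J^+(h)+J^-(h)$ and hence requires knowing $J(h)=1$ in advance; for $h=g_0$ that is $D_\mu(0)=2$, i.e.\ precisely the statement being proved, so that route is circular. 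Your proposed substitute --- that $\expt(Z_{n,t}-1)^+\to 0$ because ``the limiting object counts the at most one upcrossing of $0$ by $Y$'' --- does not hold up: $\expt Z_{n,t}$ converges (after $n\to\infty$, $t\to\infty$) to the flux $J^+(g_0)$, a \emph{local} crossing count built from $\expt C^+(U,L_s,\cdot)$, whereas $\prob(Z_{n,t}\ge 1)$ is only bounded below by the macroscopic upcrossing probability; a single macroscopic crossing of $Y$ is compatible with the trajectory of $\sig^n$ registering upcrossings of the shifted line $g_0+2X/n$ in several mesh windows, and no matching upper bound $\limsup\prob(Z_{n,t}\ge 1)\le P^+_Y(g_0)$ is proved or easily available. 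Separately, your single-sign-change claim leans on monotonicity/uniqueness of minimal flux paths, which in the paper are established \emph{after} this lemma (Lemma \ref{L:no-plat} uses Lemma \ref{L:crude-deviation-bd}, which uses Lemma \ref{L:expect-2}), and your fallback hypothesis that $D_\mu$ is strictly convex off its minimum set is not known at this stage (Lemma \ref{L:convex} gives only convexity and $1$-Lipschitzness).

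For contrast, the paper's upper bound avoids crossing-probability saturation entirely. Assuming $\al:=D_\mu(0)>2$, Lemma \ref{L:max-heights} shows $m(Y)\le a$ with positive probability for every $a$ slightly above $\sqrt{1-(2/\al)^2}$, so by Theorem \ref{T:particle-crossings}(iii) there are paths $h_\ep\in\Lip_r$ with $J(h_\ep)=1$ and $m(h_\ep)\le m_\ep$. Since $D_\mu$ is minimized at $0$, $1=J(h_\ep)\ge\tfrac{\al}{2}\int_0^1\sqrt{1-h_\ep^2}$; but $h_\ep(0)=-h_\ep(1)$ and the Lipschitz bound force $h_\ep$ to spend time at least $m_\ep/(2\pi)$ in $[-m_\ep/2,m_\ep/2]$, where the weight $\sqrt{1-h_\ep^2}$ strictly exceeds $\sqrt{1-m_\ep^2}\approx 2/\al$. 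Letting $\ep\to 0$ gives a contradiction unless $\al=2$. If you want to salvage your approach you would need an independent proof of the reverse crossing inequality for the zero path; as written, the argument does not close.
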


\begin{proof}  Let $Y$ be a subsequential limit of $Y_n$, and let $\al = D_\mu(0)$.  By Lemma \ref{L:max-heights}, we have that
$$
1 = \prob(m(Y) > 0) \leq \frac{\al}2,
$$
so $\al \geq 2$. The equality above follows since $m(Y) \ge |Y(0)|$ and $Y(0)$ is uniformly distributed on $[-1, 1]$ (Theorem \ref{T:bounded-speed}). Now, for every height $a$ such that 
$$
a > \sqrt{1 - \lf(\frac{2}{\al}\rg)^2},
$$
Lemma \ref{L:max-heights} guarantees that $m(Y) \le a$ with positive probability. Therefore by Theorem \ref{T:particle-crossings} (iii), for any $\ep > 0$ there is a $\pi\sqrt{1-y^2}$-Lipschitz path $h_\ep \in \Lip^*$ with $J(h_\ep) = 1$ such that
\begin{equation}
\label{E:m-hep}
m(h_\ep) \leq m_\ep := \sqrt{1 - \lf(\frac{2}{\al} - \ep\rg)^2}.
\end{equation}
Using that $D_\mu$ is minimized at $0$ (Lemma \ref{L:convex}), we have the bound
\begin{equation}
\label{E:flux-he}
\begin{split}
1 = J(h_\ep) &\ge \frac{\al}2 \int_0^1\sqrt{1-h_\ep^2(t)}dt.
\end{split}
\end{equation}
The above integrand is always bounded below by $2/\al - \ep$. Also, since $h_\ep(0) = -h_\ep(1)$ and $h_\ep$ is $\pi$-Lipschitz, the amount of time that $h_\ep$ spends in the interval $[-m_\ep/2, m_\ep/2]$ is at least $m_\ep/(2\pi)$. Therefore
$$
\frac{2}{\al} \ge \int_0^1\sqrt{1-h_\ep^2(t)}dt \ge \lf(\frac{2}\al - \ep\rg)\lf(1 - \frac{m_\ep}{2\pi}\rg) + \frac{m_\ep}{2\pi}\sqrt{1 - \frac{m_\ep^2}{4}}.
$$
Letting $\ep \to 0$, we get an inequality in $\al$ which implies that $\al = 2$.
\end{proof}

Combining Lemmas \ref{L:max-heights} and \ref{L:expect-2} shows that $\prob(m(Y) > a) \le \sqrt{1 - a^2}$. This is optimal, as can be seen by a quick calculation involving the Archimedean path. Later on in the paper we will prove the opposite inequality, which is much more involved.

\begin{lemma}
\label{L:crude-deviation-bd} There is a sequence $u_n \to 0$ such that 
$$
D_\mu(u_n) \leq \frac{2}{\cos(u_n/2)}.
$$
\end{lemma}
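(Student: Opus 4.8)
The plan is to use Lemma~\ref{L:expect-2} (which gives $D_\mu(0)=2$) to manufacture minimal-flux paths in $\Lip_r$ with arbitrarily small but \emph{positive} maximum height, and then to squeeze an upper bound on $D_\mu$ out of the identity $J(h)=1$ via convexity and two crude estimates. First I would fix $a\in(0,1)$ and let $Y$ be any subsequential limit of $Y_n$. By Lemma~\ref{L:max-heights} together with $D_\mu(0)=2$ we have $\prob(m(Y)>a)\le\sqrt{1-a^2}<1$, while $\prob(m(Y)=0)\le\prob(Y(0)=0)=0$ since $Y(0)$ is uniform on $[-1,1]$ (Theorem~\ref{T:bounded-speed}). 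Combining this with Theorems~\ref{T:bounded-speed} and \ref{T:particle-crossings}(iii), the event that $Y$ is a $\pi\sqrt{1-z^2}$-Lipschitz element of $\Lip_r$ with $J(Y)=1$ and $0<m(Y)\le a$ has positive probability, hence is nonempty. So there is a deterministic Lipschitz path $h\in\Lip_r$ with $m:=m(h)\in(0,a]$ and $J(h)=1$.

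Next comes the key estimate. Writing $s(t)=h'(t)/\sqrt{1-h^2(t)}$ (well-defined a.e.\ since $|h|\le m<1$), we have $2=2J(h)=\int_0^1 D_\mu(s(t))\sqrt{1-h^2(t)}\,dt$. Fix $s_0>0$ and pick a subgradient $\gamma\in[0,1]$ of the convex, symmetric, $1$-Lipschitz function $D_\mu$ at $s_0$ (Lemma~\ref{L:convex}); then $D_\mu(s(t))\ge D_\mu(s_0)+\gamma(|s(t)|-s_0)$ for all $t$. Plugging this in and using $\int_0^1 |s(t)|\sqrt{1-h^2(t)}\,dt=\int_0^1|h'(t)|\,dt\ge 2m(h)$ (the total variation of $h$ is at least $2m(h)$ because $h(1)=-h(0)$ and $|h(0)|\le m$), together with $\int_0^1\sqrt{1-h^2(t)}\,dt\ge\sqrt{1-m^2}$ and the fact that $D_\mu(s_0)-\gamma s_0\ge 2-s_0\ge 0$ for $s_0$ small, we obtain
$$
2\ \ge\ \bigl(D_\mu(s_0)-\gamma s_0\bigr)\sqrt{1-m^2}+2\gamma m,\qquad\text{i.e.}\qquad D_\mu(s_0)\sqrt{1-m^2}\ \le\ 2+\gamma\bigl(s_0\sqrt{1-m^2}-2m\bigr).
$$
Choosing $s_0=\tfrac{2m}{\sqrt{1-m^2}}$ makes the bracket vanish, so $D_\mu(s_0)\sqrt{1-m^2}\le 2$, that is $D_\mu(u)\le\tfrac{2}{\sqrt{1-m^2}}=\sqrt{4+u^2}$ where $u:=\tfrac{2m}{\sqrt{1-m^2}}>0$.

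Finally, applying the above with $a=1/n$ produces $u_n>0$ with $u_n\le\tfrac{2/n}{\sqrt{1-1/n^2}}\to 0$ and $D_\mu(u_n)\le\sqrt{4+u_n^2}$. It then remains to verify the elementary inequality $\sqrt{4+u^2}\le\tfrac{2}{\cos(u/2)}$ for small $u>0$; equivalently $(4+u^2)\cos^2(u/2)\le 4$, which follows from $\cos^2(u/2)=\tfrac{1+\cos u}{2}$ and the Taylor bound $\cos u\le 1-\tfrac{u^2}{2}+\tfrac{u^4}{24}$ after expanding the product (the leading correction is $-u^4/6<0$). Since $u_n\to 0$ this holds for all large $n$, and discarding finitely many terms gives the required sequence. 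The only real content is assembling the convexity estimate in the middle paragraph and, more importantly, producing minimal-flux paths of positive but arbitrarily small height — which is exactly where the conclusion $D_\mu(0)=2$ of Lemma~\ref{L:expect-2} is essential, via Lemma~\ref{L:max-heights}; the closing comparison $\sqrt{4+u^2}\le 2/\cos(u/2)$ is routine.
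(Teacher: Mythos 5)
Your proof is correct and follows essentially the same route as the paper: both arguments produce minimal-flux paths of arbitrarily small positive maximum height via Lemma~\ref{L:max-heights}, Lemma~\ref{L:expect-2} and Theorem~\ref{T:particle-crossings}(iii), and then exploit the convexity of $D_\mu$ together with the total-variation lower bound $2m(h)$. The only difference is in execution: the paper applies Jensen at the average local speed $2\arcsin(\al_n)$, which yields $D_\mu(u_n)\le 2/\cos(u_n/2)$ directly, whereas your supporting-line argument anchored at $s_0=2m/\sqrt{1-m^2}$ gives the (slightly stronger) intermediate bound $\sqrt{4+u^2}$ and therefore needs the extra elementary comparison $\sqrt{4+u^2}\le 2/\cos(u/2)$ at the end, which you verify correctly.
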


\begin{proof}
For any subsequential limit $Y$ of $Y_n$ and any $\ep > 0$, Lemmas \ref{L:max-heights} and \ref{L:expect-2} imply that
$$
\prob(m(Y) \le \ep) \geq 1 - \sqrt{1-\ep^2} > 0.
$$
Also, $Y(0)$ is uniformly distributed by Theorem \ref{T:bounded-speed}, so $\prob(m(Y) = 0) = 0$. Therefore by Theorem \ref{T:particle-crossings} (iii), we can find a sequence of positive numbers $\al_n \to 0$ and a sequence of paths $h_n \in \Lip^*$ with  $m(h_n) = \al_n$ and $J(h_n) = 1$. 
\medskip

Let $s_n$ be the local speed of $h_n$. The total variation of each of the paths $h_n$ is at least $2\al_n$, and hence the average absolute local speed of each $h_n$ is at least $2 \arcsin(\al_n)$.  The convexity and symmetry of $D_\mu$ (Lemma \ref{L:convex}) then gives the following bound.
\begin{align*}
J(h_n) &\geq \sqrt{1- \al_n^2}\int_0^1  D_\mu(s_n(t)) dt \\
&\geq \sqrt {1-\al_n^2} D_\mu(2 \arcsin(\al_n)).
\end{align*}
Letting $u_n = 2\arcsin(\al_n)$ and rearranging completes the proof.
\end{proof}

\subsection{Monotonicity and uniqueness for minimal flux paths}
\label{SS:monotone}
In this subsection, we prove that minimal flux paths $h \in \Lip_0$ with a particular maximum height $m(h)$ are unique up to sign, and that they satisfy a particular monotonicity relation.

\medskip

We start with two simple lemmas. The first lemma shows that minimal flux paths minimize flux on every subinterval of $[0, 1]$.
  
\begin{lemma}
\label{L:min-path}
Let $h \in \Lip^*$ be a path with $J(h) = 1$. For any interval $[a, b] \sset [0, 1]$ and any path $g \in \Lip$ with $g(a) = h(a)$ and $g(b) = h(b)$, we have that
$$
J(h; [a, b]) \le J(g; [a, b]).
$$
Moreover, if $f \in \Lip^*$ is another path with $J(f) = 1$, $f(a) = h(a)$, and $f(b) = h(b)$, then we can form a new path $p \in \Lip^*$ with $J(p) = 1$ by letting
\begin{equation*}
p(t) = \begin{cases}
&f(t) \qquad t \leq t_1, t \ge t_2 \\
&h(t) \qquad t \in [t_1, t_2].
\end{cases}
\end{equation*}
\end{lemma}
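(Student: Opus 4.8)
\emph{Proof sketch.} The plan is to deduce both statements from a cut-and-paste argument, using two facts: (a) the flux functional is additive over disjoint subintervals, since $J(h;A) = \tfrac{1}{2}\int_A D_\mu(s(t))\sqrt{1-h^2(t)}\,dt$ is an integral, so $J(h;[0,1]) = J(h;[0,a]) + J(h;[a,b]) + J(h;[b,1])$; and (b) every path in $\Lip_r$ has flux at least $1$ (Theorem \ref{T:particle-crossings}(ii)).

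For the inequality, I would argue by contradiction. Suppose $g \in \Lip$ satisfies $g(a) = h(a)$ and $g(b) = h(b)$ but $J(g;[a,b]) < J(h;[a,b])$. Define $\tilde h$ to agree with $h$ on $[0,a]\cup[b,1]$ and with $g$ on $[a,b]$. Since $g$ and $h$ agree at the junction points $a$ and $b$, the path $\tilde h$ is continuous and Lipschitz (with Lipschitz constant the maximum of those of $g$ and $h$), takes values in $[-1,1]$, and satisfies $\tilde h(0) = h(0) = -h(1) = -\tilde h(1)$, so $\tilde h \in \Lip_r$. By additivity, $J(\tilde h) = J(h;[0,a]) + J(g;[a,b]) + J(h;[b,1]) < J(h) = 1$, contradicting (b). Hence $J(h;[a,b]) \le J(g;[a,b])$.

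For the splicing statement, apply the inequality just proved twice: once to $h$ with competitor $g = f$ (giving $J(h;[a,b]) \le J(f;[a,b])$), and once to $f$ with competitor $g = h$ — legitimate because $J(f) = 1$ as well — giving $J(f;[a,b]) \le J(h;[a,b])$. Thus $J(h;[a,b]) = J(f;[a,b])$. The path $p$ (equal to $f$ on $[0,a]\cup[b,1]$ and to $h$ on $[a,b]$, i.e. with $t_1 = a$, $t_2 = b$) lies in $\Lip_r$ by the same elementary checks as above, and by additivity $J(p) = J(f;[0,a]) + J(h;[a,b]) + J(f;[b,1]) = J(f;[0,a]) + J(f;[a,b]) + J(f;[b,1]) = J(f) = 1$.

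There is no substantial obstacle here: the only care needed is to verify that the spliced paths remain Lipschitz, valued in $[-1,1]$, and have opposite endpoints (so that they are legitimate members of $\Lip_r$), and to recall that $J$ of a Lipschitz path is finite (as noted just after the definition of $J$). The real content is carried entirely by the variational lower bound in Theorem \ref{T:particle-crossings}(ii).
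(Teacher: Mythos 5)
Your argument is correct and is essentially identical to the paper's proof: both establish the inequality by splicing $g$ into $h$ over $[a,b]$ and invoking the lower bound $J \ge 1$ on $\Lip_r$ from Theorem \ref{T:particle-crossings}(ii), and both obtain the second claim from additivity of $J$ over $[a,b]$ and $[a,b]^c$. Your write-up is just slightly more explicit about verifying that the spliced paths belong to $\Lip_r$.
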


\begin{proof}
If there is some $g$ with $J(h; [a, b]) > J(g; [a, b])$, then we can make a new path $p \in \Lip^*$ which is equal to $g$ on $[a, b]$ and equal to $h$ on $[a, b]^c$. This path $p$ will have $J(p) < 1$, contradicting Theorem \ref{T:particle-crossings} (ii). The second part of the lemma is a consequence of the fact that 
\[
J(p) = J(p ; [a, b]) + J(p ; [a, b]^c). \qquad \qquad \qedhere
\]
\end{proof}

The next lemma uses the bounds established in Section \ref{SS:basic-bounds} to eliminate plateaus in minimal flux paths. 

\begin{lemma}
\label{L:no-plat}
For any height $\al \in (0, 1)$ and any interval $[t_1, t_2] \sset [0, 1]$, we have that 
$$
\inf_{h \in \Lip} \big\{J(h; [t_1, t_2]) : h(t_1) = \al, h(t_2) = \al \big\} < (t_2-t_1)\sqrt{1-\al^2}.
$$
\end{lemma}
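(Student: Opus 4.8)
The plan is to observe first that the right-hand side is itself a particle flux: the constant path $g_\alpha \equiv \alpha$ on $[t_1,t_2]$ has zero local speed, so by Lemma \ref{L:expect-2} (which gives $D_\mu(0) = 2$),
\begin{equation*}
J(g_\alpha; [t_1, t_2]) = \tfrac12 D_\mu(0)\,(t_2 - t_1)\sqrt{1-\alpha^2} = (t_2 - t_1)\sqrt{1-\alpha^2}.
\end{equation*}
Thus the assertion is exactly that the constant path is \emph{not} flux-minimal among competitors with the prescribed endpoints, and I would prove it by exhibiting a strictly better competitor. Throughout I would work in the coordinate $\theta(t) = \arcsin h(t)$, in which $\sqrt{1 - h^2} = \cos\theta$, the local speed is $s = \theta'$, and $J(h; [t_1,t_2]) = \tfrac12\int_{t_1}^{t_2} D_\mu(\theta'(t))\cos\theta(t)\,dt$. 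Write $\theta_0 = \arcsin\alpha \in (0, \tfrac\pi2)$, $L = t_2 - t_1$, $m = \tfrac{t_1+t_2}2$.

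The competitor I would use is a symmetric tent that moves \emph{away} from $0$: for a small parameter $c > 0$, set $\theta(t) = \theta_0 + c\min(t - t_1,\, t_2 - t)$ on $[t_1,t_2]$ and $h(t) = \sin\theta(t)$, extended by the constant $\alpha$ outside $[t_1,t_2]$ so that $h \in \Lip$ and $h(t_1) = h(t_2) = \alpha$ (for $c$ small, $h$ stays in $(-1,1)$). Since $|\theta'| \equiv c$ and $D_\mu$ is symmetric, while $\cos\theta$ is symmetric about $m$, a one-line integration gives
\begin{equation*}
J(h;[t_1,t_2]) = \frac{D_\mu(c)}{c}\Big(\sin(\theta_0 + \tfrac{cL}{2}) - \sin\theta_0\Big) = \frac{2D_\mu(c)}{c}\cos\!\big(\theta_0 + \tfrac{cL}{4}\big)\sin\!\big(\tfrac{cL}{4}\big).
\end{equation*}
(It matters that the tent goes up: a downward tent would \emph{increase} $\cos\theta$ and cannot help.)

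The decisive choice is to take $c = u_n$ from Lemma \ref{L:crude-deviation-bd}, so that $D_\mu(u_n) \le 2\sec(u_n/2)$; combined with $\sin(u_nL/4) \le u_nL/4$ this gives $J(h;[t_1,t_2]) \le L\sec(u_n/2)\cos(\theta_0 + u_nL/4)$. So it remains to check, for $n$ large, that $\cos(\theta_0 + u_nL/4) < \cos(u_n/2)\cos\theta_0$; expanding the left side and rearranging, this is the inequality $\cos\theta_0\big(\cos(u_nL/4) - \cos(u_n/2)\big) < \sin\theta_0\sin(u_nL/4)$. Since $L \le 1$, the left-hand side is $O(u_n^2)$, whereas the right-hand side is of exact order $u_n$ because $\sin\theta_0 > 0$ (here is where the hypothesis $\alpha > 0$ is used) and $L > 0$; hence the inequality holds once $u_n$ is small enough, i.e. for all large $n$, and the proof is complete.

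I expect the last step to be the only real obstacle, and its point is that the crude estimate $D_\mu(c) \le D_\mu(0) + c = 2 + c$ coming from $1$-Lipschitzness (Lemma \ref{L:convex}) is not good enough: it would only yield $J(h;[t_1,t_2]) \le (1 + \tfrac c2)L\cos(\theta_0 + \tfrac{cL}{4})$, and beating $L\cos\theta_0$ would then require something like $\tan\theta_0 > 2/L$, which fails when $\alpha$ and $t_2 - t_1$ are both small. What rescues the argument is precisely the quadratic-order bound $D_\mu(u_n) \le 2\sec(u_n/2) = 2 + O(u_n^2)$ of Lemma \ref{L:crude-deviation-bd}, which makes the cost of wiggling negligible compared with the $\Theta(u_n)$ saving obtained by pushing $h$ away from $0$.
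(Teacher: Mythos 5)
Your proof is correct and follows essentially the same route as the paper: the paper also takes a symmetric tent $\sin(\arcsin\al + u_n t)$ moving away from $0$ with slope $u_n$ from Lemma \ref{L:crude-deviation-bd}, bounds the flux by $\tfrac{2}{\cos(u_n/2)}\int\cos(\arcsin\al+u_nt)\,dt$, and wins because the saving from shrinking $\sqrt{1-h^2}$ is $\Theta(\al u_n)$ while the cost of the nonzero speed is $O(u_n^2)$. The only difference is presentational (you rearrange into a trigonometric inequality where the paper Taylor-expands), and your closing remark about why the $1$-Lipschitz bound on $D_\mu$ would not suffice is exactly the reason Lemma \ref{L:crude-deviation-bd} is needed.
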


\begin{proof}
Without loss of generality, we may assume that  $t_1 = 0$. Let $\{u_n\}_{n \in \nat}$ be as in Lemma \ref{L:crude-deviation-bd}, and define a sequence of paths $h_n \in \Lip$ by letting

\begin{equation*}
h_n(t) = \begin{cases} 
\;\sin (\arcsin(\al) + u_n t), \qquad &t \leq t_2/2 \\
\;h_n(t_2 - t), \qquad \qquad \quad \;\;\;\;\; &t \in [t_2/2, t_2] \\
\;\al, \qquad \qquad \qquad \quad \quad \; &t > t_2.
\end{cases} 
\end{equation*}
Lemma \ref{L:crude-deviation-bd} gives the following bound on the flux of $h_n$ on the interval $[0, t_2]$.
\begin{align*}
J(h_n; [0, t_2]) &\leq \frac{2}{\cos(u_n/2)} \int_0^{t_2/2} \cos(\arcsin(\al) + u_n t)dt \\
&= t_2 \sqrt{1-\al^2} - \frac{t_2^2 \al u_n}{4} + O(u_n^2).
\end{align*}
For small enough $u_n$,  this calculation shows that $J(h_n) < t_2 \sqrt{1-\al^2}$.
\end{proof}

We can now prove that minimal flux paths $h \in \Lip_0$ are unimodal.

\begin{prop}
\label{P:monotone-traj}
Let $h \in \Lip_0$ be such that $J(h) = 1$, and $m(h) \in (0, 1)$. Then $|h(1/2)| = m(h)$. Moreover, if $h(1/2) = m(h)$, then $h$ is strictly increasing on $[0, 1/2]$ and strictly decreasing on $[1/2, 1]$. If $-h(1/2) = m(h)$, then $h$ is strictly decreasing on $[0, 1/2]$ and strictly increasing on $[1/2, 1]$. 
\end{prop}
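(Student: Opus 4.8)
The plan is to exploit Lemma \ref{L:min-path}, which says that a minimal flux path in $\Lip_0$ minimizes flux on every subinterval, together with Lemma \ref{L:no-plat}, which forbids plateaus. First I would establish the key auxiliary fact: if $h \in \Lip_0$ has $J(h) = 1$ and $h(a) = h(b) = c$ with $c \neq 0$ for some $a < b$, then $h$ is constant equal to $c$ on $[a, b]$. Indeed, by Lemma \ref{L:min-path} the restriction $h|_{[a,b]}$ minimizes flux among Lipschitz paths joining $(a, c)$ to $(b, c)$; but if $h$ were not constant on $[a,b]$ we could compare with the constant path $g \equiv c$, whose flux on $[a,b]$ is $\tfrac12 D_\mu(0)\sqrt{1-c^2}(b-a) = (b-a)\sqrt{1-c^2}$ by Lemma \ref{L:expect-2}, and by Lemma \ref{L:no-plat} there is in fact a strictly better path joining these endpoints, contradicting minimality of $h$. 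Hence $h$ is strictly monotone between any two times at which it takes the same nonzero value — more precisely, $h$ cannot return to a nonzero level it has already left.

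Next I would translate this into unimodality. Let $t^* \in [0,1]$ be a time where $|h(t^*)| = m(h)$; by the symmetry $Y \eqd -Y$ and the symmetry $Z(t) = Y(1-t) \eqd Y$ available for subsequential limits (Proposition \ref{P:Y-symmetries}), and since every $J=1$ path in $\Lip_r$ arises this way up to the shift of Lemma \ref{L:path-shift}, it suffices to treat $h(t^*) = m(h) =: m > 0$. Since $h(0) = h(1) = 0 < m$, the "no return to a nonzero level" fact forces $h$ to be strictly increasing on $[0, t^*]$: if $h(s_1) = h(s_2)$ for $s_1 < s_2 \le t^*$ with common value $v$, then $v \neq 0$ (else $h$ would be pinned at $0$ on $[s_1,s_2]$ and could not reach $m$ by $t^*$ while also minimizing flux — or more directly $v = 0$ combined with $h(t^*) = m$ and the plateau argument gives a contradiction), and then $h$ is constant $=v$ on $[s_1, s_2]$, contradicting the plateau lemma. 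Similarly $h$ is strictly decreasing on $[t^*, 1]$. In particular $h \ge 0$ throughout, the maximum is attained at a unique $t^*$, and $h$ is strictly unimodal.

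To pin down $t^* = 1/2$, I would use the reflection symmetry together with uniqueness. Define $\tilde h(t) = h(1-t)$; then $\tilde h \in \Lip_0$, $J(\tilde h) = J(h) = 1$ (the functional $J$ is invariant under time reversal since $s(t) \mapsto -s(1-t)$ and $D_\mu$ is even by Lemma \ref{L:convex}(iv)), $m(\tilde h) = m$, and $\tilde h \ge 0$ with its maximum at $1 - t^*$. Granting the uniqueness statement of Theorem \ref{T:path-set} (which this proposition is part of establishing — so strictly one argues this after the uniqueness proposition, or one argues directly), we get $\tilde h = h$, hence $t^* = 1 - t^* = 1/2$ and $h(t) = h(1-t)$. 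Alternatively, and to avoid circularity, one can argue $t^* = 1/2$ directly: consider $\min(h, \tilde h)$ or splice $h$ on $[0,1/2]$ with its reflection on $[1/2,1]$; by Lemma \ref{L:min-path} the spliced path still has $J = 1$ and lies in $\Lip_0$, and running the unimodality argument on it together with a flux comparison forces the turning point to the center. The symmetric statement for $-h(1/2) = m(h)$ follows by applying everything to $-h$.

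The main obstacle is the pin-down $t^* = 1/2$ without invoking the uniqueness of Theorem \ref{T:path-set} (to keep the logical order clean, since Proposition \ref{P:monotone-traj} is listed as one of the ingredients of that theorem). The cleanest route is the splicing/reflection argument in the last paragraph: one must check that gluing the left half of $h$ to the mirror image of its left half produces an element of $\Lip_r$ (indeed of $\Lip_0$) with flux exactly $1$, invoke Lemma \ref{L:min-path} to transfer minimality, and then observe that a strictly unimodal $J = 1$ path symmetric about $1/2$ whose maximum would lie off-center on the *original* $h$ forces a strict flux inequality somewhere, contradicting $J(h) = 1$. Handling the boundary cases $m(h) = 0$ (excluded by hypothesis $m(h) \in (0,1)$) and the degenerate Lipschitz-speed edge cases $|h(t)| = 1$ (excluded since $m(h) < 1$) is routine.
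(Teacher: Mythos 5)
The core of your argument---the ``key auxiliary fact'' that $h(a)=h(b)=c\neq 0$ forces $h\equiv c$ on $[a,b]$---is false, and the justification you give for it actually establishes its negation. Lemma \ref{L:min-path} says that $J(h;[a,b])$ equals the infimum of the flux over Lipschitz paths joining $(a,c)$ to $(b,c)$, and Lemma \ref{L:no-plat} says that this infimum is strictly below the flux $(b-a)\sqrt{1-c^2}$ of the constant path; together these show only that $h$ is \emph{not} constant on $[a,b]$, which contradicts nothing. (The fact itself cannot hold: the paths $h_m$ of Theorem \ref{T:path-set} satisfy $h_m(t)=h_m(1-t)$, hence take every value in $(0,m)$ at two distinct times without being constant anywhere.) Consequently the step where you rule out $h(s_1)=h(s_2)$ for $s_1<s_2\le t^*$---the heart of the unimodality claim---has no support.

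What is missing is the surgery idea. The paper's proof takes an excursion $[s_1,s_2]\subset[0,t_m]$ with $h(s_1)=h(s_2)$ and $h\le h(s_1)$ there, deletes it, and inserts a plateau of the same duration at the maximum height $m(h)$ just before $t_m$. Because $D_\mu$ is minimized at $0$ and $\sqrt{1-h^2}$ is smallest at height $m(h)$, this rearrangement does not increase flux, so the new path $r\in\Lip_r$ still has $J(r)=1$; but $r$ now contains a genuine plateau at height $m(h)\in(0,1)$, and \emph{that} is what Lemmas \ref{L:min-path} and \ref{L:no-plat} jointly forbid. Two further, smaller points: your reflection-splicing argument for $t^*=1/2$ is essentially the paper's and works once unique attainment of the maximum is established; and the reduction of a sign-changing $h$ to the nonnegative case should go through $|h|$ (using the symmetry of $\mu$ to get $J(|h|)=J(h)$), not through the distributional symmetries of $Y$ in Proposition \ref{P:Y-symmetries}, since $h$ here is an arbitrary deterministic minimal-flux path. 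Without the surgery step the proposal does not prove the proposition.
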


\begin{proof}
First consider the case where $h \ge 0$. Let $t_m$ be any time when $h(t_m) = m(h)$. Suppose that there exist times $s_1 < s_2 \in [0, t_m]$ such that $h(s_1) = h(s_2)$, and $h(t) \le h(s_1)$ on the interval $[s_1, s_2]$. Define a new path
$$
r(t) = 
\begin{cases}
h(t), \qquad &t \le s_1, t \ge t_m\\
h(t + (s_2 - s_1)), \qquad &t \in (s_1, t_m - (s_2 -s_1)]\\
m(h) \qquad &t \in [t_m -(s_2 - s_1), t_m].
\end{cases}
$$
The path $r$ replaces the segment of $h$ in the interval $[s_1, s_2]$ with a plateau at height $t_m$ at a later time. This operation cannot increase flux since $D_\mu$ is minimized at $0$ (Lemma \ref{L:convex}), so $r$ must be a minimal flux path in $\Lip^*$. However 
$$
J(r; [t_m -(s_2 - s_1), t_m]) = (s_2 - s_1)\sqrt{1 - m^2(h)},
$$
which contradicts Lemmas \ref{L:min-path} and \ref{L:no-plat}. Therefore there is no interval $[s_1, s_2] \sset [0, t_m]$ where $h(s_1) = h(s_2)$ and $h(t) \le h(s_1)$ for $t \in [s_1, s_2]$, so $h$ must be strictly increasing on $[0, t_m]$. Similarly, $h$ is strictly decreasing on $[t_m, 1]$. The point $t_m$ is the unique time when $h$ achieves its maximum.

\medskip

We now show that $t_m = 1/2$. Without loss of generality, assume that $t_m \le 1/2$. Define a new path $g(t) = h(1 - t)$. The path $g \in \Lip_0$ also satisfies $J(g) = 1$ and $m(g) = m$. We have that $g(1/2) = h(1/2)$, and so by Lemma \ref{L:min-path} we can create a path $p \in \Lip^*$ with $J(p) = 1$ and $m(p) = m$ by letting
\begin{equation*}
p(t) =\begin{cases}
&h(t), \qquad t \leq 1/2,\\
&g(t), \qquad t \geq 1/2.
\end{cases}
\end{equation*}
The path $p(t)$ achieves its maximum height at both $t_m$ and $1- t_m$, so we must have that $t_m = 1 - t_m$, and hence $t_m = 1/2$.

\medskip

Now if $h$ is not a strictly non-negative path, then we can create a non-negative path $p(t) = |h(t)|$. By the symmetry of $\mu$ (Theorem \ref{T:local-speeds}), the path $p$ must again have minimal flux, so since $p \ge 0$, by the above argument $p$ is strictly increasing on $[0, 1/2]$ and strictly decreasing on $[1/2, 1]$. Therefore either $h = p$ or $h = -p$, completing the proof.
\end{proof}

We can now use Proposition \ref{P:monotone-traj} to prove uniqueness of minimal flux paths with a given maximum height.
\begin{prop}
\label{P:unique-path}
For every $m \in [0, 1]$, there is at most one $\pi\sqrt{1- y^2}$-Lipschitz path $h \in \Lip_0$ with $J(h) = 1$, $m(h) = m$, and $h \ge 0$. If such a path $h$ exists, then the only other $\pi\sqrt{1- y^2}$-Lipschitz path $g \in \Lip_0$ with $J(g) = 1$ and $m(g) = m$ is $g = -h$.
\end{prop}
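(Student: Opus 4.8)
The plan is to prove that any two $\pi\sqrt{1-y^2}$-Lipschitz paths $h_1,h_2\in\Lip_0$ with $J(h_i)=1$, $m(h_i)=m$ and $h_i\ge 0$ must coincide; the second assertion of the proposition then follows from Proposition~\ref{P:monotone-traj}, since a $\pi\sqrt{1-y^2}$-Lipschitz $g\in\Lip_0$ with $J(g)=1$ and $m(g)=m$ has $g(1/2)=\pm m$, and in the $+$ case $g$ increases from $g(0)=0$ on $[0,1/2]$ and decreases to $g(1)=0$ on $[1/2,1]$, so $g\ge 0$ and hence $g=h_1$, while in the $-$ case the same applies to $-g$, giving $g=-h_1$. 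I would dispatch $m\in\{0,1\}$ separately ($m=0$ is trivial, and for $m=1$ the bound $|h'|\le\pi\sqrt{1-h^2}$ forces $h_i(t)=\sin(\pi t)$), and assume $m\in(0,1)$, so that by Proposition~\ref{P:monotone-traj} each $h_i$ is strictly increasing on $[0,1/2]$, strictly decreasing on $[1/2,1]$, with $h_i(1/2)=m$.

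The first step is to replace $h_1,h_2$ by the lattice-extremal paths $P=h_1\vee h_2$ and $p=h_1\wedge h_2$. Both lie in $\Lip_0$, are $\pi\sqrt{1-y^2}$-Lipschitz (at a.e.\ $t$ the derivative of $P$, resp.\ $p$, equals $h_1'(t)$ or $h_2'(t)$ at a point where the value of $P$, resp.\ $p$, equals $h_1(t)$ or $h_2(t)$), and have $m(P)=m(p)=m$ since $P(1/2)=p(1/2)=m$. Because $J(h;A)$ is the integral over $A$ of a pointwise function of $(h(t),h'(t))$, and the unordered pair $\{(P,P'),(p,p')\}(t)$ equals $\{(h_1,h_1'),(h_2,h_2')\}(t)$ for a.e.\ $t$, one gets $J(P)+J(p)=J(h_1)+J(h_2)=2$. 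Since $\Lip_0\subseteq\Lip_r$, Theorem~\ref{T:particle-crossings}(ii) gives $J(P),J(p)\ge 1$, forcing $J(P)=J(p)=1$. Hence $P$ and $p$ are themselves nonnegative minimal-flux paths of maximum height $m$, so Proposition~\ref{P:monotone-traj} applies to them as well; in particular $p\le h_i\le P$, and it suffices to prove $p=P$, which by the monotonicity reduces to proving $p=P$ on $[0,1/2]$ and on $[1/2,1]$.

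The core of the argument is to reparametrize by height, which I expect will turn the flux into a convex functional. On $[0,1/2]$, writing $\tau_h=h^{-1}\colon[0,m]\to[0,1/2]$ for the inverse of $h|_{[0,1/2]}$ and $w_h=\tau_h'$, and checking that minimal-flux paths have $h'>0$ a.e.\ (so $\tau_h$ is absolutely continuous and the substitution $y=h(t)$ is valid), one obtains
$$
J(h;[0,1/2])=\tfrac12\int_0^m\phi_y\big(w_h(y)\big)\,dy,\qquad \phi_y(w):=D_\mu\!\Big(\tfrac{1}{w\sqrt{1-y^2}}\Big)\sqrt{1-y^2}\,w,
$$
and a short computation gives $\phi_y''(w)=\big(w^3\sqrt{1-y^2}\big)^{-1}D_\mu''\!\big(\tfrac{1}{w\sqrt{1-y^2}}\big)\ge 0$ by the convexity of $D_\mu$ (Lemma~\ref{L:convex}); the analogue holds on $[1/2,1]$ using the inverse of the decreasing part. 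I would then form the path $h_3$ whose height-to-time maps on the two halves are $\tfrac12(\tau_p+\tau_P)$ and $\tfrac12(\tilde\tau_p+\tilde\tau_P)$, check that $h_3\in\Lip_0$ with $m(h_3)=m$ and that $h_3$ is $\pi\sqrt{1-y^2}$-Lipschitz (its speed at each height is at most the larger of the speeds of $p$ and $P$ there), and use convexity of $\phi_y$ to get $J(h_3)\le\tfrac12(J(p)+J(P))=1$. Since $J(h_3)\ge 1$ by Theorem~\ref{T:particle-crossings}(ii), equality holds everywhere, which forces $\phi_y$ to be affine on the interval between $w_p(y)$ and $w_P(y)$ for a.e.\ $y$.

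The last step, and the one I expect to be the main obstacle, is to conclude from this that $w_p=w_P$ a.e.\ (whence $\tau_p=\tau_P$, $p=P$ on $[0,1/2]$, and symmetrically on $[1/2,1]$, so $h_1=h_2$). Affineness of $\phi_y$ is equivalent to affineness of $D_\mu$ on a nondegenerate interval of speeds, i.e.\ to $\mu$ having no mass there; at this stage of the paper the support of $\mu$ is not yet known, so such gaps cannot be excluded outright. The plan to get around this is to feed an affine piece back into the path geometry: on a speed range where $D_\mu$ is affine one may replace the corresponding sub-arc of $P$ by a genuine horizontal plateau without increasing its flux, contradicting Lemma~\ref{L:no-plat} together with the subinterval-minimality of $P$ from Lemma~\ref{L:min-path}; alternatively, one can extract from Proposition~\ref{P:monotone-traj} a first-order (Beltrami-type) optimality relation showing that a minimal path uses speeds from any affine piece of $D_\mu$ only on a null set of heights. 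Either route yields $w_p=w_P$ a.e., completing the proof.
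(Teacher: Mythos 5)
Your first two steps are sound and take a genuinely different route from the paper: the lattice trick ($P=h_1\vee h_2$, $p=h_1\wedge h_2$, $J(P)+J(p)=2$, hence both minimal by Theorem \ref{T:particle-crossings}(ii)) and the height reparametrization exhibiting $J(\cdot\,;[0,1/2])$ as $\tfrac12\int_0^m\phi_y(w_h(y))\,dy$ with $\phi_y$ convex (the perspective-function computation is correct). One technical point you flag needs real work: strict monotonicity of a Lipschitz path does not give $h'>0$ a.e.\ (take $h'=\indic_{E^c}$ for a fat Cantor set $E$), and you cannot invoke Lemma \ref{L:concave-paths} here since its proof uses the uniqueness being proved; this is repairable by compressing the set $\{h'=0\}$ into a single plateau at height $m$ and invoking Lemmas \ref{L:min-path} and \ref{L:no-plat}.

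The genuine gap is your final step, and neither suggested route closes it. Equality in Jensen only yields that, for a.e.\ height where $w_p(y)\neq w_P(y)$, $\mu$ puts no mass strictly between the two corresponding local speeds. At this stage nothing is known about $\mu$ beyond symmetry and support in $[-\pi,\pi]$, so such gaps cannot be excluded, and a Lagrangian that is affine on a range of slopes genuinely admits a continuum of minimizers (compare minimizing $\int_0^1|u'|$ with fixed endpoints). The plateau idea fails because affineness of $D_\mu$ on an interval of strictly positive speeds says nothing about inserting a speed-$0$ segment, which is all Lemma \ref{L:no-plat} excludes; and a Beltrami-type first-order identity is precisely the kind of condition that cannot separate minimizers along affine pieces. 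The paper avoids convexity entirely: if $g(t_2)<h(t_2)$ for some $t_2<1/2$, pick $t_1$ with $h(t_1)=g(t_2)$ and time-shift $g$ by $t_2-t_1$ as in Lemma \ref{L:path-shift}; the shifted path $g^*$ agrees with $h$ at $t_1$, attains the maximum $m$ strictly before time $1/2$, and recrosses $h$ at some $t_3<1/2$, so gluing $h$ and $g^*$ via Lemma \ref{L:min-path} produces a nonnegative minimal flux path in $\Lip_0$ with two distinct argmaxes, contradicting Proposition \ref{P:monotone-traj}. To complete your argument you would need to import some such structural input to dispose of the affine case; as written the proof is incomplete.
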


\begin{proof}
First observe that the only $\pi\sqrt{1- y^2}$-Lipschitz paths in $\Lip_0$ with $m(h) = 1$ are $h = \pm \sin(\pi t)$. Similarly, the only path $h \in \Lip$ with $m(h) = 0$ is $h = 0$. This proves the proposition for $m \in \{0, 1\}$. Now we assume $m \in (0, 1)$.

\medskip

Suppose that $h, g \in \Lip_0$ are two distinct non-negative paths with $J(h) = J(g) = 1$ and $m(h) = m(g) = m$. By Proposition \ref{P:monotone-traj}, $h(1/2) = g(1/2) = m$. Without loss of generality, there exists a time $t_2 \in [0, 1/2)$ such that $g(t_2) < h(t_2)$. Since $g(t_2) \ge 0, h(0) = 0$ and $h$ is continuous we can find a time $t_1 < t_2$ such that $h(t_1) = g(t_2)$. Define the shifted path
\begin{equation*}
g^*(t) = \begin{cases}
\;\;g(t + (t_2 - t_1)), \qquad &t \leq 1 - (t_2 - t_1) \\
\;\;- g(t + (t_2 - t_1) - 1), \qquad& t \geq 1 - (t_2 - t_1).
\end{cases}
\end{equation*}
By Proposition \ref{P:monotone-traj}, we have that 
$$
m = g^*(1/2 - (t_2 - t_1)) > h(1/2 - (t_2 - t_1)) \quad \mathand \quad g^*(1/2) < h(1/2) = m.
$$
Therefore there is some time $t_3 \in (1/2 - (t_2 - t_1), 1/2)$ such that $g^*(t_3) = h(t_3)$.  Moreover, $g^* \in \Lip^*$, $J(g^*)  = 1$, and $g^*(t_1) = h(t_1)$. Therefore by Lemma \ref{L:min-path} we can create a path $p \in \Lip_0$ with $J(p) = 1$ by letting
\begin{equation*}
p(t) = \begin{cases} 
\;\;h(t), \qquad& t \leq t_1 \mathor t \geq t_3 \\
\;\;g^*(t), \qquad& t \in (t_1, t_3).
\end{cases}
\end{equation*}
This new path $p$ is a non-negative minimal flux path in $\Lip_0$ which does not uniquely achieve its maximum value at $1/2$, contradicting Proposition \ref{P:monotone-traj}.

\medskip

Now, if $g \in \Lip_0$ is another minimal flux path with $J(g) = 1$ and $m(g) = m$, then $|g|$ is a non-negative minimal flux path in $\Lip_0$, so $|g| = h$. Since $h(t) > 0$ for $t \in (0, 1)$ by Proposition \ref{P:monotone-traj}, either $g = h$ or $g = -h$.
\end{proof}

Proposition \ref{P:unique-path} gives us the following easy corollary.

\begin{corollary} 
\label{C:sym}
Any $\pi\sqrt{1-h^2}$-Lipschitz path $h \in \Lip_0$ with $J(h) = 1$ has $h(t) = h(1-t)$ for all $t \in [0, 1]$.
\end{corollary}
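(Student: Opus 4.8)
The plan is to derive the corollary from the uniqueness statement in Proposition~\ref{P:unique-path}. Given a $\pi\sqrt{1-h^2}$-Lipschitz path $h \in \Lip_0$ with $J(h) = 1$, set $m = m(h)$ and consider the time-reversed path $g(t) = h(1-t)$. I would first check that $g$ is again a $\pi\sqrt{1-g^2}$-Lipschitz path in $\Lip_0$ with $m(g) = m$ and $J(g) = 1$, then apply Proposition~\ref{P:unique-path} to conclude that $g \in \{h, -h\}$, and finally rule out (or absorb) the possibility $g = -h$.

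The routine verifications are quick: $g(0) = h(1) = 0$ and $g(1) = h(0) = 0$, so $g \in \Lip_0$; moreover $|g'(t)| = |h'(1-t)|$ and $g(t)^2 = h(1-t)^2$, so $g$ inherits the $\pi\sqrt{1-g^2}$-Lipschitz bound and satisfies $m(g) = m(h)$. The one point that deserves a sentence is the equality $J(g) = J(h)$: the local speed of $g$ at $t$ equals $-s(1-t)$, where $s$ is the local speed of $h$, so substituting $t \mapsto 1-t$ in the integral \eqref{E:path-flux} defining $J(g)$ and using that $D_\mu$ is even (Lemma~\ref{L:convex}~(iv)) turns it into the integral defining $J(h)$. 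Hence $J(g) = 1$ and Proposition~\ref{P:unique-path} applies.

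If $g = h$, then $h(t) = h(1-t)$ for all $t$, which is the claim. If instead $g = -h$, then $h(1-t) = -h(t)$ for all $t$, and in particular $h(1/2) = 0$. When $m = 0$ this forces $h \equiv 0$, so the symmetry is trivial; when $m = 1$ the only $\pi\sqrt{1-y^2}$-Lipschitz paths in $\Lip_0$ are $h = \pm\sin(\pi t)$ (as noted in the proof of Proposition~\ref{P:unique-path}), both symmetric about $1/2$; and when $m \in (0,1)$, Proposition~\ref{P:monotone-traj} gives $|h(1/2)| = m > 0$, contradicting $h(1/2) = 0$, so this sub-case cannot occur. In all cases $h(t) = h(1-t)$.

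I do not expect a serious obstacle here: the corollary repackages the uniqueness (Proposition~\ref{P:unique-path}) and unimodality (Proposition~\ref{P:monotone-traj}) already in hand. The only step requiring a little care is the time-reversal invariance $J(g) = J(h)$, which rests on the symmetry of $\mu$; this is the natural place to include an explicit justification.
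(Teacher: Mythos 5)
Your proposal is correct and follows essentially the same route as the paper: the paper reduces to $h \ge 0$ first so that the "at most one non-negative path" clause of Proposition \ref{P:unique-path} applies directly to $h(t)$ and $h(1-t)$, whereas you invoke the "$g = \pm h$" clause and then exclude $g = -h$ via Proposition \ref{P:monotone-traj}; these are the same argument with the sign bookkeeping done at a different point. Your explicit verification that $J(h(1-\cdot)) = J(h)$ via the evenness of $D_\mu$ is a detail the paper leaves implicit, and is correct.
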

\begin{proof}
Without loss of generality, assume that $h \ge 0$. 
Both $h(t)$ and $h(1-t)$ are non-negative minimal flux paths satisfying all the conditions of Proposition \ref{P:unique-path} and with the same maximum height, so by that proposition, $h(t) = h(1-t)$ for all $t$.
\end{proof}

\begin{remark}
\label{R:shift}
Note that the uniqueness proofs in this section automatically give the uniqueness of the paths $h_{a, k}$ in Theorem \ref{T:unique-2}. The fact that uniqueness immediately carries over to shifted paths follows from the strict monotonicity in Proposition \ref{P:monotone-traj}.
\end{remark}

\subsection{Existence of minimal flux paths}
\label{SS:unique}

We now show that for every $m \in [0, 1]$, there exists a minimal flux path in $\Lip_0$ with maximum height $m$. This is a consequence of the following proposition, which shows that the inequality in Theorem \ref{T:particle-crossings} is an equality for minimal flux paths. Recall the definition of the upcrossing probability $P^+_Y(h; [a, b])$ and the downcrossing probability $P^-_Y(h; [a, b])$ from Section \ref{S:flux-intro}.

\begin{prop}
\label{P:flux-upcross}
Let $h \in \Lip^*$ be a path with $J(h) = 1$. Let $Y$ be a subsequential limit of $Y_n$. Then for any interval $[a, b]$, we have that
$$
P^+_Y(h; [a, b])  = J^+(h; [a, b]) \quad \mathand \quad P^-_Y(h; [a, b]) = J^-(h; [a, b]).
$$
\end{prop}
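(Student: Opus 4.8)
The plan is to bootstrap from the global interval $[0,1]$ to arbitrary subintervals, using conservation of flux together with a union-bound argument. The two upper bounds $P^+_Y(h;[a,b]) \le J^+(h;[a,b])$ and $P^-_Y(h;[a,b]) \le J^-(h;[a,b])$ are exactly Theorem \ref{T:particle-crossings}(i), so only the reverse inequalities are at stake. First I would settle the case $[a,b] = [0,1]$. Since $D_\mu = D^+_\mu + D^-_\mu$ pointwise, flux splits as $J(h) = J^+(h) + J^-(h)$, so the hypothesis $J(h) = 1$ gives $J^+(h) + J^-(h) = 1$. On the other hand, the event considered in the proof of Theorem \ref{T:particle-crossings}(ii) — that $Y$ either upcrosses or downcrosses $h$ somewhere on $[0,1]$ — has probability $1$, since $Y(0) = -Y(1)$ almost surely and $Y(0)$ is uniformly distributed (Theorem \ref{T:bounded-speed}), hence $Y(0) \ne h(0)$ almost surely; this event is precisely the union of the upcrossing and downcrossing events, so $P^+_Y(h;[0,1]) + P^-_Y(h;[0,1]) \ge 1$. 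Combined with the two upper bounds, this forces $P^+_Y(h;[0,1]) = J^+(h)$ and $P^-_Y(h;[0,1]) = J^-(h)$.

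Next I would localize. Fix $[a,b] \subseteq [0,1]$ (the case $a = b$ being trivial), and partition $[0,1]$ by the points $\{0, a, b, 1\}$ into subintervals $I_1,\dots,I_k$, one of which is $[a,b]$. Writing $f = Y - h$, the key point is that, almost surely,
$$
\bigl\{Y \text{ upcrosses } h \text{ on } [0,1]\bigr\} = \bigcup_{j=1}^{k} \bigl\{Y \text{ upcrosses } h \text{ on } I_j\bigr\}.
$$
The inclusion $\supseteq$ is immediate. For $\subseteq$: by Theorem \ref{T:bounded-speed} the marginal $Y(t)$ is uniform for each fixed $t$, so almost surely $f$ does not vanish at any of the finitely many partition endpoints; given an upcrossing of $h$ by $Y$ witnessed by $s < t$ with $f(s) < 0 < f(t)$, inspect the values of $f$ at the (nonzero) partition points lying in $[s,t]$ — the finite sequence $f(s), \dots, f(t)$ changes sign from negative to positive between two consecutive entries, and any two consecutive entries of this list lie in a common subinterval $I_j$, producing an upcrossing of $h$ by $Y$ on $I_j$. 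Granting this identity, the union bound, Theorem \ref{T:particle-crossings}(i), and additivity of flux over the partition give
$$
J^+(h;[0,1]) = P^+_Y(h;[0,1]) \le \sum_{j=1}^{k} P^+_Y(h;I_j) \le \sum_{j=1}^{k} J^+(h;I_j) = J^+(h;[0,1]),
$$
so every inequality is an equality; since $P^+_Y(h;I_j) \le J^+(h;I_j)$ termwise, each of these is an equality, in particular the one for $I_j = [a,b]$. The downcrossing identity follows identically, or by applying the upcrossing case to $-h$ and using $-Y \eqd Y$ (Proposition \ref{P:Y-symmetries}) together with $J^+(-h;\cdot) = J^-(h;\cdot)$ (symmetry of $\mu$).

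I expect no substantial obstacle here: the only content beyond bookkeeping is the almost-sure decomposition of the upcrossing event over the partition, and the sole genuine input is the uniform marginal law of $Y(t)$ from Theorem \ref{T:bounded-speed}, used to rule out $f$ vanishing at the (finitely many) partition points — it is exactly this that handles the strict-versus-nonstrict subtleties in the definition of an upcrossing when a witness coincides with a partition point. Conservation of flux ($J(h) = 1$) is what makes the chain of inequalities collapse.
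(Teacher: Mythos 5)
Your argument is correct and follows essentially the same route as the paper: first force equality on $[0,1]$ from $J^+(h)+J^-(h)=J(h)=1$ together with $P^+_Y+P^-_Y\ge 1$ (via $Y(0)=-Y(1)$ and the uniform marginal), then decompose the global upcrossing event over the partition $\{0,a,b,1\}$ up to the null set where $Y$ meets $h$ at a partition point, and collapse the resulting chain of inequalities using Theorem \ref{T:particle-crossings}(i) and additivity of $J^+$. No gaps; the paper's proof of Proposition \ref{P:flux-upcross} is the same squeeze.
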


\begin{proof}
We will only prove the first equality, as the second one follows by the symmetry of $Y$ and $\mu$ (Theorem \ref{T:local-speeds}, Proposition \ref{P:Y-symmetries}). We have that
\begin{equation*}
J(h) = J^+(h) +J^-(h) = 1 \le P^+_Y(h; [0, 1]) + P^-_Y(h; [0, 1]).
\end{equation*}
Here the inequality uses that $Y(0) = -Y(1)$ almost surely and $Y(0)$ is uniformly distributed (Theorem \ref{T:bounded-speed}) and a union bound. Therefore by Theorem \ref{T:particle-crossings} (i), we have that $P^+_Y(h; [0, 1]) = J^+(h)$.
Now,
\begin{equation}
\label{E:big-J+}
\begin{split}
P^+_Y(h; [0, 1])& \le
P^+_Y(h; [0, a]) + P^+_Y(h; [a, b]) + P^+_Y(h; [b, 1])\\
 &\le J^+(h; [a, b]^c) + P^+_Y(h; [a, b]) \\
 &\le  J^+(h; [a, b]^c) + J^+(h; [a, b]) = J^+(h).
\end{split}
\end{equation}
To see the first inequality above, note that the union of the three events on the right hand side contains the event on the left hand side minus the set $\{Y(a) = h(a) \mathor Y(b) = h(b)\}$. This set has probability $0$ since $Y(t)$ is uniformly distributed for all $t$ (Theorem \ref{T:bounded-speed}). The second and third inequalities follow from Theorem \ref{T:particle-crossings} (i). 

\medskip

Since $P^+_Y(h; [0, 1]) = J^+(h)$, all the inequalities in \eqref{E:big-J+} must in fact be equalities, proving the proposition.
\end{proof}

To apply the above proposition in order to prove the existence of minimal flux paths at every height, we need three lemmas. The first lemma shows that any two minimal flux paths in $\Lip^*$ cannot cross each other more than once. This lemma also proves part (iii) of Theorem \ref{T:path-set} and part (iv) of Theorem \ref{T:unique-2}.

\begin{lemma}
\label{L:no-cross}
Suppose that $h, g \in \Lip^*$ are non-negative $\pi\sqrt{1-y^2}$-Lipschitz paths with $J(h) = J(g) = 1$. Then there cannot exist times $t_0 < t_1 < t_2 \in [0, 1]$ such that $h(t_0) < g(t_0)$, $h(t_1) > g(t_1)$, and $h(t_2) = g(t_2)$.
In particular, if $g(0) = h(0)$, then either $h \le g$ or $g \le h$.
\end{lemma}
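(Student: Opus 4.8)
The plan is to argue by contradiction, using the unimodality of minimal flux paths in $\Lip_0$ (Proposition~\ref{P:monotone-traj}), the uniqueness of the non-negative minimal flux path of a given maximum height (Proposition~\ref{P:unique-path}), and the cut-and-paste stability of minimal flux (Lemma~\ref{L:min-path}).

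First I would record two reductions. Since $h$ is non-negative and $h(0)=-h(1)$, we get $h(0)=h(1)=0$, so in fact $h,g\in\Lip_0$; in particular $g(0)=h(0)$ always holds, so the ``in particular'' clause will follow from the first assertion. Moreover the hypotheses are symmetric in $h$ and $g$, so the first assertion and its $(h,g)\mapsto(g,h)$ analogue are both available; from them the dichotomy $h\le g$ or $g\le h$ follows by picking $u$ with $h(u)>g(u)$ and $v$ with $h(v)<g(v)$ (both lie in $(0,1)$ since $h-g$ vanishes at $0$ and $1$) and taking $\{v,u,1\}$ when $v<u$ and $\{u,v,1\}$ when $u<v$. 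I would also dispose of the boundary cases: $m(h)=0$ forces $h\equiv 0$, which is incompatible with $h(t_1)>g(t_1)\ge 0$, and $m(g)=0$ forces $g\equiv 0$, incompatible with $h(t_0)<g(t_0)=0$; the cases $m(h)=1$ or $m(g)=1$ I would handle at the end by noting the relevant path must be $\sin(\pi t)$, which still satisfies the conclusions of Proposition~\ref{P:monotone-traj} used below. So assume $m(h),m(g)\in(0,1)$.

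Now suppose $t_0<t_1<t_2$ realize the forbidden configuration and set $\phi=h-g$. Since $\phi(t_0)<0<\phi(t_1)$ and $\phi(t_2)=0$, put $s_1=\sup\{t\in[t_0,t_1]:\phi(t)=0\}$ and $s_2=\inf\{t\in[t_1,t_2]:\phi(t)=0\}$. Then $t_0<s_1<t_1<s_2\le t_2$, one has $h(s_1)=g(s_1)$ and $h(s_2)=g(s_2)$, and $h>g$ on $(s_1,s_2)$. Define the spliced path $p$ by $p=g$ on $[0,s_1]\cup[s_2,1]$ and $p=h$ on $[s_1,s_2]$. By Lemma~\ref{L:min-path} (agreement points $s_1,s_2$), $p\in\Lip_0$ with $J(p)=1$; it is non-negative, $p\ge g$ everywhere, and $p>g$ on $(s_1,s_2)$, so $p\neq g$. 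Hence $m(p)\neq m(g)$ by Proposition~\ref{P:unique-path}, and since $p\ge g$ gives $m(p)\ge m(g)$, we get $m(p)>m(g)$. Consequently the maximum of $p$ cannot be attained on $[0,s_1]\cup[s_2,1]$, where $p=g\le m(g)$, so $m(p)=\sup_{[s_1,s_2]}h\le m(h)$.

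If $m(p)=m(h)$, then $p$ and $h$ are both non-negative minimal flux paths in $\Lip_0$ of maximum height $m(h)$, so $p=h$ by Proposition~\ref{P:unique-path}; but $t_0\in[0,s_1]$ gives $h(t_0)=p(t_0)=g(t_0)$, contradicting $h(t_0)<g(t_0)$. If instead $m(p)<m(h)$, then $\sup_{[s_1,s_2]}h<m(h)=h(1/2)$ by Proposition~\ref{P:monotone-traj}, so $1/2\notin[s_1,s_2]$; therefore either $[1/2,1]\subseteq[s_2,1]$ or $[0,1/2]\subseteq[0,s_1]$, and in either case $p$ agrees with $g$ on an interval containing $1/2$, so $p(1/2)=g(1/2)=m(g)$. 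But $p$ is a non-negative minimal flux path in $\Lip_0$ with $m(p)\in(0,1)$, so Proposition~\ref{P:monotone-traj} forces $p(1/2)=m(p)>m(g)$ --- a contradiction. This rules out the forbidden configuration, and the boundary cases go through identically. I expect the main obstacle to be the bookkeeping in the last paragraph that isolates the alternative $1/2\notin[s_1,s_2]$ and pins $p=g$ near $1/2$: this is precisely where the hypothesis $h(t_0)<g(t_0)$ enters, through the behaviour of the splice on $[0,s_1]$. One should also check that $p$ really lies in $\Lip_0$ (continuity at $s_1,s_2$ is immediate from $h(s_i)=g(s_i)$, Lipschitzness is inherited), and --- for the statement in this generality, as opposed to the application to the canonical paths $h_m$ of Theorem~\ref{T:path-set} --- that the uniqueness of Proposition~\ref{P:unique-path} may be invoked for $p$, which is assembled from pieces of $h$ and $g$ and hence carries whatever regularity they do.
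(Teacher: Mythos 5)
Your proof is correct and follows essentially the same route as the paper's: construct a hybrid minimal-flux path by gluing $h$ and $g$ at crossing points (Lemma~\ref{L:min-path}) and contradict the uniqueness/unimodality of non-negative minimal flux paths at a given maximum height (Propositions~\ref{P:unique-path} and~\ref{P:monotone-traj}). The only difference is cosmetic: the paper takes $p=\max(h,g)$ and appeals to Remark~\ref{R:shift} to contradict uniqueness of $h_{a,k_2}$ in one step, whereas you splice over a single excursion interval $[s_1,s_2]$ and run a two-case analysis --- which, if anything, is slightly cleaner, since it requires only one application of the gluing lemma rather than justifying $J(\max(h,g))=1$ across possibly many crossings.
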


\begin{proof}
Suppose there exist times $t_0 < t_1 < t_2 \in [0, 1]$ such that $h(t_0) < g(t_0)$, $h(t_1) > g(t_1)$, and $h(t_2) = g(t_2)$. First, by possibly shifting the paths as in Lemma \ref{L:path-shift}, we may assume that $t_2 = 1$. Hence $g(1) = h(1) = -g(0) = -h(0)$. Now let $s \in (t_0, t_1)$ be such that $h(s) = g(s)$. Letting $a = g(0)$, Remark \ref{R:shift} implies that $h = h_{a, k_1}$ and $g = h_{a, k_2}$ for some $k_1, k_2 \in [-1, -|a|] \cup [|a|, 1]$. Here the paths $h_{a, k}$ are as in Theorem \ref{T:unique-2}. Without loss of generality, assume that $k_2 > k_1$ and that $k_2 \ge a$; the other cases follow by symmetric arguments.

\medskip

Define $p = \max(h, g)$. By Lemma \ref{L:min-path}, $J(p) = 1$. Moreover, $p(0) = -p(1) = a$, and since $k_2 > k_1$ and $k_2 \ge |a|$, we have that $m(p) = S(p) = k_2$. This contradicts the uniqueness of the path $h_{a, k_2}$ established in Remark \ref{R:shift}.
\end{proof}

The next lemma establishes a strong concavity property for minimal flux paths. 

\begin{lemma}
\label{L:concave-paths} 
For every $k \in [0, 1]$, if there exists a minimal flux, $\sqrt{1-y^2}$-Lipschitz path $h_k \in \Lip_0$ with $h_k \ge 0$ and $m(h_k) = k$, then $\arcsin(h_k)$ is concave. In other words, the local speed $s_k$ of $h_k$ is a non-increasing function of $t$.
\end{lemma}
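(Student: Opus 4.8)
The plan is to pass to the height parametrization, recognize $h_k|_{[0,1/2]}$ as the solution of a one-dimensional convex variational problem, and rule out a non-monotone speed profile by a time-preserving ``mass transport'' exchange that decreases flux.

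First reduce to $[0,1/2]$. The cases $k\in\{0,1\}$ are immediate ($h_0\equiv 0$, $h_1=\sin(\pi t)$), so assume $k\in(0,1)$; then $h_k$ is strictly increasing on $[0,1/2]$ by Proposition \ref{P:monotone-traj}, and $h_k(t)=h_k(1-t)$ by Corollary \ref{C:sym}, so $s_k(t)=-s_k(1-t)$. Since $h_k$ is $\pi\sqrt{1-h^2}$-Lipschitz, $\arcsin(h_k)$ is genuinely $\pi$-Lipschitz, so ``$\arcsin(h_k)$ concave'' is literally ``$s_k$ non-increasing a.e.''; and the reflection identity shows $s_k$ is non-increasing on all of $[0,1]$ as soon as it is on $[0,1/2]$ (the only issue, the value of $s_k$ across $t=1/2$, goes the right way since $s_k\ge 0$ a.e. on $[0,1/2]$).

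Now parametrize $h_k|_{[0,1/2]}$ by its height: for a.e. $y\in[0,k]$ set $\sigma(y)=s_k(h_k^{-1}(y))$. A change of variables turns the flux into $J(h_k;[0,1/2])=\tfrac12\int_0^k\Phi(\sigma(y))\,dy$ with $\Phi(s)=D_\mu(s)/s$, and the duration into $\int_0^k\rho(y)/\sigma(y)\,dy=\tfrac12$ with $\rho(y)=(1-y^2)^{-1/2}$. By Lemma \ref{L:min-path}, $h_k|_{[0,1/2]}$ minimizes flux among Lipschitz paths from $0$ to $k$ over $[0,1/2]$; equivalently, $\sigma$ minimizes $\int_0^k\Phi(\sigma)$ subject to $\int_0^k\rho/\sigma=\tfrac12$. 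Two facts drive the rest. (b) The map $\theta(r):=r\,D_\mu(1/r)=\Phi(1/r)$ is convex, being the perspective transform of the convex function $D_\mu$ restricted to a line; moreover $\theta(r)=\int|ry-1|\,d\mu(y)\ge 1$ with $\theta(0^+)=1$ (using symmetry of $\mu$ and $\operatorname{supp}\mu\subseteq[-\pi,\pi]$), so $\theta$ is non-decreasing. (a) Consequently $\Phi(s)=\theta(1/s)$ is non-increasing on $(0,\infty)$.

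Suppose, for contradiction, that $\sigma$ is not a.e. non-increasing. Then there are $0<v_1<v_2$ and positive-measure sets $A,B\subset[0,k]$ with $A$ entirely to the left of $B$, $\sigma\le v_1$ on $A$, and $\sigma\ge v_2$ on $B$. Choose $A'\subseteq A$, $B'\subseteq B$ and let $\tilde\sigma$ equal $v_2$ on $A'$, $v_1$ on $B'$, and $\sigma$ elsewhere; by an intermediate-value argument in the sizes of $A'$ and $B'$ we can force $\int_0^k\rho/\tilde\sigma=\tfrac12$, so the new monotone path $\tilde h$ still reaches height $k$ exactly at time $1/2$. Writing everything in the coordinate $r=1/\sigma$, a secant-slope comparison using the convexity of $\theta$ together with the fact that $\rho$ is larger on $B$ than on $A$ (as $\rho$ is increasing and $B$ lies to the right) shows this exchange does not increase $\int_0^k\Phi$, i.e. $J(\tilde h;[0,1/2])\le J(h_k;[0,1/2])$. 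Gluing $\tilde h$ on $[0,1/2]$ to $h_k$ on $[1/2,1]$ gives $p\in\Lip_r$ with $J(p)\le 1$. If $J(p)<1$ this contradicts Theorem \ref{T:particle-crossings}(ii); if $J(p)=1$ then $p$ is a $\pi\sqrt{1-h^2}$-Lipschitz path in $\Lip_0$ with $p\ge 0$ and $m(p)=k$, hence $p=h_k$ by Proposition \ref{P:unique-path}, which is absurd since a monotone path is determined by its endpoints and speed profile and $\tilde\sigma\ne\sigma$ on the positive-measure set $A'\cup B'$. Thus $\sigma$, and hence $s_k$, is non-increasing.

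\textbf{Main obstacle.} The crux is the inequality ``the time-preserving speed exchange does not increase flux'': this is exactly where the perspective-convexity of $r\mapsto rD_\mu(1/r)$ is combined with the monotonicity of the weight $\rho(y)=(1-y^2)^{-1/2}$. Secondary (routine but fiddly) points are the measure-theoretic extraction of well-separated sets $A,B$ with the stated speed bounds, the size-matching that conserves total time exactly, and the possibility that $\Phi$ is locally constant at the relevant speeds — which is precisely why the endgame splits into the ``$J(p)<1$'' case and the ``$J(p)=1$'' case settled by uniqueness (Proposition \ref{P:unique-path}).
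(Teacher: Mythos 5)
Your proof is correct in its essentials, but it takes a genuinely different route from the paper's. The paper argues entirely in the time variable: it localizes a failure of concavity to an interval $[t_4,t_5]$ on which $h_k$ lies strictly below the constant-local-speed chord $p$ joining two of its points, takes a supporting line $L(s)=as+b$ of $D_\mu$ at the chord's speed $\bar s$ with $b\ge 0$ (Lemma \ref{L:convex}(iii)), and bounds $J(h_k;[t_4,t_5])\ge \int a h' + \int b\sqrt{1-h^2}\ge \int a p' + \int b\sqrt{1-p^2}=J(p;[t_4,t_5])$, using $0\le h<p$ so that the weight $\sqrt{1-h^2}$ only decreases under the replacement; swapping in $p$ then contradicts Proposition \ref{P:unique-path}. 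Your rearrangement in the height variable uses exactly the same two structural inputs in different clothing: convexity of the perspective $\theta(r)=rD_\mu(1/r)$ together with $\theta$ non-decreasing is equivalent to convexity of $D_\mu$ plus non-negativity of the intercepts of its supporting lines, and the monotonicity of $\rho(y)=(1-y^2)^{-1/2}$ is the same weight monotonicity. I checked your key inequality and it does close: with $m$ the secant slope of $\theta$ between $1/v_2$ and $1/v_1$ (so $m\ge 0$), convexity gives $\Delta J\le m\bigl[\int_{A'}(1/v_2-r)+\int_{B'}(1/v_1-r)\bigr]$, and comparing with $\Delta T=0$ using $\rho|_{A'}\le\rho|_{B'}$ forces the bracket to be $\le 0$. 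The paper's version is shorter and more local; yours is a global rearrangement statement that makes the "why" more transparent (the optimal speed profile is an equimeasurable monotone rearrangement) at the cost of more measure theory.

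One point does need repair before your identities are literally true: the change of variables $J(h_k;[0,1/2])=\tfrac12\int_0^k\Phi(\sigma)\,dy$ and $\int_0^k\rho/\sigma\,dy=\tfrac12$ silently assume that the set $E=\{t:h_k'(t)=0\}$ has measure zero. Strict monotonicity of $h_k$ (Proposition \ref{P:monotone-traj}) does not rule out $|E|>0$, in which case $h_k^{-1}$ has a singular part, the height parametrization loses the time $|E|$ and the flux $\int_E\sqrt{1-h_k^2}\,dt$, and your reconstructed $\tilde h$ reaches height $k$ at time $1/2-|E|$ rather than $1/2$. This is fixable (e.g., park the leftover time $|E|$ at height $k$, note $\int_E\sqrt{1-h_k^2}\ge |E|\sqrt{1-k^2}$ so the comparison still goes the right way, and then the glued path visibly differs from $h_k$, again contradicting uniqueness; or first rule out $|E|>0$ by a separate exchange), but it must be said. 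The paper's time-domain argument never meets this issue, which is a concrete advantage of its formulation.
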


\begin{proof} 
By the symmetry of $h_k$ (Corollary \ref{C:sym}), it suffices to prove that $\arcsin(h_k)$ is concave on $[0, 1/2]$. Suppose that there exist times $t_1 < t_2 < t_3 \le 1/2$ such that $h(t_2) < p(t_2)$, where $p$ is the path of constant local speed $\close{s} \in [0, \pi]$ with $p(t_1) = h(t_1)$ and $p(t_2) = h(t_2)$. Let 
$$
t_4 = \sup \{t \in [t_1, t_2] : p(t) \le h(t)\} \qquad \mathand \qquad t_5 = \inf \{t \in [t_2, t_3] : p(t) \le h(t)\}.
$$
Then $h(t_4) = p(t_4)$ and $h(t_5) = p(t_5)$, and for every $t \in (t_4, t_5)$, we have that $h(t) < p(t)$. By Lemma \ref{L:convex}, we can find a line $L(s) = as + b$ with $b \ge 0$ such that $L(s) \leq D_\mu(s)$ for all $s$, and such that $L(\close{s}) = D_\mu(\close{s}).$ Therefore 
\begin{align*}
J(h; [t_4, t_5]) 
& \geq \int_{t_4}^{t_5} (as(t) + b) \sqrt{1-h^2(t)} dt \\
&\ge \int_{t_4}^{t_5} ah'(t) dt + \int_{t_4}^{t_5} b \sqrt{1-p^2(t)} dt.
\end{align*}
The inequality in the second line follows since $p(t) > h(t) \ge 0$ for all $t \in (t_4, t_5)$. By the fundamental theorem of calculus, since $p(t_4) = h(t_4)$ and $p(t_5) = h(t_5)$, we have
\begin{align*}
\int_{t_4}^{t_5} ah'(t) dt + \int_{t_4}^{t_5} b \sqrt{1-p^2(t)} dt &= 
\int_{t_4}^{t_5} ap'(t) dt + \int_{t_4}^{t_5} b \sqrt{1-p^2(t)} dt \\
& = J(p; [t_4, t_5]).
\end{align*}
We can then create a non-negative $\pi\sqrt{1 - y^2}$-Lipschitz path $g \in \Lip_0$ with $m(g) = k$ by letting $g(t) = p(t)$ for $t \in [t_4, t_5]$ and $g(t) = h(t)$ otherwise. We have $J(g) \le 1$, contradicting the uniqueness of $h_k$. 
\end{proof}

The next lemma is a consequence of the symmetry and concavity of $Y$.

\begin{lemma}
\label{L:Y-sym-type}
Let $Y$ be a subsequential limit of $Y_n$, let $t \in [0, 1)$, and let $[a, b] \sset [0, 1]$. Then $Y'(t)$ exists almost surely and 
\begin{enumerate}[label=(\roman*)]
\item $\prob(Y'(t) = 0) = 0.$
\item $\prob(Y'(t) > 0 , \; Y(t) \in [a, b]) = \prob(Y'(t) < 0 , \; Y(t) \in [a, b]) = (b-a)/4.$
\end{enumerate}
\end{lemma}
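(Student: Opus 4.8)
The plan is to reduce both statements, together with the almost sure existence of $Y'(t)$, to the single time $t = 1/2$ using the three symmetries of $Y$ from Proposition \ref{P:Y-symmetries}, and then to exploit one further symmetry that acts at the midpoint. For $t \in (1/2, 1)$, shifting by $r = t - 1/2$ and using $Y_r \eqd Y$ (Proposition \ref{P:Y-symmetries}(i)) shows that $Y_r$ coincides with $Y$ near $1/2$ after the translation, so $(Y(t), Y'(t)) \eqd (Y(1/2), Y'(1/2))$ whenever $Y'(t)$ exists. For $t \in [0, 1/2)$ the shift wraps around and picks up a sign, and composing with $Y \eqd -Y$ (Proposition \ref{P:Y-symmetries}(ii)) restores the identity $(Y(t), Y'(t)) \eqd (Y(1/2), Y'(1/2))$; the endpoint $t = 0$ is the same with one-sided derivatives. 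A consequence is that $\prob(Y \text{ is not differentiable at } t)$ is independent of $t \in (0,1)$; since $Y$ is a.s. absolutely continuous (Theorem \ref{T:bounded-speed}) this common value integrates to $0$ over $[0,1]$ and hence vanishes, so $Y'(t)$ exists a.s. for each $t$. It then suffices to prove (i) and (ii) at $t = 1/2$.

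For (i), the reduction gives $\prob(Y'(1/2) = 0) = \int_0^1 \prob(Y'(t) = 0)\,dt = \expt\,\mathrm{Leb}\{t : Y'(t) = 0\}$, so I only need that $Y'$ vanishes on a Lebesgue-null set almost surely. By Theorems \ref{T:particle-crossings}(iii), \ref{T:bounded-speed} and \ref{T:unique-2}, $Y$ is a.s. one of the minimal-flux paths $h_{Y(0), \pm m(Y)}$, and by Lemma \ref{L:concave-paths} together with the cyclic-shift description in Theorem \ref{T:unique-2}(iii), $\arcsin(Y)$ is (for $Y(0) \ne 0$, which holds a.s.) concave on $[0, \tau]$ and convex on $[\tau, 1]$ — or the reverse — where $\tau$ is the unique zero of $Y$; moreover $\arcsin(Y)$ is strictly monotone on each of its monotone pieces by Proposition \ref{P:monotone-traj} and Remark \ref{R:shift}, hence constant on no interval. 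Since $(\arcsin Y)'$ is monotone on $[0, \tau]$ and on $[\tau, 1]$, its zero set on each piece is an interval, and such an interval must be degenerate (otherwise $\arcsin(Y)$ would be constant there); thus $\{t : Y'(t) = 0\}$ is null a.s. and $\prob(Y'(1/2) = 0) = 0$.

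For (ii), I would introduce the transformation $\Psi$ taking $Y$ to the path $s \mapsto -Y(1 - s)$. It preserves the law of $Y$, being the composition of time reversal and the global sign flip, both of which preserve the law by Proposition \ref{P:Y-symmetries}(ii), (iii). The key point is that on the a.s. event that $Y'(1/2)$ exists one computes $\Psi(Y)(1/2) = -Y(1/2)$ but $\Psi(Y)'(1/2) = +\,Y'(1/2)$, so $(Y(1/2), Y'(1/2)) \eqd (-Y(1/2), Y'(1/2))$; combining this with the relation $(Y(1/2), Y'(1/2)) \eqd (-Y(1/2), -Y'(1/2))$ coming directly from $Y \eqd -Y$, the law of $(Y(1/2), Y'(1/2))$ is invariant under $v \mapsto -v$. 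Hence $\prob(Y'(1/2) > 0,\; Y(1/2) \in [a,b]) = \prob(Y'(1/2) < 0,\; Y(1/2) \in [a,b])$, and by (i) the two sum to $\prob(Y(1/2) \in [a,b])$, which is $1$ once the constraint $Y(1/2) \in [a,b]$ is dropped (equivalently $[a,b] = [-1,1]$), giving the value $1/2$; transferring through the reduction handles general $t$.

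The step I expect to be most delicate is the sign bookkeeping under the cyclic-shift-with-sign-flip in the reduction; the conceptual crux of (ii) is the observation that time reversal composed with the global sign flip fixes $Y'(1/2)$ while negating $Y(1/2)$, which is exactly the extra symmetry not visible at a generic time.
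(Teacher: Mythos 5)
Your proposal is correct and follows essentially the same route as the paper: reduce to a single reference time via the symmetries of Proposition \ref{P:Y-symmetries}, deduce existence and (i) from almost-everywhere differentiability of Lipschitz paths together with the concavity and strict monotonicity of minimal flux paths, and obtain (ii) from the law-preserving map $s \mapsto -Y(1-s)$. The only cosmetic difference is that you normalize to $t = 1/2$ while the paper normalizes to $t = 0$ (using $Y(0) = -Y(1)$ in place of your extra composition with $Y \eqd -Y$), and you are slightly more careful than the paper about when the common value in (ii) actually equals $1/2$.
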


\begin{proof}
By time stationarity of $Y$ (Proposition \ref{P:Y-symmetries} (i)) it suffices to consider $t = 0$. This also implies that
$$
Y'(0) \eqd Y'(U),
$$
for an independent uniform random variable $U$ on $[0, 1]$. Since $Y$ is Lipschitz and hence almost everywhere differentiable, this proves that $Y'(0)$ exists almost surely. By the concavity and strict monotonicity of minimal flux paths (Lemma \ref{L:concave-paths} and Proposition \ref{P:monotone-traj}), we have that
\begin{equation}
\label{E:prob-Y-0}
\prob (Y'(U) = 0) = \prob( m(Y) = |Y(U)|) = 0.
\end{equation}
Define $Z(t) = -Y(1- t)$. Since $Y(0) = -Y(1)$ (Theorem \ref{T:bounded-speed}), we have that
$
Z(0) = Y(0)
$
and $Z'(0) = -Y'(0)$. By Proposition \ref{P:Y-symmetries}, $Y \eqd Z$, so 
$$
\prob(Y'(0) > 0 , Y(0) \in [a, b]) = \prob(Y'(0) < 0 , Y(0) \in [a, b]).
$$
Putting this together with \eqref{E:prob-Y-0} and the fact that $\prob(Y(0) \in [a, b]) = (b-a)/2$ completes the proof.
\end{proof}

We are now ready to prove the existence of minimal flux paths at every height.

\begin{prop}
\label{P:exist-m}
For every $m \in [0, 1]$, there exists a $\pi\sqrt{1-y^2}$-Lipschitz path $h_m \in \Lip_0$ such that $m(h_m) = m$, $J(h_m) = 1$, and $h \ge 0$.
\end{prop}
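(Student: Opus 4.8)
The plan is to show that the set
$$
M = \{m\in[0,1] : \text{there is a nonnegative }\pi\sqrt{1-h^2}\text{-Lipschitz }h\in\Lip_0\text{ with }J(h)=1\text{ and }m(h)=m\}
$$
equals all of $[0,1]$; the proposition is precisely the statement $M=[0,1]$, and by Proposition \ref{P:unique-path} such a path, when it exists, is unique, so I will write $h_m$ for it. Note that $0,1\in M$, witnessed by the zero path and by $\sin(\pi t)$. The first step is to produce elements of $M$ from a subsequential limit $Y$ of $Y_n$: by Theorem \ref{T:bounded-speed}, almost surely $Y$ is $\pi\sqrt{1-y^2}$-Lipschitz with $Y(0)=-Y(1)$, and by Theorem \ref{T:particle-crossings}(iii), $J(Y)=1$ almost surely. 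Let $Y_0$ be the shift of $Y$ to its first zero as in Lemma \ref{L:path-shift}, so $Y_0\in\Lip_0$ and $J(Y_0)=1$; by the symmetry of $\mu$ (Theorem \ref{T:local-speeds}) the path $W=|Y_0|$ is almost surely a nonnegative $\pi\sqrt{1-h^2}$-Lipschitz path in $\Lip_0$ with $J(W)=1$. Since $m(W)=m(Y)$, every value attained by $m(Y)$ lies in $M$; moreover uniqueness forces $W=h_{m(Y)}$ almost surely, so $\supp(m(Y))\subseteq\close{M}$.

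Next I would check that $M$ is closed. If $m_k\in M$ and $m_k\to m$, the paths $h_{m_k}$ are $\pi$-Lipschitz and uniformly bounded, so by Arzel\`a--Ascoli a subsequence converges uniformly to some $h\in\Lip_0$ with $h\ge 0$ and $m(h)=m$; passing to the limit in $|h_{m_k}'|\le\pi\sqrt{1-h_{m_k}^2}$ keeps $h$ in the $\pi\sqrt{1-h^2}$-Lipschitz class, lower semicontinuity of $J$ (Proposition \ref{P:lsc-E}) gives $J(h)\le\liminf_kJ(h_{m_k})=1$, and $h\in\Lip_0\subseteq\Lip_r$ together with Theorem \ref{T:particle-crossings}(ii) gives $J(h)\ge 1$. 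Hence $m\in M$, and combining with the previous paragraph it suffices to prove $\supp(m(Y))=[0,1]$.

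This last claim is the heart of the matter, and it is where Proposition \ref{P:flux-upcross} enters. For $m\in M$, using Corollary \ref{C:sym} and Proposition \ref{P:monotone-traj} the path $h_m$ is symmetric about $1/2$, strictly increasing on $[0,1/2]$, with $h_m(1/2)=m$; define $h_{m,m}$ to be its shift by $1/2$, namely $h_{m,m}(s)=h_m(s+1/2)$ for $s\le 1/2$ and $h_{m,m}(s)=-h_m(s-1/2)$ for $s>1/2$. Then $h_{m,m}$ is a monotone path running from $m$ down to $-m$, lies in $\Lip_r$, and $J(h_{m,m})=1$ by Lemma \ref{L:path-shift}. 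Taking $s=0,\,t=1$ in the definitions of up- and down-crossings and using $Y(1)=-Y(0)$ yields the a priori bounds
$$
P^+_Y(h_{m,m};[0,1])\ge\prob(Y(0)<m)=\frac{1+m}{2},\qquad P^-_Y(h_{m,m};[0,1])\ge\prob(Y(0)>m)=\frac{1-m}{2},
$$
and since $P^+_Y(h_{m,m};[0,1])+P^-_Y(h_{m,m};[0,1])=1$ for a minimal flux path (shown in the proof of Proposition \ref{P:flux-upcross}), both are equalities; Proposition \ref{P:flux-upcross} then gives $J^+(h_{m,m})=(1+m)/2$ for every $m\in M$, and the up/down-crossing events of $h_{m,m}$ are determined up to null sets by $Y(0)$. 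I would then feed this exact information into the crossing picture: using $W=h_{m(Y)}$ and the uniformity of $Y(0)$ on $[-1,1]$ (Theorem \ref{T:bounded-speed}), the uniformity of the first-zero time of $Y$ on $[0,1]$ (a consequence of time-stationarity, Proposition \ref{P:Y-symmetries}(i)), the total ordering of the $h_m$ (Lemma \ref{L:no-cross}) which lets me compare the ``lens'' regions between $h_{m_1,m_1}$ and $h_{m_2,m_2}$, the concavity and strict unimodality of the $h_m$ (Lemma \ref{L:concave-paths}, Proposition \ref{P:monotone-traj}) which forces $h_m$ to be locally parabolic at its maximum and so pins down the singular behaviour of the law of $h_{m(Y)}(t_0)$ near $m(Y)$, and the slope symmetry of $Y$ (Lemma \ref{L:Y-sym-type}) to split over the sign of the initial slope, one concludes that the law of $m(Y)$ cannot omit an interval, i.e. $\supp(m(Y))=[0,1]$. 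Together with the previous two paragraphs this gives $M=[0,1]$.

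The main obstacle is exactly this final step --- ruling out a ``gap'' in the achievable heights. The upper bound $\prob(m(Y)>a)\le\sqrt{1-a^2}$ from Lemma \ref{L:max-heights} plus closedness of $M$ is not by itself enough (an isolated gap is not contradicted by it), so one genuinely needs the equality case of Proposition \ref{P:flux-upcross} applied to the shifted monotone paths $h_{m,m}$ to extract a sharp identity, and then the concavity and no-crossing lemmas to propagate it to all heights. By contrast, extracting minimal flux paths from $Y$ and the closedness of $M$ are routine given Theorems \ref{T:bounded-speed} and \ref{T:particle-crossings} and Proposition \ref{P:lsc-E}.
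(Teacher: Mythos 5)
Your setup matches the paper's: the endpoints $m=0,1$ are witnessed by $h\equiv 0$ and $\sin(\pi t)$, and your closedness argument for the achievable set (Arzel\`a--Ascoli plus lower semicontinuity of $J$ from Proposition \ref{P:lsc-E}, plus Theorem \ref{T:particle-crossings}(ii) for the reverse inequality $J(h)\ge 1$) is exactly what the paper does. The problem is the final step, which you yourself identify as ``the heart of the matter'': there you do not give an argument. You list ingredients (the equality case of Proposition \ref{P:flux-upcross} for the shifted monotone paths $h_{m,m}$, the ordering of Lemma \ref{L:no-cross}, concavity from Lemma \ref{L:concave-paths}, the slope symmetry of Lemma \ref{L:Y-sym-type}) and then assert that ``one concludes that the law of $m(Y)$ cannot omit an interval.'' The identity $P^+_Y(h_{m,m};[0,1])=(1+m)/2$ that you extract is correct but is only a global statement about crossings on all of $[0,1]$; it does not by itself see a gap $(\ell,m)$ in the achievable heights, and phrases such as ``pins down the singular behaviour of the law of $h_{m(Y)}(t_0)$ near $m(Y)$'' are not a proof. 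Note also that you cannot appeal to the exact law of $m(Y)$ (Theorem \ref{T:max-height}) here, since that is derived later from the existence result you are trying to prove; only the one-sided bound of Lemma \ref{L:max-heights} is available, and you correctly observe it is insufficient.

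What actually closes the argument in the paper is a \emph{local} flux comparison inside the hypothetical gap. Assuming $(\ell,m)\cap A=\emptyset$ with $m\in A$, set $k=(\ell+m)/2$ and pick $t_1<t_2\in[0,1/2]$ with $h_m(t_1)=\ell$ and $h_m(t_2)=k$. A case analysis of how a minimal flux trajectory $g$ can cross $h_m$ and the constant line at height $k$ on $[t_1,t_2]$ --- using Lemma \ref{L:no-cross}, Proposition \ref{P:monotone-traj}, and crucially the fact that no trajectory has its maximum in $(\ell,m)$ --- shows that the discrepancies in up- and down-crossing counts cancel after applying the symmetry of Lemma \ref{L:Y-sym-type}(ii), so that $P^\pm_Y(h_m;[t_1,t_2])$ and $P^\pm_Y(k;[t_1,t_2])$ have equal sums. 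By Proposition \ref{P:flux-upcross} the left side equals $J(h_m;[t_1,t_2])$ and by Theorem \ref{T:particle-crossings}(i) the right side is at most $J(k;[t_1,t_2])$; but since $h_m<k$ on $[t_1,t_2)$ and $D_\mu$ is minimized at $0$, one computes directly that $J(h_m;[t_1,t_2])>\frac{D_\mu(0)}{2}(t_2-t_1)\sqrt{1-k^2}=J(k;[t_1,t_2])$, a contradiction. This comparison with a constant line at an intermediate height, restricted to the short time window where $h_m$ traverses the gap, is the idea missing from your proposal; without it (or a worked-out substitute) the proof is incomplete.
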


\begin{proof} First observe that we can set $h_0(t) = 0$ and $h_1(t) = \sin (\pi t)$. Both of these paths satisfy all of the above properties. For this, we use that $D_\mu(\pm \pi) = \pi$ and $D_\mu(0) = 2$ (Lemma \ref{L:convex} and Lemma \ref{L:expect-2}).

\medskip

Let $A \sset [0, 1]$ be the set of all $m$ such there is a $\pi\sqrt{1-y^2}$-Lipschitz path $h_m \in \Lip_0$ with $h_m \ge 0$, $J(h_m) = 1$ and $m(h_m) = m$.  Let $m \in \close{A}$, and let $m_k \in A$ be a sequence converging to $m$. 

\medskip

By the Arzel\`a-Ascoli Theorem, there exists a $\pi\sqrt{1-h^2}$-Lipschitz subsequential limit $h_m \in \Lip_0$ of $h_{m_k}$ with $m(h_m) = m$ and $h_m \ge 0$. By the lower semicontinuity of $J$ (Proposition \ref{P:lsc-E}), $J(h_m) = 1$. Therefore $A$ is closed.

 \medskip
 
 Now suppose that $A \ne [0, 1]$. Then there exists an interval $(\ell, m) \sset [0, 1]$ such that $\ell, m \in A$ and $(\ell, m) \cap A = \emptyset$. Let $k = (\ell + m)/2$, and choose $t_1 < t_2 \in [0, 1/2]$ so that $
 h_m(t_1) = \ell $ and $h_m(t_2) = k.$
We will show that
 \begin{equation}
 \label{E:P=P}
P^+_Y(h_m ; [t_1, t_2]) + P^-_Y(h_m ; [t_1, t_2]) =  P^+_Y\lf(k ; [t_1, t_2]\rg) + P^-_Y\lf(k; [t_1, t_2]\rg).
 \end{equation}
 Here we use $k$ as shorthand for the line $h(t) = k$.
 
 \medskip
 
Let $g \in \Lip^*$ be any minimal flux $\pi\sqrt{1- y^2}$-Lipschitz path. We will analyze how $g$ crosses the paths $k$ and $h_m$ in the interval $[t_1, t_2]$. By the unimodality established in Proposition \ref{P:monotone-traj}, $g$ upcrosses $k$ at most once in the interval $[0, 1]$, and downcrosses $k$ at most once. Moreover, by Lemma \ref{L:no-cross}, $g$ crosses $h_m$ at most once.

\medskip

If $g(t_1) \notin [\ell, k]$, and $g(t_2) \ne k$, then the total number of crossings of the lines $h_m$ and $k$ is even. Hence, either $g$ doesn't cross either path in the interval $[t_1, t_2]$, or else $g$ first crosses $k$ going down, and then crosses $k$ going up, all while staying above $h_m$. This would force $g$ to have a positive local minimum in the interval $[t_1, t_2]$. This is forbidden by the monotonicity properties of minimal flux paths (Proposition \ref{P:monotone-traj}).

\medskip

If $g(t_1) \in (\ell, k)$ and $g'(t_1) < 0$, then again by the monotonicity properties of $g$, $g$ downcrosses $h_m$, does not upcross $h_m$, and neither upcrosses nor downcrosses $k$. 

\medskip

If $g(t_1) \in (\ell, k)$, $g'(t_1) > 0$, and $g(t_2) \ne k$, then $m(g) \ge m$ since $(\ell, m) \cap A = \emptyset$. By the monotonicity of $g$ (Proposition \ref{P:monotone-traj}), $g(t^*) = m(g)$ for some $t^* > t_1$. If $g(t_2) < h(t_2)$, then since both $g$ and $h_m$ are minimal flux paths, the restrictions on crossings imposed by Lemma \ref{L:no-cross} imply that $t^* < t_2$. In this case, $g$ both upcrosses and downcrosses $k$ in the interval $[t_1, t_2]$, and downcrosses but does not upcross $h_m$.

\medskip

 On the other hand, if $g(t_2) > h(t_2)$, then $g$ upcrosses $k$ in the interval $[t_1, t_2]$ but does not downcross $k$, and does not cross $h$ at all in this interval.

\medskip

Now let $Y$ be any subsequential limit of $Y_n$. By Theorem \ref{T:bounded-speed} and Lemma \ref{L:Y-sym-type} (i), we have that
$$
\prob \Big(Y(t_1) \notin \{\ell, k\}, Y(t_2) \ne k, Y'(t_1) \text{ exists and is not equal to $0$}\Big) = 1.
$$
Therefore by the above analysis of minimal flux $\pi\sqrt{1- y^2}$-Lipschitz paths $g \in \Lip^*$, we have that
\begin{align*}
P^+_Y(h_m ; [t_1, t_2]) - P^+_Y\lf(k ; [t_1, t_2]\rg) &= - \prob(Y'(t_1) > 0, Y(t_1) \in [\ell, k]), \quad \mathand \\
P^-_Y(h_m ; [t_1, t_2]) - P^-_Y\lf(k ; [t_1, t_2]\rg) &= \prob(Y'(t_1) < 0, Y(t_1) \in [\ell, k]).
\end{align*}
Equation \eqref{E:P=P} then follows from the symmetry established in Lemma \ref{L:Y-sym-type} (ii). Now by Proposition \ref{P:flux-upcross}, the left hand side of \eqref{E:P=P} is equal to $J(h_m ; [t_1, t_2])$, and by Theorem \ref{T:particle-crossings} (i), the right hand side of \eqref{E:P=P} is bounded above by $J\lf(k; [t_1, t_2] \rg)$.
However, since $D_\mu$ is minimized at $0$ by Lemma \ref{L:convex}, we can also easily calculate that
\begin{align*}
J(h_m ; [t_1, t_2]) &\ge \frac{D_\mu(0)}{2} \int_{t_1}^{t_2} \sqrt{1 - h^2_m(t)}dt \\
&> \frac{D_\mu(0)}2(t_2 - t_1)\sqrt{1 - {k^2}} = J\lf(k; [t_1, t_2] \rg).
\end{align*}
This is a contradiction, so $A$ must be the whole interval $[0,1]$.
\end{proof}

\section{The derivative distribution}
\label{S:path-speed}

Let
$
\prob_{a}(Y \in \cdot)
$
be any regular conditional distribution of $Y$ given that $Y(0) = a$. In this section, we use the structure of minimal flux paths to find $\prob_a(Y'(0) \in \cdot)$.

\begin{prop}
\label{P:local-global-deriv}
Let $Y$ be any subsequential limit of $Y_n$. With probability one, $Y'(0)$ exists. Moreover,
\begin{equation}
\label{E:X-Y'}
\prob_{a} \lf(\frac{Y'(0)}{\sqrt{1 - a^2}} \in \cdot\rg) = \mu,
\end{equation}
for Lebesgue-a.e. $a \in (-1, 1)$.
\end{prop}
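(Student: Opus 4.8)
The plan is to reduce the identity \eqref{E:X-Y'} to a statement about the function $D^+_\mu$, by examining upcrossings of suitably chosen minimal flux paths over vanishingly short initial time intervals and then differentiating in the starting height $a$.

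First the easy part. By Theorem \ref{T:bounded-speed}, $Y$ is $\pi\sqrt{1-y^2}$-Lipschitz, hence Lipschitz, so $Y'(0)$ exists almost surely (this is already contained in Lemma \ref{L:Y-sym-type}). Moreover $Y(0)$ is uniform on $[-1,1]$ and $Y(0)=-Y(1)$ almost surely, so $Y\in\Lip_r$ with $J(Y)=1$ almost surely by Theorem \ref{T:particle-crossings}(iii), and then $\arcsin Y$ is concave almost surely by Lemma \ref{L:concave-paths} and Theorem \ref{T:unique-2}; write $s_Y$ for its non-increasing right derivative, so that $s_Y(0)=Y'(0)/\sqrt{1-Y(0)^2}\in[-\pi,\pi]$ almost surely. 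It therefore suffices to show that for Lebesgue-a.e.\ $a$ the conditional law of $s_Y(0)$ given $Y(0)=a$ is $\mu$.

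Fix $c\in[-\pi,\pi]$ and, for each $a\in(-1,1)$, let $g_{a,c}\in\Lip_r$ be the minimal flux path with $g_{a,c}(0)=a$ whose initial local speed is $c$; such a path exists by Theorems \ref{T:path-set} and \ref{T:unique-2} (the initial speed is a continuous function of the shape parameter and, using concavity and right-continuity of the local speed at $0$, sweeps out all of $[-\pi,\pi]$). The core step is the identity, for every interval $[a_1,a_2]\subseteq[-1,1]$,
\[
\lim_{\ep\to 0}\frac{1}{\ep}\int_{a_1}^{a_2}P^+_Y(g_{a,c};[0,\ep])\,da=\int_{a_1}^{a_2}\frac12 D^+_\mu(c)\sqrt{1-a^2}\,da .
\]
For the right-hand side: since $g_{a,c}$ is a minimal flux path, Proposition \ref{P:flux-upcross} gives $P^+_Y(g_{a,c};[0,\ep])=J^+(g_{a,c};[0,\ep])$, and because $g_{a,c}$ has initial local speed $c$, right-continuity of its local speed at $0$ yields $\ep^{-1}J^+(g_{a,c};[0,\ep])\to\tfrac12 D^+_\mu(c)\sqrt{1-a^2}$, with the uniform bound $J^+(g_{a,c};[0,\ep])\le\tfrac{\pi}{2}\ep$ justifying the passage to the limit under the $a$-integral. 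For the left-hand side I evaluate it directly: conditioning on $(Y(0),s_Y(0))=(a_0,s_0)$ and writing $\arcsin Y(t)=\arcsin a_0+s_0 t+o(t)$, one checks that as $\ep\to 0$ the set of $a$ for which $Y$ upcrosses $g_{a,c}$ in $[0,\ep]$ agrees, up to a set of measure $o(\ep)$, with $\{a:\arcsin a_0<\arcsin a<\arcsin a_0+(s_0-c)^+\ep\}$; hence $\ep^{-1}\int_{a_1}^{a_2}\indic(Y\text{ upcrosses }g_{a,c}\text{ in }[0,\ep])\,da\to (s_Y(0)-c)^+\sqrt{1-Y(0)^2}\,\indic(Y(0)\in[a_1,a_2])$ for a.e.\ realization of $Y$, and, taking expectations with dominated convergence (using $|s_Y(0)|\le\pi$), $\ep^{-1}\int_{a_1}^{a_2}P^+_Y(g_{a,c};[0,\ep])\,da\to\int_{a_1}^{a_2}\tfrac12\expt[(s_Y(0)-c)^+\mid Y(0)=a]\sqrt{1-a^2}\,da$. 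Comparing the two evaluations and differentiating in $[a_1,a_2]$ gives $\expt[(s_Y(0)-c)^+\mid Y(0)=a]=D^+_\mu(c)$ for a.e.\ $a$, first for each fixed $c$, then simultaneously for all $c$ in a countable dense subset of $[-\pi,\pi]$, then for all $c\in[-\pi,\pi]$ by monotone continuity. For $c\ge\pi$ both sides vanish, and for $c\le-\pi$ both sides equal $-c$ once we know $\expt[s_Y(0)\mid Y(0)=a]=0$ — which is the case $c=-\pi$ (using $s_Y(0)\ge-\pi$). Thus $\expt[(s_Y(0)-c)^+\mid Y(0)=a]=\int(y-c)^+\,d\mu(y)$ for all $c\in\real$ and a.e.\ $a$. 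Since $c\mapsto\expt[(X-c)^+]$ determines the law of any bounded random variable $X$ (its left derivative equals $-\prob(X\ge c)$), the conditional law of $s_Y(0)=Y'(0)/\sqrt{1-Y(0)^2}$ given $Y(0)=a$ is $\mu$ for a.e.\ $a$, which is \eqref{E:X-Y'}.

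I expect the main obstacle to be the direct evaluation of $\ep^{-1}\int P^+_Y(g_{a,c};[0,\ep])\,da$, and in particular ruling out spurious upcrossings: when $Y(0)>a$, $Y$ starts above $g_{a,c}$, and one must show it cannot dip below and climb back above $g_{a,c}$ within a window of length $\ep$ for a set of $a$ of measure exceeding $o(\ep)$. When $s_Y(0)\ge c$ this follows from differentiability of $\arcsin Y$ at $0$ alone, since then the deficit of $\arcsin Y$ below its tangent at $0$ is $o(\ep)$ on $[0,\ep]$; but when $s_Y(0)<c$ it genuinely uses the concavity of $\arcsin Y$: since $s_Y$ is non-increasing and $g_{a,c}$ has local speed near $c>s_Y(0)$ on $[0,\ep]$, the difference $\arcsin Y-\arcsin g_{a,c}$ is strictly decreasing there, so $Y$ can only cross $g_{a,c}$ downward. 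The remaining work is bookkeeping: controlling the $o(\ep)$ errors uniformly enough in $a$ on the relevant compact subintervals, and handling the negligible contribution of trajectories with $Y(0)$ within $O(\ep)$ of the endpoints $a_1,a_2$.
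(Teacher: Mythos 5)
Your overall strategy is the same as the paper's: compare the upcrossing probability of a family of minimal flux paths started at height $a$ over a short initial window with the flux $J^+$ via Proposition \ref{P:flux-upcross}, integrate over $a$, let the window length tend to $0$, and differentiate to recover $\expt_a(s_Y(0)-c)^+=D^+_\mu(c)$. But there is one genuine gap at the foundation of your construction: you take, for \emph{every} $c\in[-\pi,\pi]$, a minimal flux path $g_{a,c}$ whose \emph{initial local speed} is exactly $c$, asserting that the initial speed varies continuously in the shape parameter and sweeps out $[-\pi,\pi]$. Nothing proved up to this point gives that. What is known is that the family $\{h_{a,k}\}$ is ordered and space-filling in the sense of Lemma \ref{L:closed-int} (the \emph{values} $h_{a,k}(t)$ fill an interval for each fixed $t$), not that the map $k\mapsto s_{a,k}$ is continuous or surjective onto $[-\pi,\pi]$. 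In fact the set of achievable initial speeds is essentially the support of $\mu$ itself (this is the content of Lemma \ref{L:equal-measure}, proved \emph{after} this proposition), and at this stage $\mu$ could a priori have atoms or gaps in its support, in which case $g_{a,c}$ simply would not exist for some $c$ and your identity could not even be formulated for those $c$; restricting to achievable $c$ would then not determine the law. Assuming the continuum of initial speeds here is dangerously close to assuming what you are trying to prove. The paper avoids this by parametrizing the comparison paths by their \emph{endpoint} at time $r$ (so the \emph{average} local speed over $[0,r]$ is $q$), whose existence is exactly Lemma \ref{L:closed-int}, and then sandwiching $J^+(g_{a,r};[0,r])/r$ between $\tfrac12 D^+_\mu(q)\min_{[0,r]}\sqrt{1-g_{a,r}^2}$ (Jensen, using convexity of $D^+_\mu$) and the flux of the constant-speed path (Lemma \ref{L:min-path}); this yields the same limit $\tfrac12 D^+_\mu(q)\sqrt{1-a^2}$ without ever needing the instantaneous initial speed to equal $q$.

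A secondary, fixable point: your evaluation of the left-hand side by a pathwise expansion around $t=0$ forces you to rule out spurious upcrossings and to control the $o(\ep)$ errors uniformly in $a$, which you acknowledge is delicate (your concavity argument for the case $s_Y(0)<c$ also needs the local speed of $g_{a,c}$ to stay above $s_Y(0)$ on $[0,\ep]$ uniformly in $a$). The paper sidesteps all of this with the exact identity $P^+_Y(g_{a,r};[0,r])=\prob(Y(0)<a,\;Y(r)>g_{a,r}(r))$, which holds because $Y$ is almost surely a (shifted) minimal flux path and two minimal flux paths cross at most once (Lemma \ref{L:no-cross}); a change of variables then turns the $a$-integral of these probabilities directly into $\int\expt_x(Y'(0)-\sqrt{1-x^2}q)^+dx$ in the limit. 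I would recommend restructuring your argument along these two lines; the rest of your proof (passing from the integrated identity to \eqref{E:X-Y'} by differentiating in $a$ and then in $c$) is sound and matches the paper.
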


As an immediate corollary of Proposition \ref{P:local-global-deriv}, we will be able to find the distribution of the maximum height function $m(Y)$ for any subsequential limit $Y$ of $Y_n$. This will allow us to show that the sequence $\{Y_n\}$ has a unique limit point. 

\medskip

Moreover, Proposition \ref{P:local-global-deriv} will be used later to prove an integral transform formula for the local speed distribution $\mu$, which will allow us to determine $\mu$, and then $Y$. This is done in Section \ref{S:integral-formula}.

\medskip
 
To prove Proposition \ref{P:local-global-deriv}, we first show that minimal flux paths fill space. For this lemma, we use the notation of Theorem \ref{T:unique-2}.

\begin{lemma}
\label{L:closed-int}
Let $a \in [0, 1]$, and let 
$$
M_a = \lf\{h_{a, k} : k \in  [-1, -a] \cup [a, 1] \rg\}.
$$
 Then for any $t \in [0, 1]$, we have that
 $$
\{h(t) : h \in M_a\} = [\sin(-\pi t + \arcsin(a)), \sin(\pi t + \arcsin(a))].
$$
\end{lemma}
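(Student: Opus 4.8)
The plan is to prove the two inclusions separately. Write $\phi=\arcsin a$, and recall from Theorem~\ref{T:unique-2} that for $k\in[-1,-a]\cup[a,1]$ the path $h_{a,k}$ is the unique minimal flux ($J=1$) $\pi\sqrt{1-h^2}$-Lipschitz path in $\Lip_r$ with $h(0)=-h(1)=a$ satisfying $m(h_{a,k})=S(h_{a,k})=k$ if $k\ge a$ and $-m(h_{a,k})=I(h_{a,k})=k$ if $k\le -a$, and that $k\mapsto h_{a,k}(t)$ is non-decreasing for every $t$ (part (iv)).

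For the inclusion ``$\subseteq$'' it suffices to identify the two extreme paths. The curve $t\mapsto\sin(\pi t+\phi)$ is $\pi\sqrt{1-h^2}$-Lipschitz with local speed $\pm\pi$ at almost every $t$; it lies in $\Lip_r$ with value $a$ at $t=0$ and $-a$ at $t=1$; its maximum is $1$, attained at $t=1/2-\phi/\pi\in[0,1/2]$; and $J=\frac{\pi}{2}\int_0^1|\cos(\pi t+\phi)|\,dt=1$ by Lemma~\ref{L:convex} (which gives $D_\mu(\pm\pi)=\pi$). Hence by the uniqueness in Theorem~\ref{T:unique-2}(i) this curve equals $h_{a,1}$, and symmetrically $t\mapsto\sin(-\pi t+\phi)=h_{a,-1}$. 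Since every index $k$ satisfies $-1\le k\le 1$, part (iv) gives $\sin(-\pi t+\phi)\le h(t)\le\sin(\pi t+\phi)$ for all $h\in M_a$ and all $t$.

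For the inclusion ``$\supseteq$'' the crux is that $k\mapsto h_{a,k}$ is continuous in the uniform norm. I would prove this by compactness: if $k_n\to k$ in $[a,1]$, the paths $h_{a,k_n}$ are uniformly $\pi$-Lipschitz and bounded, so by Arzel\`a-Ascoli some subsequence converges uniformly to a path $g$; then $\arcsin g$, being a uniform limit of $\pi$-Lipschitz functions, is $\pi$-Lipschitz, so $g$ is $\pi\sqrt{1-g^2}$-Lipschitz; $g\in\Lip_r$ with $g(0)=-g(1)=a$; $J(g)=1$ by lower semicontinuity of $J$ (Proposition~\ref{P:lsc-E}) together with $J(g)\ge 1$ (Theorem~\ref{T:particle-crossings}(ii)); and $S(g)=m(g)=\lim k_n=k$ because $\sup$ and $\sup|\cdot|$ are continuous under uniform convergence. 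By the uniqueness in Theorem~\ref{T:unique-2}(i), $g=h_{a,k}$; as every subsequence has a further subsequence with this limit, $h_{a,k_n}\to h_{a,k}$. The same argument applies on $[-1,-a]$ using the infimum characterization. Thus $k\mapsto h_{a,k}(t)$ is continuous and non-decreasing on each of $[a,1]$ and $[-1,-a]$, so its images there are exactly the intervals $[h_{a,a}(t),\sin(\pi t+\phi)]$ and $[\sin(-\pi t+\phi),h_{a,-a}(t)]$.

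It remains to glue the two intervals, i.e.\ to show $h_{a,a}(t)=h_{a,-a}(t)$ for every $t$. Here I would note that $h_{a,a}$ satisfies $|h_{a,a}(t)|\le m(h_{a,a})=a$ while $h_{a,a}(1)=-a$, so $I(h_{a,a})=-a=-m(h_{a,a})$; hence $h_{a,a}$ meets the defining conditions of $h_{a,-a}$ in Theorem~\ref{T:unique-2}(ii) and the two paths coincide by uniqueness. Since $\sin(-\pi t+\phi)\le h_{a,a}(t)\le\sin(\pi t+\phi)$ by part (iv), the two intervals above share exactly the point $h_{a,a}(t)$ and their union is all of $[\sin(-\pi t+\phi),\sin(\pi t+\phi)]$, which is the claim; the degenerate cases $a=0$ (index set $[-1,1]$) and $a=1$ (where $h_{1,1}=h_{1,-1}=\cos(\pi t)$ and the target interval is a single point) are handled automatically. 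I expect the continuity/compactness step to be the only point that needs genuine care; the rest follows directly from the uniqueness and monotonicity statements in Theorem~\ref{T:unique-2}.
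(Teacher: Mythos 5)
Your proof is correct and uses essentially the same ingredients as the paper's: the extremal paths $h_{a,\pm 1}(t)=\sin(\pm\pi t+\arcsin a)$ together with the ordering in Theorem \ref{T:unique-2}(iv) for one inclusion, and Arzel\`a--Ascoli plus lower semicontinuity of $J$ (Proposition \ref{P:lsc-E}) plus uniqueness for the other, with the identification $h_{a,a}=h_{a,-a}$ closing the gap at $k=\pm a$. The only difference is presentational: you package the second inclusion as continuity of $k\mapsto h_{a,k}$ followed by the intermediate value theorem, whereas the paper runs a Dedekind-cut argument on the linearly ordered family $M_a$; these are the same argument.
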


\begin{proof}
Let $K_{a, t} = \{h(t) : h \in M_a\}.$ The set $K_{a, t}$ contains the endpoints of the above interval, since $M_a$ contains the two extremal functions $h_{a, -1}(t) = \sin(-\pi t + \arcsin(a))$ and $h_{a, 1}(t) = \sin(\pi t + \arcsin(a))$. Moreover, the ordering on minimal flux paths (Theorem \ref{T:unique-2} (iv)) implies that 
$$
K_{a, t} \sset [\sin(-\pi t + \arcsin(a)), \sin(\pi t + \arcsin(a))].
$$
Now suppose that $c \in (\sin(-\pi t + \arcsin(a)), \sin(\pi t + \arcsin(a)))$. Let 
$$
A = \{h_{a, k} \in M_a : h_{a, k}(t) \le c\} \qquad \mathand \qquad B = \{h_{a, k} \in M_a : h_{a, k}(t) > c\}.
$$
Both of these sets are non-empty.
 By Theorem \ref{T:unique-2} (iv), $M_a$ is a linearly ordered subset of the set of $\pi \sqrt{1-y^2}$-Lipschitz paths in $\Lip$ with the usual partial order on functions. Therefore the sets $A$ and $B$ have a supremum and infimum in $\Lip$ by the Arzel\`a-Ascoli Theorem. By the lower semicontinuity of flux (Proposition \ref{P:lsc-E}), $\sup A$ and $\inf B$ lie in the set $M_a$. This, combined with the monotonicity from Theorem \ref{T:unique-2} (iv), implies that
 \begin{align*}
  \sup A &= h_{a, k_A}, \quad \text{ where } \quad k_A = \sup \{ k : h_{a, k} \in A\}, \quad \mathand \\
   \inf A &= h_{a, k_B}, \quad \text{ where } \quad k_B = \inf \{ k : h_{a, k} \in B\}.
 \end{align*}
Since $A \cup B = M_a$, we either have that $k_A = k_B$, or else $k_A = -a$ and $k_B = a$. Since $h_{a, a} = h_{-a, a}$ by Theorem \ref{T:unique-2} (v), this implies that $h_{a, k_A}(t) = h_{a, k_B}(t) =  c$, so $c \in K_{a, t}$.
\end{proof}

\begin{proof}[Proof of Proposition \ref{P:local-global-deriv}] First, $Y'(0)$ exists almost surely by Lemma \ref{L:Y-sym-type}.
We now show that for every $q \in (-\pi, \pi)$ and $b < c \in (-1, 1)$, that
\begin{equation}
\label{E:deriv-integrated}
\int_{b}^c \expt_{a} (Y'(0) - \sqrt{1 - a^2}q)^+ da = D_\mu^+(q) \int_b^c \sqrt{1-a^2}da.
\end{equation}
Here $\expt_a$ is the expectation taken with respect to $\prob_a$. Lemma \ref{L:closed-int} guarantees that there exists a time $t_0>0$ such that for any $r \in [0, t_0]$ and $a \in [b, c]$, there is a minimal flux $\pi\sqrt{1-y^2}$-Lipschitz path $g \in \Lip^*$ with $g(0) = a$ and $g(r) = \sin(\arcsin(a) + qr)$. Call this path $g_{a, r}$. Letting
$$
s_{a, r}(t) = \frac{g'_{a, r}(t)}{\sqrt{1 - g^2_{a, r}(t)}}
$$
be the local speed of $g_{a, r}$, we have that
\begin{align}
\nonumber
J^+(g_{a, r} ; [0, r]) &\ge \frac{1}2 \min_{t \in [0, r]} \sqrt{1 - g^2_{a, r}(t)} \int_0^r D^+_\mu(s_{a, r}(t)) dt \\
\label{E:J-lower}
&\ge  \frac{1}2 \min_{t \in [0, r]} {\sqrt{1 - g^2_{a, r}(t)}} r D^+_\mu(q).
\end{align}
Here the second inequality follows by Jensen's inequality, using that $D^+_\mu$ is convex (Lemma \ref{L:convex}) and that the average local speed of $g_{a, r}$ is $q$. Now letting $g_a(t) = \sin(\arcsin(a) + qt)$, Lemma \ref{L:min-path} implies that
\begin{equation}
\label{E:g-J-h}
J^+(g_{a, r} ; [0,r]) \le J^+(g_a ; [0, r]) \le \frac{1}2  \max_{t \in [0, r]} {\sqrt{1 - g^2_{a}(t)}} r D^+_\mu(q).
\end{equation}
Combining this inequality with \eqref{E:J-lower} implies that
$$
\lim_{r \to 0^+} \frac{J^+(g_{a, r} ; [0, r])}{r} = \frac{D_\mu^+(q)}2 \sqrt{1 - a^2}.
$$
Moreover, \eqref{E:g-J-h} implies that $J^+(g_{a, r} ; [0, r]) \le rD^+_\mu(q)$ for all $a \in [b, c], r \in [0, t_0]$. Therefore
by the bounded convergence theorem, we have that
\begin{equation}
\label{E:want-1}
\lim_{r \to 0} \int_{b}^c \frac{J^+(g_{a, r} ; [0, r])}r da = \frac{D^+_\mu(q)}2\int_b^c \sqrt{1- a^2}da.
\end{equation}
Now recall that $P^+_Y(g_{a, r} ; [0, r])$ is the probability that $Y$ upcrosses $g_{a, r}$ in the interval $[0, r]$. Since minimal flux paths cross at most once (Lemma \ref{L:no-cross}),
$$ 
\prob(Y(0) < a, Y(r) > g_{a, r}(r))= P^+_Y(g_{a, r} ; [0, r]).
$$
Therefore by Proposition \ref{P:flux-upcross}, we can write 
\begin{align}
\nonumber
\int_{b}^c \frac{J^+(g_{a, r} ; [0, r])}r da &= \int_{b}^c \frac{\prob(Y(0) < a, Y(r) > g_{a, r}(r))}r da.
\end{align}
Then defining $P_x(a, r) = \prob_x(Y(r) > g_{a, r}(r))/r$, the right hand side above is equal to 
\begin{align}
\label{E:J-+-da}
\int_{b}^c \frac{1}2 \int_{-1}^a P_x(a, r)dxda = \frac{1}2 \int_{-1}^c \int_{x \vee b}^c  P_x(a, r) dadx.
\end{align}
Now define $S_{x, y} = \sin(\arcsin(x) + (\pi - q)y)$. Since $Y$ is $\pi\sqrt{1-h^2}$-Lipschitz almost surely (Theorem \ref{T:bounded-speed}), for almost every $x \in [-1, 1]$, we have that
$$
\prob_x(Y(t) > g_{a, r}(r)) = 0
$$
whenever $a > S_{x, r}.$ Therefore we can rewrite the right hand side of \eqref{E:J-+-da} as
\begin{equation}
\label{E:want-2}
\begin{split}
\frac{1}2 \lf[\int_b^{c} \int_{x}^{S_{x, r}} P_x(a, r) da dx - \int_{S_{c, -r}}^c \int_c^{S_{x, r}} P_x(a, r) da dx + \int_{S_{b, -r}}^b \int_b^{S_{x, r}} P_x(a, r) da dx\rg].
\end{split}
\end{equation}
In the second and third terms above, the integrand is of size $O(r^{-1})$, whereas the region of integration is of size $O(r^{2})$. Therefore these terms go to zero as $r \to 0$. Now letting
$$
f(a, x, s, r) = \frac{\sin(\arcsin(a) + sr) - x}r,
$$
and making the substitution $a \mapsto u = f(a, x, q, r)$, we can write the first term of \eqref{E:want-2} as
\begin{equation}
\label{E:b-c}
\begin{split}
\int_b^c \int_{f(x, x, q, r)}^{f(x, x, \pi, r)} \prob_x\lf(\frac{Y(r) - Y(0)}r > u \rg) \frac{\cos(\arcsin(ru + x) - qr)}{\sqrt{1 - (ru + x)^2}}du dx.
\end{split}
\end{equation}
Moreover, for every $x \in [-1, 1]$ such that $Y'(0)$ exists $\prob_x$-almost surely, and for $u \in [q\sqrt{1 - x^2}, \pi\sqrt{1 - x^2}]$ for which $u$ is not an atom of the distribution $\prob_x(Y'(0) \in \cdot)$, we have that
$$
\prob_x\lf(\frac{Y(r) - Y(0)}r > u \rg) \xrightarrow[r \to 0]{} \prob_x(Y'(0) > u).
$$
Therefore by the dominated convergence theorem, \eqref{E:b-c} converges to 
$$
\int_b^c \int_{q\sqrt{1 - x^2}}^{\pi\sqrt{1 - x^2}} \prob_x(Y'(0) > u) du dx = \int_{b}^c \expt_{x} (Y'(0) - \sqrt{1 - x^2}q)^+ dx \qquad \mathas \quad r \to 0.
$$
The equality above again uses that $|Y'(0)| \le \pi\sqrt{1- Y^2(0)}$ almost surely, since $Y$ is $\pi\sqrt{1-y^2}$-Lipschitz. 
Combining this with \eqref{E:want-1}, \eqref{E:J-+-da}, and \eqref{E:want-2} proves \eqref{E:deriv-integrated}.
Now define 
\[
d(Y) = \frac{Y'(0)}{\sqrt{1 - Y^2(0)}}.
\]
By \eqref{E:deriv-integrated}, for almost every $a \in (-1, 1)$, for every $q \in \rat \cap (\pi, \pi)$, we have that 
$$
\sqrt{1 - a^2}\expt_{a} \lf(d(Y) -  q\rg)^+ = \expt_{a} \lf(Y'(0) - \sqrt{1 - a^2} q\rg)^+ = \sqrt{1- a^2}D_\mu^+(q).
$$
Therefore by continuity in $s$ of the functions $\expt_{a} \lf(d (Y) - s\rg)^+$ and $D_\mu^+(s)$, we have that
\begin{equation}
\label{E:daY}
\expt_{a} \lf(d (Y) - s\rg)^+ = D_\mu^+(s) \qquad \mathforall s \in (-\pi, \pi).
\end{equation}
Now, $D_\mu^+(s)$ is Lipschitz (Lemma \ref{L:convex}) and hence differentiable almost everywhere, so we can differentiate both sides of the above equation to get that for almost every $s \in (-\pi, \pi)$, that
$$
\prob_a(d(Y) > s) = \mu(s, \infty).
$$
Finally, for almost every $a \in (-1, 1)$, we have that 
$$
\prob_a(d(Y) \in [-\pi, \pi]) = \mu[\pi, \pi] = 1
$$
since $Y$ is almost surely $\pi\sqrt{1 - y^2}$-Lipschitz (Theorem \ref{T:bounded-speed}).
Therefore $\prob_a(d(Y) \in \cdot) = \mu$.
\end{proof}

\subsection{The max-height distribution and the uniqueness of $Y$}

As an application of Proposition \ref{P:local-global-deriv}, we can find $\prob(m(Y) > a)$ for all $a \in [0, 1]$. This allows us to deduce the uniqueness of $Y$.

\begin{theorem}
\label{T:max-height}
Let $Y$ be any subsequential limit of $Y_n$. For all $a \in [0, 1]$, we have that 
$ \prob(m(Y) > a) = \sqrt{1 - a^2}$. That is, $m(Y) \eqd \sqrt{1- U^2}$ for a uniform random variable $U$ on $[0, 1]$.
\end{theorem}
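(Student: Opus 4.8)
The plan is to combine Proposition \ref{P:local-global-deriv}, which identifies the conditional derivative distribution, with the characterization of minimal flux paths from Theorem \ref{T:path-set} (equivalently Theorem \ref{T:unique-2}). Since $Y$ is almost surely a minimal flux path in $\Lip_r$ by Theorem \ref{T:particle-crossings} (iii), and since such a path is determined by its starting value $Y(0) = a$, its extremal height, and whether it is initially increasing or decreasing (Theorem \ref{T:unique-2}), the event $\{m(Y) > a\}$ depends only on these data. The key observation is that, by the explicit formula in Lemma \ref{L:closed-int}, given $Y(0) = a$ the set of reachable values $Y(r)$ for small $r > 0$ is exactly $[\sin(-\pi r + \arcsin(a)), \sin(\pi r + \arcsin(a))]$, and the map from the local speed $Y'(0)/\sqrt{1 - a^2} \in [-\pi, \pi]$ to the minimal flux path through $(0, a)$ with that initial speed is a bijection onto $M_{|a|}$ — more precisely, onto the family $\{h_{a, k}\}$ parametrized by the extremal height $k \in [-1, -|a|] \cup [|a|, 1]$. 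Under this correspondence, $m(h_{a, k}) = |k|$, and the monotone ordering of Theorem \ref{T:unique-2} (iv) means that $m(Y) > a$ is equivalent to an explicit inequality on $Y'(0)$.

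Concretely, I would first fix $a \in [0, 1]$ and work conditionally on $Y(0) = x$ for a generic $x \in (-1, 1)$. I would express the extremal height of the minimal flux path $h_{x, k}$ through $(0, x)$ with initial local speed $s = d(Y) := Y'(0)/\sqrt{1 - Y^2(0)}$ as a function $k = k(x, s)$; by the concavity of $\arcsin(h_k)$ (Lemma \ref{L:concave-paths}) and the symmetry $h_m(t) = h_m(1-t)$ (Corollary \ref{C:sym}), a path starting at $x$ with positive local speed $s$ reaches its maximum height at the time $\arcsin(h)$ becomes flat, and in fact the extremal height, once $x$ is shifted to $0$ via Lemma \ref{L:path-shift}, is $\sin$ of something like $\tfrac12(\text{arc length of }\arcsin h)$. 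The cleanest route is to use the shifting lemma: given $Y(0) = x$ with $x > 0$ and $Y'(0) < 0$ (the decreasing case), $Y$ first hits $0$ at some time $t_0$, and the shifted path lies in $\Lip_0$; by Theorem \ref{T:unique-2} (iii) it equals $h_{-k}$ where $k = m(Y)$, and the relation between $x$, the initial speed, and $k$ is pinned down by the constant-speed bound $|Y'| \le \pi\sqrt{1 - Y^2}$ combined with concavity. From Proposition \ref{P:local-global-deriv}, $d(Y) \sim \mu$ conditionally on $Y(0) = x$, so $\prob(m(Y) > a \mid Y(0) = x)$ becomes $\mu$ of an explicit interval of speeds. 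Finally I would integrate over the uniform law of $Y(0)$ on $[-1,1]$ (Theorem \ref{T:bounded-speed}) and check that the answer is $\sqrt{1 - a^2}$, independent of the precise form of $\mu$ — this independence is exactly what one expects, since Lemma \ref{L:max-heights} already gives the bound $\prob(m(Y) > a) \le D_\mu(0)\sqrt{1-a^2}/2 = \sqrt{1-a^2}$ by Lemma \ref{L:expect-2}, and the content here is the matching lower bound.

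An alternative and possibly slicker computation: use Lemma \ref{L:max-heights} for the upper bound, and for the lower bound use Proposition \ref{P:flux-upcross} with the constant line $h \equiv a$, noting $P^+_Y(a; [0,1]) + P^-_Y(-a; [0,1]) = J^+(g_a) + J^-(g_{-a}) = D_\mu(0)\sqrt{1-a^2}/2 = \sqrt{1-a^2}$, and then arguing that the left side equals exactly $\prob(m(Y) > a)$ rather than merely bounding it. The equality $P^+_Y(a;[0,1]) + P^-_Y(-a;[0,1]) = \prob(m(Y) > a)$ holds because, by the unimodality of minimal flux paths (Proposition \ref{P:monotone-traj}) together with $Y(0) = -Y(1)$, the events "$Y$ upcrosses the line $a$" and "$Y$ downcrosses the line $-a$" are disjoint (a unimodal path with $Y(0)=-Y(1)$ cannot both exceed $a$ and drop below $-a$ while having $J(Y)=1$ — one of these forces $m(Y) \ge$ something incompatible) and their union is precisely $\{m(Y) > a\}$ up to the null event $\{Y(0) \in \{a, -a\}\}$. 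This avoids the explicit speed-to-height change of variables entirely.

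The main obstacle I anticipate is making the disjointness/covering argument in the second approach fully rigorous: one must rule out a minimal flux path that both upcrosses $a$ and downcrosses $-a$, which requires carefully invoking Proposition \ref{P:monotone-traj} (a path achieving large positive height near its midpoint cannot also achieve large negative height) together with Corollary \ref{C:sym} and the constraint $Y(0) = -Y(1)$. If that case analysis gets delicate, the first approach via Proposition \ref{P:local-global-deriv} and the explicit parametrization of $M_{|a|}$ is a safe fallback, at the cost of one genuinely computational lemma relating initial speed to extremal height. Either way, the crucial inputs are already in hand: the derivative law is $\mu$, minimal flux paths are unimodal and linearly ordered by height, and $D_\mu(0) = 2$.
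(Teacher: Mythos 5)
Both routes you propose have a genuine gap at the decisive step. In your ``slicker'' second approach, the lower bound hinges on applying Proposition \ref{P:flux-upcross} to the constant line $h \equiv a$, but that proposition requires $h \in \Lip_r$ (so $h(0) = -h(1)$) with $J(h) = 1$; the constant line at height $a \neq 0$ satisfies neither hypothesis, and its proof breaks immediately, since it starts from $P^+_Y(h;[0,1]) + P^-_Y(h;[0,1]) = 1 = J(h)$, which is exactly what fails here ($Y$ need not cross the line at all, and $J(g_a) = \sqrt{1-a^2} < 1$). Theorem \ref{T:particle-crossings}(i) gives only $P^+_Y(g_a) \le J^+(g_a)$, i.e.\ the upper bound of Lemma \ref{L:max-heights}; the reverse inequality for constant lines is essentially equivalent to the theorem you are trying to prove. (Your disjointness/covering analysis of the events is fine, but it is not where the difficulty lies.) Your first approach has a different problem: the map from initial local speed to extremal height, $s \mapsto k(x,s)$, is determined by the minimal flux paths, which are themselves defined as minimizers of the $\mu$-dependent functional $J$; no explicit formula is available at this stage. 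Writing $\prob_x(m(Y)>a)$ as $\mu$ of a speed interval and integrating in $x$ produces precisely the integral transform of Proposition \ref{P:integral-equality} --- an equation \emph{for} $\mu$, not an identity valid for arbitrary $\mu$. Your hope that the integral evaluates to $\sqrt{1-a^2}$ ``independent of the precise form of $\mu$'' is false; if it were true, the transform would carry no information and could not be used (as the paper does) to identify $\mu$ as the arcsine law. Worse, the precise version of this computation (Lemma \ref{L:bias-unbias}) actually invokes Theorem \ref{T:max-height}, so this route is circular.

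The paper's proof is a short first-moment argument that sidesteps all of this. By unimodality of minimal flux paths, the total variation satisfies $V(Y) = 2m(Y)$. By time-stationarity, $\expt V(Y) = \expt|Y'(0)|$, and by Proposition \ref{P:local-global-deriv} together with $D_\mu(0) = 2$ (Lemma \ref{L:expect-2}) and the uniformity of $Y(0)$, this equals $2\expt\sqrt{1-U^2}$. Hence $\expt m(Y) = \expt\sqrt{1-U^2}$. Combined with the stochastic domination $\prob(m(Y)>a) \le \sqrt{1-a^2} = \prob(\sqrt{1-U^2}>a)$ from Lemma \ref{L:max-heights}, equality of means forces equality in distribution. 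You had all the ingredients (the derivative law, unimodality, $D_\mu(0)=2$, and the upper bound); the missing idea is to convert the pointwise upper bound into a distributional identity via the expectation of $m(Y)$ rather than trying to compute $\prob(m(Y)>a)$ pointwise.
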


\begin{proof}
Let $V(Y)$ be the total variation of $Y$ on the interval $[0, 1]$. We have that
\begin{align*}
\expt V(Y) &= \expt \int_0^1 |Y'(t)|dt = \expt \lf(\frac{|Y'(0)|}{\sqrt{1-Y^2(0)}}\rg) \sqrt{1 - Y^2(0)} = 2 \expt \sqrt{1 - U^2},
\end{align*}
where $U$ is a uniform random variable on $[0, 1]$. Here the second equality follows from time stationarity of $Y$ (Proposition \ref{P:Y-symmetries} (i)). The third equality follows by Proposition \ref{P:local-global-deriv}, the fact that the first moment of $\mu$ is $2$ (Lemma \ref{L:expect-2}), and the fact that $Y(0)$ is uniformly distributed (Theorem \ref{T:bounded-speed}). 

\medskip

Moreover, the characterization of minimal flux paths (Theorem \ref{T:path-set}) implies that 
$
V(Y) = 2 \expt m(Y),
$
so $\expt m(Y) = \expt \sqrt{1 - U^2}$.
Finally, the bound in Lemma \ref{L:max-heights} implies that 
$$
\prob(m(Y) > a) \le \frac{D_\mu(0)}2 \sqrt{1 - a^2} = \sqrt{1- a^2} = \prob(\sqrt{1 - U^2} > a),
$$
so the random variable $m(Y)$ is stochastically dominated by $\sqrt{1 - U^2}$. Hence $m(Y) \eqd \sqrt{1 - U^2}$.
\end{proof}

Now we can prove the existence of a unique limit $Y$ of $Y_n$. First extend the paths $h_m$ defined in Theorem \ref{T:path-set} to paths $h_m:[0, \infty) \to [-1, 1]$ by letting $h_m(t) = -h_m(t - 1)$ for all $t > 1$.

\begin{theorem}
\label{T:Y-unique}
The sequence $Y_n$ has a distributional limit $Y$ given by
$$
Y(t) = h_{\sqrt{1 - U^2}}(V + t).
$$
Here $U$ is a uniform random variable on $[0, 1]$ and $V$ is uniform on $[0, 2]$, independent of $U$.
\end{theorem}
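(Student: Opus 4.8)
The plan is to prove that every subsequential limit $Y$ of $Y_n$ has the law described in the statement; since $\{Y_n\}$ is precompact in distribution by Theorem \ref{T:bounded-speed}, this yields $Y_n \cvgd Y$. So fix a subsequential limit $Y$. By Theorem \ref{T:bounded-speed} it is almost surely $\pi\sqrt{1-y^2}$-Lipschitz with $Y(0) = -Y(1)$, hence $Y \in \Lip_r$ almost surely, and by Theorem \ref{T:particle-crossings} (iii) it almost surely satisfies $J(Y) = 1$. Put $M = m(Y)$; by Theorem \ref{T:max-height} we have $M \eqd \sqrt{1 - U^2}$ for $U$ uniform on $[0,1]$, so $M \in (0,1)$ almost surely. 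Put $T_0 = \inf\{t : Y(t) = 0\}$, which lies in $(0,1)$ almost surely because $Y(0)$ is uniform on $[-1,1]$ (Theorem \ref{T:bounded-speed}) and $Y$ changes sign on $[0,1]$. Then by Lemma \ref{L:path-shift} the shifted path $Y_0 \in \Lip_0$ is a $\pi\sqrt{1-y^2}$-Lipschitz path with $J(Y_0) = 1$ and $m(Y_0) = M$, so Theorem \ref{T:path-set} forces $Y_0 = \Sigma\, h_M$ for a random sign $\Sigma \in \{+1, -1\}$.

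The next step is to reassemble $Y$ from the triple $(M, T_0, \Sigma)$. Extend each $h_m$ to $[0,\infty)$ antiperiodically, $h_m(u+1) = -h_m(u)$, so that $h_m$ has period $2$ (this is consistent since $h_m(0) = h_m(1) = 0$). Unwinding the definition of $Y_0$ and using $Y(0) = -Y(1)$ together with this antiperiodicity, one checks that
\[
Y(s) = h_M(s + V) \qquad \text{for all } s \in [0,1],
\]
where $V = 2 - T_0 \in (1,2)$ if $\Sigma = +1$ and $V = 1 - T_0 \in (0,1)$ if $\Sigma = -1$. In the other direction, $|Y(0)| < M$ almost surely: $Y'(0)$ exists and is almost surely nonzero by Lemma \ref{L:Y-sym-type} (i), while a minimal flux path is strictly unimodal on each period (Proposition \ref{P:monotone-traj}), so its only critical point is the unique maximum; hence $Y'(0) \ne 0$ forces $|Y(0)| < M$. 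Given this, $V$ is recovered measurably from $Y$ as the unique $v \in (0,2)$ with $h_M(v) = Y(0)$ and $\operatorname{sign} h_M'(v) = \operatorname{sign} Y'(0)$. Thus, identifying $[0,2]$ with $\real/2\Z$, the map $Y \mapsto (M, V)$ is an almost sure bijection onto $[0,1]\times(\real/2\Z)$, measurable with measurable inverse $(m, v) \mapsto h_m(v + \cdot)$.

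Finally I would invoke time-stationarity. A direct computation with the antiperiodicity of $h_M$ shows that the time shift $Y \mapsto Y_t$ of Proposition \ref{P:Y-symmetries} (i) corresponds under this parametrization to $(M, V) \mapsto (M,\, V + t \bmod 2)$. Since $Y_t \eqd Y$ for every $t \in [0,1]$, the law $\nu$ of $(M,V)$ on $[0,1]\times(\real/2\Z)$ is invariant under rotation of the second coordinate by every $t$ in a nondegenerate interval. Disintegrating $\nu$ over the first coordinate, almost every conditional law of $V$ given $M$ is then a rotation-invariant probability measure on $\real/2\Z$, hence equals normalized Haar measure; equivalently $V$ is uniform on $[0,2]$ and independent of $M$. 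Combining with $M \eqd \sqrt{1 - U^2}$ gives $(M,V) \eqd (\sqrt{1-U^2},\, 2W)$ for independent uniform $U, W$ on $[0,1]$, so $Y(t) = h_M(V+t) \eqd h_{\sqrt{1-U^2}}(V + t)$ with the asserted joint law. As this law is the same along every subsequence, $Y_n \cvgd Y$.

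The step I expect to be the main obstacle is the middle one: upgrading the informal correspondence $Y \leftrightarrow (m(Y), V)$ to a bona fide measurable bijection in spite of the sign ambiguity $\Sigma$ and the antiperiodic extension of $h_m$ from $[0,1]$ to $[0,\infty)$, and checking that it intertwines the path-level time shift with rotation on $\real/2\Z$. Once this bookkeeping is done, the uniformity of $V$ is just the standard fact that a translation-invariant probability measure on a compact group is Haar measure. The one genuinely delicate point inside this step is the almost sure strict inequality $|Y(0)| < m(Y)$, which is what makes the inverse map single-valued; it rests on Lemma \ref{L:Y-sym-type} (i) and the strict unimodality of minimal flux paths from Proposition \ref{P:monotone-traj}.
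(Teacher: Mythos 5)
Your proposal is correct and follows essentially the same route as the paper: represent $Y$ as $h_{m(Y)}(\cdot+V)$ using the characterization of minimal flux paths (Theorems \ref{T:particle-crossings} (iii), \ref{T:path-set}/\ref{T:unique-2}), identify the law of $m(Y)$ via Theorem \ref{T:max-height}, and deduce uniformity and independence of $V$ from time-stationarity (Proposition \ref{P:Y-symmetries} (i)). The paper states this more tersely; your additional bookkeeping (the sign $\Sigma$, the strict inequality $|Y(0)|<m(Y)$, and the measurable bijection $Y\leftrightarrow(M,V)$) is a correct elaboration of the same argument rather than a different approach.
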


\begin{proof}
By Theorem \ref{T:unique-2} and Theorem \ref{T:particle-crossings} (iii), any subsequential limit $Y$ of $Y_n$ is supported on the set of paths 
$$
\{h_m (\cdot + u) : m \in [0, 1], u \in [0, 2]\}, 
$$ 
so we can write 
$
Y(\cdot) = h_M(\cdot + V),
$
for a pair of random variables $(M, V)$. By Theorem \ref{T:max-height}, $M = \sqrt{1 - U^2}$, for a uniform random variable $U$ on $[0, 1]$. By the time stationarity of $Y$ (Proposition \ref{P:Y-symmetries} (i)), for any $t \in [0, 2]$, we have that
$$
h_M(\cdot + t +  V) \eqd h_M(\cdot +  V),
$$
which in turn implies that $(M, V) \eqd (M, V + t \Mod 2 )$ for any $t \in [0, 2]$. This implies that $V$ has uniform distribution, and that $M$ is independent of $V$.
\end{proof}

\section{The integral transform formula}
\label{S:integral-formula}
Let $\mu_+$ be the pushforward of the local speed distribution $\mu$ under the map $x \mapsto |x|$.
In this section we prove an integral transform formula for $\mu_+$. This transform formula allows us to identify $\mu$ as the arcsine distribution on $[-\pi, \pi]$. Once we know $\mu$, we can find the set of minimal flux paths $h_m$ introduced in Theorem \ref{T:path-set}, and then in turn identify $Y$.

\medskip

 To find the integral transform formula, we will calculate $\prob(m(Y) > k)$ in two different ways. We first give a heuristic explanation of how to do this when the local speed distribution $\mu$ has no atoms. By Theorem \ref{T:max-height}, we have that
$$
\prob(m(Y) > k) = \sqrt{1 - k^2}.
$$
We can also calculate $\prob(m(Y) > k)$ by integrating the marginal probabilities $\prob_a(m(Y) > k)$. This gives that
\begin{equation}
\label{E:heur1}
\sqrt{1 - k^2} = \frac{1}2 \int_{-1}^1 \prob_a(m(Y) > k)da = 1 - k + \frac{1}2 \int_{-k}^k \prob_a(m(Y) > k)da.
\end{equation}
Now we want to find an expression for $\prob_a(m(Y) > k)$ when $|a| < k$.  By the ordering on minimal flux paths, if $Y(0) = a$, then $m(Y) > k$ if and only if $|Y'(0)|$ is greater than some threshold value. Therefore for some $s_{a, k} \in \real$, we have that
\begin{equation}
\label{E:heur2}
\prob_a(m(Y) > k) = \prob_a (|Y'(0)| > \sqrt{1 - a^2}s_{a, k}) = \mu_+(s_{a, k}, \infty).
\end{equation}
The final equality here follows from Proposition \ref{P:local-global-deriv}. To find $s_{a, k}$, we calculate $\prob(m(Y) > k | \; m(Y) > a)$. Set 
\[
T_a = \inf \{t \in [0, 1] : |Y(t)| = a\}.
\]
Note that the time $T_a$ may not exist, in which case we set $T_a = \infty$. We should have that
$$
\prob(m(Y) > k | \; m(Y) > a) = \prob(|Y'(T_a)| > \sqrt{1 - a^2}s_{a, k} | T_a < \infty).
$$
Now, the ``amount of time" that $Y$ spends with $|Y| = a$ is inversely proportional to its speed at that location. Therefore the distribution of $|Y'(T_a)|$ should be a size-biased version of the distribution of $Y'(0)$ given that $|Y(0)| = a$. Hence,  
$$
\prob(|Y'(T_a)| > \sqrt{1 - a^2}s_{a, k}) = \hat{\mu}_+(s_{a, k}, \infty),
$$
where $\hat{\mu}_+$ is the size-biased distribution of $\mu_+$ (we define this formally in the next paragraph). Finally, we can also calculate $\prob(m(Y) > k | \; m(Y) > a)$ using Theorem \ref{T:max-height}. This gives that
$$
\hat{\mu}_+(s_{a, k}, \infty) = \prob(m(Y) > k | \; m(Y) > a) = \frac{\prob(m(Y) > k)}{\prob(m(Y) > a)} = \frac{\sqrt{1 - k^2}}{\sqrt{1 - a^2}}.
$$
We can combine this with \eqref{E:heur2} and \eqref{E:heur1} to get an integral transform formula involving the function $r_{\mu_+}(x) = S(\hat{S}^{-1}(x))$, where $S$ and $\hat{S}$ are the survival functions of $\mu_+$ and $\hat{\mu}_+$, respectively. 

\medskip

We now precisely define everything that is needed to state the integral transform formula. For a probability measure $\nu$ on $(0, \infty)$ with finite mean, define the {\bf size-biased distribution} $\hat{\nu}$ on $(0, \infty)$ by the Radon-Nikodym derivative formula
 $$
 \frac{d\hat{\nu}}{d \nu}(x) = \frac{x}{\int_0^\infty x d \nu(x)}.
 $$
In order to define the integral transform when $\mu$ has atoms, we define the \textbf{extended survival function} $S:\real \times [0,1] \to [0, 1]$ of a probability measure $\nu$ by
$$
S(x, q) = \nu(x, \infty) + (1 - q) \nu(x).
$$
$S$ is a non-increasing, continuous function in the lexicographic ordering on $\real \times [0,1]$. The function $S$ can be thought of as the survival function in the lexicographic ordering for $\nu \X \scrL$, where $\scrL$ is uniform measure on $[0, 1]$. Now define the \textbf{ size-bias ratio function} $r_\nu(x):[0, 1] \to [0, 1]$ of a probability measure $\nu$ on $(0, \infty)$ with finite mean by
\begin{equation*}
r_\nu(x) = S(\hat{S}^{-1}(x)),
\end{equation*}
where $\hat{S}$ is the extended survival function of $\hat{\nu}$ and $S$ is the extended survival function of $\nu$. Here the inverse function is given by
$$
\hat{S}^{-1}(x) = \sup \{(y, q) \in \real \times [0,1] : \hat{S}(y, q) = x \},
$$
where the supremum is taken with respect to the lexicographic ordering. When $\nu$ has no atoms, we can define $r_\nu$ in terms of the usual survival functions. 
\begin{prop}[The integral transform] 
\label{P:integral-equality} Let $\mu_+$ be the pushforward of the measure $\mu$ under the map $f(x) = |x|$. 
For every $k \in (0, 1)$, we have that
\begin{equation}
\label{E:integral-H}
 1 - k + \int_0^k r_{\mu_+} \lf(\frac{\sqrt{1 - k^2}}{\sqrt{1- x^2}}\rg) dx = \sqrt{1-k^2}.
\end{equation}
\end{prop}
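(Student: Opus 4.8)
The plan is to evaluate $\prob(m(Y) > k)$ in two ways, where $Y$ is the (by now unique) distributional limit of $Y_n$, which by Theorem~\ref{T:Y-unique} has the explicit form $Y(t) = h_{\sqrt{1-U^2}}(V+t)$. On one hand $\prob(m(Y)>k) = \sqrt{1-k^2}$ by Theorem~\ref{T:max-height}. On the other hand, conditioning on $Y(0)$, which is uniform on $[-1,1]$ (Theorem~\ref{T:bounded-speed}), and using $m(Y) \ge |Y(0)|$ together with $Y \eqd -Y$ (Proposition~\ref{P:Y-symmetries}),
\begin{equation*}
\sqrt{1-k^2} \;=\; \frac12\int_{-1}^1 \prob_a(m(Y)>k)\,da \;=\; (1-k) + \int_0^k \prob_a(m(Y)>k)\,da .
\end{equation*}
So the proposition reduces to the claim that for Lebesgue-a.e.\ $a \in [0,k)$,
\begin{equation*}
\prob_a(m(Y)>k) \;=\; r_{\mu_+}\!\left(\frac{\sqrt{1-k^2}}{\sqrt{1-a^2}}\right).
\end{equation*}

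To prove this claim I would disintegrate the law of $Y$ over the event $Y(0)=a$ using the explicit form of $Y$. Write $M := m(Y) = \sqrt{1-U^2}$, let $f_M(m)=m/\sqrt{1-m^2}$ be its density on $[0,1]$, and for $m > |a|$ let $w_m(a)$ be the magnitude of the derivative of $h_m$ at a point where $h_m = a$ (by the symmetry $h_m(t)=h_m(1-t)$ this magnitude is the same at all such points); by the ordering and strict monotonicity of minimal flux paths (Theorem~\ref{T:path-set}, Theorem~\ref{T:unique-2}, Proposition~\ref{P:monotone-traj}, Lemma~\ref{L:concave-paths}), $m \mapsto w_m(a)$ is non-decreasing. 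Changing variables $h_m(v)=a$ in the expectation over the uniform shift $V$, the conditional law of $M$ under $\prob_a$ has density proportional to $f_M(m)/w_m(a)$ on $(|a|,1)$, the factor $1/w_m(a)$ being the occupation density of $h_m$ at level $a$. Two facts then pin down the claim. First, since $|Y'(0)| = w_M(a)$ on $\{Y(0)=a\}$ and Proposition~\ref{P:local-global-deriv} gives that $|Y'(0)|/\sqrt{1-a^2}$ has law $\mu_+$ under $\prob_a$, pushing the $\prob_a$-law of $M$ forward under $m \mapsto w_m(a)/\sqrt{1-a^2}$ identifies it with $\mu_+$, so $\prob_a(m(Y)>k) = S_{\mu_+}\!\big(w_k(a)/\sqrt{1-a^2}\big)$, where $S_{\mu_+}$ is the extended survival function and the lexicographic ordering is used to absorb atoms of $\mu$. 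Second, pushing the \emph{unconditional} law $f_M$, restricted to $(|a|,1)$, forward under the same map produces, after renormalization, the \emph{size-biased} measure $\hat\mu_+$ rather than $\mu_+$ --- this is precisely where the size-biasing enters, the disintegration having reweighted $M$ by $1/w_m(a)$ --- and evaluating its survival at $w_k(a)/\sqrt{1-a^2}$, using $\prob(M>k) = \sqrt{1-k^2}$ and $\prob(M<1)=1$, gives $\hat S_{\mu_+}\!\big(w_k(a)/\sqrt{1-a^2}\big) = \sqrt{1-k^2}/\sqrt{1-a^2}$.

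Combining the two identities, $w_k(a)/\sqrt{1-a^2} = \hat S_{\mu_+}^{-1}\!\big(\sqrt{1-k^2}/\sqrt{1-a^2}\big)$, whence
\begin{equation*}
\prob_a(m(Y)>k) \;=\; S_{\mu_+}\!\left(\hat S_{\mu_+}^{-1}\!\left(\frac{\sqrt{1-k^2}}{\sqrt{1-a^2}}\right)\right) \;=\; r_{\mu_+}\!\left(\frac{\sqrt{1-k^2}}{\sqrt{1-a^2}}\right),
\end{equation*}
which is the claim; substituting it into the first display proves the proposition. I expect the main obstacle to be the careful handling of atoms of $\mu$: when $\mu_+$ has an atom, $m \mapsto w_m(a)$ is locally constant, the survival functions are monotone only in the lexicographic sense, and one must verify that the disintegration and both push-forward identifications survive at this generality --- which is exactly why the statement is phrased with the size-bias ratio function $r_{\mu_+}$ and the extended survival functions rather than ordinary inverses. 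Secondary points, all coming from the structure theory of Section~\ref{S:unique}, are the strict monotonicity of $m \mapsto w_m(a)$ off the atoms, the validity of the change of variables $h_m(v)=a$ (each nonzero level is crossed transversally, so the plateau at height $0$ is harmless), and matching the monotone branches. (The heuristic form of the size-biasing in the text --- inspecting $Y$ at the random first hitting time $T_a$ of level $a$, where the occupation density weights speeds by $1/|\text{speed}|$ --- is the same computation expressed through the occupation measure of $Y$ instead of the disintegration.)
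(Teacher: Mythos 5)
Your proposal is correct and follows essentially the same route as the paper: the paper likewise integrates $\prob_a(m(Y)>k)$ over $a$ using Theorem \ref{T:max-height}, derives the occupation-time disintegration (its equation \eqref{E:f-amid} in Lemma \ref{L:bias-unbias}), identifies the pushforward of the $\prob_a$-law of $m(Y)$ under $m\mapsto s_{a,m}$ with $\mu_+$ via Proposition \ref{P:local-global-deriv} and the reweighted unconditional law with $\hat{\mu}_+$ (Lemma \ref{L:equal-measure}), and composes the two extended survival functions to get $r_{\mu_+}$. The atom-handling subtlety you flag is exactly where the paper expends its effort (the constants $q_{a,k}$, the continuity of $k\mapsto\prob_a(m(Y)>k)$, and Lemma \ref{L:X-atom} ensuring $\mu(0)=0$), so your identification of the main obstacle is accurate.
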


To prove Proposition \ref{P:integral-equality}, we first establish that $\mu(0) = 0$. 
\begin{lemma}
\label{L:X-atom}
$\mu(0) = 0$.
\end{lemma}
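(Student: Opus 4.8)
The plan is to deduce this from three ingredients already in hand: the conditional derivative formula of Proposition~\ref{P:local-global-deriv}, the fact that a subsequential limit $Y$ of $Y_n$ has $Y(0)$ uniform on $[-1,1]$ (Theorem~\ref{T:bounded-speed}), and the fact that such a $Y$ satisfies $\prob(Y'(0)=0)=0$ (Lemma~\ref{L:Y-sym-type}(i)). The underlying point is simply that $\mu(\{0\})$ equals the probability that a limiting trajectory has zero derivative at a fixed time, and that probability vanishes by the concavity/strict monotonicity of minimal flux paths already exploited in Lemma~\ref{L:Y-sym-type}.

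First I would fix a subsequential limit $Y$ of $Y_n$. By Proposition~\ref{P:local-global-deriv}, for Lebesgue-a.e.\ $a\in(-1,1)$ the regular conditional law of $Y'(0)/\sqrt{1-a^2}$ given $Y(0)=a$ is $\mu$; since $\sqrt{1-a^2}>0$ for such $a$, this forces $\prob_a(Y'(0)=0)=\mu(\{0\})$ for a.e.\ $a$. Integrating this identity against the law of $Y(0)$, which by Theorem~\ref{T:bounded-speed} is uniform on $[-1,1]$ (and in particular puts no mass on $\{-1,1\}$), the law of total probability gives
$$
\prob(Y'(0)=0)=\frac{1}{2}\int_{-1}^{1}\prob_a(Y'(0)=0)\,da=\mu(\{0\}).
$$
On the other hand, Lemma~\ref{L:Y-sym-type}(i) applied with $t=0$ gives $\prob(Y'(0)=0)=0$. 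Comparing the two statements yields $\mu(\{0\})=0$, as claimed.

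There is no substantial obstacle; the only step warranting a word of care is the passage from the a.e.-in-$a$ identity to the integrated identity, i.e.\ the disintegration of $\prob(Y'(0)\in\cdot\,)$ over the uniform law of $Y(0)$ via its regular conditional distribution, which is the standard law of total probability.
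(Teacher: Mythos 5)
Your proposal is correct and follows essentially the same route as the paper: the paper's proof likewise combines Lemma \ref{L:Y-sym-type}(i) (which gives $\prob(Y'(0)=0)=0$) with Proposition \ref{P:local-global-deriv} (which identifies $\prob\lf(Y'(0)/\sqrt{1-Y^2(0)}=0\rg)$ with $\mu(0)$). You merely spell out the disintegration over the uniform law of $Y(0)$ that the paper leaves implicit.
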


\begin{proof} 
By Lemma \ref{L:Y-sym-type}, $\prob \lf(\frac{Y'(0)}{\sqrt{1 - Y^2(0)}} = 0 \rg) = 0$. By Proposition \ref{P:local-global-deriv}, this is equal to $\mu(0)$. \end{proof}

Now let $m < 1$, and let $h_m$ be as in Theorem \ref{T:path-set}. For any $a \in [0, m]$, define
$$
s_{a, m} = \frac{1}{\sqrt{1 - a^2}} \liminf_{r \to 0} \frac{|h_m(t^* + r) - h_m(t^*)|}{r},
$$
where $t^*$ is any point where $h_m(t^*) = a$. Note that $s_{a, m}$ is independent of $t^*$ by the symmetry of minimal flux paths (Theorems \ref{T:path-set} (i)). If $h_m$ is differentiable at $t^*$, then $s_{a, m} = |h_m'(t^*)|/\sqrt{1 - a^2}$. 
We now prove an integral formula relating the speeds $s_{a, m}$ to the local speed distribution.

\begin{lemma}
\label{L:bias-unbias}
Let $\nu$ be the law of $m(Y)$. For almost every $a \in [0, 1)$, for every $k \in (a, 1)$ there exists a constant $q_{a, k} \in [0, 1]$ such that
\begin{equation}
\begin{split}
\prob_a(m(Y) > k) &= \frac{1}{\sqrt{1 - a^2}} \int \indic \Big(s_{a, m} > s_{a, k} \;\; \mathor \;\; s_{a, m} = s_{a, k}, \; m > k\Big) \frac{2}{s_{a, m}} d\nu(m) \\
\label{E:s-a-k}
&= \mu_+(s_{a, k}, \infty) + q_{a, k}\mu_+(s_{a, k}).
\end{split}
\end{equation}
Moreover, $\prob_a(m(Y) > k)$ is a continuous function of $k \in (a, 1)$ for almost every $a \in [0, 1)$.
\end{lemma}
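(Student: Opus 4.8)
The plan is to read everything off the explicit representation $Y(\cdot) = h_{M}(\cdot + V)$ from Theorem~\ref{T:Y-unique}, where $M = m(Y)$ has law $\nu$ (so $M \eqd \sqrt{1-U^2}$), $V$ is uniform on $[0,2]$ independent of $M$, and the $h_m$ are the minimal flux paths of Theorem~\ref{T:path-set} extended to $[0,\infty)$ by $h_m(t) = -h_m(t+1)$. The first equality will come from disintegrating the joint law of $(Y(0),m(Y))$ along $Y(0)$, the second from Proposition~\ref{P:local-global-deriv}, and the two are matched up using that $m \mapsto s_{a,m}$ is non-decreasing. For the latter: the shift of the extended $h_m$ passing through height $a$ at time $0$ on an increasing stretch lies in $\Lip_r$, is $\pi\sqrt{1-h^2}$-Lipschitz, has supremum $m$, and has flux $1$ --- the integrand $D_\mu(s(t))\sqrt{1-h^2(t)}$ defining $J$ is $1$-periodic for the extended $h_m$, since antiperiodicity flips the sign of the local speed and $D_\mu$ is even (Lemma~\ref{L:convex}), and $J(h_m)=1$ by Theorem~\ref{T:path-set} --- so by uniqueness (Theorem~\ref{T:unique-2}, Remark~\ref{R:shift}) it equals $h_{a,m}$. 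As $h_{a,m_1} \le h_{a,m_2}$ for $a \le m_1 \le m_2$ (Theorem~\ref{T:unique-2}(iv)) and both equal $a$ at time $0$, comparing right derivatives gives $s_{a,m_1} \le s_{a,m_2}$; hence for $k \in (a,1)$ the indicator in the statement coincides with $\indic(m>k)$ on the co-null set of $m$ for which $s_{a,m}$ is defined.

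Next I would disintegrate. For each $m$ and Lebesgue-a.e.\ $a \in (0,m)$, the Lipschitz path $h_m$ is differentiable with non-zero derivative at each of its two preimages $t^\ast \in (0,1/2)$ and $1-t^\ast$ of the level $a$ (the image of the non-differentiability set is Lebesgue-null), with $|h_m'(t^\ast)| = |h_m'(1-t^\ast)| = \sqrt{1-a^2}\,s_{a,m}$ by the symmetry $h_m(t)=h_m(1-t)$. The area formula then gives that the push-forward of the uniform law on $[0,2]$ under $h_m$ has density $(\sqrt{1-a^2}\,s_{a,m})^{-1}$ at $a$, so integrating against $\nu$ and using that $Y(0)$ is uniform on $[-1,1]$ (Theorem~\ref{T:bounded-speed}),
\[
\prob\big(Y(0) \in da,\ m(Y) \in dm\big) = \frac{\indic(|a|<m)}{\sqrt{1-a^2}\,s_{a,m}}\, da\, \nu(dm),
\]
and the regular conditional law is $\prob_a(m(Y)\in dm) = \frac{2\,\indic(|a|<m)}{\sqrt{1-a^2}\,s_{a,m}}\,\nu(dm)$ for a.e.\ $a$ (forcing $\int_{m>|a|}\tfrac{2}{s_{a,m}}\,\nu(dm) = \sqrt{1-a^2}$). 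Integrating $\indic(m>k)$ against this conditional law, and rewriting the indicator via the monotonicity above, is the first equality.

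For the second equality, note that for a.e.\ $(M,V)$ the path $h_M$ is differentiable at $V$ with $V \notin \{1/2,1\}\bmod 2$, so $Y'(0) = h_M'(V)$ is taken on a strictly monotone stretch of $h_M$ at height $Y(0)$, whence $|Y'(0)|/\sqrt{1-Y(0)^2} = s_{Y(0),\,m(Y)}$ almost surely. By Proposition~\ref{P:local-global-deriv} the left side has law $\mu_+$ (the push-forward of $\mu$ under $x\mapsto|x|$) under $\prob_a$ for a.e.\ $a$; writing $\{m(Y)>k\} = \{s_{a,m(Y)}>s_{a,k}\} \sqcup \big(\{s_{a,m(Y)}=s_{a,k}\}\cap\{m(Y)>k\}\big)$ via the monotonicity then gives $\prob_a(m(Y)>k) = \mu_+(s_{a,k},\infty) + q_{a,k}\,\mu_+(s_{a,k})$ with $q_{a,k} = \prob_a(m(Y)>k \mid s_{a,m(Y)}=s_{a,k}) \in [0,1]$ (set to $0$ when $\mu_+(s_{a,k})=0$). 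Continuity of $k\mapsto\prob_a(m(Y)>k)$ on $(a,1)$ then follows because the conditional law $\prob_a(m(Y)\in\cdot)$ found above is absolutely continuous with respect to $\nu$, which has a density and hence no atoms.

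I expect the occupation-time / change-of-variables step to be the main obstacle: one has to verify that the merely Lipschitz path $h_m$ is differentiable with non-vanishing derivative at its level-$a$ preimages for Lebesgue-a.e.\ $a$ (so that the occupation density of $h_m(V)$ is genuinely $(\sqrt{1-a^2}\,s_{a,m})^{-1}$, with $s_{a,m}$ the local speed appearing in the statement, and the $\liminf$ in its definition is an honest derivative), and one has to intersect the several ``for almost every $a$'' exceptional sets --- from the area formula, from Fubini in the $m$-integration, and from Proposition~\ref{P:local-global-deriv} --- into one co-null set.
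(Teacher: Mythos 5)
Your proposal is correct and follows essentially the same route as the paper: both arguments rest on the occupation density $2/(\sqrt{1-a^2}\,s_{a,m})$ of the minimal flux paths (the paper computes it via the a.e.\ derivative of $h_m^{-1}$ and time-stationarity rather than the explicit $h_M(\cdot+V)$ representation, but this is the same calculation), on the monotonicity of $m\mapsto s_{a,m}$ coming from the ordering in Theorem~\ref{T:unique-2}(iv), on the identity $|Y'(0)|=\sqrt{1-Y(0)^2}\,s_{Y(0),m(Y)}$ combined with Proposition~\ref{P:local-global-deriv} for the second equality, and on $\nu$ having a Lebesgue density for the continuity claim. The technical caveats you flag (a.e.\ differentiability with nonvanishing derivative, intersecting the exceptional null sets) are exactly the points the paper handles via the concavity of $\arcsin(h_m)$ and its "for almost every pair $a<k$, then upgrade by monotonicity and continuity" step.
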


\begin{proof}
Let $h_m$ be as in Theorem \ref{T:path-set}, and let $a < b \in [0, 1)$. We first compute the amount of time that $h_m$ spends in the interval $[a, b]$. Define $h_m^{-1}:[0, m] \to [0, 1/2]$ by
$$
h_m^{-1}(x) = \inf \{t : h_m(t) = x \}.
$$
By the strict monotonicity and symmetry of $h_m$ (Theorem \ref{T:path-set} (i), (ii)), we can write
\begin{equation}
\label{E:L-h}
\scrL\{t: |h_m(t)| \in [a, b]\} = 2[h_m^{-1}(b) -  h_m^{-1}(a)],
\end{equation}
where $\scrL$ is Lebesgue measure on $[0, 1]$.
Now by the concavity of $h_m$ (Lemma \ref{L:concave-paths}), the inverse $h_m^{-1}$ is absolutely continuous with derivative $2/[s_{x, m} \sqrt{1 - x^2}]$ for almost every $x$. Therefore the left hand side of \eqref{E:L-h} is equal to 
$$
\int_a^{b} \frac{2}{s_{x, m} \sqrt{1 - x^2}}dx.
$$
Now letting $U$ be a uniform random variable on $[0, 1]$ that is independent of $Y$, for any $a < b < k \in [0, 1)$, we have that
\begin{equation*}
\begin{split}
\prob\big(m(Y) > k \mathand Y(0) \in [a, b]\big) &= \frac{1}2\prob\big(m(Y) > k \mathand |Y(U)| \in [a, b]\big) \\
&= \frac{1}2 \int_k^1 \scrL\{t: |h_m(t)| \in [a, b]\} d\nu(m) \\
&= \int_a^{b} \frac{1}{\sqrt{1 - x^2}} \int_k^1 \frac{1}{s_{x, m}} d\nu(m)dx.
\end{split}
\end{equation*}
The first equality above follows by the time stationarity and symmetry of $Y$ (Proposition \ref{P:Y-symmetries} (i) and (ii)). The second equality follows since $Y$ is supported on shifts of the minimal flux paths $h_m$ (Theorem \ref{T:unique-2}). This implies that for almost every pair $a < k \in [0, 1)$, we have
\begin{equation}
\label{E:f-amid}
\prob_a(m(Y) > k) = \frac{1}{\sqrt{1 - a^2}} \int_k^1 \frac{2}{s_{a, m}} d\nu(m).
\end{equation}
Now by the ordering on minimal flux paths (Theorem \ref{T:unique-2}(iv)), we have that
\begin{equation}
\label{E:speed-mono}
\text{if} \;\; s_{a, m(Y)} < s_{a, k}, \qquad \text{then } m(Y) < k.
\end{equation}
This allows us to rewrite \eqref{E:f-amid} to get that
\begin{equation}
\label{E:f-amid-2}
\prob_a(m(Y) > k) = \frac{1}{\sqrt{1 - a^2}} \int \indic \Big(s_{a, m} > s_{a, k} \;\; \mathor \;\; s_{a, m} = s_{a, k}, m > k\Big) \frac{2}{s_{a, m}} d\nu(m)
\end{equation}
for almost every pair $a < k \in [0, 1)$.
Moreover, for almost every $a \in [0, 1)$, the derivative $Y'(0)$ exists almost surely (Proposition \ref{P:local-global-deriv}). In particular,
$$
|Y'(0)| = \sqrt{1 - a^2}s_{a, m(Y)} \qquad \prob_a\text{-almost surely}.
$$
This uses the fact that $Y$ is supported on shifts of $h_{m(Y)}$. Also, $Y'(0)/\sqrt{1- a^2}$ has distribution $\mu$ for almost every $a$ (Proposition \ref{P:local-global-deriv}). Therefore \eqref{E:speed-mono} implies that for almost every $a \in [0, 1)$, for every $k \in (a, 1)$ there exists a constant $q_{a, k} \in [0, 1]$ such that
\begin{equation}
\label{E:Pa-mY}
\prob_a(m(Y) > k) = \mu_+(s_{a, k}, \infty) + q_{a, k}\mu_+(s_{a, k}).
\end{equation}

Now let $a$ be such that \eqref{E:Pa-mY} holds for every $k \in (a, 1)$ and \eqref{E:f-amid} and \eqref{E:f-amid-2} hold for almost every $k \in (a, 1)$. 
By concavity of minimal flux paths (Lemma \ref{L:concave-paths}), $s_{a, m} > 0$ whenever $m > a$. Therefore since $\nu$ has a Lebesgue density by Theorem \ref{T:max-height}, the right hand side of \eqref{E:f-amid} is continuous and non-increasing. Since both sides of \eqref{E:f-amid-2} are also non-increasing, and are equal to the right hand side of \eqref{E:f-amid} for almost every $k \in (a, 1)$, they must be equal for every $k \in (a, 1)$. Combining this with \eqref{E:Pa-mY} proves the lemma.
\end{proof}

\begin{lemma}
\label{L:equal-measure} 
Let $\mathfrak{s}_a$ be the law of $\prob(s_{a, m(Y)} \in \cdot \; | m(Y) > a)$, and define the measure $\bar{\mathfrak{s}}_a$ by the Radon-Nikodym formula 
$$
\frac{2}{s} d\mathfrak{s}_a(s) = d\bar{\mathfrak{s}}_a(s).
$$
Then $\bar{\mathfrak{s}}_a = \mu_+$ for almost every $a \in [0, 1)$.
In particular, for such $a$, for all $k \in (a, 1)$ we have
\begin{equation}
\label{E:q-ak}
q_{a, k}\mu_+(s_{a, k}) = \frac{2}{s_{a, k}\sqrt{1 - a^2}} \int \indic \Big(s_{a, m} = s_{a, k}, m > k\Big)d\nu(m).
\end{equation}
\end{lemma}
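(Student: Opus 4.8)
The plan is to make explicit the conditional law of $m(Y)$ under $\prob_a$, and then to match the $\prob_a$-distribution of the exit speed $s_{a,m(Y)}$ computed from this conditional law against its value $|Y'(0)|/\sqrt{1-a^2}$, whose law is $\mu_+$ by Proposition~\ref{P:local-global-deriv}.

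First I would promote \eqref{E:f-amid} from an identity of survival functions to an identity of measures: for almost every $a\in[0,1)$,
$$
\prob_a(m(Y)\in dm)=\frac{2}{s_{a,m}\sqrt{1-a^2}}\,\indic(m>a)\,d\nu(m).
$$
This is a bona fide probability measure: its total mass is the $k\downarrow a$ limit of the right side of \eqref{E:f-amid}, which equals $\prob_a(m(Y)>a)=1$ for a.e.\ $a$ — since $\mu$ has no atom at $0$ (Lemma~\ref{L:X-atom}), $Y'(0)\ne 0$ and hence $m(Y)>|Y(0)|=a$ $\prob_a$-almost surely — and $s_{a,m}>0$ for $m>a$ by the concavity of minimal flux paths (Lemma~\ref{L:concave-paths}). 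Pushing this law forward under $m\mapsto s_{a,m}$, and recalling that $\mathfrak{s}_a$ is the pushforward of $\nu(\cdot)/\sqrt{1-a^2}$ restricted to $(a,1]$ (using $\nu(a,1]=\sqrt{1-a^2}$ from Theorem~\ref{T:max-height}), gives, for every bounded measurable $f$,
$$
\expt_a\big[f(s_{a,m(Y)})\big]=\int f(s)\,\frac{2}{s}\,d\mathfrak{s}_a(s)=\int f\,d\bar{\mathfrak{s}}_a.
$$

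Next I would compute $s_{a,m(Y)}$ directly under $\prob_a$. By Theorem~\ref{T:Y-unique}, $Y=h_{m(Y)}(\cdot+V)$, so conditioning on $Y(0)=a$ forces $h_{m(Y)}$ to pass through height $a$ at time $V$, and at any point where $h_{m(Y)}$ is differentiable this reads $|Y'(0)|=s_{a,m(Y)}\sqrt{1-a^2}$ by the definition of $s_{a,m}$. A Fubini argument shows this holds $\prob_a$-a.s.\ for a.e.\ $a$: $\nu$ has a density (Theorem~\ref{T:max-height}), $\arcsin(h_m)$ is concave hence differentiable off a countable set (Lemma~\ref{L:concave-paths}), and integrating over $m\sim\nu$ the set of heights $a$ at which $h_m$ fails to be differentiable is negligible, so for a.e.\ $a$ the point $V$ is $\prob_a$-a.s.\ a differentiability point of $h_{m(Y)}$. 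Then $|Y'(0)|/\sqrt{1-a^2}$ has law $\mu_+$ under $\prob_a$ for a.e.\ $a$ by Proposition~\ref{P:local-global-deriv}, and comparing with the previous display yields $\bar{\mathfrak{s}}_a=\mu_+$.

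Finally, \eqref{E:q-ak} follows by splitting the integrand in the first line of \eqref{E:s-a-k} into the disjoint cases $s_{a,m}>s_{a,k}$ and $\{s_{a,m}=s_{a,k},\,m>k\}$. The first piece equals $\int\indic(s>s_{a,k})\,d\bar{\mathfrak{s}}_a(s)=\mu_+(s_{a,k},\infty)$ by the identification just proved, which is exactly the first term on the second line of \eqref{E:s-a-k}; subtracting it off and using that $2/s_{a,m}=2/s_{a,k}$ on the event $\{s_{a,m}=s_{a,k}\}$ leaves precisely \eqref{E:q-ak}. I expect the only genuinely delicate point to be the Fubini step keeping $V$ away from the non-differentiability heights of $h_{m(Y)}$ for a.e.\ $a$; everything else is bookkeeping on top of \eqref{E:f-amid} and Proposition~\ref{P:local-global-deriv}.
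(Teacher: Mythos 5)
Your proof is correct, and while it rests on exactly the same two inputs as the paper's argument --- the $\nu$-representation \eqref{E:f-amid} of $\prob_a(m(Y)>k)$ and the identity $|Y'(0)|=\sqrt{1-a^2}\,s_{a,m(Y)}$ combined with Proposition \ref{P:local-global-deriv} --- you assemble them differently, and more directly. The paper works with the two \emph{extended survival function} representations packaged in \eqref{E:s-a-k}: it first shows via the limits \eqref{E:mY} and continuity that both $\bar{\mathfrak{s}}_a$ and $\mu_+$ put full mass on the closure of $\{s_{a,k}:k\in(a,1)\}$, and then needs the auxiliary quantity $k^*=\inf\{\ell: s_{a,k}<s_{a,m}\ \text{for all}\ m>\ell\}$ to strip off the interpolation constants $p_{a,k},q_{a,k}$ and match the genuine survival functions at each $s_{a,k}$; this is the delicate part, since $k\mapsto s_{a,k}$ may have plateaus corresponding to atoms. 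You bypass all of that by upgrading \eqref{E:f-amid} to an identity of measures (legitimate: both sides are monotone in $k$, the right side is continuous because $\nu$ has a density and $s_{a,m}>0$ for $m>a$, and the total masses match since $\prob_a(m(Y)>a)=1$ for a.e.\ $a$), pushing forward under $m\mapsto s_{a,m}$ to see that the $\prob_a$-law of $s_{a,m(Y)}$ \emph{is} $\bar{\mathfrak{s}}_a$, and identifying that same law as $\mu_+$ via the derivative. The equality of measures then needs no atom bookkeeping at all, and \eqref{E:q-ak} drops out by subtracting the $\{s_{a,m}>s_{a,k}\}$ piece from \eqref{E:s-a-k} exactly as you say. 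The one step you flag as delicate --- that $V$ lands $\prob_a$-a.s.\ at a differentiability point of $h_{m(Y)}$ for a.e.\ $a$ --- is in fact already asserted and used in the paper's proof of Lemma \ref{L:bias-unbias} (via time-stationarity and a.e.\ differentiability of $Y$), so you may simply cite it rather than re-deriving it; your Fubini argument is a fine substitute. In short: same ingredients, but your pushforward formulation buys a shorter proof that avoids the paper's $k^*$ construction.
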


Implicit in the statement of Lemma \ref{L:equal-measure} is the claim that $\mathfrak{s}_a > 0$ almost surely for all $a$. This follows since $s_{a, k} > 0$ whenever $a < k$, which is a consequence of the concavity of minimal flux paths (Lemma \ref{L:concave-paths}).

\begin{proof}
Let $a$ be such that \eqref{E:s-a-k} holds for every $k \in (a, 1)$ and such that $\prob_a(m(Y) > k)$ is continuous. Further assume that
$$
\prob_a\lf( Y'(0) \text{ exists and is non-zero }, m(Y) < 1, J(Y) = 1 \rg) = 1.
$$
These conditions hold for almost every $a \in [0, 1)$ (Proposition \ref{P:local-global-deriv}, Lemma \ref{L:X-atom}). Noting that $\sqrt{1 - a^2} = \prob(m(Y) > a)$ by Theorem \ref{T:max-height}, equation \eqref{E:s-a-k} implies that for every $k \in (a, 1)$, there exists a $p_{a, k} \in [0, 1]$ such that
\begin{equation}
\label{E:comp}
\prob_a(m(Y) > k) = \bar{\mathfrak{s}}_a(s_{a, k}, \infty) + p_{a, k}\bar{\mathfrak{s}}_a(s_{a, k}) = \mu_+(s_{a, k}, \infty) + q_{a, k}\mu_+(s_{a, k}).
\end{equation}
Now, since $Y'(0) \ne 0$, $\prob_a$-almost surely, the concavity of minimal flux paths (Lemma \ref{L:concave-paths}) implies that $\prob_a(m(Y) = a) = 0$. Moreover, since $m(Y) < 1$, $\prob_a$-almost surely, we have that $\prob_a(m(Y) \in (a, 1)) = 1$, and so
\begin{equation}
\label{E:mY}
\lim_{k \to 1} \prob_a(m(Y) > k) = 0 \quad \mathand \quad \lim_{k \to a} \prob_a(m(Y) > k) = 1.
\end{equation}
Therefore, \eqref{E:comp} and the continuity of $\prob_a(m(Y) > k)$ implies that 
\begin{equation}
\label{E:full}
\mu_+ \lf( s_{a, k} : k \in (a, 1) \rg) = \bar{\mathfrak{s}}_a \lf( s_{a, k} : k \in (a, 1) \rg) = 1.
\end{equation}
Now fix $k \in (a, 1)$. Let 
$$
k^* = \inf \{\ell  \in [k, 1] :  s_{a, k} < s_{a, m} \;\; \text{ for all } \;\;m > \ell \}.
$$
Equation \eqref{E:comp} and the continuity of $\prob_a(m(Y) > k)$ then implies that
$$
\prob_a(m(Y) > k^*) = \bar{\mathfrak{s}}_a(s_{a, k}, \infty) = \mu_+(s_{a, k}, \infty).
$$
Combining this with \eqref{E:full} proves that $\mu_+ = \bar{\mathfrak{s}}_a$. Equation \eqref{E:q-ak} follows by using that $\mu_+ = \bar{\mathfrak{s}}_a$ to simplify equation \eqref{E:s-a-k}.
\end{proof}

\begin{proof}[Proof of Proposition \ref{P:integral-equality}]
Fix $a$ so that the conclusion of Lemma \ref{L:equal-measure} holds, and let $k > a$. By Theorem \ref{T:max-height} and the ordering on minimal flux paths, we can write 
\begin{align*}
\nonumber\prob(m(Y) > k | \; m(Y) > a)  &= \frac{1}{\sqrt{1 - a^2}} \lf[\int \indic(s_{a, m} > s_{a, k} ) d\nu(m)  + \int \indic(s_{a, m} = s_{a, k}, m > k) d\nu(m) \rg].
\end{align*}
By Lemma \ref{L:equal-measure}, we can rewrite the first integral above in terms of $\mathfrak{s}_a$, and then in terms of $d\mu_+$. This gives that
$$
\frac{1}{\sqrt{1 - a^2}}\int \indic(s_{a, m} > s_{a, k} ) d\nu(m) = \int \indic(s > s_{a, k}) d\mathfrak{s}_a(s) = \int \indic(s > s_{a, k})\frac{s}2d\mu_+(s).
$$
We can also rewrite the second integral using \eqref{E:q-ak}. This implies that
$$
\prob(m(Y) > k | \; m(Y) > a) = \int \indic(s > s_{a, k})\frac{s}2d\mu_+(s) + \frac{s_{a, k}}2 q_{a, k} \mu_+(s_{a, k}).
$$
By Lemma \ref{L:expect-2}, we can recognize $\frac{s}{2}$ as the Radon-Nikodym derivative of the size-biased random variable $\hat{\mu}_+$ with respect to $\mu_+$, proving that
$$
\frac{\sqrt{1 - k^2}}{\sqrt{1 - a^2}} = \prob(m(Y) > k | m(Y) > a) = \hat{\mu}_+(s_{a, k}, \infty) + q_{a, k}\hat{\mu}_+(s_{a, k})
$$
for almost every $a < k \in [0, 1)$. Now, Lemma \ref{L:bias-unbias} allows us to conclude that 
\begin{align*}
\prob_a(m(Y) > k) &= S(s_{a, k}, 1 - q_{a, k}) = S\lf(\hat{S}^{-1}\lf(\frac{\sqrt{1- k^2}}{\sqrt{1- a^2}}\rg)\rg)
\end{align*}
for almost every $a < k \in [0, 1)$. Combining this with the symmetry of $Y$ (Proposition \ref{P:Y-symmetries}) and Theorem \ref{T:max-height} implies that
\[
\sqrt{1 - k^2} = \prob(m(Y) > k) = \int_{0}^1 \prob_a(m(Y) > k) da = 1 - k + \int_0^k r_{\mu_+}\lf(\frac{\sqrt{1- k^2}}{\sqrt{1- a^2}}\rg) da. \qedhere
\]
\end{proof}

\section{The weak trajectory limit}
\label{S:transform}

In this section we show that the integral transform in Proposition \ref{P:integral-equality} determines the local speed distribution. This will allow us to immediately conclude that the weak limit of the trajectory random variables $Y_n$ is the Archimedean path. We begin with two basic lemmas about size-bias ratio functions.

\begin{lemma} 
\label{L:loc-lip}
For any probability distribution $\nu$ on $(0, \infty)$ with finite first moment $m$, the size-bias ratio function $r_\nu$ is locally Lipschitz on $[0, 1)$ and continuous on $[0, 1]$.
\end{lemma}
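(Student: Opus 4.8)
The plan is to reduce everything to monotonicity and continuity properties of the two extended survival functions $S$ (for $\nu$) and $\hat S$ (for $\hat\nu$), and of the natural inverse $\hat S^{-1}$. First I would record the elementary facts: $S$ is continuous and non-increasing on $\real \times [0,1]$ in the lexicographic order, $S(x,q) \to 1$ as $x \to 0^-$ (uniformly in $q$) and $S(x,q) \to 0$ as $x \to \infty$, and likewise for $\hat S$; moreover $\hat\nu \ll \nu$ with $d\hat\nu/d\nu(x) = x/m$, so $\hat\nu(A) = 0$ whenever $\nu(A) = 0$ (the densities agree on the support apart from the factor $x/m$). The key structural observation is that $\hat S$ and $S$ have \emph{the same continuity and flatness structure}: $\hat S$ is strictly decreasing across a point $(x,q)$ iff $\nu$ puts mass near $x$, which is exactly when $S$ is, so the composition $S \circ \hat S^{-1}$ is well-defined (independent of the choice of lexicographic preimage in a flat segment, since on such a flat segment $\nu$ — hence $S$ — is also flat). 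I'd make this precise first; it is what makes $r_\nu = S \circ \hat S^{-1}$ a genuine function $[0,1] \to [0,1]$.

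Next, for continuity on $[0,1]$: $r_\nu$ is a composition of the continuous non-increasing map $S$ with $\hat S^{-1}$, and $\hat S^{-1}$ (defined as the lexicographic supremum of the level set) is itself continuous from $[0,1]$ into $\real\times[0,1]$ (a monotone inverse of a continuous monotone surjection onto $[0,1]$ is continuous; the only subtlety, jumps of $\hat S$, is handled by the $q$-coordinate which "fills in" the jumps, exactly the reason for introducing the extended survival function). Hence $r_\nu$ is continuous on the compact interval $[0,1]$. The endpoints: at $x=1$, $\hat S^{-1}(1)$ sits at the left end where $\hat S = 1$, and $S$ there equals $1$; at $x=0$, $\hat S^{-1}(0)$ sits at $(\infty,\cdot)$ (or the right end of the support) and $S = 0$. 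So $r_\nu(1) = 1$, $r_\nu(0) = 0$, and $r_\nu$ is continuous throughout.

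For local Lipschitzness on $[0,1)$: fix $x_0 < 1$ and work on $[0, x_0]$. For $x \in [0,x_0]$, the preimage $(y,q) = \hat S^{-1}(x)$ satisfies $\hat S(y,q) \ge x_0 > 0$, which forces $y$ to stay in a bounded region $[0, M]$ (since $\hat S \to 0$ at $\infty$), and more importantly forces $\hat\nu[y,\infty) \ge x_0 > 0$. The point is to compare infinitesimal increments: moving $x$ by $\de$ changes $\hat S^{-1}(x)$ by some lexicographic increment $d\hat\nu$-mass, and $r_\nu$ changes by the corresponding $\nu$-mass. Since on the relevant range $d\hat\nu/d\nu = y/m$ with $y \le M$, the $\nu$-mass of any interval is at most $(m/y_{\min})$ times... — wait, this needs a lower bound on $y$, which fails near $x$ close to $1$; that is precisely why we restrict to $[0,x_0]$ and why Lipschitzness is only \emph{local}. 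Concretely: on the support points $y$ corresponding to $x \in [0,x_0]$, we have a \emph{lower} bound $y \ge \eta > 0$ for some $\eta = \eta(x_0) > 0$, because $\hat\nu$ has finite total mass $1$ and $\hat\nu[y,\infty) \ge x_0$ cannot hold for $y$ arbitrarily small unless... actually here I must be careful — $\hat\nu[y,\infty)\ge x_0$ is automatic for small $y$. The correct bound goes the other way: I want an \emph{upper} bound on $y$. Let me re-plan this step.

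\medskip

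\noindent\textbf{Revised plan for the Lipschitz step.} The cleanest route: show directly that $r_\nu(x_1) - r_\nu(x_2) \le C(x_0)\,(x_1 - x_2)$ for $0 \le x_2 < x_1 \le x_0 < 1$. Write $(y_i, q_i) = \hat S^{-1}(x_i)$, so $y_1 \le y_2$ (lexicographically) and $\hat S(y_i, q_i) = x_i$. Then $r_\nu(x_1) - r_\nu(x_2) = S(y_1,q_1) - S(y_2,q_2)$, which equals the $(\nu\times\scrL)$-measure, in the lexicographic sense, of the "slab" between $(y_1,q_1)$ and $(y_2,q_2)$; call this slab $B$. Likewise $x_1 - x_2 = \hat S(y_1,q_1) - \hat S(y_2,q_2) = (\hat\nu\times\scrL)(B)$. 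Since every point $(y,q) \in B$ has $y \le y_2 \le M(x_0) := \inf\{y : \hat\nu(y,\infty) \le x_0\} \vee (\text{something finite})$ — more simply, $\hat S(y,q) \ge x_2 \ge 0$ gives no bound, but $\hat S(y,q) \le x_1 \le x_0$ gives $\hat\nu(y,\infty) \le x_0 < 1$, hence $\hat\nu[0,y] \ge 1 - x_0 > 0$, hence $y \ge $ the $(1-x_0)$-quantile $\eta(x_0) > 0$ of $\hat\nu$; this is the lower bound I actually need. On $B$ we therefore have $d\hat\nu = (y/m)\,d\nu \ge (\eta(x_0)/m)\,d\nu$, so $(\hat\nu\times\scrL)(B) \ge (\eta(x_0)/m)\,(\nu\times\scrL)(B)$, i.e.
\[
r_\nu(x_1) - r_\nu(x_2) = (\nu\times\scrL)(B) \le \frac{m}{\eta(x_0)}\,(\hat\nu\times\scrL)(B) = \frac{m}{\eta(x_0)}\,(x_1 - x_2).
\]
This gives the local Lipschitz constant $C(x_0) = m/\eta(x_0)$ on $[0, x_0]$. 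Finally, $\eta(x_0) > 0$ because $\hat\nu$ is a probability measure on $(0,\infty)$: its mass on $(0,\varepsilon]$ tends to $0$ as $\varepsilon \to 0$, so for $\varepsilon$ small enough $\hat\nu(0,\varepsilon] < 1 - x_0$, forcing any $y$ with $\hat\nu[0,y] \ge 1 - x_0$ to satisfy $y > \varepsilon$.

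\medskip

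The main obstacle is exactly the bookkeeping in the previous paragraph: making the "lexicographic slab" argument rigorous, in particular verifying that $\hat S(y_1,q_1) - \hat S(y_2,q_2)$ and $S(y_1,q_1) - S(y_2,q_2)$ are genuinely the $\hat\nu\times\scrL$- and $\nu\times\scrL$-masses of the same set $B$ (this is where the definition of $\hat S^{-1}$ as a lexicographic supremum, and the fact that $\nu,\hat\nu$ have identical null sets, must be used to handle atoms of $\nu$ and flat pieces of $\hat S$ without double-counting or off-by-an-atom errors). Once the measures-of-a-slab picture is set up, continuity and local Lipschitzness both drop out of it, and the endpoint values $r_\nu(0)=0$, $r_\nu(1)=1$ follow from the limiting behaviour of $S$ and $\hat S$.
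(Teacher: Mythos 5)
Your revised plan is correct and is essentially the paper's argument in expanded form: the paper's proof simply computes the one-sided derivatives $\partial_\pm r_\nu(x) = m/\pi_1(\hat S^{-1}(x\pm))$ and notes that $\pi_1(\hat S^{-1}(\cdot))$ is decreasing and strictly positive on $[0,1)$, which is exactly your bound $d\nu/d\hat\nu = m/y \le m/\eta(x_0)$ on the slab. (Your passing claim that $\hat S^{-1}$ itself is continuous fails when $\nu$ has gaps in its support, but your own observation that $S$ and $\hat S$ share flat segments, together with the slab comparison, makes this harmless.)
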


\begin{proof}
Let $\pi_1(x, y) = x.$ By calculus, when $x \in (0, 1)$ we have that
$$
\del_+ r_\nu(x) = \lim_{h \to 0^+} \frac{m}{\pi_1(\hat{S}^{-1}(x + h))} \qquad \mathand \qquad
\del_- r_\nu(x) = \lim_{h \to 0^+} \frac{m}{\pi_1(\hat{S}^{-1}(x - h))}.
$$
The first equation also holds at $x = 0$. As $\pi_1(\hat{S}^{-1}(y))$ is a decreasing function of $y$, and strictly positive for all $y \in [0, 1)$, this shows that $r_\nu$ is locally Lipschitz on $[0, 1)$. It is straightforward to check that $r_\nu$ is continuous at $1$.
\end{proof}

\begin{lemma}
\label{L:rX-det}
Suppose that $\nu_1$ and $\nu_2$ are probability measures on $(0, \infty)$ with the same first moment, such that $r_{\nu_1} = r_{\nu_2}$. Then $\nu_1 = \nu_2$.
\end{lemma}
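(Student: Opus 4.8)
The plan is to recover the size-biased measure $\hat{\nu}_i$ from $r_{\nu_i}$ together with the common first moment $m := m_1 = m_2$, and then to recover $\nu_i$ from $\hat{\nu}_i$ by un-size-biasing. As in the proof of Lemma \ref{L:loc-lip}, write $\pi_1(x,q) = x$, and let $S_i$, $\hat{S}_i$ denote the extended survival functions of $\nu_i$, $\hat{\nu}_i$.

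The main step is to extract the function $\pi_1 \circ \hat{S}_i^{-1}$ from $r_{\nu_i}$ and $m$; this is where Lemma \ref{L:loc-lip} enters. By that lemma, for every $x \in (0,1)$ the one-sided derivatives of $r_{\nu_i}$ exist and satisfy
$$\del_+ r_{\nu_i}(x) = \lim_{h \to 0+}\frac{m}{\pi_1(\hat{S}_i^{-1}(x+h))}, \qquad \del_- r_{\nu_i}(x) = \lim_{h \to 0+}\frac{m}{\pi_1(\hat{S}_i^{-1}(x-h))}.$$
The function $\pi_1 \circ \hat{S}_i^{-1}$ is non-increasing on $[0,1]$ and takes values in $(0,\infty)$ on $(0,1)$: since $\hat{\nu}_i$ is a probability measure on $(0,\infty)$, its extended survival function equals $1$ on $\{0\} \times [0,1]$ and tends to $0$ at $+\infty$, so for heights $x \in (0,1)$ the corresponding level sets sit at finite, strictly positive first coordinate. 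Thus the left and right limits of $\pi_1 \circ \hat{S}_i^{-1}$ at every point of $(0,1)$ are read off from $r_{\nu_i}$ and $m$. Because $r_{\nu_1} = r_{\nu_2}$ and the first moments agree, the non-increasing functions $\pi_1 \circ \hat{S}_1^{-1}$ and $\pi_1 \circ \hat{S}_2^{-1}$ have identical one-sided limits at every point of $(0,1)$; a non-increasing function equals both of its one-sided limits at each of its (at most countably many) continuity points, so these two functions agree off a countable set, hence Lebesgue-almost everywhere on $[0,1]$.

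Next I would pass from $\pi_1 \circ \hat{S}_i^{-1}$ back to $\hat{\nu}_i$ via the quantile representation. The extended survival function and its generalized inverse are designed exactly so that the generalized inverse transports Lebesgue measure to the underlying probability measure, atoms included: if $U$ is uniform on $[0,1]$ then $\pi_1(\hat{S}_i^{-1}(U)) \sim \hat{\nu}_i$. Concretely, by continuity and monotonicity of $\hat{S}_i$ one checks $\scrL\{x : \pi_1(\hat{S}_i^{-1}(x)) > y\} = \hat{\nu}_i(y, \infty)$ for all but countably many $y$, which determines $\hat{\nu}_i$. Since the two functions $\pi_1 \circ \hat{S}_i^{-1}$ agree Lebesgue-a.e. by the previous step, their pushforwards of Lebesgue measure coincide, so $\hat{\nu}_1 = \hat{\nu}_2$.

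Finally, inverting the Radon--Nikodym relation $d\hat{\nu}_i / d\nu_i(x) = x/m$ that defines the size-bias gives $d\nu_i = c_i\, x^{-1}\, d\hat{\nu}_i$ with $c_i = \lf(\int_0^\infty x^{-1}\, d\hat{\nu}_i(x)\rg)^{-1}$ forced by $\nu_i$ being a probability measure; this integral is finite and positive exactly because $\nu_i$ exists (indeed it equals $1/m$). Hence $\hat{\nu}_1 = \hat{\nu}_2$ forces $c_1 = c_2$ and therefore $\nu_1 = \nu_2$. Apart from the extraction step, everything here is routine measure-theoretic bookkeeping --- checking finiteness and positivity of $\pi_1 \circ \hat{S}_i^{-1}$ on $(0,1)$ so the quotients $m/\del_\pm r_{\nu_i}$ make sense, and checking that the various countable exceptional sets are harmless since only Lebesgue-a.e. equality is needed downstream --- so I do not foresee a genuine obstacle; the role of the ``same first moment'' hypothesis is precisely to fix the scale in the extraction step, since without it one would only recover $\hat{\nu}_1$ and $\hat{\nu}_2$ up to a common dilation.
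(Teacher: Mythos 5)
Your proposal is correct and rests on the same key identity as the paper's proof, namely the formula from Lemma \ref{L:loc-lip} expressing $\del_{\pm} r_{\nu}$ as $m/\pi_1(\hat{S}^{-1}(\cdot\,\pm))$, together with the shared first moment. The paper runs the argument contrapositively (locating a single point where $\hat{S}_{\nu_1}$ and $\hat{S}_{\nu_2}$ disagree and showing the one-sided derivatives of $r$ differ there), while you reconstruct $\hat{\nu}_i$ in full and then un-size-bias; this is the same argument in the opposite direction.
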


\begin{proof} We prove the contrapositive.
Let $\nu_1$ and $\nu_2$ be measures with the same first moment, and suppose that $\nu_1 \ne \nu_2$. Since $\nu_1$ and $\nu_2$ have the same first moment, $\hat{\nu}_1 \ne \hat{\nu}_2$, so  for some value of $(y, q) \in (0, \infty) \X [0, 1]$, we have that $\hat{S}_{\nu_1} (y, q) \ne \hat{S}_{\nu_2} (y, q)$. Since the functions $\hat{S}_{\nu_1}(y, \cdot)$ and $\hat{S}_{\nu_2}(y, \cdot)$ are linear for any fixed value of $y$, this implies that $\hat{S}_{\nu_1} (y, r) \ne \hat{S}_{\nu_2} (y, r)$ for some $r \in \{0, 1\}$. Moreover, since 
$$
\hat{S}_{\nu_i}(y, 1) = \sup \{\hat{S}_{\nu_i}(z, 0) : z > y \}, \qquad i = 1, 2,
$$
we can conclude that $\hat{S}_{\nu_1} (z, 0) \ne \hat{S}_{\nu_2} (z, 0)$ for some $z \in (0, \infty)$. Without loss of generality, assume that $a_1 := \hat{S}_{\nu_1} (z, 0) > a_2 :=\hat{S}_{\nu_2} (z, 0)$. Therefore using the notation of the previous lemma, letting $b = (a_1 + a_2)/2$, we have that
$$
\lim_{h \to 0^+} \pi_1(\hat{S}_{\nu_1}^{-1}(b + h)) \ge z, \qquad \text{whereas} \qquad \lim_{h \to 0^+} \pi_1(\hat{S}_{\nu_1}^{-1}(b + h)) < z.
$$
Since $b \in (0, 1)$, the derivative computation in the previous lemma combined with the fact that $\nu_1$ and $\nu_2$ have the same first moment implies that $r_{\nu_1} \ne r_{\nu_2}$.
\end{proof}
Now let $\scrX$ be the space of continuous functions from $[0, 1] \to \real$ that are locally Lipschitz on $[0, 1)$. Define an integral transform $H$ on $\scrX$ by 
\begin{equation*}
H(r)(k) = \int_0^k r \lf(\frac{\sqrt{1 - k^2}}{\sqrt{1- x^2}}\rg) dx,
\end{equation*}
for $k \in [0, 1]$. By Lemma \ref{L:loc-lip}, any size-bias ratio function is in $\scrX$, so if the integral transform $H$ is injective on $\scrX$, then $H(r_{\mu_+})$ determines $r_{\mu_+}$.

\begin{lemma}
\label{L:injective-transform-lip}
The integral transform $H$ is injective on $\scrX$.
\end{lemma}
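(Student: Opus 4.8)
The plan is to reduce the statement to the injectivity of the classical Abel integral transform. Since $H$ is linear, it suffices to show that if $r \in \scrX$ satisfies $H(r)(k) = 0$ for all $k \in [0, 1]$, then $r \equiv 0$.

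First I would change variables twice to bring $H(r)(k)$ into Abel form. Setting $c = 1 - k^2$ and substituting $u = \sqrt{1 - k^2}/\sqrt{1 - x^2}$ — which is strictly increasing in $u$ for $u > 0$, sends $x = 0$ to $u = \sqrt c$ and $x = k$ to $u = 1$, and has Jacobian $dx = c\,u^{-2}(u^2 - c)^{-1/2}\,du$ — yields
$$
H(r)(k) = c \int_{\sqrt c}^{1} \frac{r(u)}{u^2 \sqrt{u^2 - c}}\, du.
$$
A further substitution $v = u^2$ turns this into $H(r)(k) = \tfrac{c}{2} \int_c^1 \phi(v)(v - c)^{-1/2}\, dv$, where $\phi(v) = r(\sqrt v)\, v^{-3/2}$ is continuous on $(0,1]$ (hence bounded on each $[a, 1]$ with $a > 0$). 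Thus the hypothesis $H(r) \equiv 0$ says precisely that $\int_c^1 \phi(v)(v - c)^{-1/2}\, dv = 0$ for every $c \in (0, 1)$.

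Next I would invert this Abel-type equation by the standard composition trick. For fixed $a \in (0, 1)$, I would integrate the identity above against $(c - a)^{-1/2}\, dc$ over $c \in (a, 1)$, apply Fubini on the region $\{a < c < v < 1\}$ (legitimate since $\int_a^1 |\phi(v)| \int_a^v [(c - a)(v - c)]^{-1/2}\, dc\, dv = \pi \int_a^1 |\phi(v)|\, dv < \infty$), and use the elementary evaluation $\int_a^v [(c - a)(v - c)]^{-1/2}\, dc = \pi$ (via $c = a + (v - a)\sin^2\theta$). This gives $\int_a^1 \phi(v)\, dv = 0$ for all $a \in (0, 1)$, so $\phi \equiv 0$ on $(0, 1)$ by differentiation, hence $r(u) = 0$ for $u \in (0, 1)$, and continuity of $r$ forces $r(0) = r(1) = 0$ as well. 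Therefore $r \equiv 0$.

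The only slightly delicate points are bookkeeping: confirming that $u \mapsto x$ is a genuine diffeomorphism on the relevant range with the asserted Jacobian, and checking the absolute integrability needed to apply Fubini near the $(\cdot)^{-1/2}$ singularities at the endpoints. Neither is a real obstacle — every singularity in sight is of integrable square-root type. The genuine content of the lemma is exactly the injectivity of the Abel transform on $(0,1)$, which the identity $\int_a^v [(c-a)(v-c)]^{-1/2}\, dc = \pi$ makes immediate; as an alternative one could simply invoke the classical inversion formula for Abel's integral equation after the reduction above.
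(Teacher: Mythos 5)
Your proof is correct, and it takes a genuinely different route from the paper's. Both arguments begin with the same substitution, rewriting $H(r)(\sqrt{1-y^2})/y^2$ as $\int_y^1 r(u)u^{-2}(u^2-y^2)^{-1/2}\,du$; your Jacobian and limits of integration check out, as does the identity $\int_a^v[(c-a)(v-c)]^{-1/2}\,dc=\pi$ and the Fubini justification (the absolute double integral is exactly $\pi\int_a^1|\phi|<\infty$ since $\phi$ is bounded on $[a,1]$). From there the paper does not invert the Abel transform: it argues by contradiction, normalizing $\max r=\delta>0$, disposing of the cases $r(1)>0$ and $r(0)>0$ at the endpoints, and then at an interior maximum point $y$ using the local Lipschitz bound $r\ge\delta-k\ep$ on $[y,y+\ep]$ to expand $\close{H}(r)(y)-\close{H}(r)(y+\ep)=\frac{(2-y^2)\delta\ep}{y^3\sqrt{1-y^2}}+O(\ep^{3/2})>0$. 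The local Lipschitz hypothesis in the definition of $\scrX$ is used essentially there, whereas your argument needs only continuity of $r$ and hence proves the stronger statement that $H$ is injective on all of $C[0,1]$; it is also shorter and avoids the case analysis and the asymptotic expansion. The one piece of bookkeeping worth writing out explicitly is that $c\mapsto\int_c^1\phi(v)(v-c)^{-1/2}\,dv$ is bounded on $[a,1]$ (by $2\|\phi\|_{\infty,[a,1]}\sqrt{1-c}$), so the iterated integral against $(c-a)^{-1/2}\,dc$ is well defined and vanishes; with that, the reduction to $\int_a^1\phi=0$ for all $a$ and the conclusion $\phi\equiv 0$ by continuity are airtight.
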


\begin{proof}
Let $r \in \scrX, r \ne 0$. We will show that $H(r) \ne 0$. Without loss of generality, we may assume that 
$$
\max_{x \in [0, 1]} r(x) = \delta > 0.
$$
Letting $u = \frac{\sqrt{1 - k^2}}{\sqrt{1- x^2}}$ and $y = \sqrt{1 -k^2}$, we have that
\begin{equation*}
H(r)(\sqrt{1-y^2}) = \int_y^1 r(u) \frac{y^2}{u^2\sqrt{u^2 - y^2}}du.
\end{equation*}
For $y \in (0, 1]$, we have that
\begin{equation}
\label{E:H-bar}
\close{H}(r)(y) := \frac{H(r)(\sqrt{1 - y^2})}{y^2} = \int_y^1 \frac{r(u)}{u^2\sqrt{u^2 - y^2}}du.
\end{equation}
It suffices to show that $\close{H}(r)(y) \ne 0$ for some $y \in (0, 1]$. 
Observe that if $r(1) > 0$, then there would exist a $\ga > 0$ such that $r(x) > 0$ for all $x > \ga$, so $\close{H}(r)(\ga) > 0$. Also, if $r(0) > 0$, then $\close{H}(r)(y) \to \infty$ as $y \to 0$. Therefore there must exist $y \in (0,1)$ be such that  $r(y) = \de$.

 Since $r$ is locally Lipschitz on $[0, 1)$, we can find $k, \ep_0 > 0$ such that $r(y + x) \ge \delta - kx$ for all $x \in [0, \ep_0]$. 
Therefore for $\ep < \ep_0$ we have
\begin{align}
\label{E:de-k-bd}
\int_y^{y+\ep}\frac{r(u)}{u^2\sqrt{u^2 - y^2}}du \ge \int_y^{y+\ep}\frac{(\delta - k \ep)}{u^2\sqrt{u^2 - y^2}}du =  \frac{(\delta - k \ep)\sqrt{\ep^2 + 2y\ep}}{y^2(y + \ep)}.
\end{align}

Now consider the difference between $\close{H}(r)(y)$ and $\close{H}(r)(y + \ep)$. We have 
\begin{align}
\nonumber \close{H}&(r)(y) - \close{H}(r)(y + \ep) \\
\nonumber &= \int_y^{y+\ep}\frac{r(u)}{u^2\sqrt{u^2 - y^2}}du + \int_{y+ \ep}^1 \frac{r(u)}{u^2\sqrt{u^2 - y^2}}du - \int_{y+ \ep}^1 \frac{r(u)}{u^2\sqrt{u^2 - (y+ \ep)^2}}du  \\
\nonumber &\ge \int_y^{y+\ep}\frac{r(u)}{u^2\sqrt{u^2 - y^2}}du + \delta \int_{y + \ep}^1 \lf[\frac{1}{u^2\sqrt{u^2 - y^2}} - \frac{1}{u^2\sqrt{u^2 - (y + \ep)^2}}\rg]du \\
\label{E:int-alm} &\ge \frac{(\de - k \ep) \sqrt{\ep^2 + 2y\ep}}{y^2(y + \ep)} + \de \lf[\frac{(y + \ep)^2\sqrt{1-y^2} -y^2\sqrt{1-(y + \ep)^2}- (y + \ep)\sqrt{\ep^2 + 2y\ep}}{(y + \ep)^2y^2}\rg] .
\end{align}
In above calculation the first inequality comes from the fact that $r(x) \le \de$ for all $x$, and the observation that
$$
\frac{1}{u^2\sqrt{u^2 - y^2}} < \frac{1}{u^2\sqrt{u^2 - (y + \ep)^2}}
$$ 
for all $u \in (y + \ep, 1)$. The second equality follows by integration and plugging in the bound in \eqref{E:de-k-bd}. Now expanding in $\ep$ about $\ep = 0$, we get that \eqref{E:int-alm} is equal to \begin{align*}
\frac{(2-y^2)\de\ep}{y^3\sqrt{1-y^2}} + O(\ep^{3/2}).
\end{align*}
This is strictly greater than 0 for small enough $\ep$, so
$
\close{H}(r)(y) - \close{H}(r)(y + \ep) \ne 0
$
for such $\ep$. Hence $\close{H}(r)(x) \ne 0$ for some $x \in [0, 1)$. 
\end{proof}

\begin{prop}
\label{P:arcsin-en} Let $\mathfrak{arc}$ be the arcsine distribution on $[-\pi, \pi]$, and let $\mathfrak{arc}_+$ be the pushforward of $\mathfrak{arc}$ under the map $x \mapsto |x|$. Then for every $k \in [0, 1]$, we have that
\begin{equation}
\label{E:integral-H-2}
 1 - k + \int_0^k r_{\mathfrak{arc}_+} \lf(\frac{\sqrt{1 - k^2}}{\sqrt{1- x^2}}\rg) dx = \sqrt{1-k^2}.
\end{equation}
\end{prop}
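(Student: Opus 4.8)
The plan is to compute the size-bias ratio function $r_{\mathfrak{arc}_+}$ in closed form and then verify the integral identity \eqref{E:integral-H-2} by elementary calculus. Since $\mathfrak{arc}$ has Lebesgue density $\frac{1}{\pi\sqrt{\pi^2 - x^2}}$ on $[-\pi,\pi]$, its pushforward $\mathfrak{arc}_+$ under $x \mapsto |x|$ has density $\frac{2}{\pi\sqrt{\pi^2 - x^2}}$ on $[0,\pi]$; in particular $\mathfrak{arc}_+$ has no atoms, so the extended survival functions in the definition of $r$ reduce to ordinary survival functions and the lexicographic bookkeeping disappears. A direct computation gives $\int_0^\pi x\, d\mathfrak{arc}_+(x) = 2$, consistent with Lemma \ref{L:expect-2}, so the size-biased measure $\hat{\mathfrak{arc}}_+$ has density $\frac{x}{\pi\sqrt{\pi^2 - x^2}}$ on $[0,\pi]$.

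Next I would compute the two survival functions $S(x) = \mathfrak{arc}_+(x,\pi] = 1 - \frac{2}{\pi}\arcsin(x/\pi)$ and $\hat{S}(x) = \hat{\mathfrak{arc}}_+(x,\pi] = \sqrt{1 - (x/\pi)^2}$, for $x \in [0,\pi]$. Inverting the second gives $\hat{S}^{-1}(y) = \pi\sqrt{1 - y^2}$ for $y \in [0,1]$, so, using $\arcsin\sqrt{1-y^2} = \arccos y = \pi/2 - \arcsin y$,
\[
r_{\mathfrak{arc}_+}(y) = S\bigl(\pi\sqrt{1-y^2}\bigr) = 1 - \frac{2}{\pi}\arcsin\bigl(\sqrt{1-y^2}\bigr) = \frac{2}{\pi}\arcsin(y), \qquad y \in [0,1].
\]
Substituting this into \eqref{E:integral-H-2}, the claim reduces to the identity
\[
\int_0^k \arcsin\!\left(\frac{\sqrt{1-k^2}}{\sqrt{1-x^2}}\right) dx = \frac{\pi}{2}\bigl(k - 1 + \sqrt{1-k^2}\bigr), \qquad k \in (0,1),
\]
the endpoints $k=0$ and $k=1$ then following by continuity of both sides in $k$.

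To prove this last identity I would differentiate both sides in $k$. Writing $g(x,k) = \sqrt{1-k^2}/\sqrt{1-x^2}$, one has $1 - g^2 = (k^2 - x^2)/(1 - x^2)$, hence $\partial_k \arcsin g(x,k) = -k/\bigl(\sqrt{1-k^2}\,\sqrt{k^2 - x^2}\bigr)$; combining the Leibniz boundary term $\arcsin g(k,k) = \arcsin 1 = \pi/2$ with differentiation under the integral sign (legitimate because this $k$-derivative has only the integrable singularity $\sqrt{k^2-x^2}^{-1}$ at $x=k$) and the elementary evaluation $\int_0^k dx/\sqrt{k^2 - x^2} = \pi/2$ gives
\[
\frac{d}{dk}\int_0^k \arcsin g(x,k)\, dx = \frac{\pi}{2} - \frac{\pi k}{2\sqrt{1-k^2}},
\]
which is exactly $\frac{\pi}{2}\,\frac{d}{dk}\bigl(k - 1 + \sqrt{1-k^2}\bigr)$. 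Since both sides vanish at $k=0$, the identity and hence \eqref{E:integral-H-2} follow. (Alternatively one can integrate by parts and reduce to $\int_0^k dx/\sqrt{k^2-x^2} = \pi/2$ and $\int_0^k dx/\bigl((1-x^2)\sqrt{k^2-x^2}\bigr) = \pi/(2\sqrt{1-k^2})$, obtaining the closed form in one shot.) There is no genuine obstacle here; the only point requiring a careful word is the justification of differentiation under the integral sign — equivalently, the convergence of the improper integrals in the integration-by-parts route — near the endpoint $x = k$.
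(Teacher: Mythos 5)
Your proposal is correct, and every computation checks out: the density of $\mathfrak{arc}_+$, the first moment $2$, the two survival functions, the closed form $r_{\mathfrak{arc}_+}(y) = 1 - \tfrac{2}{\pi}\arcsin(\sqrt{1-y^2}) = \tfrac{2}{\pi}\arcsin(y)$, and the reduction to the identity $\int_0^k \arcsin\bigl(\sqrt{1-k^2}/\sqrt{1-x^2}\bigr)\,dx = \tfrac{\pi}{2}(k-1+\sqrt{1-k^2})$, which your differentiation (Leibniz boundary term $\pi/2$ plus $\int_0^k dx/\sqrt{k^2-x^2} = \pi/2$) or the integration-by-parts route does establish. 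However, you take a genuinely different route from the paper in the second half. The paper computes $r_{\mathfrak{arc}_+}$ exactly as you do, but then evaluates the integral with no calculus at all: it observes that for $(X_1,X_2)\sim\mathfrak{Arch}$ the quantity $\prob(X_1^2+X_2^2 > k^2)$ equals $\sqrt{1-k^2}$ when computed in polar coordinates, while in Cartesian coordinates (integrating out the second variable over $[\sqrt{k^2-y^2}\vee 0, \sqrt{1-y^2}]$) it equals $(1-k) + \int_0^k r_{\mathfrak{arc}_+}\bigl(\sqrt{1-k^2}/\sqrt{1-y^2}\bigr)\,dy$; equating the two gives \eqref{E:integral-H-2} immediately. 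The paper's approach buys a conceptual explanation --- the integral identity is literally the max-height law $\prob(m(\scrA)>k)=\sqrt{1-k^2}$ of the Archimedean path written in two coordinate systems, which is why the arcsine law is the answer --- whereas yours is self-contained elementary calculus whose only delicate point is the one you flag, namely justifying differentiation under the integral sign near the integrable singularity at $x=k$ (or, on the integration-by-parts route, the convergence of $\int_0^k dx/\bigl((1-x^2)\sqrt{k^2-x^2}\bigr) = \pi/(2\sqrt{1-k^2})$). Either justification is routine, so there is no gap.
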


\begin{proof} The distribution $\mathfrak{arc}$ has density $(\pi \sqrt{\pi^2 - x^2})^{-1}$ on $[-\pi, \pi]$. From this, we can calculate that
\begin{align*}
r_{\mathfrak{arc}_+}(y) = 1 - \frac{2}{\pi}\arcsin(\sqrt{1-y^2}).
\end{align*}
We now use the connection between the arcsine distribution and the Archimedean measure $\mathfrak{Arch}$ to help evaluate the integral in \eqref{E:integral-H-2}. Let $(X_1, X_2) \sim \mathfrak{Arch}$ and let 
$$
\scrA(t) = X_1 \cos (\pi t) + X_2 \sin(\pi t)
$$
be the Archimedean path. Writing $\prob(m(\scrA) > k) = \prob(X_1^2 + X_2^2 > k^2)$ in both polar and Cartesian coordinates, we get that
$$
\int_k^1 \frac{rdr}{\sqrt{1 - r^2}} = 4 \int_{0}^1 \int_{\sqrt{k^2 - y^2} \vee 0}^{\sqrt{1 - y^2}} \frac{dxdy}{2 \pi \sqrt{1 - x^2 - y^2}}.
$$
The left hand side can easily be evaluated as $\sqrt{1 - k^2}$. The right hand side is equal to \begin{align*}
(1 - k) + \int_{0}^k \frac{2}{\pi} \lf(\frac{\pi}{2} - \arcsin\lf(\frac{\sqrt{k^2 - y^2}}{\sqrt{1- y^2}}\rg)\rg)dy
= (1- k) + \int_{0}^k r_\mathfrak{arc}\lf(\frac{\sqrt{1 - k^2}}{\sqrt{1- y^2}}\rg) dy. \qquad \qedhere
\end{align*}

\end{proof}

We can now prove Theorem \ref{T:main-2}, and in turn use that to prove Theorem \ref{T:weak-limit}.

\begin{proof}[Proof of Theorem \ref{T:main-2}]
By Proposition \ref{P:integral-equality}, Lemma \ref{L:injective-transform-lip} and Proposition \ref{P:arcsin-en}, we have that  $r_{\mu_+} = r_{\mathfrak{arc}_+}$. Moreover, the first moment of $\mathfrak{arc}_+$ is $2$. By Lemma \ref{L:expect-2}, this matches the first moment of $\mu_+$. Therefore by Lemma \ref{L:rX-det}, $\mu_+ = \mathfrak{arc}_+$. Finally, symmetry of both measures implies that $\mu = \mathfrak{arc}$.
\end{proof}

\begin{proof}[Proof of Theorem \ref{T:weak-limit}] Fix $m > 0$.
Let $g_m(t) = m \sin (\pi t)$, and let $g_m^{-1}$ be the inverse of $g_m$ on the interval $[0, 1/2]$. Let 
$$
s_m(t) = \frac{g_m'(t)}{\sqrt{1 - g_m^2(t)}}
$$
be the local speed of $g_m$. We can calculate that
\begin{align}
\nonumber J(g_m) = 2J(g_m; [0, 1/2]) &= \int_0^{1/2} D_\mu(s_m(t)) \sqrt{1 - g_m^2(t)}dt \\
\label{E:s-h}
&= \int_0^m \frac{D_\mu(s_m(g_m^{-1}(x))}{s_m(g_m^{-1}(x))}dx.
\end{align}
Here we have made the substitution $x = g_m(t)$ to go from the first to the second line. Now using Theorem \ref{T:main-2}, we can calculate
$$
D_\mu(c) = D_\mathfrak{arc}(c) = \frac{2}{\pi} \lf(c \arcsin \lf(\frac{c}{\pi}\rg) + \sqrt{\pi^2 - c^2}\rg).
$$
From here we can use that $s_m(g_m^{-1}(x)) = \frac{\pi \sqrt{m^2 - x^2}}{\sqrt{1 - x^2}}$ to compute that
\begin{equation*}
J(g_m) = \int_0^m \frac{2}{\pi} \lf[ \frac{\sqrt{1-m^2}}{\sqrt{m^2 - x^2}} + \arcsin\lf(\frac{\sqrt{m^2 - x^2}}{\sqrt{1- x^2}}\rg)\rg]dx
\end{equation*}
The first part of this integral can be easily evaluated, and the second part can be evaluated by comparing with the integral in the final line of the proof of Proposition \ref{P:arcsin-en}. Putting this all together yields that $J(g_m) = 1$, as desired. Therefore by Theorem \ref{T:Y-unique}, we can write
$$
Y(t) = \sqrt{1 - V^2}\sin(\pi t + 2 \pi U),
$$
where $U$ and $V$ are independent uniform random variables on $[0, 1]$. This is the Archimedean path!
\end{proof}

\subsection{The empirical distribution of trajectories}
\label{SS:slightly}

By a compactness argument, we can immediately prove a slightly stronger version of Theorem \ref{T:weak-limit}. This will allow us to conclude Theorem \ref{T:subnetwork}. This theorem will also be necessary for establishing the stronger limits in Sections \ref{S:strong-limit} and \ref{S:geom-limit}.

\medskip

For a Polish space $S$, let $\scrM(S)$ be the space of probability measures on $S$ with the topology of weak convergence. Note that $\scrM(S)$ is itself a Polish space.
Recall the notation $\sig_G$ introduced in Section \ref{S:intro}. For a fixed sorting network $\sig$, let $\nu_\sig \in \scrM(D)$ be uniform measure on the set $\{\sig_G(i, \cdot)\}_{i \in \{1, \dots, n\}}$. Now let $\Om_n$ be the space of all $n$-element sorting networks, and define 
$$
\nu_n = \frac{1}{\card{\Om_n}} \sum_{\sig \in \Om_n} \de(\nu_\sig).
$$
For each $n$, $\nu_n \in \scrM(\scrM(\scrD))$. We now extend Theorem \ref{T:weak-limit} to give a limit theorem for the sequence $\nu_n$. 

\begin{proof}
By Remark 2.4 from \cite{dauvergne1}, the sequence $\nu_n$ is precompact. For any subsequential limit $\nu$ of $\nu_n$, Theorem \ref{T:weak-limit} implies that $\nu$-almost every $\rho$ is supported on curves of the form $a\sin(\pi t) + b \cos(\pi t)$. Hence if $Z$ is a random path with law $\rho$, then
$$
Z(t) = X_1 \cos(\pi t) + X_2 \sin(\pi t)
$$
for some random variables $(X_1, X_2)$. Moreover, $Z(t)$ is uniform for every $t$ since each measure $\nu_n$ is supported on the set $\{\nu_\sig\}_{\sig \in \Om_n}$.  Therefore $(X_1, X_2) \sim \mathfrak{Arch}$, so $Z$ is the Archimedean path, and hence $\nu = \de_{\prob_\scrA}$.
\end{proof}

\begin{proof}[Proof of Theorem \ref{T:subnetwork}]
We use the notation introduced in the paragraphs preceding Theorem \ref{T:subnetwork}. Fix $m \in \mathbb N$. First, by Theorem \ref{T:weak-limit'}, we have
\begin{equation}
\label{E:product-version}
\frac{1}{|\Om_n|} \sum_{\sig \in \Om_n} \nu_\sig^m \to \prob_\scrA^m \qquad \mathas \quad n \to \infty.
\end{equation}
Here $\nu^m$ refers to the $m$-fold product measure $\nu \X \nu \X \dots \X \nu$ and the convergence in \eqref{E:product-version} is simply weak convergence in $\scrM(\scrD)$. Next, let $\tilde \nu_\sig^m$ denote the product measure $\nu_\sig^m$ conditioned to lie in the set 
$$
T := \{(x_1, \dots, x_m) \in \scrD^m : x_i \ne x_j \text{ for all } i, j\}
$$
Then for every $n, m$ and every $\sig \in S_n$, we have $\nu_\sig^m(T) \ge 1 - m^2/n$, and so \eqref{E:product-version} also holds with $\tilde  \nu_\sig^m$ in place of  $\nu_\sig^m$. Now, $\tau^n_m$ is equal in law to the pushforward of $\frac{1}{|\Om_n|} \sum_{\sig \in \Om_n} \tilde \nu_\sig^m$ under a certain `subnetwork map' $F$, which we describe formally in the next paragraph. Therefore the limit of $\tau^n_m$ should be the pushforward of $\prob^m_\scrA$ under $F$, which we can check to be $\tau_m$. The details are as follows.
 
 \medskip
 
Let $R_m$ be the set of  $k$-tuples $(x_1, \dots, x_m) \in \scrD^m$ where 
\begin{itemize}[nosep]
	\item For every pair $i \ne j$, there exists a unique time $t_{i, j} \in [0, 1]$ for which either $x_i(t) > x_j(t)$ for $t < t_{i, j}$ and $x_i(t) > x_j(t)$ for $t > t_{i, j}$ or else $x_i(t) > x_j(t)$ for $t > t_{i, j}$ and $x_i(t) > x_j(t)$ for $t < t_{i, j}$. 
	Moreover, $t_{i, j} \ne 0$.
	\item For all $(i, j) \ne (i', j')$ we have $t_{i, j} \ne t_{i', j'}$.
\end{itemize}
The set $R_m$ should be thought of as a generalization of the space of all $m$-element sorting networks.
The subnetwork map $F$ takes an element $x \in R_m$ and maps it to an $m$-element sorting network, as follows: for $t \in [0, 1]$, let $\sig_t$ be the relative order of $(x_1(t), \dots, x_m(t))$ to $(x_1(0), \dots, x_m(0))$. This is well-defined outside of the points $t_{i, j}$; we define the map $t \mapsto \sig_t$ to be right continuous at these points. Letting  $(t_0, t_1, \dots, t_{m \choose 2})$ be the order statistics of the points $0, t_{i, j}, i \ne j \in \{1, \dots, m\}$, setting $F(x) = (\sig_{t_0}, \sig_{t_1}, \dots, \sig_{t_{m \choose 2}})$ gives an $m$-element sorting network. 

\medskip

The function $F$ is continuous on the subset of $R^*_m$ of $R_m$ consisting of continuous paths. Moreover, $\prob^m_\scrA(R^*_m) = 1$, so by the continuous mapping theorem, $\tau^n_m$ converges to the pushforward of $\scrP^m_\scrA$ under $F$. Since $\scrP^m_\scrA$ is supported on sine curves of period $2$, this is simply the geometric sorting network $\tau_m$.
\end{proof}

\begin{section}{The strong sine curve limit}
\label{S:strong-limit}
Theorem \ref{T:weak-limit} shows that for any $\ep > 0$, with high probability $(1- \ep)n$ particle trajectories in a random sorting network are close to sine curves. In this section, we extend this result to all particle trajectories, thus proving Theorem \ref{T:sine-curves}. By combining Theorem \ref{T:sine-curves} and Theorem \ref{T:weak-limit}, we also prove Theorems \ref{T:matrices} and \ref{T:unif-rotation}.

\medskip

The idea behind the proof is as follows. By Theorem \ref{T:weak-limit}, we know that most trajectories in a typical large-$n$ sorting network are close to sine curves. Since any two trajectories must cross exactly once, this restricts the type of behaviour that the remaining trajectories can have. Specifically, this forces all other trajectories to be either sine curves themselves, or to spend a lot of time at the edge of the sorting network. We can eliminate this second case by using the octagon bound from \cite{angel2007random}. To state this bound, let $A_{n, \ga}$ be the event where
\begin{align*}
\lf|\sig^n_G(i, t) - \sig^n_G(i, 0)\rg| < 2\sqrt{2t - t^2} + \ga \;\; &\mathand \;\; \lf|\sig^n_G(i, t) - \sig^n_G(i, 1)\rg| < 2\sqrt{1 - t^2} + \ga \\
&\text{for all } \;\; t \in [0, 1], i \in \{1, \ddd, n\}.
\end{align*}

\begin{theorem}[Octagon bound, \cite{angel2007random}]
\label{T:octagon}
For any $\ga > 0$, we have that 
$$
\lim_{n \to \infty} \prob(A_{n, \ga}) = 1.
$$
\end{theorem}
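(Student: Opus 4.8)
Although Theorem \ref{T:octagon} is quoted from \cite{angel2007random}, here is the approach I would take to prove it.

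\textbf{Reduction via symmetry.} The event $A_{n,\gamma}$ is the conjunction of two families of constraints: a bound of $2\sqrt{2t-t^2}+\gamma$ on the displacement of every particle from its \emph{starting} position, and a bound of $2\sqrt{1-t^2}+\gamma$ on the displacement from its \emph{final} position. Reading a sorting network in reverse and conjugating every generator by $\rev_n$ is a measure-preserving involution on $\Om_n$ under which $(K^n_1,\dots,K^n_N)\eqd(n-K^n_N,\dots,n-K^n_1)$ — the composition of the two symmetries appearing in the proof of Proposition \ref{P:Y-symmetries} — and it sends $\sigma^n(v,m)\mapsto n+1-\sigma^n(v,N-m)$. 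This interchanges ``displacement from the start'' with ``displacement from the end'' and sends $t\mapsto 1-t$ (note $2(1-t)-(1-t)^2=1-t^2$). Hence $\prob(A_{n,\gamma}^c)\le 2\prob(E_n)$, where $E_n$ is the event that some particle is displaced from its start by more than $n\sqrt{2(m/N)-(m/N)^2}+n\gamma/2$ at some step $m$, and it suffices to prove $\prob(E_n)\to 0$; by the same symmetry one may further restrict to $m\le N/2$.

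\textbf{Net particle flux.} The key is to read the rightward displacement $\sigma^n(i,m)-i$ as $R_i(m)-L_i(m)$, where $R_i(m)$ (resp.\ $L_i(m)$) counts the particles $j>i$ (resp.\ $j<i$) lying to the left of particle $i$ at step $m$ — equivalently, the number of particles that have up-crossed (resp.\ down-crossed) the trajectory $m\mapsto\sigma^n(i,m)$. Thus the displacement is exactly the \emph{net particle flux} across this trajectory. The plan is to show this net flux concentrates around the deterministic value predicted by the limiting space-time swap density of \cite{angel2007random} (the law-of-large-numbers object that is the technical core of that paper): in a short window near global space-time position $(t,\sigma^n_G(i,t))$ the picture is the local limit $U$ sped up by the semicircle factor $2\sqrt{\alpha(1-\alpha)}$, so particle $i$'s global velocity is at most the maximal local speed $\pi$ times that factor, giving $|(\sigma^n_G(i,\cdot))'|\le \pi\sqrt{1-(\sigma^n_G(i,\cdot))^2}$ up to $o(1)$, and integrating $\arcsin$ of this yields precisely the octagon. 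The probabilistic inputs are this LLN together with Theorems \ref{T:time-stat} and \ref{T:dist-k1} (which give every swap location a semicircular marginal bounded by $3/n$, hence first- and second-moment control of swap counts in space-time boxes), allowing one to replace the random flux by its mean on a fine mesh and union-bound over the $n$ particles and dyadic time scales.

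\textbf{The main obstacle.} The difficulty is that the net flux $R_i(m)-L_i(m)$ is a difference of two quantities each of order $n$ (both $R_i$ and $L_i$ are generically $\Theta(n)$ away from the center), so the naive bound ``displacement $\le R_i$'' is off by a factor of $n$; one genuinely needs the limiting swap density — or an extremal-coupling argument comparing $\sigma^n$ with displacement-maximizing networks, for which the octagon boundary is attained — to pin down the cancellation, and this must be done \emph{uniformly in $i$}, i.e.\ for the worst particle rather than a typical one. That uniformity is what forces the union bound and the concentration estimates for swap counts in every relevant box, and is where essentially all of the work sits, consistent with the octagon bound being a substantial theorem of \cite{angel2007random}. (A lighter route, once Theorem \ref{T:weak-limit'} is available, would confine all but $o(n)$ particles to the octagon directly and then squeeze each exceptional particle between two nearly-sinusoidal bulk trajectories it must cross exactly once; but ruling out an exceptional particle hugging the edge after an anomalously fast initial excursion still requires a swap-count estimate near the edge, so this does not bypass the core difficulty.)
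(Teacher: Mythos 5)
First, a point of comparison: the paper you are reading offers no proof of Theorem \ref{T:octagon} at all --- it is imported verbatim from \cite{angel2007random}, where it is one of the main results (proved by an elementary combinatorial/probabilistic argument built on the swap-location distribution of Theorem \ref{T:dist-k1} and time-stationarity, predating both the local limit and the Lipschitz bounds of \cite{dauvergne1}). So your proposal can only be judged on its own merits, and on those merits it has a genuine gap rather than being a complete alternative proof. Your symmetry reduction is fine (modulo the further restriction to $m\le N/2$, which does not follow from the stated involution, since that involution exchanges the two constraint families rather than fixing one while reversing time). The problem is that the entire load-bearing step --- showing that the net flux across the trajectory of the \emph{worst} particle concentrates, uniformly over all $n$ particles and all times --- is exactly what you defer. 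The inputs you cite do not deliver it: the space-time swap LLN of \cite{angel2007random} controls bulk swap counts of order $n^2\times\text{area}$, while the displacement of a single particle is a boundary quantity of order $n$ equal to a difference of two order-$n$ crossing counts along a trajectory that is itself selected adversarially (it is the extremal one) and is correlated with the swap field; first- and second-moment bounds on swap counts in boxes do not give an $o(n)$-precision estimate of that difference for a data-dependent curve. You name this as ``the main obstacle,'' which is honest, but naming the obstacle is not the same as overcoming it.

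Two further warnings about the specific mechanism you propose. First, the intermediate claim that every particle satisfies $|(\sigma^n_G(i,\cdot))'|\le\pi\sqrt{1-(\sigma^n_G(i,\cdot))^2}+o(1)$ \emph{uniformly in $i$} is essentially a consequence of Theorem \ref{T:sine-curves}, and the paper's proof of Theorem \ref{T:sine-curves} (via Lemma \ref{L:edge-time}) \emph{uses} the octagon bound to control the exceptional particles; the distributional statements you could actually invoke (Theorems \ref{T:local}, \ref{T:local-speeds}, \ref{T:bounded-speed}) concern a uniformly random particle or subsequential limits of $Y_n$, not the maximum over $i$. So within this paper's logical architecture your route is circular, and outside it the uniform speed bound is unproven. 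Second, integrating $|\arcsin(h)'|\le\pi$ gives a displacement bound of $2\sin(\pi t/2)$, which is strictly stronger than $2\sqrt{2t-t^2}$ for $t\in(0,1)$; you are therefore routing a 2007 theorem through a substantially harder 2018-era statement. The actual proof in \cite{angel2007random} avoids all of this by exploiting hard combinatorial identities (each pair of particles swaps exactly once; each particle makes exactly $n-1$ swaps in total) together with the explicit law of $K^n_1$ and stationarity, which is why the union bound over particles there is tractable. If you want a self-contained proof, that is the argument to reconstruct.
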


Now define 
$$
L^n_{i}(\de) = \scrL \{ t: |\sig^n_G(i, t)| \ge 1 - \de\},
$$
where $\scrL$ is Lebesgue measure on $[0, 1]$. This is the amount of time that particle $i$ spends within $\de$ of the edge in the random sorting network $\sig^n$.
\begin{lemma}
\label{L:edge-time}
For every $\ep > 0$, we have that
$$
\lim_{n \to \infty} \prob \lf( \max_{i \in [1, n] } L^n_i(\ep^2/16) > \ep \rg) =0.
$$
\end{lemma}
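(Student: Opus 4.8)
The plan is to argue by contradiction using the octagon bound together with the already-established facts that (i) most trajectories are close to genuine sine curves (Theorem~\ref{T:weak-limit}, in the stronger empirical form of Theorem~\ref{T:weak-limit'}) and (ii) distinct particle trajectories in a sorting network cross exactly once. First I would fix $\ep>0$ and set $\de=\ep^2/16$; note that a sine curve $a\sin(\pi t+\theta)$ with amplitude $a\le 1-\ep/2$ spends \emph{zero} time at height $\ge 1-\de$ once $\de$ is small relative to $\ep$, and more importantly a sine curve with amplitude $a>1-\de$ still spends only $O(\sqrt{\de})<\ep$ time above height $1-\de$ (direct computation: the arcsine factor gives time $\tfrac{1}{\pi}\big(\pi-2\arcsin\tfrac{1-\de}{a}\big)$, which is $O(\sqrt{\de})$ uniformly). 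So on the high-probability event that the empirical distribution of trajectories is close (in the weak sense on $\scrM(\scrD)$) to the Archimedean path law, all but $\eta n$ trajectories—for an $\eta$ we get to choose small—are within $\ep/8$ in uniform norm of a sine curve $a_i\sin(\pi t+\theta_i)$, and hence spend at most $\ep/2$ time at height $\ge 1-\de$ each. It remains to control the $\eta n$ exceptional trajectories and show none of them can linger near the edge.

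The heart of the argument is to rule out a single ``bad'' trajectory $\sig^n_G(j,\cdot)$ with $L^n_j(\de)>\ep$. Suppose such a $j$ exists; by symmetry (Proposition~\ref{P:Y-symmetries}-type reasoning, or just considering $\pm$) assume it spends time $>\ep/2$ at height $\ge 1-\de$, in a time-set $E$ with $\scrL(E)>\ep/2$. On this time-set the trajectory is above height $1-\de=1-\ep^2/16$. Now invoke the octagon bound (Theorem~\ref{T:octagon}): on $A_{n,\ga}$, any particle trajectory stays below $\sig^n_G(i,0)+2\sqrt{2t-t^2}+\ga$ and below $\sig^n_G(i,1)+2\sqrt{1-t^2}+\ga$. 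Combined with $\sig^n_G(j,0)\in[-1,1]$, this forces $E$ to be contained in a \emph{narrow} interval around $t=1/2$: being at height $\ge 1-\de$ requires $2\sqrt{2t-t^2}\gtrsim 2-\de$ hence $t$ within $O(\sqrt\de)=O(\ep/4)$ of $1/2$, and similarly from the other endpoint bound. Since $\sqrt\de=\ep/4$, the octagon forces $\scrL(E)=O(\ep)$ with a constant that can be arranged (by choosing constants carefully, or by noting the octagon already gives a window of length comparable to $\ep$) to contradict $\scrL(E)>\ep/2$ once we also use a counting/crossing argument. The cleanest route: the $(1-\eta)n$ good trajectories that are $\ep/8$-close to sine curves with amplitude $\le 1-\ep/4$ all lie strictly below height $1-\de$ on the window $E'$ (the interval of length, say, $3\ep/4$ around $1/2$ that the octagon forces $E$ into), so the bad particle $j$, which is at height $\ge 1-\de$ throughout $E$, must be above \emph{all} of these $(1-\eta-O(\ep))n$ good particles at times in $E$ but was below some positive fraction of them at time $0$ (since $\sig^n_G(j,0)$ is bounded away from $1$ unless $j$ is itself one of the few particles near the top). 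Each such ``overtaking'' between time $0$ and a time in $E$, followed by being overtaken back before time $1$ (forced since $\sig^n_G(j,1)$ must also be consistent with being in $[-1,1]$ and the good particles return to their reversed positions), would require a double crossing, contradicting the single-crossing property. Quantifying this: there can be at most $O(\eta n + \ep n)$ particles $j$ that are above a given good particle at some time and below it at another time, but if $\scrL(E)>\ep/2$ for even one $j$ of moderate starting height we produce many such double crossings.

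The main obstacle I anticipate is making the last crossing-counting step fully rigorous and uniform in $j$: we need that \emph{no} particle—not just a typical one—spends more than $\ep$ time near the edge, so a first-moment/averaging argument does not directly suffice and we must really use the geometric rigidity (octagon + single crossings) to pin down each individual trajectory. The technical subtlety is handling particles that genuinely start near the edge (where $\sig^n_G(j,0)$ is itself within $\ep$ of $\pm1$): for those the octagon window around $t=1/2$ argument must be supplemented by the observation that such a particle must move monotonically away from the edge it starts near and then back to the opposite edge, again leaving only an $O(\sqrt\de)=O(\ep/4)$ time window at each extreme, so $L^n_j(\de)\le$ (time near starting edge) $+$ (time near opposite edge) $+$ (time near $\pm1$ in the middle) $\le O(\ep)$. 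I would organize the write-up as: (1) fix $\ep$, define the good event $G_n$ as the intersection of $A_{n,\ga}$ (for suitable small $\ga$) and the event from Theorem~\ref{T:weak-limit'} that $\ge(1-\eta)n$ trajectories are $\ep$-close to sine curves; (2) show $\prob(G_n)\to1$; (3) on $G_n$, bound $L^n_j(\de)$ for each $j$ by splitting into the ``starts near edge'' and ``starts away from edge'' cases, in each invoking the octagon to confine time-near-height-$(1-\de)$ to $O(\sqrt\de)$-length windows and the single-crossing property against the good sine trajectories to forbid additional excursions; (4) choose $\ga,\eta$ at the end so all the $O(\cdot)$ constants sum to less than $\ep$.
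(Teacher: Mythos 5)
Your strategy diverges from the paper's, and two of its key steps fail. The paper's proof is short: on the octagon event $A_{n,\ep^4/64}$, any particle with $|\sig^n_G(i,0)|\ge 1-\ep^2/16$ is forced (using that its terminal position is essentially $-\sig^n_G(i,0)$) to satisfy $|\sig^n_G(i,t)|<1-\ep^2/16$ for all $t\in[\ep/2,1-\ep/2]$, so particles \emph{starting} near the edge automatically have $L^n_i\le\ep$; then time-stationarity (Theorem~\ref{T:time-stat}) upgrades this to all particles, because a particle with $L^n_i(\ep^2/16)>\ep$ would, after a uniform time shift, start near the edge with probability at least $\ep$ while retaining $L^n_i>\ep$. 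You never invoke stationarity and instead try to handle particles starting away from the edge directly, and that is exactly where your argument breaks.

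Concretely: first, the octagon bound does not confine the near-edge times of a particle starting away from the edge to an $O(\sqrt\de)$-window. Your inequality ``$2\sqrt{2t-t^2}\gtrsim 2-\de$ hence $t$ within $O(\sqrt\de)$ of $1/2$'' is wrong on both counts — it gives $t\ge 1-O(\sqrt\de)$, and it is only needed if the particle starts at the very bottom. A particle starting at height $0$ is permitted by both octagon constraints to sit at height $1-\de$ for all $t\in[1-\sqrt3/2,\ \sqrt3/2]$, an interval of length about $0.73$. Second, the single-crossing argument does not produce the double crossings you claim: if a good particle $k$ starts \emph{above} the bad particle $j$, then, terminal positions being reversed, $k$ ends \emph{below} $j$, so $j$ overtaking $k$ on its way up and staying above it forever is exactly one crossing — there is no forced ``overtaking back before time $1$.'' (The paper does use a crossing argument of this flavour, but in Lemma~\ref{L:edge-cond} and in the opposite logical direction: overtaking many high-amplitude trajectories \emph{forces} long edge-time, which is then excluded by the present lemma; so that implication cannot also serve as a proof of the present lemma.) A smaller, fixable issue: $\ep/8$-closeness to a sine curve of amplitude near $1$ only bounds the time spent above $1-\de$ by $O(\sqrt{\ep})$, not by $\ep/2$; you would need closeness at scale $O(\ep^2)$ for that step.
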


\begin{proof}
Fix $\ep > 0, n \in \nat$, and let $i$ be such that $|\sig_G(i, 0)| \ge 1 - \ep^2/16$. On the event $A_{n, \ep^4/64}$, a simple calculation shows that
$$
-1 + \ep^2/16 < \sig_G(i, t) < 1 - \ep^2/16 \qquad \mathforall t \in [\ep/2, 1 - \ep/2].
$$
Therefore by Theorem \ref{T:octagon}, we have that
\begin{equation}
\label{E:time-shift}
\lim_{n \to \infty} \prob \bigg( \max \lf\{L^n_i(\ep^2/16)  : \; i \in [1, n], \;|\sig^n_G(i, 0)| \ge 1 - \ep^2/16 \rg\} > \ep \bigg) = 0.
\end{equation}
By time stationarity of random sorting networks (Theorem \ref{T:time-stat}), for each $n$ the above probability is greater than or equal to 
$$
\ep \prob \lf( \max_{i \in [1, n] } L^n_i(\ep^2/16) > \ep \rg).
$$
Therefore \eqref{E:time-shift} implies the lemma.
\end{proof}

Recall that $\nu_\sig$ is uniform measure on the trajectories of a sorting network $\sig$, and that $\prob_\scrA$ is the law of the Archimedean path.
To prove Theorem \ref{T:sine-curves} from here, we must show that if $\nu_\sig$ is close to $\prob_\scrA$ in the weak topology, then for any particle $i$, either $\sig_G(i, \cdot)$  spends a lot of time at the edge of the sorting network, or else $\sig_G(i, \cdot)$ is close to a sine curve. 

\medskip

Recall that $\scrD$ is the closure of all sorting network trajectories in the uniform norm. To metrize weak convergence on the space of probability measures on $\scrD$, we use the L\'evy-Prokhorov metric $d_{LP}$. For a set $A \sset \scrD$, define 
\begin{equation*}
A^\ep = \{ f \in \scrD : ||f - g||_u < \ep \;\; \text{for some} \; g \in A \}.
\end{equation*}
Here and throughout the remaining proofs, $||\cdot||_u$ is the uniform norm. For two probability measures $\nu_1, \nu_2$ on $\scrD$, define
$$
d_{LP}(\nu_1, \nu_2) = \inf \big\{ \ep > 0 : \nu_1(A) \le \nu_2(A^\ep) + \ep, \nu_2(A) \le \nu_1(A^\ep) + \ep \text{  for all Borel sets } A \sset \scrD \big\}.
$$
We now prove two lemmas characterizing particle behaviour when $\nu_\sig$ is close to ${\prob_\scrA}$. The first gives conditions under which a particle spends time close to the edge of a sorting network. Let $a_{n, i} = 2i/n - 1$, and let
$$
h_{a_{n, i}, 1}(t) = \sin(\arcsin(a_{n, i}) + \pi t) \qquad \mathand \qquad h_{a_{n, i}, -1}(t) = \sin(\arcsin(a_{n, i}) - \pi t).
$$
This agrees with the notation $h_{a, k}$ from Theorem \ref{T:unique-2}. Recalling the definition of the Archimedean measure $\mathfrak{Arch}$ from Section \ref{S:intro}, for $\ep \in (0, 1]$ define
$$
L(\ep) =\mathfrak{Arch}( r \ge 1 - \ep, \theta \in [2\pi x, 2\pi (x + \ep)]) =\ep\sqrt{2\ep - \ep^2},
$$
where $(r, \theta)$ are polar coordinates. Note that $L(\ep)$ is independent of $x$ by the rotational symmetry of $\mathfrak{Arch}$.

\begin{lemma}
\label{L:edge-cond}
Let $\ga \in (0, 2]$ and let $\ep \in (0, \ga^2/100)$. 
Suppose that for a fixed $n$-element sorting network $\sig$ and a particle $i \in \{1, \ddd, n \}$, that $d_{LP}(\nu_\sig, {\prob_\scrA}) < L(\ep)/2$, and either
$$
\max_{t \in [0, 1]} [\sig_G(i, t)  - h_{a_{n,i}, 1}(t)]\ge \ga \quad \text{or} \quad \max_{t \in [0, 1]} [h_{a_{n,i}, -1}(t) - \sig_G(i, t)] \ge \ga.
$$
Then 
$$
\scrL\lf\{ t \in [0, 1] : |\sig_G(i, t)| \ge 1- 3\ep \rg\} \ge \ga/6.
$$
\end{lemma}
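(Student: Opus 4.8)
The plan is to derive the conclusion directly: the hypothesis produces a time $t_0$ at which particle $i$ sits a macroscopic distance above the sine‑curve cone emanating from its starting height, and then, using the single‑crossing property of sorting networks together with the near‑Archimedean structure of $\nu_\sig$, one shows particle $i$ is trapped in the top $3\ep$‑band for total time at least $\ga/6$. \textbf{Reductions.} By symmetry we treat the first alternative (the second is obtained by replacing $C_{n,i},c_{n,i},\text{``above''},\text{``top''}$ throughout by $c_{n,i},C_{n,i},\text{``below''},\text{``bottom''}$, which corresponds to the position‑reversal of $\sig$, under which $\prob_\scrA$ is invariant). Fix $t_0$ with $\sig_G(i,t_0) \ge C_{n,i}(t_0) + \ga$. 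Since $\sig_G(i,t_0) \le 1$ we get $C_{n,i}(t_0) \le 1-\ga$; because $C_{n,i}(t) = \cos(\pi(t-t^*))$ with $t^* = 1/2 - \arcsin(a_{n,i})/\pi$, this forces $|t_0 - t^*| \ge \sqrt{2\ga}/\pi$. Reversing time (which sends particle $i$ to particle $n+1-i$, interchanges the increasing and decreasing halves of $C_{n,i}$, and leaves $d_{LP}(\nu_\sig,\prob_\scrA)$ unchanged since $t \mapsto \scrA(1-t)$ is again Archimedean) I may assume $t_0 \le t^*$; then $C_{n,i}(t_0) \ge a_{n,i}$, so in particular $\sig_G(i,t_0) \ge a_{n,i} + \ga$.

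\textbf{Extracting structure from the hypothesis.} Since $\prob_\scrA$ is carried by sine curves, $d_{LP}(\nu_\sig,\prob_\scrA) < L(\ep)/2$ implies that all but fewer than $nL(\ep)/2 < n\ep$ of the $n$ trajectories lie within $L(\ep)/2 < \ep$ (uniform norm) of some sine curve; call these good. Applying the $d_{LP}$‑bound to the Archimedean events $\{r \ge 1-\ep\}$ (of mass $\sqrt{2\ep-\ep^2}$) and, more finely, to $\{r \ge 1-\ep,\ \theta \text{ in a prescribed arc}\}$, one extracts a family of $\gtrsim n\sqrt\ep$ good particles each within $3\ep$ of an amplitude‑$(\ge 1-\ep)$ sine curve, with peak times $O(\sqrt\ep)$‑dense in $[0,1]$; hence the pointwise maximum of this family exceeds $1-3\ep$ at every time in $[\sqrt{8\ep}/\pi,\ 1-\sqrt{8\ep}/\pi]$. (Here one uses that $d_{LP}(\nu_\sig,\prob_\scrA) < L(\ep)/2$ already forces $n \gtrsim \ep^{-3/2}$, as $\nu_\sig$ has atoms of size $1/n$ while $\prob_\scrA$ is atomless.) On the other hand, writing $\sig_G(i,t) = a_{n,i} + \tfrac 2n\big(D(\floor{Nt}) - U(\floor{Nt})\big)$, where $D(k)$ and $U(k)$ count the particles that have crossed particle $i$ from above, respectively from below, within $k$ steps, the bound $\sig_G(i,t_0) \ge a_{n,i} + \ga$ gives $D(\floor{Nt_0}) \ge n\ga/2$: at least $n\ga/2$ particles started above $a_{n,i}$, were overtaken by particle $i$ before time $t_0$, and --- each pair crossing exactly once --- remain strictly below particle $i$ on $[t_0,1]$. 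Discarding bad ones leaves a set $P$ of $\gtrsim n\ga$ good particles whose sine curves $g_j$ start above $a_{n,i}$ and therefore lie above $C_{n,i}$ on an initial interval and below it afterwards, crossing $C_{n,i}$ exactly once, at a time $\tau_j \le t^*$.

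\textbf{Trapping near the top, and the main obstacle.} Combining ``particle $i$ is above each $j \in P$ on $[t_0,1]$'' with ``$g_j$ lies above $C_{n,i}$ before $\tau_j$'' already shows $\sig_G(i,t) > C_{n,i}(t) - \ep$ on $[t_0,\tau^*)$, where $\tau^* = \max_j \tau_j$, and a matching analysis for the good particles that start near $a_{n,i}$ (all of which lie below particle $i$ at $t_0$, sine curves through $a_{n,i}$ being confined to $[c_{n,i},C_{n,i}]$) pins $\sig_G(i,\cdot)$ near $C_{n,i}$ whenever it crosses one of them. The remaining --- and genuinely hard --- step is to upgrade this to an interval $I$ of length $\ge \ga/6$ on which particle $i$ lies above \emph{every} member of the near‑extremal family whose curve peaks in $I$: the envelope estimate from the previous paragraph then gives $\sig_G(i,t) > 1-3\ep$ for all $t \in I$, which is the conclusion. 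I expect establishing $|I| \ge \ga/6$ to be the crux; it requires tracking the order in which particle $i$ overtakes the $\gtrsim n\ga$ particles of $P$ (which occupy, up to an $O(\ep)$ error, the starting positions in $(a_{n,i},a_{n,i}+\ga]$) and comparing their sine curves with $C_{n,i}$ near $t^*$, and the one delicate point throughout is the unbounded derivative of $\arcsin$ near $\pm 1$ --- precisely why the hypothesis carries $\ga^2$ rather than $\ga$ and the conclusion carries the constant $3$. Everything else reduces to the symmetry reductions above, elementary inequalities for $\sin$ and $\arcsin$, and counting positions in a permutation.
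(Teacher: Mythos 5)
Your write-up sets up sensible machinery (the symmetry reductions, the extraction of near-extremal ``good'' trajectories from the L\'evy--Prokhorov bound, the single-crossing property) but it does not prove the lemma: you explicitly leave open the step you yourself call ``the crux,'' namely producing an interval $I$ with $|I|\ge \ga/6$ on which particle $i$ dominates the near-extremal envelope. Since the entire quantitative content of the lemma is exactly that measure bound, this is a genuine gap, not a routine verification. Moreover, the route you chose (counting the $\gtrsim n\ga$ overtaken particles and trying to trap $\sig_G(i,\cdot)$ above a whole family of curves \emph{simultaneously} on an interval) is what creates the difficulty; it is not clear it can be closed without essentially redoing the argument differently.

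The paper's proof avoids this obstacle by needing only \emph{one witness particle per time}, not an envelope. For each phase $\al$ in a range of length about $\ga$, apply the $d_{LP}$ bound to the thin Archimedean sector $\{r\ge 1-\ep,\ \theta\in[2\pi x,2\pi(x+\ep)]\}$ with $2\pi x=\arcsin(a_{n,i})+\al$ (this sector has mass $L(\ep)$, which is why the hypothesis carries $L(\ep)/2$) to produce a particle $j_\al$ within $L(\ep)/2+2\ep$ of the sine curve $\sin(\pi t+\arcsin(a_{n,i})+\al)$. At time $0$ particle $i$ sits below $j_\al$ (for $\al$ not too small), while at the time $s$ where $\sig_G(i,s)\ge C_{n,i}(s)+\ga$ it sits above $j_\al$; by the exactly-once-crossing property, $\sig_G(i,t)>\sig_G(j_\al,t)$ for all $t>s$, in particular at the peak time $t_\al$ of that sine curve, where $\sig_G(j_\al,t_\al)\ge 1-3\ep$. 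The map $\al\mapsto t_\al$ is affine, so the peak times sweep out an interval of length $\ge(\de-2\sqrt{L(\ep)/2+2\ep})/\pi\ge\ga/6$, and the measure bound falls out with no envelope or ordering argument at all. If you want to salvage your approach, the fix is to replace the simultaneous-domination step by this pointwise witness argument; as written, the proof is incomplete.
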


\begin{proof}
Without loss of generality, we can assume that there exists an $s \in [0, \arccos(a_{n, i})/\pi]$ such that
\begin{equation}
\label{E:sig-ga}
\sig_G(i, s)  - h_{a_{n,i}, 1}(s) \ge \ga.
\end{equation}
This is case where particle $i$ passes $h_{a_{n,i}, 1}$ while $h_{a_{n,i}, 1}$ is increasing.
The other cases (where particle $i$ passes $h_{a_{n,i}, 1}$ while $h_{a_{n,i}, 1}$ is decreasing, and the two cases where particle $i$ passes $h_{a_{n,i}, -1}$) follow by symmetric arguments. We can only be in this case if $a_{n, i} \le 1 - \ga$, so $\arccos(a_{n, i}) > \ga$.

\medskip

Since $d_{LP}(\nu_\sig, {\prob_\scrA}) < L(\ep)/2$, for every $x \in [0, 1]$, there exists a particle $j(x)$ such that $$
||\sig_G(j(x), t) - r\sin(\pi t + 2 \pi (x + \beta)) ||_u < L(\ep)/2
$$
for some $r \ge 1 - \ep$ and $\beta \in [0, \ep]$. Now since
$$
||r\sin(\pi t + 2 \pi (x + \beta)) - \sin(\pi t + 2 \pi x)||_u \le 2 \ep,
$$
this implies that 
\begin{equation}
\label{E:sig-jx}
||\sig_G(j(x), t) -  \sin(\pi t + 2 \pi x)||_u < L(\ep)/2 + 2 \ep
\end{equation}
for all $x \in [0, 1]$. Define $\de = \ga - \frac{L(\ep)}2 - 2 \ep$. Note that $\de$ is positive. For all $\al \in [0, \de]$, the inequality \eqref{E:sig-ga} implies that
\begin{align}
\label{E:cond-1}
\sig_G(i, s)- \sin(\arcsin(a_{n, i}) + \al + \pi s) > \ga - \de = L(\ep)/2 + 2 \ep.
\end{align}
Now define 
$j_{\al} = j((\arcsin(a_{n, i}) + \al)/(2\pi)).$ Combining \eqref{E:cond-1} with \eqref{E:sig-jx}, we have that
\begin{align}
\nonumber
\sig_G(i, s) > \sig_G\lf(j_\al, s\rg) \qquad \text{for } \al \in [0, \de].
\end{align}
 Moreover, if $1 - \cos(\al) \ge L(\ep)/2 + 2 \ep$ and $\al \le \arccos(a_{n, i})$, then 
 $$
 \sig_G(i, 0) = a_{n, i} \le  \sin(\arcsin(a_{n, i}) + \al) - (L(\ep)/2 + 2 \ep),
 $$
 and so 
 $$
 \sig_G(i, 0) < \sig_G\lf(j_\al, 0\rg).
 $$
 Now, noting that $\de < \ga < \arccos(a_{n, i})$, for every 
 $$\al \in [\arccos(1 - L(\ep)/2 - 2 \ep), \arccos(a_{n, i})] \cap [0, \de] = [\arccos(1 - L(\ep)/2 - 2 \ep), \de],
 $$
 the particles $i$ and $j_\al$ must cross during the interval $[0, s]$. Therefore
\begin{equation}
\label{E:sig-G}
\sig_G(i, t) > \sig_G\lf(j_\al, t\rg)\qquad \mathforall t > s, \;\; \al \in [\arccos(1 - L(\ep)/2 - 2 \ep), \de].
\end{equation}
 We now show that this forces the particle $i$ to spend a large amount of time close to the edge of the sorting network.  For all $\al \in [0, \de]$, there must be some time $t_\al \in [0, 1]$ such that 
$$
\sin(\pi t_\al +\arcsin(a_{n, i}) + \al) = 1.
$$
The time  $t_\al \notin [0, s]$ since for every $\al \in [0, \de]$, we have that
$$
||\sin(\pi t +\arcsin(a_{n, i}) + \al) - h_{a_{n,i}, 1}(t)||_u < \ga, \quad \mathand \quad h_{a_{n,i}, 1}(t) \le h_{a_{n,i}, 1}(s) \le 1 - \ga
$$
for all $t \in [0, s]$.
We have used \eqref{E:sig-ga} to get the second inequality on the right side above. Therefore for all $\al \in [\arccos(1 - L(\ep)/2 - 2 \ep), \de]$, \eqref{E:sig-G} implies that
$$
\sig_G(i, t_\al) > \sig(j_\al, t_\al) \ge 1 - L(\ep)/2 - 2\ep \ge 1 - 3\ep.
$$ 
 Using the fact that $\arccos(1 - x) \le 2\sqrt{x}$ for $x \in [0, \pi/2)$, we have that
 \[
\scrL \big\{t_\al : \al \in [\arccos(1 - L(\ep)/2 - 2 \ep), \de] \big\} \ge \frac{\de - 2\sqrt{L(\ep)/2 + 2 \ep}}{\pi} \ge \frac{\ga - 3\ep - 2\sqrt{3\ep}}{\pi} \ge \frac{\ga}{6}.
\]
Here the final bound follows from the fact that $\ep < \ga^2/100$.
\end{proof}

The second lemma shows that if a curve $\sig_G(i, \cdot)$ stays close to the region between the curves $h_{a_{n,i}, -1}$ and $h_{a_{n,i}, 1}$, then it must be close to a sine curve. 

\begin{lemma}
\label{L:sine-close}
Let $\sig$ be an $n$-element sorting network, $i \in \{1, \ddd, n \}$, and $\ga \in (0, 1)$. Suppose that
$$
\max_{t \in [0, 1]} [\sig_G(i, t)  - h_{a_{n,i}, 1}(t)] < \ga \quad \text{and} \quad \max_{t \in [0, 1]} [h_{a_{n,i}, -1}(t) - \sig_G(i, t)] < \ga,
$$
and that $d_{LP}(\nu_\sig, {\prob_\scrA}) < \frac{\ga^4}{128}$. Then there exist constants $a \in [0, 1]$ and $\theta \in [0, 2\pi]$ such that 
$$
||\sig_G(i, t) - a\sin(\pi t + \theta) ||_u < 2\ga + 2/n.
$$
\end{lemma}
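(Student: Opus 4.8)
The plan is to deduce the claim by comparing $f:=\sig_G(i,\cdot)$ against approximating trajectories of suitable sine curves, using only the combinatorial fact that in any sorting network the relative order of two particle trajectories changes exactly once. Throughout, identify a sine curve of amplitude $\le 1$ with a point $(p,q)$ of the closed unit disc via $S_{p,q}(t)=p\cos(\pi t)+q\sin(\pi t)$, so that $S_{p,q}(0)=p$, $S_{p,q}(\tfrac12)=q$, $S_{p,q}(1)=-p$, and $(p,q)=(X_1,X_2)$ recovers the Archimedean path of Theorem \ref{T:weak-limit}. Having already identified $\mu$ and $Y$ (Theorems \ref{T:main-2}, \ref{T:Y-unique}, \ref{T:weak-limit}), the minimal flux paths $h_{a_{n,i},k}$ of Theorem \ref{T:unique-2} are precisely the sine curves $S_{a_{n,i},q}$ with $|q|\le\sqrt{1-a_{n,i}^2}$; by Lemma \ref{L:closed-int} and Theorem \ref{T:unique-2}(iv) they form a linearly ordered family sweeping out the lens $\{(t,y): c_{n,i}(t)\le y\le C_{n,i}(t)\}$. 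By hypothesis $f(0)=a_{n,i}$, $f(1)=-a_{n,i}+2/n$, and $f$ lies within $\ga$ of this lens; we may also assume $n>8/\ga$, since otherwise $2/n$ alone makes the target bound easy.

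\noindent\textbf{Approximation step.} First I would show: for every $(p,q)$ in the closed unit disc there is a particle $j$ of $\sig$ with $\|\sig_G(j,\cdot)-S_{p,q}\|_u<\ga/4$. This follows from $d_{LP}(\nu_\sig,\prob_\scrA)<\ga^4/128$ together with a lower bound on the $\prob_\scrA$–mass of a $\ga/8$–ball about $S_{p,q}$ in $\scrD$: writing $\scrA(t)=\sqrt{1-V^2}\sin(\pi t+2\pi U)$ with $U,V$ independent uniform, this mass is at least of order $\ga^{3/2}$ in the worst case $p^2+q^2$ close to $1$ (from $\sqrt{1-V^2}$ within $\ga/16$ of the amplitude), and $\ga^{3/2}/c > \ga^4/128$ for $\ga<1$; then $\prob_\scrA(B)\le\nu_\sig(B^{\ga^4/128})+\ga^4/128$ forces a trajectory into the enlarged ball.

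\noindent\textbf{The core dichotomy.} Let $q^f$ be the truncation of $f(\tfrac12)$ to $[-\sqrt{1-a_{n,i}^2},\sqrt{1-a_{n,i}^2}]$ and set $S^f:=S_{a_{n,i},q^f}$, a sine curve lying in the lens that agrees with $f$ at $t=0$, up to an $O(\ga)$ truncation error at $t=\tfrac12$, and up to $2/n$ at $t=1$. I claim $\|f-S^f\|_u<2\ga+2/n$, which gives the lemma with $(a,\theta)$ the amplitude and phase of $S^f$. Suppose not; then there is a time $t^*\in(0,1)$ with (say) $f(t^*)>S^f(t^*)+2\ga$, and by the symmetry of $\sig$, $\nu_\sig$ and $\prob_\scrA$ under $t\mapsto 1-t$ we may take $t^*\in(0,\tfrac12)$. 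The plan is then to produce an auxiliary sine curve $S''$ of \emph{smaller amplitude but shifted phase} relative to $S^f$ (the analogue of $0.7\cos(\pi t)$ relative to $0.9\sin(\pi t)$) that starts strictly above $f$ (above $a_{n,i}+\tfrac{\ga}{2}$), passes strictly below $f(t^*)$ at $t^*$, and strictly exceeds $f$ at some later time $\hat t\in(t^*,1]$; concretely a curve $S_{p'',q''}$ with $p''=a_{n,i}+c\ga$ and $q''$ pinned by the value at $t^*$, using the lens containment of $f$ to control $S''$ off $\{0,t^*\}$. Its approximating trajectory $\tilde g''$ then has $f-\tilde g''<0$ at $t=0$, $>0$ at $t^*$, and $<0$ at $\hat t$, so the relative order of $f$ and $\tilde g''$ changes at least twice — impossible. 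The residual $2/n$ absorbs the unavoidable mismatch between the lattice endpoints $f(0)=a_{n,i}$, $f(1)=-a_{n,i}+2/n$ and a single sine curve.

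\noindent\textbf{Main obstacle.} The crux is precisely the construction of $S''$ from the mild hypothesis $f(t^*)>S^f(t^*)+2\ga$: because every sine curve is unimodal on $[0,1]$, one cannot have $S''$ above $f$ near both endpoints \emph{and} below $f$ at an interior $t^*$ for free, and one must check that a phase–shifted, smaller–amplitude curve actually achieves this with every relevant gap exceeding the $\ga/4$ approximation error — and with the sharp constant $2$ rather than a larger multiple of $\ga$, which is where one must exploit that $f$ is squeezed into the narrow part of the lens near $t=0$ and $t=1$. If a clean single $S''$ proves elusive, the fallback is to run the same double–crossing argument against a whole short sub–arc of the ordered family $\{h_{a_{n,i},k}\}$ near the crossover parameter where $\sup_t(f-h_{a_{n,i},k})$ and $\sup_t(h_{a_{n,i},k}-f)$ balance. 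Handling the truncation when $(a_{n,i},f(\tfrac12))$ lies slightly outside the unit disc, and the case $t^*\in(\tfrac12,1)$ by time reversal, are routine once the core estimate is in place.
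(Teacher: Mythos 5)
Your proposal assembles the right ingredients --- the LP--closeness of $\nu_\sig$ to $\prob_\scrA$ produces, for each point of the unit disc, a particle trajectory uniformly close to the corresponding sine curve, and the single-crossing property of particles then constrains $\sig_G(i,\cdot)$ --- but the central step is missing, and you say so yourself: the construction of the auxiliary curve $S''$ is exactly where the work lies, and it is not carried out. Worse, as set up it runs into a real obstruction. You anchor your comparison curve $S^f$ at $t=0$ and $t=1/2$ and then try to build $S''=S_{p'',q''}$ with $p''=a_{n,i}+c\ga$ and $q''$ pinned by the requirement $S''(t^*)<f(t^*)-c'\ga$. Chasing the inequalities, $q''$ must exceed $q^f$ by at least $\ga/\sin(\pi t^*)$, and the hypotheses only force $t^*\gtrsim \ga$, so $q''$ can exceed $q^f$ by an amount of order $1$; combined with $p''>a_{n,i}$ there is nothing preventing $(p'',q'')$ from leaving the closed unit disc, where $\mathfrak{Arch}$ has no mass and the approximation step produces no particle $j$. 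Your fallback (sweeping through the ordered family $h_{a_{n,i},k}$) is also only sketched. Separately, your mass estimate is stated backwards: the $\prob_\scrA$-mass of a $\de$-ball is \emph{largest} near the boundary of the disc (order $\de^{3/2}$, from the $1/\sqrt{1-r^2}$ singularity) and smallest in the interior (order $\de^2$); the numerics still go through, but the "worst case $p^2+q^2$ close to $1$" reasoning is wrong.

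The paper avoids all of this by a different case split and by using the single-crossing property positively rather than via a double-crossing contradiction. If $f(s)$ is at least $\ga$ \emph{inside} the lens for some $s$, the sine curve through $(0,f(0))$ and $(s,f(s))$ has parameters $(x_0,y_0)\in B(0,1-\ga)$ --- the $\ga$ of interior room is precisely what licenses perturbing to $(x_1,y_1)$ by $\ga^2/3$ \emph{while staying inside the disc}, in a direction that makes the approximating particle $j$ lie above $f$ at both $t=0$ and $t=s$; since $j$ and $i$ cross at most once, $j$ stays above $f$ on all of $[0,s]$, giving the upper bound there, and the three symmetric perturbations give the rest. If $f$ never enters the $\ga$-interior of the lens, it is trivially within $2\ga+2/n$ of $C_{n,i}$ or $c_{n,i}$. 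To repair your argument you would need to replace the anchor point $t=1/2$ by such an interior time $s$ (or handle the no-interior-time case separately), at which point you essentially recover the paper's proof.
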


\begin{proof} 
Suppose first that for some $s \in [0,1]$, that 
\begin{equation}
\label{E:sig-GG}
\sig_G(i, s) \in [h_{a_{n,i}, -1}(s) + \ga, h_{a_{n,i}, 1}(s) - \ga]. 
\end{equation}
Observe that $s \in [\arcsin(\ga)/\pi, 1- \arcsin(\ga)/\pi]$, since otherwise the above interval 
is necessarily empty. By \eqref{E:sig-GG}, we can find a point $(x_0, y_0) \in B(0, 1 - \ga)$ such that 
$$
x_0 = \sig_G(i, 0) \qquad \mathand \qquad x_0\cos(\pi s) + y_0 \sin(\pi s) = \sig_G(i, s).
$$
Since $s \le 1- \arcsin(\ga)/\pi$, we can find another point 
$
(x_1, y_1) \in
B((x_0, y_0), \ga) \sset B(0, 1)
$
such that 
 \begin{equation}
 \label{E:x1-x0}
x_1 > x_0 + \ga^2/3 \quad \mathand \quad x_1\cos(\pi s) + y_1\sin(\pi s) > x_0\cos(\pi s) + y_0\sin(\pi s) + \ga^2/3.
 \end{equation}
Now observe that
$$
\ga^4/64\le \inf \lf\{ \mathfrak{Arch}(B(x, \ga^2/6)) : x \in B(0, 1) \rg\}.
$$
Therefore since $d_{LP}(\nu_\sig, \prob_{\prob_\scrA}) < \ga^4/128$, there must be a point $(x_2, y_2) \in  B(0, 1) \cap B((x_1, y_1),\ga^2/6)$ and a particle $j \in \{1, \dots, n\}$ such that 
\begin{equation}
\label{E:G-jj}
\begin{split}
||\sig_G(j, t) - (x_1\cos(\pi t) &+ y_1\sin(\pi t))||_u \\
&\le ||\sig_G(j, t) - (x_2\cos(\pi t) + y_2\sin(\pi t))||_u + \ga^2/6 < \ga^2/3.
\end{split}
\end{equation}
By \eqref{E:x1-x0}, this implies that 
$$
\sig_G(j, 0) > \sig_G(i, 0) \qquad \mathand \qquad \sig_G(j, s) > \sig_G(i, s),
$$
and hence $\sig_G(j, t) > \sig_G(i, t)$ for all $t \in [0, s]$. Combining this with \eqref{E:G-jj} and the fact that $(x_1, y_1) \in B((x_0, y_0), \ga)$ implies that
$$
x_0\sin(\pi t) + y_0 \cos(\pi t) - \sig_G(i, t)  \le \ga^2/3 + \ga < 2\ga \qquad \mathforall t \in [0, s].
$$
Symmetric arguments give the same upper bound on this difference over the interval $[s, 1]$, and on the difference $\sig_G(i, t) - x_0\sin(\pi t) + y_0 \cos(\pi t)$ over the interval $[0, 1]$. This implies that
$$
||\sig_G(i, t) - [x_0\sin(\pi t) + y_0 \cos(\pi t)]||_u \le 2\ga.
$$
 Now suppose that there does not exist an $s \in [0, 1]$ such that $\sig_G(i, s) \in [h_{a_{n,i}, -1}(s) + \ga, h_{a_{n,i}, 1}(s) - \ga]$. Then either
$$
||\sig_G(i, \cdot) - h_{a_{n,i}, 1}(\cdot)||_u < 2\ga + \frac{2}n \qquad \mathor \qquad ||\sig_G(i, \cdot) - h_{a_{n,i}, -1}(\cdot)||_u < 2\ga + \frac{2}n.
$$
This can be seen by observing that the difference $h_{a_{n,i}, 1} - h_{a_{n,i}, -1}$ is unimodal and non-negative, so the path $\sig_G(i, \cdot)$ must stay close of one of those paths for its entire trajectory. The additive factor of $2/n$ comes from the fact that the path $\sig_G(i, \cdot)$ can make jumps of that size.

\medskip
As all the functions $h_{a_{n,i}, \pm 1}$, and $x_0\sin(\pi t) + y_0 \cos(\pi t)$ are of the form $a\sin(\pi t + \theta)$, for some $(a, \theta) \in [0, 1] \X[0, 2\pi]$, this proves the lemma.
\end{proof}

\begin{proof}[Proof of Theorem \ref{T:sine-curves}] Fix $\ga \in (0, 1)$, and recall from Section \ref{SS:slightly} that $\nu_n$ is uniform measure on the set $\{\nu_\sig\}_{\sig \in \Om_n}$. For small enough $\ep > 0$, Lemma \ref{L:edge-time} and Theorem \ref{T:weak-limit'} imply that 
$$
\prob \lf ( \max_{i \in [1, n]} \scrL\lf\{t: |\sig^n_G(i, t)| \ge 1 -3\ep \rg\} < \frac{\ga}{6}, \;\; d_{LP}(\nu_n, \prob_\scrA) < \frac{\ga^4}{128} \wedge \frac{L(\ep)}2 \rg) \to 1 \qquad \mathas n \to \infty.
$$
Combining Lemmas \ref{L:edge-cond} and \ref{L:sine-close}, this implies that there exist random variables $A_{n, i, \ga} \in [0, 1]$ and $\Theta_{n, i, \ga} \in [0, 2\pi]$ such that 
$$
P_{n, \ga} := \prob \lf(\max_{i \in [1, n]} ||\sig^n_G(i, t) - A_{n, i, \ga} \sin(\pi t + \Theta_{n, i, \ga})||_u < 2\ga + \frac{2}n \rg) \to 1 \qquad \mathas n \to \infty.
$$
As constructed, the random variables $A_{n, i, \ga}$  and $\Theta_{n, i, \ga}$ depend on $\ga$. To remove this dependence, let $\ga_n \to 0$ be a sequence such that $P_{n, \ga_n} \to 1$ as $n \to \infty$. Let $A_{n, i} = A_{n, i, \ga_n}$ and $\Theta_{n, i} = \Theta_{n, i, \ga_n}$, 
and define
\begin{equation}
\label{E:Bnga}
B_{n, \ga} = \lf\{ \max_{i \in [1, n]} ||\sig^n_G(i, t)- A_{n, i} \sin(\pi t + \Theta_{n, i})||_u < \ga \rg\},
\end{equation}
Then for any $\ga > 0$, we have that $\prob(B_{n, \ga}) \to 1$ as $n \to \infty$.
\end{proof}

We can also prove Theorem \ref{T:matrices} and Theorem \ref{T:unif-rotation}. For these we use the notation $B_{n, \ga}$ from \eqref{E:Bnga}.

\begin{proof}[Proof of Theorem \ref{T:matrices}] Fix $t \in [0, 1]$.
Since $\nu_{n} \to \de_{\prob_\scrA}$ (Theorem \ref{T:weak-limit'}), the random measure $\rho_t^n$ converges in probability to the law of $(X, X \cos (\pi t )+ Y \sin (\pi t))$, where $(X, Y) \sim \mathfrak{Arch}$. This law is simply $\mathfrak{Arch}_t$. 

\medskip

Now for a set $A \sset [-1, 1]^2$, let 
$$
A^\ga = \{x \in [-1, 1]^2: d(x, A) < \ga \}.
$$
On the event $B_{n, \ga}$, the support of the measure $\rho_t^n$ is contained in the set 
$
\supp(\mathfrak{Arch}_t)^\ga.
$
Moreover, since $\mathfrak{Arch}_t$ has a Lebesgue density that is bounded below, the weak convergence of $\rho^n_t$ to $\rho$ implies that with high probability,
$
\supp(\mathfrak{Arch}_t) \sset \supp(\rho_n^t)^\ga
$
for any $\ga > 0$. Therefore since $\prob(B_{n, \ga}) \to 1$ as $n \to \infty$ by Theorem \ref{T:sine-curves}, we have that
\[
\lim_{n \to \infty} d_H(\supp(\rho^n_t), \supp(\mathfrak{Arch}_t)) = 0 \qquad \text{ in probability. }  \quad \qedhere
\]
\end{proof}

\begin{proof}[Proof of Theorem \ref{T:unif-rotation}]
First observe that for any $a \in [0, 1]$, $t \in [0, 1/2]$, and $\theta \in [0, 2\pi]$, that
$$
e^{\pi i t} \lf[a\sin(\pi t + \theta) + ia\sin(\pi t + \pi/2 + \theta) \rg] = a \sin(\theta).
$$
Therefore on the event $B_{n, \ga}$, we have that
$$
\max_{j \in [1, n]} \max_{s, t \in [0, 1]} |Z^n_j(t) - Z^n_j(s)| \le 2\ga.
$$
Since $\prob(B_{n, \ga}) \to 1$ as $n \to \infty$ for any $\ga > 0$ by Theorem \ref{T:sine-curves}, this proves the theorem.
\end{proof}

\end{section}
\section{The geometric limit}
\label{S:geom-limit}

In this section, we use Theorem \ref{T:sine-curves} and Theorem \ref{T:weak-limit} to prove Theorem \ref{T:geom-limit}. Recall from Section \ref{S:intro} that $d_\infty(f, g)$ is the uniform norm between two $\real^n$-valued functions $f$ and $g$, where the pointwise distance is the $L^\infty$-distance. Recall also that $\bar{\sig}$ is the embedding of a sorting network $\sig$ into the $(n-2)$-dimensional sphere $\mathbb{S}^{n-2} \sset \real^n$.

\medskip

By Theorem \ref{T:sine-curves} and a change of variables,  there exist random vectors $\mathbf{X^n} = (X^n_1, \ddd, X^n_n)$ and $\mathbf{V^n} = (V^n_1, \ddd, V^n_n)$ such that $d_\infty(F_n, \close{\sig}^n)/n \to 0$ in probability, where 
$$
F_n(t) = \mathbf{X^n} \cos(\pi t) + \mathbf{V^n} \sin(\pi t) + \mathbf{c}, \qquad \text{where} \qquad 
\mathbf{c} = \lf( \frac{n + 1}2, \ddd, \frac{n + 1}2\rg).
$$
Moreover, we can assume that $X^n_i = i - (n + 1)/2$ and that $V^n_i =  \sig^n(i, N/2) -  (n + 1)/2$, as these changes only shift the curve  $\mathbf{X^n} \cos(\pi t) + \mathbf{V^n} \sin(\pi t)$ by $d_\infty$-distance $o(n)$ in probability. 

\medskip

The point $\mathbf{c}$ is the center of $\mathbb{S}^{n-2}$. It remains to show that we can shift the curve $F_n$ by $d_\infty$-distance $o(n)$ to obtain a great circle in $\mathbb{S}^{n-2}$. For this we need the following lemma.

\begin{lemma}
\label{L:dot-prod-small}
Let the vectors $\mathbf{X^n}$ and $\mathbf{V^n}$ be as above. Then 
$$
\lim_{n \to \infty} \frac{\lf\langle\mathbf{X^n} , \mathbf{V^n} \rg\rangle}{n^3} = 0 \qquad \text{in probability.}
$$
\end{lemma}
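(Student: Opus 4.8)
The plan is to express the inner product $\langle \mathbf{X^n}, \mathbf{V^n}\rangle$ in terms of an average over particles and then recognize it as a Riemann-sum approximation to an integral that vanishes by symmetry. Recall that $X^n_i = i - (n+1)/2$ and $V^n_i = \sig^n(i, N/2) - (n+1)/2$, so that in the rescaled coordinates of Section~\ref{S:intro} we have $2X^n_i/n \approx \sig^n_G(i, 0)$ and $2V^n_i/n \approx \sig^n_G(i, 1/2)$. Hence
$$
\frac{\langle \mathbf{X^n}, \mathbf{V^n}\rangle}{n^3} = \frac{1}{n^3}\sum_{i=1}^n \lf(i - \tfrac{n+1}2\rg)\lf(\sig^n(i, N/2) - \tfrac{n+1}2\rg) = \frac{1}{4n}\sum_{i=1}^n \sig^n_G(i, 0)\,\sig^n_G(i, 1/2) + o(1).
$$
The sum $\frac{1}{n}\sum_{i=1}^n \sig^n_G(i, 0)\,\sig^n_G(i, 1/2)$ is exactly $\int xy\, d\rho^n_{1/2}(x, y)$, the integral of the bounded continuous function $(x,y)\mapsto xy$ against the permutation-matrix measure $\rho^n_{1/2}$.

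First I would invoke Theorem~\ref{T:matrices}: $\rho^n_{1/2} \to \mathfrak{Arch}_{1/2} = \mathfrak{Arch}$ in probability in the weak topology. Since $(x,y)\mapsto xy$ is bounded and continuous on $[-1,1]^2$, weak convergence gives
$$
\int xy\, d\rho^n_{1/2}(x,y) \cvgp \int xy\, d\mathfrak{Arch}(x,y).
$$
Now $\mathfrak{Arch}$ is the projected surface area measure of the $2$-sphere, which is invariant under the reflection $(x,y)\mapsto(x,-y)$; since $xy$ is odd under this reflection, $\int xy\, d\mathfrak{Arch} = 0$. Therefore $\int xy\, d\rho^n_{1/2} \cvgp 0$, and combining with the displayed identity, $\langle \mathbf{X^n}, \mathbf{V^n}\rangle / n^3 \cvgp 0$.

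The only technical point to handle with care is the passage from the integer coordinates $X^n_i, V^n_i$ to the rescaled trajectory values $\sig^n_G(i, 0), \sig^n_G(i, 1/2)$: the difference between $i - (n+1)/2$ and $\frac n2\sig^n_G(i,0)$ is $O(1)$ (likewise for the $V$ term, since $\sig^n_G(i,1/2) = 2\sig^n(i,N/2)/n - 1$ with $N/2$ possibly non-integer, introducing at most an $O(1)$ rounding in the time argument — one can use time-stationarity, Theorem~\ref{T:time-stat}, or simply absorb the half-step into the $o(1)$ error), so each cross-term error contributes $O(n)$ to the sum, hence $O(n^{-2})$ after dividing by $n^3$. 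This bookkeeping is routine. I do not anticipate a genuine obstacle here; the substantive input is Theorem~\ref{T:matrices}, which is already available, together with the elementary reflection symmetry of $\mathfrak{Arch}$.
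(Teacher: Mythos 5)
Your proof is correct and follows essentially the same route as the paper: both identify $\lf\langle\mathbf{X^n},\mathbf{V^n}\rg\rangle/n^3$ (up to the factor $1/4$ and negligible discretization errors) with the empirical average of $\sig^n_G(i,0)\,\sig^n_G(i,1/2)$ and conclude from $\int xy\,d\mathfrak{Arch}=0$ by symmetry. The only difference is the citation — the paper samples a uniform index $I_n$ and appeals to Theorem~\ref{T:weak-limit}, whereas you integrate $xy$ against $\rho^n_{1/2}$ and appeal to Theorem~\ref{T:matrices} (which is legitimate, as Theorem~\ref{T:matrices} is established in Section~\ref{S:strong-limit} before this lemma is needed, so there is no circularity); your route arguably handles the ``in probability'' statement more directly, since convergence in probability of the random measure $\rho^n_{1/2}$ immediately yields convergence in probability of the bounded continuous test integral.
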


\begin{proof}
Let $I_n$ be a uniform random variable on $\{1, \dots, n\}$, independent of $\mathbf{X^n}$ and $\mathbf{V^n}$, and define
$$
(\tilde{X}^n, \tilde{V}^n) =  \lf(\frac{2X^n_{I_n}}n, \frac{2V^n_{I_n} }n\rg).
$$
We have that 
$$
(\tilde{X}^n, \tilde{V}^n) \eqd (Y_n(0) - 1/n, Y_n(1/2) - 1/n),
$$
where $Y_n$ is the trajectory random variable of $\sig^n$.  Therefore by Theorem \ref{T:weak-limit}, 
$$
(\tilde{X}^n, \tilde{V}^n) \cvgd (X, V) \sim \mathfrak{Arch}.
$$
By the bounded convergence theorem, this implies that $\expt \tilde{X}^n\tilde{V}^n \to \expt XV$ in probability. Observing that $n^3 \expt \tilde{X}^n\tilde{V}^n = 4\lf\langle\mathbf{X^n} , \mathbf{V^n} \rg\rangle$ and that $\expt XV = 0$ completes the proof.
\end{proof}

\begin{proof}[Proof of Theorem \ref{T:geom-limit}.] Fix $n \in \nat$. For ease of bounding errors, we assume that $n \ge 27$. Let $\sig$ be a fixed $n$-element sorting network. Our goal is to perturb the vector $\mathbf{V^n}$ to a new vector $\mathbf{W^n}$ so that the path
\begin{equation}
\label{E:C-nep}
C_{n}(t) := \mathbf{X^n} \cos(\pi t) + \mathbf{W^n} \sin(\pi t) + \mathbf{c}
\end{equation}
follows a great circle on $\mathbb{S}^{n-2}$, and so that $d_\infty(C_{n}, F_n)$ is small. In order to do this, we just need to find $\mathbf{W^n}$ such that
$$
\sum_{i=1}^n W^n_i = 0, \;\lf\langle\mathbf{X^n} , \mathbf{W^n} \rg\rangle = 0, \quad \mathand \quad ||W^n||_2^2 = (n^3 - n)/12.
$$
The first of these conditions guarantees that $C_n$ lies in the correct hyperplane $\mathbb{L}_n \sset \real^n$, and the second and third conditions guarantee that $C_n$ traces out a great circle on the sphere $\mathbb{S}^{n-2}$ within that hyperplane. We first perturb $\mathbf{V^n}$ to a vector $\mathbf{Z^n}$ satisfying the first two properties. Define the random variable
$$
A_n = \frac{\lf|\lf\langle\mathbf{X^n} , \mathbf{V^n} \rg\rangle\rg|}{n^3}.
$$
Without loss of generality, assume that $\lf\langle\mathbf{X^n} , \mathbf{V^n} \rg\rangle > 0$. 

\medskip

Choose any $i \ge 3(n + 1)/4$. Let $k > 0$, and decrease the value of $V^n_i$ by $kn$ and increase the value of $V^n_{n + 1 - i}$ by $kn$. Call this new vector $\mathbf{V^n_1}$. Note that $X^n_i \ge n/4$, and that $X^n_i = -X^n_{n + 1 - i}$. Therefore $\lf\langle\mathbf{X^n} , \mathbf{V^n_1} \rg\rangle \le\lf\langle\mathbf{X^n} , \mathbf{V^n} \rg\rangle - kn^2/2$.
  
 \medskip
 
By iterating the above procedure to repeatedly lower the dot product $\lf\langle\mathbf{X^n} , \mathbf{V^n} \rg\rangle$, we can obtain a vector $\mathbf{K_n} = (K_1n, \ddd, K_nn)$ with the following properties (here is where we require that $n \ge 27$).
\smallskip
\begin{enumerate}[nosep,label=(\roman*)]
\item For all $i$, $|K_i| \le 9A_n$, $- K_i = K_{n + 1 - i}$, and $K_i = 0$ if $i \in \lf(\frac{n+1}4, \frac{3(n+1)}4\rg)$.
\item $\lf\langle\mathbf{X^n} , \mathbf{V^n + K^n} \rg\rangle = 0.$
\end{enumerate}
\smallskip

Let $\mathbf{Z^n} = \mathbf{V^n + K^n}$. Observe that $\sum_{i=1}^n Z^n_i = 0$ since both $\sum_{i=1}^n K_i = 0$ and $\sum_{i=1}^n V^n_i = 0$. The fact that $\sum_{i=1}^n V^n_i = 0$ follows since $\mathbf{V^n} + \mathbf{c}$ is a permutation of the vector $(1, 2, \ddd, n)$.
For any $t$, we have that 
\begin{align*}
||F_n(t) - (\mathbf{X^n} \cos(\pi t) + \mathbf{Z^n} \sin(\pi t) + \mathbf{c})||_\infty = \max_{i \in [1, n]} |V^n_i \sin(\pi t) - Z^n_i \sin(\pi t)| \le 9 A_n n.
\end{align*}
We can now define $\mathbf{W^n} = M_n\mathbf{Z^n}$, where $M_n$ is a random constant chosen so that $||\mathbf{W^n}||^2_2 = (n^3 - n)/12$. Using the definition of $C_n$ in \eqref{E:C-nep}, for every $t \in [0, 1]$, we have that
 \begin{equation}
 \label{E:fC}
 \begin{split}
||F_n(t) - C_n(t)||_\infty &\le 9A_n n +  \max_{i \in [1, n]} |W^n_i \sin(\pi t) - Z^n_i \sin(\pi t)| \\
&< 9 A_n n + |M_n - 1|\lf[\frac{n + 1}2 + 9 A_n n\rg].
\end{split}
\end{equation}
In the last inequality, we have used that $|Z^n_i - V^n_i| \le 9 A_n n$ to get that $|Z_{n, i}| \le (n+1)/2 + 9A_n n$. The inequality \eqref{E:fC} implies that
\begin{equation}
\label{E:d-inf-comp}
\frac{d_\infty(C_n, \close{\sig}^n)}n \le 9 A_n + |M_n - 1|\lf[\frac{n + 1}{2n} + 9 A_n \rg] + \frac{d_\infty(F_n, \close{\sig}^n)}n.
\end{equation}
Now again using that $|Z^n_i - V^n_i| \le 9 A_n n$, we have that  
\begin{align*}
\big|\;||\mathbf{Z^n}||^2_2 - ||\mathbf{V^n}||^2_2 \; \big| \le \sum_{i=1}^n |Z^n_i - V^n_i||Z^n_i + V^n_i| \le 9A_n n^2(n + 1 + 9 A_n n).
\end{align*}
Therefore since $||\mathbf{V^n}||^2_2 = (n^3 - n)/12$, we have that
$$
M_n \in  \lf[\sqrt{1 - 217A_n}, \sqrt{1 + 217A_n} \rg]
$$
whenever $A_n < 1 /217$. Lemma \ref{L:dot-prod-small} implies that $A_n \to 0$ as $n \to \infty$ in probability, and hence $M_n \to 1$ in probability. Therefore since $d_\infty(F_n, \close{\sig}^n)/n \to 0$ in probability as $n \to \infty$, \eqref{E:d-inf-comp} implies that $d_\infty(C_n, \close{\sig}^n)/n$ does as well.
\end{proof}

\section*{Acknowledgements} Thank you to B\'alint Vir\'ag for many fruitful discussions about the problem, and for many constructive comments about previous drafts. Thank you to Laurent Miclo, Maxim Arnold, and Boris Khesin for pointing out the connections between random sorting networks and fluid dynamics. Thank you to Zachary Hamaker, Svante Linusson and Robin Sulzgruber for telling me about related families of sorting processes that also appear to converge to the Archimedean limit.

\bibliographystyle{alpha}
\bibliography{LLRSNbib}
\end{document}